\newcommand{\mbb}[1]{\mathbb #1}
\newcommand{\mc}[1]{\mathcal #1}
\newcommand{\ms}[1]{\mathscr #1}
\newcommand{\wh}{\widehat}
\newcommand{\Spec}{\operatorname{Spec}}
\newcommand{\GL}{\operatorname{GL}}
\newcommand{\Hom}{\operatorname{Hom}}
\newcommand{\Gal}{\operatorname{Gal}}
\newcommand{\spcl}{\operatorname{sp}}
\newcommand{\cd}{\operatorname{cd}}
\newcommand{\tors}{\operatorname{\mc T}}
\newcommand{\ttors}{\operatorname{\mho}}
\newcommand{\red}{\operatorname{red}}
\newcommand{\ov}[1]{\overline{#1}}
\newcommand{\til}[1]{\widetilde{#1}}
\newcommand{\basering}{R}
\newcommand{\customdiagram}[4]{
\fbox{\xymatrix @C=.33cm @R=-0.3cm {
 & {#2} \ar@/^0.4pc/[dr] \\
{#1} \ar@/^0.4pc/[ur] \ar@/_0.4pc/[dr] & & {#4} \\
 & {#3} \ar@/_0.4pc/[ur]
}}
}
\newcommand{\vardiagram}[2]{
\fbox{\xymatrix @C=.33cm @R=-0.3cm {
 & {#2}_1 \ar@/^0.4pc/[dr] \\
{#2}_0 \ar@/^0.4pc/[ur] \ar@/_0.4pc/[dr] & & {#1} \\
 & {#2}_2 \ar@/_0.4pc/[ur]
}}
}
\newcommand{\diagram}[1]{\vardiagram{#1}{#1}}
\newcommand{\mvardiagram}[2]{
\mbox{\xymatrix @C=.33cm @R=-0.3cm {
 & {#2}_1 \ar@/^0.4pc/[dr] \\
{#2}_0 \ar@/^0.4pc/[ur] \ar@/_0.4pc/[dr] & & {#1} \\
 & {#2}_2 \ar@/_0.4pc/[ur]
}}
}
\newcommand{\mdiagram}[1]{\mvardiagram{#1}{#1}}
\newcommand{\varPdiagram}[2]{
\fbox{\xymatrix @C=.33cm @R=-0.3cm {
 & {#2}_\UU \ar@/^0.4pc/[dr] \\
{#2}_\BB \ar@/^0.4pc/[ur] \ar@/_0.4pc/[dr] & & {#1} \\
 & {#2}_\PP \ar@/_0.4pc/[ur]
}}
}
\newcommand{\Pdiagram}[1]{\varPdiagram{#1}{#1}}
\newcommand{\antidiagram}[1]{
\fbox{\xymatrix @C=.33cm @R=-0.3cm {
 & {#1}_1 \ar@/_0.4pc/[dl] \\
{#1}_0 & & {#1} \ar@/_0.4pc/[ul] \ar@/^0.4pc/[dl] \\
 & {#1}_2 \ar@/^0.4pc/[ul]
}}
}
\theoremstyle{plain}
\newtheorem{thm}{Theorem}[section]
\newtheorem{lem}[thm]{Lemma}
\newtheorem{lemma}[thm]{Lemma}
\newtheorem{cor}[thm]{Corollary}
\newtheorem{prop}[thm]{Proposition}
\newtheorem*{thm*}{Theorem}
\newtheorem*{rem*}{Remark}
\newtheorem*{lem*}{Lemma}
\newtheorem*{cor*}{Corollary}
\newtheorem*{prop*}{Proposition}
\theoremstyle{definition}
\newtheorem{defn}[thm]{Definition}
\newtheorem{ex}[thm]{Example}
\newtheorem{hyp}[thm]{Hypothesis}
\theoremstyle{remark}
\newtheorem{rem}[thm]{Remark}
\newcommand{\SL}{\operatorname{SL}}
\newcommand{\SK}{\operatorname{SK}}
\newcommand{\sheaf}[1]{\mathscr{#1}}
\newcommand{\XX}{\sheaf{X}}
\newcommand{\PP}{\sheaf{P}}
\newcommand{\UU}{\sheaf{U}}
\newcommand{\BB}{\sheaf{B}}
\newcommand{\OO}{\sheaf{O}}
\newcommand{\ZZ}{\sheaf{Z}}
\newcommand{\R}{{\mathrm{R}}}
\renewcommand{\P}{\mathbb P}
\newcommand{\QQ}{\mathbb{Q}}
\newcommand{\A}{\mathbb{A}}
\newcommand{\G}{\mathbb{G}}
\newcommand{\Z}{\mathbb{Z}}
\newcommand{\Gm}{\mathbb{G}_{\mathrm m}}
\newcommand{\N}{\operatorname{N}\!}
\newcommand{\Nrd}{\operatorname{Nrd}}
\newcommand{\Ext}{\operatorname{Ext}}
\newcommand{\cha}{\mathrm{char}}
\newcommand{\sep}{\mathrm{sep}}
\newcommand{\iso}{\ {\buildrel \sim \over \longrightarrow} \ }
\DeclareSymbolFont{cyrletters}{OT2}{wncyr}{m}{n}
\DeclareMathSymbol{\Sha}{\mathalpha}{cyrletters}{"58}
\title{Local-global principles for constant reductive groups over semi-global fields}
\date{January 2, 2022}
\author{Jean-Louis Colliot-Th\'el\`ene, David Harbater, Julia Hartmann, \\Daniel Krashen, R. Parimala, and V. Suresh}
\dedicatory{To Gopal Prasad on his 75th birthday} 
\thanks{
\textit{Mathematics Subject Classification} (2010): 11E72, 12G05, 14G05 (primary);  14H25, 20G15, 14G27 (secondary).\\
\textit{Key words and phrases.} Linear algebraic groups, torsors, reductive groups, local-global principles, Galois cohomology, semi-global fields, patching, R-equivalence.}
\begin{document}

\maketitle

\begin{abstract}
We study local-global principles for torsors under reductive linear algebraic groups over semi-global fields; i.e., over one variable function fields over complete discretely valued fields.  We provide conditions on the group and the semiglobal field under which the local-global principle holds, and we compute the obstruction to the local-global principle in certain classes of examples.  Using our description of the obstruction, we give the first example of a semisimple simply connected group over a semi-global field where the local-global principle fails.  Our methods include patching and R-equivalence.
\end{abstract}

\section*{Introduction}

Local-global principles are classically studied over global fields, and in particular number fields.  In addition to global function fields (i.e., function fields over finite fields), function fields over local fields are also of interest in arithmetic geometry. More generally, one may consider one-variable function fields $F$ over complete discretely valued fields; such fields
are also known as {\em semi-global fields}. Local-global principles for these function fields have been considered in a number of recent works.

In this paper, we study local-global principles for the triviality of torsors under connected  linear algebraic groups over semi-global fields. As in the case of number fields, there is a local-global principle that holds for torsors under a connected linear algebraic group that is rational as an $F$-variety (\cite{HHK09}, \cite{HHK15}), but this is not the case for torsors under general nonrational connected linear algebraic groups (e.g.\ nonrational tori, \cite{CTPS16}).
Nevertheless, it was shown in \cite[Theorem~7.3]{CHHKPS} that local-global principles do hold for torsors over a semi-global field $F$ if the group is a torus over the valuation ring $R$ of $K$, and the closed fiber of a regular projective model $\XX$ of $F$ over $R$ satisfies a certain property related to trees.
The results in that work relied on two key properties of tori: that they are commutative and that they have flasque resolutions.

In the present paper, we use a quite different strategy to treat local-global principles for torsors over $F$ under more general reductive groups $G$ over $R$, where commutativity might not hold and where flasque tori do not control the group in the same way.  
The obstruction to such a local-global principle is a Tate-Shafarevich set $\Sha(F,G)$; viz., the set of isomorphism classes of $G$-torsors over $F$ that become trivial over each completion $F_v$ of $F$ at a divisorial discrete valuation $v$.  The following is a special case of our Theorem~\ref{main}:

\begin{thm}
Let $K$ be a complete discretely valued field, $R$ its ring of integers, and $F$ a semi-global field over $K$. Let $\XX$ be a regular projective model of $F$ over $R$.
Assume that the residue field $k$ of $R$ is of characteristic zero; that the closed fiber $\XX_k$ of $\XX$ is reduced and has normal crossings; and that the reduction graph associated to $\XX_k$ is a tree and remains so under all finite extensions $k'/k$.  Then for any reductive linear algebraic group $G$ over $R$ we have $\Sha(F,G)=1$.
\end{thm}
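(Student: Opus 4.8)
The plan is to use patching to reduce the vanishing of $\Sha(F,G)$ to local statements on a suitable model, and then to exploit the tree hypothesis to assemble these into a global triviality. First I would fix a regular projective model $\XX$ of $F$ over $R$ whose closed fiber $\XX_k$ is reduced with normal crossings and has tree-like reduction graph (stable under finite residue extensions). Given a $G$-torsor $E$ over $F$ that is trivial at every divisorial valuation of $F$, the aim is to show $E$ is trivial. The patching framework of \cite{HHK09} expresses $G$-torsors over $F$ in terms of torsors over the overfields $F_U$ (for $U$ ranging over the components of $\XX_k$, or affine opens therein), $F_P$ (for $P$ a closed point of $\XX_k$), and $F_\wp$ (for $\wp$ a branch, i.e.\ a height-one prime of the completed local ring at a node). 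Concretely, a $G$-torsor over $F$ corresponds to a collection of $G$-torsors over the $F_U$ together with patching data over the $F_\wp$, so the class of $E$ is captured by a cocycle in $H^1$ of a suitable graph of groups $\{G(F_U), G(F_P), G(F_\wp)\}$.

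The key reductions are then: (i) because $E$ is trivial at each discrete valuation $v$, and in particular at the valuations cutting out the components of $\XX_k$, the restriction $E_{F_U}$ is trivial for each $U$; likewise, using completions at the nodes and the branches, $E_{F_P}$ and $E_{F_\wp}$ become trivial. So after choosing trivializations, the class of $E$ is represented purely by the transition elements $g_\wp \in G(F_\wp)$ attached to the branches $\wp$ at the nodes $P$, modulo the action of $G(F_U)$ on one side and $G(F_P)$ on the other. (ii) The obstruction to further trivializing $E$ thus lives in the cohomology of the incidence complex of the reduction graph with coefficients in these groups — essentially an $H^1$ of a graph of groups. When the reduction graph is a tree, $H^1$ of such a graph of groups vanishes for trivial reasons (a tree is simply connected, so every $1$-cocycle is a coboundary), \emph{provided} one can actually solve the relevant factorization problems locally at each node: given $g_\wp \in G(F_\wp)$, write $g_\wp = g_U \cdot g_P$ with $g_U \in G(F_U)$ and $g_P \in G(F_P)$. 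This is where the hypothesis that $G$ is a \emph{constant} (i.e.\ defined over $R$) reductive group, together with $\cha k = 0$, enters: one uses that $G(F_\wp)$ admits such factorizations — this should follow from the good reduction of $G$ and results on $R$-equivalence and weak approximation for reductive groups over the relevant complete fields, as developed in the earlier sections of the paper.

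The main obstacle I expect is precisely step (ii): proving the local factorization $G(F_\wp) = G(F_U)\cdot G(F_P)$, or a version of it sufficient to kill the cocycle. Over a tree one does not need a group structure on $H^1$ — one processes the nodes one at a time, using the tree to orient the edges toward a root and clearing the transition element at each leaf inward — but each clearing step requires solving a factorization problem over the branch field, and for nonabelian $G$ this is genuinely a statement about the image of $G(F_U)\times G(F_P) \to G(F_\wp)$. I would handle this by invoking the good-reduction hypothesis on $G$ to reduce the factorization to a statement over the residue fields (using that $\XX_k$ is reduced with normal crossings, so the branch, patch, and component residue fields fit into the expected configuration), and then use that over such fields a smooth connected group has the needed surjectivity — e.g.\ via Hensel's lemma lifting a factorization of the reduction, or via an explicit section coming from a Levi/parahoric structure. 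The condition that the reduction graph \emph{remains} a tree under all finite extensions $k'/k$ is what lets this argument survive the passage to the residue-field level, where a finite extension may be forced on us; without it, a cycle could appear upstairs and the cocycle need not be a coboundary. Assembling the cleared local data back via patching then yields a global trivialization of $E$, so $\Sha(F,G)=1$.
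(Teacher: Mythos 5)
The overall architecture you describe---patching, the double-coset description of the obstruction, processing the tree from the leaves inward, and a local factorization at each branch---is indeed the strategy of the paper. But there is a genuine gap at precisely the step you flag as the main obstacle. The exact factorization $G(F_\wp)=G(F_U)\cdot G(F_P)$ is not available for general (non-rational) reductive groups, and the method you propose---lift a factorization of the reductions via Hensel's lemma / formal smoothness---can only ever produce a factorization \emph{up to the congruence subgroup} $G_\wp(\wh R_\wp)=\ker\bigl(G(\wh R_\wp)\to G(k_\wp)\bigr)$: lifting from the residue level gives agreement modulo the maximal ideal, not equality. This weaker statement is exactly what the paper proves (Proposition~\ref{anisotropic factorization}, building on Proposition~\ref{branch factorization}): after refining $\PP$ one finds $g_U,g_P$ with $g_Ug_\wp g_P\in G_\wp(\wh R_\wp)$ for every branch, which yields a $G$-torsor over the model $\XX$ whose restriction to the reduced closed fiber is trivial (Theorem~\ref{torsors extend})---but not yet a trivial torsor over $F$.

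The missing ingredient is the passage from ``trivial on the closed fiber of $\XX$'' to ``trivial over $F$.'' This is not formal: it is the main theorem of Gille--Parimala--Suresh \cite{GPS}, invoked in Proposition~\ref{GPS prop}, and it is the second pillar of the proof alongside the factorization. Your proposal ends with ``assembling the cleared local data back via patching then yields a global trivialization,'' but the local data is cleared only modulo the congruence subgroups, so what patching yields is a torsor over $\XX$ trivial along $\XX_k^{\red}$; without \cite{GPS} (or some substitute injectivity statement for the relevant part of $H^1(\XX,G)\to H^1(F,G)$) the argument does not close. Two smaller points: the identification of $\Sha(F,G)$ with $\Sha_\PP(F,G)$, needed so that triviality at divisorial valuations gives triviality over the patches, is itself a theorem requiring reductivity of $G$ over the model (Proposition~\ref{sha containments}, Theorem~\ref{equalshas}); and the factorization-up-to-congruence at a singular point requires passing to the anisotropic quotient $H/S$ of the centralizer of a maximal split torus and using monotonicity of the tree to match the residue fields of parent and child vertices---this is where your hypothesis that the graph remains a tree under finite extensions of $k$ actually enters.
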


The central technical tool in proving this theorem is a factorization result (Proposition~\ref{anisotropic factorization}). It implies that under these hypotheses, any torsor over $F$
that is locally trivial with respect to discrete valuations of $F$
can be lifted to a torsor over a model $\XX$ of $F$ that is trivial along the reduced closed fiber of $\XX$ (Theorem~\ref{torsors extend}).  We then invoke
recent work of Gille, Parimala and Suresh \cite{GPS} to conclude that the given torsor over $F$ is in fact trivial, thereby obtaining Theorem~\ref{main}.  In addition we show that under appropriate hypotheses, we may even drop the assumptions that the group is reductive and connected.

In certain situations where the reduction graph does not satisfy the tree hypothesis of the above theorem, we compute an explicit quotient of $\Sha(F,G)$ in Proposition~\ref{dbl coset surj}.  This quotient can be viewed as a ``lower bound'' for $\Sha(F,G)$, and sometimes it turns out to give $\Sha(F,G)$ exactly.  In particular, we have the following result, which computes $\Sha(F,G)$ in terms of the group $G(k)/\R$ of $\R$-equivalence classes of $k$-points of $G$ under appropriate hypotheses (see Theorem~\ref{explicit Sha} for the precise statement):

\begin{thm}
In the above situation (but allowing arbitrary characteristic for $k$), if the closed fiber of $\XX$ is reduced and consists of copies of $\P^1_k$ meeting at $k$-points and forming $m$ cycles, and if $\cha(k)$ is not one of the bad primes for the reductive group $G$ over $R$, then $\Sha(F,G)$ is in bijection with the set of uniform conjugacy classes of $(G(k)/\R)^m$.
\end{thm}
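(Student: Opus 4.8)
The plan is to pass through patching along the closed fiber and then to evaluate the resulting combinatorial quotient for the given configuration. Recall from \cite{HHK09}, \cite{HHK15} that over the model $\XX$ a $G$-torsor over $F$ is given by patching data along $\XX_k$: a family of $G$-torsors over the completion fields attached to the irreducible components $C$ of $\XX_k$ and to its nodes $\wp$, glued over the branch fields. A locally trivial class has, in particular, trivial restriction to the completion field $F_C$ of every component $C$; granting the standard reduction from arbitrary divisorial valuations of $F$ to those visible on $\XX$, the converse holds as well, so $\Sha(F,G)$ is identified with the set of patching data whose vertex torsors are all trivial. This is a quotient of a product of groups of the form $G(\text{branch field})$ by the actions of the vertex groups $G(F_C)$ (over components) and $G(F_\wp)$ (over nodes); Proposition~\ref{dbl coset surj} packages this as a double-coset set and already gives a well-defined surjection from $\Sha(F,G)$ onto it. The task of the theorem is to evaluate that set for the prescribed configuration of $\P^1$'s and to show the surjection is a bijection.

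The next step is the local input at the nodes. Each node $\wp$ is a $k$-point, so the reductive $R$-group $G$ specializes via $G(R_\wp)\twoheadrightarrow G(k)$, and the anisotropic factorization of Proposition~\ref{anisotropic factorization} applies at $\wp$: if the two branches there lie on components $C$ and $C'$, it writes an arbitrary element of $G(F_\wp)$ as a product of a term from the $C$-branch group, a term from a lift of $G(k)$, and a term from the $C'$-branch group, with the class modulo $\R$ of the middle term well defined --- equivalently, the relevant double-coset space at $\wp$ is identified with $G(k)/\R$. This is precisely where the hypothesis that $\cha(k)$ is not a bad prime for $G$ is used, as it underlies the Bruhat-Tits and $\R$-triviality statements of \cite{GPS} that drive the factorization. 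Substituting these identifications, $\Sha(F,G)$ becomes a quotient of $\prod_\wp\bigl(G(k)/\R\bigr)$ by the residual action of the component groups $G(F_C)$, each acting through maps $G(F_C)\to G(k)/\R$, one for every node on $C$.

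The heart of the argument is to compute this quotient using that every component is a $\P^1_k$. Along a component $C\cong\P^1_k$ carrying nodes $\wp_1,\dots,\wp_r$, the group $G(F_C)$ acts on $\prod_j G(k)/\R$; since $\P^1_k$ minus a point is $\A^1_k$ and $G$ has good reduction over $R$, one shows --- again using the good-prime hypothesis, specialization to a rational point of $\A^1_k$, and the structure of $G$-torsors over the affine line --- that this action suffices to untwist completely along any one prescribed edge of $C$ while changing the contributions of the other edges of $C$ only by conjugation by a single common element of $G(k)/\R$. Fixing a spanning tree $T$ of the reduction graph $\Gamma$ and propagating trivializations outward from a root vertex, the component actions kill the contribution of every edge of $T$. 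Since $\Gamma$ is a union of $m$ cycles, so that $b_1(\Gamma)=m$, exactly $m$ edges $e_1,\dots,e_m$ survive, each carrying one copy of $G(k)/\R$; a locally trivial torsor thus determines a tuple $(g_1,\dots,g_m)\in(G(k)/\R)^m$. The only remaining freedom is the choice of trivialization at the root, whose change conjugates $g_1,\dots,g_m$ simultaneously by one element of $G(k)/\R$, and this yields the asserted bijection of $\Sha(F,G)$ with the set of uniform conjugacy classes of $(G(k)/\R)^m$.

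The main obstacle is this last step: proving that, once $T$ is fixed, the residual component action is \emph{exactly} simultaneous conjugation by $G(k)/\R$, neither coarser nor finer. Surjectivity --- realizing every tuple by an honest locally trivial torsor --- should come from patching in prescribed monodromy around the surviving cycles, using Proposition~\ref{anisotropic factorization} in the constructive direction. Injectivity demands careful bookkeeping of how trivializations propagate along chains of $\P^1$'s and of how the loop monodromies transform, together with the input from \cite{GPS} that $\R$-equivalence genuinely governs the obstruction once $\cha(k)$ is a good prime. The commutative case of \cite{CHHKPS} is the model for the combinatorics, but for nonabelian $G$ one must carry the base point around each loop --- which is what forces \emph{conjugacy} classes rather than bare tuples --- while the single global trivialization is what makes the conjugation \emph{uniform}.
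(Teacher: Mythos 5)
Your architecture matches the paper's: identify $\Sha(F,G)$ with a double coset space via patching (Theorem~\ref{equalshas} plus \cite[Corollary~3.6]{HHK15}), push the coefficients down to $G(k)/\R$ via specialization (Proposition~\ref{dbl coset surj}), and then evaluate the resulting constant coefficient system on the bipartite reduction graph by a spanning-tree argument, which is exactly Lemma~\ref{decorated} and does yield uniform conjugacy classes of $(G(k)/\R)^m$. The surjectivity and the combinatorics are essentially right. But there are two genuine problems.

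First, the step you flag as the ``main obstacle'' --- that the residual action is \emph{exactly} simultaneous conjugation, i.e.\ that the specialization map $\theta$ is injective --- is the actual content of the theorem, and your proposal does not supply it. The difficulty is not bookkeeping of trivializations along the tree: it is that two branch tuples with the same image in $\prod_\wp G(k)/\R$ must be shown to define isomorphic torsors over $F$. The paper's route (Theorem~\ref{computation-of-sha}) is: first adjust both representatives to be specializable with \emph{equal} specializations in $G(k)$, not merely $\R$-equivalent ones --- this is Lemma~\ref{equal-in-gk}, and it is here, via rational functions on $\bar U\cong\P^1_k$ with prescribed zeros and poles at the nodes pulled back against explicit direct $\R$-equivalences, that the hypothesis that the components are $\P^1_k$'s meeting at $k$-points is really consumed; then adjust further so the two tuples are congruent modulo the branch ideals (Lemma~\ref{equal-in-rpmt}), so that the associated torsors over $\XX$ restrict to isomorphic torsors on $\XX_k$ (Corollary~\ref{torsors closed fiber}); and finally invoke the Gille--Parimala--Suresh theorem (Proposition~\ref{GPS prop}) to descend from ``isomorphic on $\XX_k$'' to ``equal in $H^1(F,G)$''. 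Without this chain the claimed bijection is only a surjection, which is Proposition~\ref{dbl coset surj} and is strictly weaker.

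Second, you have misplaced where the good-characteristic hypothesis enters. The local factorization at the nodes (Propositions~\ref{branch factorization} and~\ref{anisotropic factorization}) and the Bruhat--Tits--Rousseau input hold for an arbitrary reductive group over $R$ in any characteristic; they do not use the bad-prime condition, and they are not part of \cite{GPS}. The hypothesis that $\cha(k)$ is not a bad prime is exactly the condition that $\mu(G)$ be \'etale, and it is used only in the final descent step: in Proposition~\ref{GPS prop} one needs proper base change for the finite multiplicative-type group $\mu(G)$ to compare $H^i(\XX,\mu(G))$ with $H^i(\XX_k,\mu(G))$ and thereby reduce to the simply connected and toral cases of \cite{GPS}. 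Attributing the hypothesis to the factorization step leaves the actual place where it is needed unaccounted for.
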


Using this, we answer an open question concerning local-global principles for torsors under semisimple simply connected linear algebraic groups.
Such principles hold in the case of global fields (by work of Eichler, Kneser, Harder, and Chernousov),
and it seemed reasonable to expect that they would hold in the semi-global case.
This was conjectured for semi-global fields over $p$-adic fields in \cite{CTPS12}; and
in that situation the conjecture has been proven in many cases (see \cite{preeti}, \cite{Hu}, \cite{PPS}, \cite{PS20}).
Here we provide the first counterexamples to such a local-global principle over a more general semi-global field (Examples~\ref{triangle ex} and~\ref{nonmono tree}), thus proving the following:

\begin{thm}
For $G$ a semisimple simply connected group over a field $k$, and $F$ a semi-global field over $k((t))$, the local-global principle for $G$-torsors over $F$ does not always hold.
In particular, there are counterexamples in which $k$ has the form $E(x,y)$ or $E((x))((y))$, where $E$ is either a number field or a field of the form $\kappa_0(u,v)$ or $\kappa_0((u))((v))$ for some field~$\kappa_0$.
\end{thm}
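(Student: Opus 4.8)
The plan is to deduce the failure of the local--global principle from the explicit computation of $\Sha(F,G)$ in Theorem~\ref{explicit Sha}, applied to a semisimple simply connected group over $k$ that is not $\R$-trivial. For the geometry, set $K=k((t))$ and $R=k[[t]]$, let $E_q$ be the Tate elliptic curve over $K$ with $\operatorname{ord}_K(q)=3$, and take $F=K(E_q)$, a one-variable function field over $K$ and hence a semi-global field over $k((t))$. The minimal regular model $\XX$ of $F$ over $R$ is then a regular projective model whose reduced closed fiber is the N\'eron $3$-gon: three copies of $\P^1_k$ meeting transversally in three distinct $k$-rational points and forming a single cycle, so that $m=1$. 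For the group, fix a central division $k$-algebra $D$ and let $G=\SL_1(\mathcal D)$ for an Azumaya $R$-algebra $\mathcal D$ reducing to $D$ (such a lift exists since $R$ is complete local with residue field $k$); this is a constant reductive $R$-group of type $A$ whose special fiber is the semisimple simply connected $k$-group $G_0=\SL_1(D)$. Because the root system $A_n$ has no bad primes, Theorem~\ref{explicit Sha} applies with no restriction on $k$ and puts $\Sha(F,G)$ in bijection with the set of uniform conjugacy classes of $(G_0(k)/\R)^{m}=G_0(k)/\R$.

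By a theorem of Voskresenskii, $G_0(k)/\R$ is canonically isomorphic to the reduced Whitehead group $\SK_1(D)$, which is abelian; its set of uniform conjugacy classes is therefore $\SK_1(D)$ itself, at worst modulo the involution $g\mapsto g^{-1}$ coming from reversing the orientation of the cycle. Either way this set has more than one element as soon as $\SK_1(D)\neq 1$, in which case $\Sha(F,G)\neq 1$: some $G$-torsor over $F$ is trivial over every completion $F_v$ at a divisorial valuation but nontrivial over $F$, which is exactly the asserted failure of the local--global principle. The theorem is thus reduced to producing, over each field $k$ of the shape listed in the statement, a central division algebra $D$ with $\SK_1(D)\neq 1$.

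This remaining step is the substance of the argument, and it is supplied by Platonov's computations of reduced Whitehead groups together with his reduction theorems relating $\SK_1$ of a division algebra over a valued field to $\SK_1$ over the completion. Concretely, one takes $D$ to be a biquaternion algebra $(a,x)\otimes(b,y)$ over $k=E(x,y)$ or over $k=E((x))((y))$, ramified along the two extra parameters $x$ and $y$, with $a,b\in E^\times$ suitably chosen (and with $\kappa_0$ of characteristic different from $2$ in the cases where $E$ is built from $\kappa_0$, which one is free to arrange). Iterating the residue maps down to $E$ and invoking Platonov's reduction theorems expresses $\SK_1(D)$ through the arithmetic of $E$ alone; since $E$ is a number field, or is of the form $\kappa_0(u,v)$ or $\kappa_0((u))((v))$, one can choose $a$ and $b$ so that the resulting invariant is nonzero (typically $\Z/2$), which both recovers Platonov's original counterexamples to the Tannaka--Artin conjecture and establishes analogues over the fields appearing in the statement. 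The single hardest point is precisely this non-vanishing of $\SK_1(D)$ over the smallest admissible fields --- above all over $E(x,y)$ with $E$ a number field, where one must control a higher cohomological invariant of the biquaternion algebra; the verifications that $\XX$ is regular with reduced closed fiber the prescribed $m$-cycle of projective lines are routine, and the passage from $\R$-equivalence to $\Sha$ is immediate from Theorem~\ref{explicit Sha}.
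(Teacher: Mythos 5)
Your proposal follows essentially the same route as the paper: a model whose reduced closed fiber is a single cycle of projective $k$-lines meeting at $k$-points (so $m=1$ in Theorem~\ref{explicit Sha}), the group $G=\SL_1(D)$ for a biquaternion algebra, Voskresenski\u{\i}'s identification $G(k)/\R\simeq\SK_1(D)$, and Platonov/Rost to make $\SK_1(D)$ nontrivial over the listed fields. (The paper's Example~\ref{triangle ex} uses the explicit model $\operatorname{Proj}(R[u,v,w]/(uvw-t(u+v+w)^3))$ rather than a Tate curve, and since $R=k[[t]]$ and $D$ is a $k$-algebra one can simply take the constant lift $D\otimes_k R$; also, no quotient by inversion enters --- the identification is with uniform conjugacy classes, which for the abelian group $\SK_1(D)$ is just $\SK_1(D)$.)

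Two points need repair. First, your claim that ``the root system $A_n$ has no bad primes, so Theorem~\ref{explicit Sha} applies with no restriction on $k$'' is false: the hypothesis of Theorem~\ref{explicit Sha} is that $\mu(G)$ be \'etale, and for $\SL_1(D)$ with $\deg D=4$ one has $\mu(G)=\mu_4$, so the condition is $\cha(k)\neq 2$. This holds for all the fields you use, but the justification as written is wrong. Second, and more substantively, Platonov's reduction theorems compute $\SK_1$ only over (iterated) complete discretely valued fields, so they settle the $E((x))((y))$ cases but not $E(x,y)$; there are no residue maps on $\SK_1$ over a rational function field that would let you ``iterate down to $E$.'' The paper bridges this gap either by Theorem~\ref{reduction to torus}(a), which constructs an explicit section $T(\kappa)/\R\to\SK_1(D)$ and a retraction $\SK_1(D_{\kappa((x))((y))})\to T(\kappa)/\R$, forcing $\SK_1(D)\twoheadrightarrow T(\kappa)/\R$ already over $\kappa(x,y)$; or by Rost's injection $\SK_1(D)\hookrightarrow H^4(k,\Z/2)$ (Proposition~\ref{Rost prop}), which requires $-1$ to be a square in $k$ and an explicit residue computation showing $(a)\cup(b)\cup(x)\cup(y)\neq 0$. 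You correctly identify this as the crux but leave it at the level of a citation that does not, by itself, cover the function-field case; one of these two arguments must be supplied.
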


In our examples, $\cd(k) \ge 4$, and $\cd(F) \ge 6$.  
For an arbitrary semisimple simply connected group $G$ over a semi-global field $F$,
it is unknown whether there exist counterexamples of smaller cohomological dimension.

Our examples are obtained by using our descriptions of $\Sha(F,G)$ given in Proposition~\ref{dbl coset surj} and Corollary~\ref{explicit Sha} as well as by relying on classical results of Platonov and of Voskresenski\v{\i}.  Building on that, we even obtain examples where $\Sha(F,G)$ is infinite.  We also show that if one considers just the discrete valuations on $F$ that are trivial on $K$, then the corresponding local-global principle 
can fail even when $K$ is $p$-adic (Example~\ref{p-adic counterex}).  For a further discussion of our examples and the general context, see Section~\ref{sect: counterex}.

\medskip

{\bf Structure of the manuscript:}
Following a discussion of background and preliminary results in Section~\ref{overview sec},  we prove local factorization results in Section~\ref{factor sec} for reductive groups in the patching setup.  Section~\ref{local triv sec} builds on those results to prove global factorization statements, from which we obtain Theorem~\ref{torsors extend} mentioned above.  In Section~\ref{lgp mon tree sec}, we use that lifting result to obtain Theorem~\ref{main},
which asserts a local-global principle under the hypothesis that the reduction graph is a ``monotonic tree'' (a notion introduced in \cite{CHHKPS}).  We also obtain extensions of that theorem to groups that need not be reductive or connected.  Afterwards, to treat cases where the monotonic tree hypothesis does not hold, we find an explicit quotient of $\Sha(F,G)$ in Section~\ref{sectionlb} in cases where the components of the closed fiber are projective lines; and we show in Section~\ref{Sha computation section} that this is
an exact computation of $\Sha(F,G)$ when those lines and their intersection points are defined over $k$.  These computations, which are given in terms of $\R$-equivalence classes in $G(k)$,
rely on the double coset presentation of $\Sha(F,G)$ arising from patching.  
The descriptions of $\Sha(F,G)$ given in Sections~\ref{sectionlb} and~\ref{Sha computation section} are then used in Section~\ref{sect: counterex} to obtain our examples of semisimple simply connected groups $G$ for which $\Sha(F,G) \ne 1$.  
We conclude with an Appendix that establishes a ``specialization map'' $G(K)/\R \to G(k)/\R$ on $\R$-equivalence classes that extends work of Gille (see~\cite{GilleTAMS}) and is used especially in Sections~\ref{sectionlb} and~\ref{Sha computation section}.

\medskip

{\bf Acknowledgments:} The authors thank Brian Conrad and Philippe Gille for helpful discussions.

\section{Patching and local-global principles} \label{overview sec}

In this section, we begin by recalling the patching setup for semi-global fields.  Afterwards, we discuss local-global principles over such fields and their models, and we recall the relationships among obstruction sets to such principles.

\subsection{The patching framework} \label{patching subsec}

Let $K$ be a complete discretely valued field, with valuation ring $\basering$, uniformizing parameter $t \in {\basering}$, and residue field $k$.
A \textit{semi-global field over $K$} (or \textit{over $\basering$}) is a one-variable function field $F$ over $K$; i.e., a finitely generated extension of $K$ of transcendence degree one in which $K$ is algebraically closed.  A {\em normal model} of~$F$ is an integral $\basering$-scheme $\XX$ with function field~$F$ that is flat and projective over~$\basering$ of relative dimension one, and that is normal as a scheme. Its closed fiber is denoted by $\XX_k$.  If $\XX$ is a normal model that is regular as a scheme, we say that $\XX$ is a {\em regular model} of $F$. Such a regular model exists by the main theorem in \cite{Lip78} (see
also \cite[Theorem 0BGP]{stacks}).
Moreover, by \cite[page 193]{Lip75}, there exists a regular model $\XX$ for which the reduced closed fiber ${\XX}_k^{\red}$  is a union of regular curves, with normal crossings.
We refer to such a regular model as a (strict) {\em normal crossings model of~$F$}.

Let $\XX$ be a normal model of a semi-global field $F$.  Let $U$ be an affine open subset of ${\XX}_k^{\red}$, and assume that $U$ is irreducible (or equivalently, is contained in an irreducible component of ${\XX}_k^{\red}$).  We consider the ring $R_{U} \subset F$
consisting of the rational functions on~${\XX}$ that are regular at all points of $U$.
The $t$-adic completion $\wh{R}_{U}$ of $R_{U}$ is an $I$-adically complete domain, where
$I$ is the radical  of the ideal generated by $t$ in $\wh{R}_{U}$.
The quotient
$\wh{R}_{U}/I $ equals $k[U]$, the ring of regular functions on the integral affine curve $U$.  We write $F_U$ for the field of fractions of $\wh R_U$.  Also, for
a (not necessarily closed)
 point $P$ of ${\XX}_k^{\red}$, we let~$F_{P}$ denote the field of fractions of the complete local ring $\wh R_P:=\wh{\mc O}_{\XX,P}$ of $\XX$ at $P$.  Note that if $P \in U$ then $F_P$ contains $F_U$.

Finally, for a closed point $P \in {\XX}_k^{\red}$, we consider the height one primes $\wp$ of the complete local ring $\wh R_P$ that contain the uniformizing parameter $t \in R$.
For each such $\wp$, we let $R_\wp$ be the localization of $\wh R_P$ at $\wp$, and we let $\wh R_\wp$ be its $t$-adic (or equivalently, its $\wp$-adic) completion; this is a complete discrete valuation ring.
We write $F_\wp$ for the fraction field of $\wh R_\wp$, and $k_\wp$ for the residue field.
We call such a $\wp$ a {\it branch at} $P$ {\it on} ${\XX}_k^{\red}$ (or {\it on} $U$, if $P \in \ov U$ for $U \subset {\XX}_k^{\red}$ as above if $\wp$ contains the ideal of $\wh R_P$ defining $\ov U$).

We will often choose sets $\PP$ and $\UU$ as follows:

\smallskip

{\narrower\narrower\noindent
We let $\PP$ be a finite nonempty set of closed points of ${\XX}_k^{\red}$
that contains all the
points of ${\XX}_k^{\red}$ at which ${\XX}_k^{\red}$ is not unibranched.
(If $\XX$ is a normal crossings model, these are the points at which ${\XX}_k^{\red}$ is singular; i.e., not regular.) 
We let $\UU$ be the set of irreducible components of the affine curve ${\XX}_k^{\red} \smallsetminus \PP$. \par}

\smallskip 

\noindent In this situation, the
fields of the form $F_P$, $F_U$ for $P\in \PP$, $U\in \UU$ are called {\it patches} for $F$ and the rings $\wh R_P, \wh R_U$ are called {\it patches} on $\XX$.
Let $\BB$ denote the set of all branches at points $P\in \PP$ (each of which lies on some $U\in \UU$). The fields $F_\wp$ (resp., rings $\wh R_\wp$) are referred to as the {\it overlaps} of the corresponding patches $F_P, F_U$ (resp., $\wh R_P, \wh R_U$).  For a branch $\wp$ at $P$ on $U$, 
there is an inclusion $F_{P} \subset F_\wp$ induced by the inclusion $\wh{R}_P\subset \wh{R}_\wp$, and also an inclusion $F_{U} \subset  F_\wp$ that is induced by the inclusion $\wh{R}_{U} \hookrightarrow \wh{R}_\wp$. (See \cite{admis}, beginning of Section~4.)
We also have an associated {\em reduction graph} $\Gamma$, a bipartite graph whose vertices correspond to the elements of $\PP \cup \UU$, and whose edges correspond to the elements of $\BB$; here an edge $\wp \in \BB$ connects vertices $P \in \PP$ and $U \in \UU$ if $\wp$ is a branch at $P$ on $U$.  (See \cite[Section~6]{HHK15} for more details.)

Given sets $\PP, \UU, \BB$ as above, we may consider a {\em refinement} $\PP', \UU', \BB'$, by choosing a finite set of closed points $\PP'$ of ${\XX}_k^{\red}$ that contains $\PP$.  Thus the new points of $\PP'$ (i.e., those that are not in $\PP$) are all regular points of ${\XX}_k^{\red}$.  The set $\BB'$ consists of $\BB$ together with one additional element $\wp'$ for each new element $P'$ of $\PP'$ (viz., the unique branch at the regular point $P'$ of ${\XX}_k^{\red}$).  The set $\UU'$ is in bijection with the set $\UU$; viz., for each $U \in \UU$, the corresponding element of $\UU'$ is obtained by deleting from $U$ the finitely many new points of $\PP'$ that lie on $U$.

\subsection{Local-global principles over semi-global fields} \label{lgp sgp}
In this manuscript we will consider smooth affine group schemes $G$ over a ring $A$, or over a scheme $Z$; we will require these to be of finite type.  In the case that the ring $A$ is a field $F$, we will speak of a {\em linear algebraic group} $G$; and again we will require $G$ to be smooth in this manuscript.

If $G$ is a linear algebraic group over a field $F$, then the isomorphism classes of $G$-torsors over $F$ are in natural bijection with the pointed set $H^1(F,G)$ in Galois cohomology.
Given a family $\{F_\omega\}_{\omega \in \Omega}$ of overfields of $F$, we can consider the local-global principle for $G$-torsors over $F$ with respect to these overfields; this asserts the triviality of a $G$-torsor over $F$ provided that it becomes trivial over each $F_\omega$.  As in the classical case of local-global principles over number fields, there is then the associated obstruction set
\[\Sha_\Omega(F, G) := {\rm ker}\bigl(H^1(F, G) \to \prod_{\omega  \in \Omega} H^1(F_\omega, G)\bigr)\]
to the validity of this local-global principle.
In particular, if $\Omega$ is a set of discrete valuations on $F$, then for the overfields we take the completions $F_v$ of $F$ with respect to $v$, for $v \in \Omega$.

In the case that $F$ is a semi-global field, we may let $\Omega$ be the set of divisorial discrete valuations on $F$; i.e., the discrete valuations associated to prime divisors on regular models of $F$.  In this situation we simply write
\[\Sha(F, G): = {\rm ker}\bigl(H^1(F, G) \to \prod_{v  \in \Omega} H^1(F_v, G)\bigr)\]
for $\Sha_\Omega(F, G)$.
If instead we choose a normal crossings model $\XX$ of $F$ over the valuation ring $R$ of the underlying complete discretely valued field $K$, and if we
consider the set of overfields $F_P$, where $P$ ranges over all the points of the reduced closed fiber $X = \XX_k^{\red}$ of $\XX$, then we obtain the obstruction
\[\Sha_X(F, G) := {\rm ker}\bigl(H^1(F, G) \to \prod_{P \in X} H^1(F_P, G)\bigr).\]
Finally, with $\PP$ and $\UU$ as in Section~\ref{patching subsec}, we may consider the obstruction
\[\Sha_\PP(F, G) := {\rm ker}\bigl(H^1(F, G) \to \prod_{\zeta \in \PP \cup \UU} H^1(F_\zeta, G)\bigr)\]
to the local-global principle with respect to the set of overfields $F_P$ and $F_U$, ranging over $P \in \PP$ and $U \in \UU$.  (Since the set $\PP$ determines $\UU$, the notation mentions just $\PP$.)

Recall that a connected linear algebraic group $G$ over an algebraically closed field $k$ is {\em reductive} if its unipotent radical (the maximal smooth connected unipotent normal subgroup) is trivial.  More generally, by a {\em reductive group} over a ring $A$ or a scheme $Z$ we will mean a connected smooth affine group scheme $G$ over $A$ (resp., $Z$) such that $G$ is reductive in the previous sense over every geometric point.

For convenience, we recall the following:

\begin{prop} \label{sha containments}
Let $F$ be a semi-global field, let $\XX$ be a normal model of $F$, and let $X = {\XX}_k^{\red}$ be its reduced closed fiber.  Let $G$ be a linear algebraic group over $F$.
\begin{enumerate}
\renewcommand{\theenumi}{\alph{enumi}}
\renewcommand{\labelenumi}{(\alph{enumi})}
\item \label{ShaP ShaX}
If $\PP\subset X$ is as above then $\Sha_\PP(F, G) \subseteq \Sha_X(F, G)$.
\item \label{ShaP containments}
If $\PP\subset X$ is as above and $\PP' \subset X$ is
a finite set of closed points that contains $\PP$, then
$\Sha_\PP(F, G) \subseteq \Sha_{\PP'}(F,G)$.
\item \label{ShaX union}
$\bigcup_{\PP}  \Sha_\PP(F, G) =  \Sha_X(F, G)$, where the union is taken over all subsets $\PP$ as above.
\item \label{ShaX subset Sha}
If $\XX$ is a regular model of $F$ then $\Sha_X(F, G) \subseteq \Sha(F, G)$.
\item \label{ShaX equals Sha}
In part~(\ref{ShaX subset Sha}), the containment is an equality if $G$ is reductive over the model $\XX$\!\!, or if $G$ is a finite linear algebraic group over $F$.
\end{enumerate}
\end{prop}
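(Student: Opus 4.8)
The statement to prove is Proposition~\ref{sha containments}, in five parts; the core issues are in (\ref{ShaX subset Sha}) and (\ref{ShaX equals Sha}), while (\ref{ShaP ShaX})--(\ref{ShaX union}) are essentially formal. For part~(\ref{ShaP ShaX}), note that if a $G$-torsor becomes trivial over every $F_P$ with $P$ ranging over all of $X$, then in particular it is trivial over $F_P$ for the closed points $P\in\PP$, and also over $F_U$ for $U\in\UU$ since the generic point $\eta_U$ of $U$ lies in $X$ and $F_{\eta_U}=F_U$ (or: $F_U\subseteq F_{\eta_U}$, so triviality over $F_{\eta_U}$ forces triviality over $F_U$ by functoriality). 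Hence $\Sha_X\subseteq\Sha_\PP$ is false in that direction — rather $\Sha_\PP\subseteq\Sha_X$ because the index set for $\Sha_X$ is larger, so being in the kernel over all of $X$ is a stronger condition. I would phrase it as: $\Sha_\PP(F,G)$ is the kernel over the smaller family $\{F_\zeta:\zeta\in\PP\cup\UU\}$, $\Sha_X(F,G)$ the kernel over the larger family $\{F_P:P\in X\}$, and since every $F_\zeta$ (for $\zeta\in\PP\cup\UU$) is among the $F_P$ up to the inclusion $F_U\subseteq F_{\eta_U}$ noted above, a class killed over all of $X$ is killed over $\PP\cup\UU$. Part~(\ref{ShaP containments}) is the same bookkeeping: enlarging $\PP$ to $\PP'$ enlarges the family of overfields (the new points $P'$ contribute $F_{P'}$, and the components only shrink, with $F_{U'}\subseteq F_{U}$... ), so again $\Sha_\PP\subseteq\Sha_{\PP'}$; here one should use a \emph{refinement} as defined in Section~\ref{patching subsec} and check the (routine) compatibility that $F_{U'}\subseteq F_{U}$ and the new branch fields sit appropriately, so the kernel can only grow.

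**Part (\ref{ShaX union}).** The inclusion $\bigcup_\PP\Sha_\PP\subseteq\Sha_X$ is immediate from (\ref{ShaP ShaX}). For the reverse, take $\alpha\in\Sha_X(F,G)$, i.e.\ $\alpha$ dies over $F_P$ for every point $P$ of $X$. I want a single finite $\PP$ (containing the non-unibranched points, as required) over which $\alpha$ already lies in $\Sha_\PP$. The point is that $\alpha$ comes from $H^1(F,G)$ with $G$ of finite type, hence $\alpha$ is represented by a cocycle over some finitely generated subring, so its restriction is controlled by finitely much data; concretely, over the generic point $\eta_U$ of each component the class $\alpha|_{F_{\eta_U}}$ is trivial, and by a standard spreading-out/limit argument the triviality of $\alpha$ over $F_{\eta_U}$ propagates to $F_U$ for $U$ obtained by removing finitely many closed points from the component. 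Collecting these finitely many closed points over all (finitely many) components, and throwing in the finitely many non-unibranched points, gives the desired $\PP$; then $\alpha$ dies over each $F_P$ ($P\in\PP$, as $P\in X$) and over each $F_U$ ($U\in\UU$) by construction, so $\alpha\in\Sha_\PP(F,G)$. The main technical care here is the limit argument identifying $H^1(F_U,G)=\varinjlim H^1(\wh R_{U'},G)$-type statements — but this is a known feature of the patching setup and I would cite the relevant facts from \cite{HHK15}, \cite{admis} rather than reprove them.

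**Parts (\ref{ShaX subset Sha}) and (\ref{ShaX equals Sha}).** For (\ref{ShaX subset Sha}): $\XX$ regular means every divisorial discrete valuation $v$ on $F$ either corresponds to a prime divisor on $\XX$ itself, or on some blowup; in the first case $v$ is the valuation at the generic point $\eta_Z$ of an irreducible component $Z$ of $\XX_k^{\red}$ (if $v$ is trivial on $K$... more precisely, $v$ is centered at a height-one point $P$ of $\XX$, and $F_v$, the completion of $F$ at $v$, receives a map from $F_P=\operatorname{Frac}\wh{\mc O}_{\XX,P}$). So $F_P\subseteq F_v$ for that $P\in X$, whence a class trivial over all $F_P$ ($P\in X$) is in particular trivial over $F_P$ and hence over $F_v$; the harder point is that an \emph{arbitrary} divisorial valuation, coming from a blowup $\XX'\to\XX$, still has its completion $F_v$ receiving a map from some $F_{P'}$ with $P'\in\XX'_k^{\red}$, and one needs $\Sha_X(F,G)$ computed on $\XX$ to control $\Sha_{X'}(F,G)$ on $\XX'$. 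This is where I'd invoke that $\Sha_X(F,G)$ is independent of the choice of normal crossings model (a known invariance in the patching literature — it follows from (\ref{ShaX union}) plus the fact that any two models are dominated by a common one, together with the fact that $\Sha_\PP$ is stable under refinement from part~(\ref{ShaP containments})), reducing to the case handled above. For (\ref{ShaX equals Sha}), the remaining inclusion is $\Sha(F,G)\subseteq\Sha_X(F,G)$: a class $\alpha$ trivial over every $F_v$ must be shown trivial over every $F_P$, $P\in X$. For $P$ the generic point of a component this is the $v=$ the associated divisorial valuation case and is immediate. For $P$ a closed point, the key input is that $F_P$ embeds into $F_\wp$ for the branches $\wp$ at $P$, and each $F_\wp$ is a complete discretely valued field whose valuation restricts to a divisorial valuation $v$ on $F$ — \emph{but} $F_\wp$ need not equal $F_v$, only receive a map from it after completion, so one needs a descent: triviality over $F_v$, hence over $F_\wp$, for all branches, combined with triviality over the $F_{\eta_U}$, should force triviality over $F_P$. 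This is exactly the kind of local statement where reductiveness of $G$ over $\wh{\mc O}_{\XX,P}$ (or finiteness of $G$) is used: for $G$ reductive over the 2-dimensional regular local ring one has a patching/purity-type result (of the form $H^1(\wh R_P,G)\hookrightarrow\prod_\wp H^1(\wh R_\wp,G)$, or the comparison between $H^1(F_P,G)$ and the $H^1(F_\wp,G)$'s) that lets one conclude $\alpha|_{F_P}=1$. I expect \textbf{this last local descent to be the main obstacle}: it is not formal, it genuinely uses the structure theory of reductive group schemes over regular local rings (or, in the finite case, a separate argument via étale cohomology and proper/smooth base change), and it is the reason the hypothesis in~(\ref{ShaX equals Sha}) cannot simply be dropped. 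I would isolate it as the crux and cite or adapt the appropriate result (in the spirit of \cite{HHK15} together with results on reductive group schemes) to finish.
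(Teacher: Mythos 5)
The paper's own proof of this proposition is purely a matter of citation: parts (\ref{ShaP ShaX}) and (\ref{ShaP containments}) are from the beginning of \cite[Section~5]{HHK15}, part (\ref{ShaX union}) is \cite[Corollary~5.9]{HHK15}, part (\ref{ShaX subset Sha}) is \cite[Proposition~8.2]{HHK15}, and part (\ref{ShaX equals Sha}) combines \cite[Theorem~8.10(ii)]{HHK15} with the proof of \cite[Lemma~8.6]{HHK15} and \cite[Proposition~8.4]{HHK15}. Your decision to sketch the underlying arguments instead is legitimate, and your identification of the crux of (\ref{ShaX equals Sha}) --- the non-formal local statement that $\Sha(F_P,G)$ is trivial for closed points $P$, which is where reductivity over $\XX$ or finiteness of $G$ enters --- is exactly right.

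However, your justifications of (\ref{ShaP ShaX}) and (\ref{ShaP containments}) have the field inclusions, and hence the logic, running the wrong way. The mechanism behind $\Sha_\PP(F,G)\subseteq\Sha_X(F,G)$ is that every $F_P$ with $P\in X$ \emph{contains} one of the fields in the $\PP$-family: if $P\in U$ then $F_U\subseteq F_P$ (as recorded in Section~\ref{patching subsec}), so triviality over $F_U$ pushes forward to triviality over $F_P$. What you wrote instead --- that the $F_\zeta$ are ``among the $F_P$'' so a class killed over all of $X$ is killed over $\PP\cup\UU$ --- argues the reverse containment $\Sha_X\subseteq\Sha_\PP$, and even that argument fails, since triviality over $F_{\eta_U}$ does not descend to the subfield $F_U$. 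Likewise, ``the index set for $\Sha_X$ is larger, so being in the kernel over all of $X$ is a stronger condition'' would make $\Sha_X$ the \emph{smaller} set, contradicting the containment you are asserting. In (\ref{ShaP containments}) you wrote $F_{U'}\subseteq F_U$ for $U'\subseteq U$; the true inclusion is $F_U\subseteq F_{U'}$ (fewer points at which regularity is required means a larger ring $R_{U'}\supseteq R_U$), and it is precisely this direction that lets triviality over $F_U$ imply triviality over $F_{U'}$ and over $F_{P'}$ for the new points $P'\in U$. These are not typos one can wave away: the entire content of parts (\ref{ShaP ShaX})--(\ref{ShaP containments}) is the direction of these inclusions, so as written the argument does not establish the claims. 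The sketches of (\ref{ShaX union})--(\ref{ShaX equals Sha}) are serviceable modulo the citations you indicate, though for (\ref{ShaX subset Sha}) the more direct route (and the one in \cite{HHK15}) is that every divisorial valuation $v$ has a center $P$ on the regular model $\XX$ with $F_P\subseteq F_v$, rather than passing through model-independence of $\Sha_X$.
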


\begin{proof}
Parts~(\ref{ShaP ShaX}) and~(\ref{ShaP containments}) are shown at the beginning of  \cite[Section~5]{HHK15}; part~(\ref{ShaX union}) is shown in \cite[Corollary~5.9]{HHK15}; and part~(\ref{ShaX subset Sha}) is shown in \cite[Proposition~8.2]{HHK15}.  The first 
assertion in~(\ref{ShaX equals Sha}) is shown in \cite[Theorem~8.10(ii)]{HHK15}.  Concerning the  second assertion in (\ref{ShaX equals Sha}), the proof of \cite[Lemma~8.6]{HHK15} shows that 
$\Sha(F_P,G)$ is trivial for every closed point $P$ on $\XX$.  (The statement of that lemma assumed that $G$ is a finite constant group, but that additional hypothesis was not used in the proof.)
The asserted equality then follows from \cite[Proposition~8.4]{HHK15}.
\end{proof}

See also Theorem~\ref{equalshas} below, where a stronger conclusion is shown under additional hypotheses. 

It will be convenient to consider all patches of a given type and all overlaps together. For this purpose, we define $F$-algebras:
\begin{equation} \label{patch fields}
	F_\UU = \prod_{U \in \UU} F_U, \ \
	F_\PP = \prod_{P \in \PP} F_P, \ \
	F_\BB = \prod_{\wp \in \BB} F_\wp,
\end{equation}
and rings
\begin{equation} \label{patch rings}
	\wh R_\UU = \prod_{U \in \UU} \wh R_U, \ \
	\wh R_\PP = \prod_{P \in \PP} \wh R_P, \ \
	\wh R_\BB = \prod_{\wp \in \BB} \wh R_\wp.
\end{equation}
We note that we have diagonal inclusions $F \subset F_\UU,\! F_\PP \subset F_\BB$ and $\wh R_\UU,\! \wh R_\PP \subset \wh R_\BB$, 
and we can write
\begin{equation} \label{Sha_P short}
\Sha_\PP(F,G) = \ker\bigl(H^1(F, G) \to H^1(F_\UU, G) \times H^1(F_\PP, G)\bigr).
\end{equation}
By Corollary 3.6 of \cite{HHK15}, this has a double coset description:
\begin{equation} \label{Sha_P dbl coset}
\Sha_\PP(F,G) \simeq G(F_\UU)\backslash G(F_\BB)/G(F_\PP).
\end{equation}
Namely, this isomorphism of pointed sets sends each $G$-torsor $\xi \in \Sha_\PP(F,G)$ to the double coset corresponding to the induced patching problem (see the proof of \cite[Theorem~2.4]{HHK15}).  
To see this, choose trivializations $\xi_{F_U} \simeq G_{F_U}$ and $\xi_{F_P} \simeq G_{F_P}$ for each $U \in \UU$ and $P \in \PP$, where $\xi_{F_U}, \xi_{F_P}$ are the base changes of $\xi$ to the fields $F_U, F_P$.  Then
for each branch $\wp \in \BB$ on $U \in \UU$ at $P \in \PP$, we obtain an induced isomorphism $G_{F_U} \otimes_{F_U} F_\wp \to G_{F_P} \otimes_{F_P} F_\wp$ of trivial torsors, given by left multiplication by some element $g_\wp \in G(F_\wp)$.  The double coset associated to $\xi$ is then the one represented by $(g_\wp)_{\wp \in \BB}$.

We can reinterpret Proposition~\ref{sha containments} in these terms.
If $\PP',\UU',\BB'$ is a refinement of $\PP,\UU,\BB$, then the containment
$\Sha_\PP(F,G) \subseteq \Sha_{\PP'}(F,G)$ corresponds to the inclusion of the corresponding double coset spaces.  This inclusion takes the class of $(g_\wp)_{\wp \in \BB} \in G(F_\BB)$ to the class of $(g_\wp')_{\wp \in \BB'} \in G(F_{\BB'})$, where $g_\wp' = g_\wp$ if $\wp \in \BB$ and where $g_\wp'=1$ otherwise, by the above explicit description of the identification in (\ref{Sha_P dbl coset}).  As $\PP$ varies, these double coset spaces form a direct system.  If $\XX$ is a regular model, then the direct limit is identified with $\Sha_X(F,G)$, and also with $\Sha(F,G)$ if in addition $G$ is reductive over $\XX$.  See also the comment after Theorem~\ref{equalshas} below.

In the sequel, it will be useful to consider the spectra of the fields and rings above.  The inclusions $F \subset F_\UU,\! F_\PP \subset F_\BB$ and $\wh R_\UU,\! \wh R_\PP \subset \wh R_\BB$
yield commutative diagrams
\begin{equation} \label{patch diagrams}
\Spec F_\bullet = \Pdiagram{\Spec F\!{}}\ , \ \ \ \XX_\bullet = \varPdiagram{\XX}{\Spec \wh R}\,.
\end{equation}

Passing to the reduced closed fiber, we also obtain the commutative diagram
\begin{equation} \label{closed fiber diagram}
	(\XX_k^{\red})_\bullet = \customdiagram{\Spec \wh R_\BB/\sqrt{ (t)}}{\Spec \wh R_\UU/\sqrt{(t)}}{\Spec \wh R_\PP/\sqrt{(t)}}{\XX_k^{\red}}\,,
\end{equation}
where we write $\sqrt{(t)}$ for the radical of the ideal $(t)$ in the respective ring.
The morphisms $\Spec\,F \to \XX$ and $\XX_k^{\red} \to \XX$ are each compatible  with the corresponding morphisms between the schemes in (\ref{patch diagrams}) and (\ref{closed fiber diagram})
associated to $\BB, \UU, \PP$; and so we also have 
morphisms of diagrams:
\begin{equation} \label{diagram diagram}
\xymatrix{
& \XX_\bullet\\
\Spec F_\bullet \ar[ur]& &
(\XX_k^{\red})_\bullet \ar[ul]\\
}
\end{equation}
We will use this viewpoint in the next subsection.

\subsection{Local-global principles over models}\label{cosets}

We would like to extend definition~(\ref{Sha_P short}) and equation~(\ref{Sha_P dbl coset})
from the situation of groups over a semi-global field $F$ to groups over a model $\XX$ of $F$.  The latter will be done in Corollaries~\ref{model double coset} and~\ref{closed fiber double coset}.  In order to extend~(\ref{Sha_P short}), we will first consider a more abstract framework for local-global principles.

For a scheme $Y$, a group scheme $G$ over $Y$, and a $Y$-scheme $Z$, let
$\tors_G(Z)$ be the category of (right) $G$-torsors over $Z$. Note that for
$Z \to Z'$ a morphism of $Y$-schemes, there is a functor $\tors_G(Z') \to
\tors_G(Z)$ defined by base change (well defined up to natural
isomorphism). We will abuse notation in the case that $Z = \Spec A$, and
write $\tors_G(A)$ for $\tors_G(Z) = \tors_G(\Spec A)$. Note that $\tors_G(\coprod Z_i)$ is naturally equivalent to the product of categories $\prod \tors_G(Z_i)$, and we consider this as an identification.

Recall that if we are given categories $C_0, C_1, C_2$, and functors
$\iota_1: C_1 \to C_0$, and $\iota_2: C_2 \to C_0$, then we define the
2-fiber product of categories $C_1 \times_{C_0} C_2$, as follows: The
objects of $C_1 \times_{C_0} C_2$ are triples $(x_1, x_2, \phi)$, where
$x_i$ is an object in $C_i$, and $\phi: \iota_1 x_1 \to \iota_2 x_2$ is an
isomorphism. Morphisms $(x_1, x_2, \phi) \to (x_1', x_2', \phi')$ are
defined to be pairs of morphisms $(f_1, f_2)$ with $f_i : x_i \to x_i'$
such that we have a commutative diagram:
\[\xymatrix{
\iota_1 x_1 \ar[r]^{\iota_1 f_1} \ar[d]_{\phi} & \iota_1 x_1'
\ar[d]^{\phi'} \\
\iota_2 x_2 \ar[r]_{\iota_2 f_2} & \iota_2 x_2'.
}\]

If we have a commutative diagram of schemes
$$\mdiagram{Y}$$
and a group scheme $G$ over $Y$, base change induces a natural functor
\begin{align*}
	\tors_G(Y) &\to \tors_G(Y_1) \times_{\tors_G(Y_0)} \tors_G(Y_2) \\
	P &\mapsto (P_{Y_1}, P_{Y_2}, 1),
\end{align*}
where $1$ stands for the natural
identification $(P_{Y_1})_{Y_0} = (P_{Y_2})_{Y_0}$, coming from
commutativity of the diagram of schemes. We will refer to this functor as
the one {\it induced by base change}.

\begin{defn}\label{def_sha}
Given a commutative diagram of schemes
\[Y_\bullet = \diagram{Y}\]
and a smooth affine group scheme $G$ over $Y$, we let 
\[\Sha(Y_\bullet, G) =
\ker\bigl(H^1(Y, G) \to H^1(Y_1, G) \times H^1(Y_2, G)\bigr).\]
\end{defn}

Here $H^1(Z,G)$ denotes the first \'etale cohomology set of $G$ over a scheme $Z$; this classifies isomorphism classes of $G$-torsors over $Z$.

As a special case of the definition, if we take $Y_\bullet = \Spec F_\bullet$, where
$F$ and $\Spec F_\bullet$ are as in Equation~(\ref{patch diagrams}), then
\[\Sha(\Spec F_\bullet, G) = \Sha_{\PP}(F,G) = G(F_\UU)\backslash G(F_\BB)/G(F_\PP).\]

It will be useful to give such a double coset description of $\Sha(Y_\bullet, G)$ more generally, in order to extend equation~(\ref{Sha_P dbl coset}) from fields to schemes.
Giving such a description is possible whenever base change induces an equivalence of categories for torsors as in the previous section:

\begin{prop} \label{double coset lemma}
Given a commutative diagram of schemes $$Y_\bullet = \diagram{Y}$$ and a smooth affine group scheme $G$ over $Y$,
suppose that the functor
\[\tors_G(Y) \to \tors_G(Y_1) \times_{\tors_G(Y_0)} \tors_G(Y_2) \]
induced by base change is an equivalence. Then we have an identification
\(\Sha(Y_\bullet, G) = G(Y_1) \backslash G(Y_0) / G(Y_2).\)
\end{prop}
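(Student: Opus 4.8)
The plan is to translate the kernel $\Sha(Y_\bullet,G)$ into a statement about the $2$-fiber product of torsor categories and then read off a double coset description by a direct computation, in parallel with the field case treated in \cite[Corollary~3.6]{HHK15}. By definition, $\Sha(Y_\bullet,G)$ is the set of isomorphism classes of $G$-torsors $P$ over $Y$ for which $P_{Y_1}$ and $P_{Y_2}$ are trivial. The assumed equivalence $\tors_G(Y)\simeq\tors_G(Y_1)\times_{\tors_G(Y_0)}\tors_G(Y_2)$ is in particular essentially surjective and fully faithful, hence induces a bijection on isomorphism classes; under it $\Sha(Y_\bullet,G)$ corresponds to the set of isomorphism classes of objects $(P_1,P_2,\phi)$ of the $2$-fiber product with $P_1$ and $P_2$ trivial. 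Every such object is isomorphic to one of the form $(G_{Y_1},G_{Y_2},\phi_g)$ where, by the standard identification of the automorphism group of the trivial $G$-torsor over a scheme $Z$ with $G(Z)$, the gluing $\phi_g$ is translation by a uniquely determined $g\in G(Y_0)$; here the identity element of $G(Y_0)$ corresponds to the class of the trivial torsor $G_Y$, the base point of $\Sha(Y_\bullet,G)$.

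The heart of the matter is then to decide when two elements $g,g'\in G(Y_0)$ yield isomorphic objects $(G_{Y_1},G_{Y_2},\phi_g)$ and $(G_{Y_1},G_{Y_2},\phi_{g'})$. Unravelling the definition of a morphism in the $2$-fiber product, such an isomorphism amounts to a pair of automorphisms of the trivial torsors $G_{Y_1}$ and $G_{Y_2}$, i.e.\ a pair $(a_1,a_2)\in G(Y_1)\times G(Y_2)$ acting by translation, subject to the commutativity of the square relating the gluings $\phi_g$ and $\phi_{g'}$. Writing $\bar a_1,\bar a_2$ for the images of $a_1,a_2$ in $G(Y_0)$ under the structure maps of the diagram, that square commutes precisely when an identity of the shape $g'\bar a_1 = \bar a_2 g$ holds in $G(Y_0)$. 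Hence $g$ and $g'$ represent the same class if and only if they lie in a common double coset for the two-sided action of $G(Y_1)$ and $G(Y_2)$ on $G(Y_0)$ through these structure maps, and this yields the desired identification of pointed sets $\Sha(Y_\bullet,G)=G(Y_1)\backslash G(Y_0)/G(Y_2)$.

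I expect the only genuine obstacle to be bookkeeping rather than mathematics: keeping the conventions for automorphisms of right $G$-torsors (left versus right translation) and the direction of the gluing isomorphism $\phi$ consistent throughout, so that the resulting double coset lands on the side matching $G(Y_1)\backslash G(Y_0)/G(Y_2)$ exactly, and checking that the bijection is pointed. None of this is deep, and as a consistency check it must reproduce, for $Y_\bullet=\Spec F_\bullet$, the description $\Sha(\Spec F_\bullet,G)=G(F_\UU)\backslash G(F_\BB)/G(F_\PP)$ recalled above.
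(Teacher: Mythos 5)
Your proposal is correct and follows essentially the same route as the paper: restrict via the assumed equivalence to the full subcategory of objects $(G_{Y_1},G_{Y_2},\phi)$ with both components trivial, identify the gluing $\phi$ with an element $g\in G(Y_0)$ via automorphisms of trivial torsors, and unwind the isomorphism condition in the $2$-fiber product to the double coset relation (the paper writes it as $g'=g_1gg_2^{-1}$, matching your $g'\bar a_1=\bar a_2 g$ up to the left/right conventions you flag). No gaps.
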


\begin{proof}
Let $\ttors(Y_\bullet, G)$ be the full subcategory of $\tors_G(Y_1) \times_{\tors_G(Y_0)}
\tors_G(Y_2)$ consisting of objects of the form $(G_{Y_1}, G_{Y_2}, \phi)$,
for some $\phi$ (here, $G_{Y_i}$ is considered as the trivial torsor over
itself). If $P/Y$ is a $G$-torsor, then by definition, $P_{Y_1}$ and
$P_{Y_2}$ are trivial torsors if and only if the image of $P$ under the
functor induced by base change is isomorphic to an object of $\ttors(Y_\bullet, G)$. Since
base change induces an equivalence of categories, it follows that $\ttors(Y_\bullet, G)$
is equivalent to the essential image of those torsors that are trivial
when restricted to both $Y_1$ and $Y_2$, i.e., to the torsors classified by the elements of $\Sha(Y_\bullet, G)\subseteq H^1(Y,G)$.

It remains to show that we have a bijection between isomorphism classes of
objects in $\ttors(Y_\bullet, G)$, and elements of the double coset space $G(Y_1)
\backslash G(Y_0) /G(Y_2)$.
For this, first note that the automorphisms of right $G_{Y_i}$-torsors may be
identified with elements of $G(Y_i)$, acting via left multiplication.
We define our bijection by mapping $(G_{Y_1},
G_{Y_2}, \phi)$ to the double coset $G(Y_1) g G(Y_2)$, where $\phi$ acts by
left multiplication by $g \in G(Y_0)$. We note that
$(G_{Y_1}, G_{Y_2}, \phi) \cong (G_{Y_1}, G_{Y_2}, \phi')$  if and only if
we can find morphisms $\psi_i : G_{Y_i} \to G_{Y_i}$  of $G_{Y_i}$-torsors (for $i=1,2$)
such that the diagram
\[\xymatrix{G_{Y_0} \ar[r]^{\phi} \ar[d]_{(\psi_1)_{Y_0}} & G_{Y_0}
\ar[d]^{(\psi_2)_{Y_0}} \\
G_{Y_0} \ar[r]_{\phi'} \ar[r] & G_{Y_0},}\]
commutes. But if $\phi, \phi'$ are induced by $g, g' \in
G(Y_0)$, respectively, and each
$\psi_i$ is induced by $g_i \in G(Y_i)$, then the diagram reads:
\[ g' = g_1 g g_2^{-1}. \]
This shows that the objects are isomorphic if and only if $g$ and $g'$ are
in the same double coset, as claimed.
\end{proof}

Coming back to the concrete situation of Sections~\ref{patching subsec} and~\ref{lgp sgp}, we then have the following:

\begin{cor}\label{model double coset}
Let $\XX$ be a normal model of a semi-global field~$F$, and let $\XX_\bullet$ be as in diagram~(\ref{patch diagrams}). Then for every smooth affine group scheme $G$  over $\XX$,
$$\Sha({\XX_\bullet},G)=G(\wh R_\UU)\backslash G(\wh R_\BB)/G(\wh R_\PP).$$
\end{cor}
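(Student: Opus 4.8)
The plan is to apply Proposition~\ref{double coset lemma} directly to the diagram $\XX_\bullet$ from~(\ref{patch diagrams}), so the entire task reduces to verifying the hypothesis of that proposition: namely, that for a smooth affine group scheme $G$ over $\XX$, base change induces an equivalence of categories
\[\tors_G(\XX) \iso \tors_G(\wh R_\UU) \times_{\tors_G(\wh R_\BB)} \tors_G(\wh R_\PP).\]
Here I am identifying $\tors_G(\wh R_\UU)$ with $\prod_{U \in \UU}\tors_G(\wh R_U)$ and similarly for $\PP$ and $\BB$, using the observation in the text that torsors over a disjoint union are the product of the torsor categories. Once this equivalence is established, Proposition~\ref{double coset lemma} yields $\Sha(\XX_\bullet, G) = G(\wh R_\UU)\backslash G(\wh R_\BB)/G(\wh R_\PP)$, which is exactly the asserted formula.

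To prove the equivalence, the first step is to reduce to the affine setting. Cover $\XX$ by the two affine opens $\Spec \wh R_{\UU}$ (or rather the finitely many $\Spec \wh R_U$) together with $\Spec\wh R_{\PP}$; their "intersection" in the relevant sense is governed by the branches, i.e.\ by $\Spec\wh R_\BB$. The key input is the patching equivalence for torsors over $\wh R$-algebras in the patching setup, which is exactly what \cite[Theorem~3.5 / Corollary~3.6]{HHK15} (invoked already in~(\ref{Sha_P dbl coset})) provides at the level of fields; the analogous statement for the rings $\wh R_U, \wh R_P, \wh R_\wp$ is established in the same framework (see \cite{HHK15}, \cite{admis}), since the ring patches $\wh R_U$, $\wh R_P$, $\wh R_\wp$ form a patching system of the same inverse-limit type: $\wh R$ is the fiber product of $\wh R_\UU$ and $\wh R_\PP$ over $\wh R_\BB$ in an appropriate completed sense, and the associated matrix (or torsor) factorization problem is solvable. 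One then feeds this into the 2-categorical descent/gluing statement: a $G$-torsor over $\XX$ is equivalent data to a $G$-torsor over each $\wh R_U$ and each $\wh R_P$ together with compatible gluing isomorphisms over the $\wh R_\wp$, which is precisely membership in the $2$-fiber product.

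The step I expect to be the main obstacle is passing from the field-level patching equivalence to the ring-level (integral model) statement, i.e.\ justifying that base change $\tors_G(\XX)\to\tors_G(\wh R_\UU)\times_{\tors_G(\wh R_\BB)}\tors_G(\wh R_\PP)$ is an equivalence rather than merely that it is so after passing to fraction fields. This requires knowing both that the factorization of elements of $G(\wh R_\BB)$ into $G(\wh R_\UU)\cdot G(\wh R_\PP)$ holds (surjectivity-type input, giving essential surjectivity of the functor) and that $\wh R \to \wh R_\UU\times\wh R_\PP$, $\wh R_\UU\times\wh R_\PP \rightrightarrows \wh R_\BB$ is an effective descent situation for affine schemes (giving full faithfulness, via $\wh R = \wh R_\UU\times_{\wh R_\BB}\wh R_\PP$ and flatness of the patches). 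For smooth affine $G$ these facts are exactly the content of the patching machinery developed in \cite{HHK09}, \cite{HHK15}; assuming those results, the proof is then just the formal application of Proposition~\ref{double coset lemma} described above, together with the bookkeeping identifying $H^1$ with isomorphism classes of torsors and the product decomposition over $\PP$, $\UU$, $\BB$.
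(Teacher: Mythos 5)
Your overall reduction is exactly the paper's: verify that base change gives an equivalence $\tors_G(\XX) \to \tors_G(\wh R_\UU) \times_{\tors_G(\wh R_\BB)} \tors_G(\wh R_\PP)$ and then apply Proposition~\ref{double coset lemma}. The paper disposes of the equivalence in one line by citing \cite[Corollary~3.1.5]{HKL}, which is a ring-level (integral model) patching theorem proved by a Tannakian argument; it is precisely the point you flag as the main obstacle, and it is not contained in \cite{HHK09}, \cite{HHK15}, or \cite{admis}, all of which establish patching only over the fields $F_U, F_P, F_\wp$.

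More importantly, your proposed justification of the equivalence contains a genuine error. You assert that essential surjectivity of the base-change functor requires the factorization $G(\wh R_\BB) = G(\wh R_\UU)\cdot G(\wh R_\PP)$. If that factorization held, the double coset space $G(\wh R_\UU)\backslash G(\wh R_\BB)/G(\wh R_\PP)$ would be a single point and the corollary would assert $\Sha(\XX_\bullet,G)=1$ for every $G$ --- which is false (take $G=\Gm$: the corollary identifies $\wh R_\UU^\times\backslash \wh R_\BB^\times/\wh R_\PP^\times$ with the kernel of $\operatorname{Pic}(\XX)\to \operatorname{Pic}(\wh R_\UU)\times\operatorname{Pic}(\wh R_\PP)$, which is typically nonzero for a projective model). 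What essential surjectivity actually requires is \emph{effectivity of the gluing}: a triple $(P_1,P_2,\phi)$ in the $2$-fiber product must descend to a torsor over $\XX$, and that descended torsor need not be trivial. Factorization is what would make it trivial, which is a different (and generally false) statement. This conflation also explains why the field-level input of \cite{HHK09} does not transport to the rings: over fields, solvability of all vector-space patching problems is equivalent to $\GL_n$-factorization because every module over a field is free, whereas over the rings $\wh R_U,\wh R_P$ the glued module need only be projective, so the equivalence of categories holds even though factorization fails. Supplying that ring-level descent statement is the actual content of \cite{HKL}, and without citing it (or reproving it) your argument is incomplete.
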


\begin{proof}
By \cite[Corollary~3.1.5]{HKL}, base change
induces an equivalence of categories
\[\tors_G(\XX) \to \tors_G(\wh R_\UU) \times_{\tors_G(\wh R_{\BB})} \tors_G(\wh R_\PP).\] The assertion then follows from Proposition~\ref{double coset lemma}.
\end{proof}

\begin{cor}\label{closed fiber double coset}
Let $\XX$ be a normal model of a semi-global field~$F$, and let $({\XX}_k^{\red})_\bullet$ be as in diagram~(\ref{closed fiber diagram}). Then for any smooth affine group scheme $G$ over $\XX$,
$$\Sha(({\XX}_k^{\red})_\bullet,G)=G(\wh R_\UU/\sqrt{(t)}\,)\backslash G(\wh R_\BB/\sqrt{(t)}\,)/G(\wh R_\PP/\sqrt{(t)}\,).$$
\end{cor}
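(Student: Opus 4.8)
The plan is to mimic the proof of Corollary~\ref{model double coset}: exhibit a base-change equivalence of torsor categories and then quote Proposition~\ref{double coset lemma}. Write $Y=\XX_k^{\red}$, and $Y_0=\Spec\wh R_\BB/\sqrt{(t)}$, $Y_1=\Spec\wh R_\UU/\sqrt{(t)}$, $Y_2=\Spec\wh R_\PP/\sqrt{(t)}$, so that the diagram $({\XX}_k^{\red})_\bullet$ of~(\ref{closed fiber diagram}) has the shape required in Definition~\ref{def_sha} and Proposition~\ref{double coset lemma}. Viewing $G$ over $Y=\XX_k^{\red}$ by base change along the closed immersion $\XX_k^{\red}\hookrightarrow\XX$, it suffices to show that base change induces an equivalence
\[\tors_G(\XX_k^{\red})\longrightarrow\tors_G(\wh R_\UU/\sqrt{(t)}\,)\times_{\tors_G(\wh R_\BB/\sqrt{(t)}\,)}\tors_G(\wh R_\PP/\sqrt{(t)}\,);\]
for then Proposition~\ref{double coset lemma} identifies $\Sha(({\XX}_k^{\red})_\bullet,G)$ with $G(Y_1)\backslash G(Y_0)/G(Y_2)$, which is exactly the claimed double coset space. (Since $\UU,\PP,\BB$ are finite, each of these rings is a finite product and $G$ of it is the corresponding finite product of groups, so everything may be read off componentwise.)

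The key observation is that this is a \emph{formal gluing} statement for the Noetherian curve $\XX_k^{\red}$. Indeed, $\wh R_U/\sqrt{(t)}=k[U]$, so $\Spec\wh R_U/\sqrt{(t)}=U$ and the $U\in\UU$ form an affine open cover of $\XX_k^{\red}\smallsetminus\PP$. For $P\in\PP$, since $\mc O_{\XX,P}$ is excellent and $\sqrt{(t)}$ cuts out the reduced closed fiber near $P$, one has $\wh R_P/\sqrt{(t)}=\wh{\mc O}_{\XX_k^{\red},P}$, so $\Spec\wh R_\PP/\sqrt{(t)}$ is the formal completion of $\XX_k^{\red}$ along the finite closed set $\PP$. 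For a branch $\wp$ at $P$, the complete discrete valuation ring $\wh R_\wp$ has $\sqrt{(t)}$ equal to its maximal ideal, so $\wh R_\wp/\sqrt{(t)}=k_\wp$, and $\Spec\wh R_\BB/\sqrt{(t)}=\coprod_\wp\Spec k_\wp=(\coprod_{U\in\UU}U)\times_{\XX_k^{\red}}\Spec\wh R_\PP/\sqrt{(t)}$ is the corresponding punctured formal completion. Thus $({\XX}_k^{\red})_\bullet$ is precisely the diagram formed by the open $\XX_k^{\red}\smallsetminus\PP$, the completion of $\XX_k^{\red}$ along $\PP$, and their overlap.

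For such a diagram the displayed functor is an equivalence by the patching machinery of \cite{HKL} — the same source used for Corollary~\ref{model double coset}, now applied to the curve $\XX_k^{\red}$ with this cover in place of the model $\XX$ over $\Spec\wh R$. If no directly applicable statement is available, I would instead deduce it from formal gluing for coherent sheaves (a Beauville--Laszlo-type descent along the closed subscheme $\PP$; see e.g.\ \cite{stacks}): fix a closed immersion $G\hookrightarrow\GL_n$ over $\XX$, patch the associated $\GL_n$-torsor by formal gluing for vector bundles, and then patch the reduction of structure group, i.e.\ the section of the associated $\GL_n/G$-bundle. In either case, Proposition~\ref{double coset lemma} then gives the asserted double coset description, completing the proof.

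The main obstacle is having the patching equivalence for $G$-torsors available in exactly this setting, where the base is a curve over the field $k$ and the completions are taken along the finite point set $\PP$ rather than along a closed fiber over a discrete valuation ring; one must check that the hypotheses of the cited theorem (Noetherianness and excellence of the pieces, the coherent formal-gluing input, and the descent of the reduction of structure group through the possibly non-affine quotient $\GL_n/G$) are satisfied here. I expect the \cite{HKL} arguments to carry over with only cosmetic changes. A routine point, handled as in Proposition~\ref{double coset lemma} and Corollary~\ref{model double coset}, is that base-change functors are well defined only up to natural isomorphism, so compatible choices must be fixed throughout.
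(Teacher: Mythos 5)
Your proposal is correct and follows essentially the same route as the paper: the paper's proof simply cites \cite[Corollary~3.1.2]{HKL} for the base-change equivalence $\tors_G(\XX_k^{\red}) \to \tors_G(\wh R_\UU/\sqrt{(t)}\,) \times_{\tors_G(\wh R_{\BB}/\sqrt{(t)}\,)} \tors_G(\wh R_\PP/\sqrt{(t)}\,)$ and then applies Proposition~\ref{double coset lemma}, exactly as you outline. Your additional geometric identification of the pieces and the Beauville--Laszlo fallback are reasonable supplementary justification, but the needed equivalence is available off the shelf in \cite{HKL}.
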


\begin{proof}
By \cite[Corollary~3.1.2]{HKL}, base change
induces an equivalence of categories
\[\tors_G(\XX_k^{\red}) \to \tors_G(\wh R_\UU/\sqrt{(t)}\,) \times_{\tors_G(\wh R_{\BB}/\sqrt{(t)}\,)} \tors_G(\wh R_\PP/\sqrt{(t)}\,).\]
The assertion then follows from Proposition~\ref{double coset lemma}.
\end{proof}

Note that the identifications in Equation~(\ref{Sha_P dbl coset}), Corollary~(\ref{model double coset}), and Corollary~(\ref{closed fiber double coset}) are compatible, because each is given by base change.  Thus we have the following:

\begin{cor} \label{commu-diag}
In the notation of Corollaries~\ref{model double coset} and~\ref{closed fiber double coset},
let $G$ be a smooth affine group scheme over $\XX$.
Then we have the following commutative diagram of sets
\[
\begin{array}{ccccc}
H^1(F,G) & \supseteq & \Sha_\PP(F,G)  & \to &  \left.  G(F_\UU)  \middle\backslash G(F_\BB)  \middle\slash G(F_\PP) \right. \\[.15cm]
\uparrow & & \uparrow & & \uparrow \\[.15cm]
H^1(\XX,G) & \supseteq & \Sha(\XX_\bullet, G) & \to &   G(\wh{R}_\UU)  \backslash G(\wh{R}_\BB)  \slash G(\wh{R}_\PP)  \\[.15cm]
\downarrow & & \downarrow & & \downarrow \\[.15cm]
H^1(\XX_k^{\rm red},G) & \supseteq & \Sha((\XX_k^{\rm red})_\bullet, G) & \to &  \left.  G(\wh{R}_\UU/\sqrt{(t)}\,)  \middle\backslash G(\wh{R}_\BB/\sqrt{(t)}\,)  \middle\slash G(\wh{R}_\PP/\sqrt{(t)}\,) \right.
\end{array}
\]
where the upper and lower right vertical maps are respectively induced by the inclusion and reduction maps.
\end{cor}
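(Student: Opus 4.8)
The plan is to observe that every arrow in the diagram — horizontal and vertical — is an instance of pullback of torsors (respectively, of $G$-sections), so that commutativity of all four squares is forced by functoriality of base change. First, the three horizontal bijections onto double coset spaces, namely \eqref{Sha_P dbl coset}, the one in Corollary~\ref{model double coset}, and the one in Corollary~\ref{closed fiber double coset}, are each produced by the construction extracted in the proof of Proposition~\ref{double coset lemma}: for a torsor $\xi$ in the relevant $\Sha$ set one chooses trivializations over the ``$\UU$-part'' $Y_1$ and the ``$\PP$-part'' $Y_2$, and the associated double coset is represented by $(g_\wp)_{\wp\in\BB}\in G(Y_0)$, where $g_\wp$ is the element by which left multiplication expresses the induced comparison isomorphism of the two resulting trivial torsors over the $\wp$-overlap. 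Second, the vertical maps are pullback along the two morphisms of diagrams $\Spec F_\bullet\to\XX_\bullet$ and $(\XX_k^{\red})_\bullet\to\XX_\bullet$ of \eqref{diagram diagram}: on the $H^1$ columns they are pullback of torsors along $\Spec F\hookrightarrow\XX$ and along $\XX_k^{\red}\hookrightarrow\XX$, together with the analogous maps on patches and overlaps; on the double coset columns they are the maps on $G$-sections induced by the localizations $\wh R_\zeta\hookrightarrow F_\zeta$ and by the quotient maps $\wh R_\zeta\to\wh R_\zeta/\sqrt{(t)}$, for $\zeta$ among $\UU$, $\PP$, $\BB$.

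I would first confirm that the vertical maps restrict to the middle column, so that the diagram is well-formed. If $\xi\in\Sha(\XX_\bullet,G)$ then $\xi|_{\Spec\wh R_\UU}$ and $\xi|_{\Spec\wh R_\PP}$ are trivial, and since the squares of schemes underlying the morphisms of diagrams in \eqref{diagram diagram} commute, the pullback $\xi_F$ restricts over $\Spec F_\UU$ and $\Spec F_\PP$ to pullbacks of those trivial torsors, hence is trivial there; so the image of $\xi$ lies in $\Sha_\PP(F,G)$, and likewise the image in $H^1(\XX_k^{\red},G)$ lies in $\Sha((\XX_k^{\red})_\bullet,G)$. Because the containments $\Sha\subseteq H^1$ are tautological and preserved by pullback, the two left-hand squares then commute automatically.

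The substantive point is commutativity of the two right-hand squares, for which the argument is uniform. Fix $\xi\in\Sha(\XX_\bullet,G)$ with chosen trivializations over $\Spec\wh R_\UU$ and $\Spec\wh R_\PP$, giving a representative $(g_\wp)\in G(\wh R_\BB)$ of its double coset. Pulling back along $\Spec F_\bullet\to\XX_\bullet$, those trivializations pull back to trivializations of $\xi_F$ over $\Spec F_\UU$ and $\Spec F_\PP$; and because pullback of $G$-torsors is functorial, carries the trivial torsor to the trivial torsor compatibly with the $G$-action, and carries ``left multiplication by $g_\wp$'' to ``left multiplication by the image of $g_\wp$'', the representative of the double coset of $\xi_F$ attached to the pulled-back trivializations is exactly the image of $(g_\wp)$ under $G(\wh R_\BB)\to G(F_\BB)$. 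This is the commutativity of the upper right square; the identical argument with $(\XX_k^{\red})_\bullet\to\XX_\bullet$ and the reduction maps $\wh R_\zeta\to\wh R_\zeta/\sqrt{(t)}$ gives the lower right square. I expect the only real work to be bookkeeping: checking that each constituent square of the two morphisms of diagrams in \eqref{diagram diagram} actually commutes — so that ``restriction of a trivialization'' is unambiguous — and that the naturality statements about comparison isomorphisms hold at the generality used, both of which are already implicit in the framework of Section~\ref{cosets} and the proof of Proposition~\ref{double coset lemma}. Granting those, there is no deeper obstacle: every square commutes because every map in sight is a pullback.
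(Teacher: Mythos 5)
Your proposal is correct and follows essentially the same route as the paper, which justifies the corollary in one sentence by noting that all three identifications with double coset spaces are given by base change and are therefore compatible. Your elaboration — checking that the vertical maps restrict to the $\Sha$ column and tracing the representative $(g_\wp)$ through pullback of trivializations — is exactly the bookkeeping the paper leaves implicit.
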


\begin{cor} \label{torsors closed fiber}
Let $\XX$ be a normal model of a semi-global field~$F$, let $\PP,\UU,\BB$ be as in Section~\ref{patching subsec},
and let $G$ be a smooth affine group scheme over $\XX$.
\renewcommand{\theenumi}{\alph{enumi}}
\renewcommand{\labelenumi}{(\alph{enumi})}
\begin{enumerate}
\item \label{torsors same cl fib}
If $g,g' \in G(\wh R_\BB)$ are congruent modulo $\sqrt{(t)}$, then the corresponding $G$-torsors over $\XX$ (as in Corollary~\ref{model double coset}) restrict to isomorphic $G$-torsors over $\XX_k^{\red}$.
\item \label{torsors triv cl fib}
In particular, if $g \in G(\wh R_\BB)$ is trivial modulo $\sqrt{(t)}$, then the corresponding $G$-torsor over $\XX$ restricts to the trivial $G$-torsor over $\XX_k^{\red}$.
\end{enumerate}
\end{cor}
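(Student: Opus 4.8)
The plan is to deduce both parts directly from the commutative diagram of Corollary~\ref{commu-diag}, which relates the double coset description of $\Sha(\XX_\bullet,G)$ over $\XX$ to that of $\Sha((\XX_k^{\red})_\bullet,G)$ over $\XX_k^{\red}$ via the reduction map modulo $\sqrt{(t)}$. The whole statement is essentially a formal consequence of the identifications already established, so the real content is just tracking an element through that diagram.

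For part~(\ref{torsors same cl fib}), I would first recall that by Corollary~\ref{model double coset} the elements $g,g'\in G(\wh R_\BB)$ determine classes $\xi,\xi'\in\Sha(\XX_\bullet,G)\subseteq H^1(\XX,G)$, namely the $G$-torsors over $\XX$ represented by the double cosets $G(\wh R_\UU)\,g\,G(\wh R_\PP)$ and $G(\wh R_\UU)\,g'\,G(\wh R_\PP)$. Since a torsor that is trivial over $\wh R_\UU$ and over $\wh R_\PP$ remains trivial after base change to $\wh R_\UU/\sqrt{(t)}$ and $\wh R_\PP/\sqrt{(t)}$, the restrictions $\xi|_{\XX_k^{\red}}$ and $\xi'|_{\XX_k^{\red}}$ lie in $\Sha((\XX_k^{\red})_\bullet,G)$. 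By the commutativity of the lower square in Corollary~\ref{commu-diag} together with Corollary~\ref{closed fiber double coset}, these restrictions correspond under the double coset bijection to the classes of the reductions $\bar g,\bar g'\in G(\wh R_\BB/\sqrt{(t)})$. The hypothesis $g\equiv g'\pmod{\sqrt{(t)}}$ gives $\bar g=\bar g'$ in $G(\wh R_\BB/\sqrt{(t)})$, hence $\xi|_{\XX_k^{\red}}$ and $\xi'|_{\XX_k^{\red}}$ represent the same class in $\Sha((\XX_k^{\red})_\bullet,G)\subseteq H^1(\XX_k^{\red},G)$; that is, they are isomorphic $G$-torsors over $\XX_k^{\red}$.

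For part~(\ref{torsors triv cl fib}), I would apply part~(\ref{torsors same cl fib}) with $g'=1$: the double coset of $1$ is the base point of $G(\wh R_\UU)\backslash G(\wh R_\BB)/G(\wh R_\PP)$, which under Corollary~\ref{model double coset} corresponds to the trivial $G$-torsor over $\XX$, and that restricts to the trivial $G$-torsor over $\XX_k^{\red}$. If $g$ is trivial modulo $\sqrt{(t)}$, then in particular $g\equiv 1\pmod{\sqrt{(t)}}$, so part~(\ref{torsors same cl fib}) yields that the $G$-torsor attached to $g$ restricts to a torsor over $\XX_k^{\red}$ isomorphic to the trivial one. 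I do not expect any serious obstacle here: the only point requiring care is to verify that the torsors in question genuinely lie in the relevant $\Sha$-sets, so that the bijections of Corollaries~\ref{model double coset} and~\ref{closed fiber double coset} and the compatibility in Corollary~\ref{commu-diag} apply — and this is immediate because triviality of a torsor is preserved under base change along $\wh R_\UU\to\wh R_\UU/\sqrt{(t)}$ and $\wh R_\PP\to\wh R_\PP/\sqrt{(t)}$.
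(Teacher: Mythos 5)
Your argument is correct and is exactly the route the paper takes: the published proof simply states that the corollary is immediate from the compatible identifications in Corollaries~\ref{model double coset}--\ref{commu-diag}, and your write-up is a careful unwinding of that one-line argument, including the (correct) check that the restricted torsors lie in $\Sha((\XX_k^{\red})_\bullet,G)$ so the double coset bijections apply.
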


\begin{proof}
This is immediate from the compatible identifications in Corollaries~\ref{model double coset}-\ref{commu-diag}.
\end{proof}

If $L$ is any field, then an {\em $L$-torus} is a linear algebraic group $T$ over $L$ such that $T\times_L L^\sep$
is isomorphic to a product of copies of ${\mbb G}_{{\rm m},L^\sep}$, where $L^\sep$ is a separable algebraic closure of~$L$.  More generally, if $Z$ is a scheme, then by a {\em $Z$-torus} (or {\em torus over $Z$}) we mean a smooth affine group scheme $T$ over $Z$ for which
there exists a finite \'etale morphism $Y \to Z$
such that $T\times_Z Y$ is isomorphic to a product of copies of ${\mbb G}_{{\rm m}, Y}$.  A torus is {\em split} if it is isomorphic to a product of copies of $\Gm$ over the given base.

\begin{prop} \label{GPS tori}
Let $R$ be a complete  discrete valuation ring with fraction field $K$ and residue field $k$.
Let $\XX$ be a normal model of a semi-global field~$F$ over $K$, and let $\XX_k^{\rm red}$ be its reduced closed fiber.
Let $T$ be a torus over $\XX$.
Then the  image of $\ker(H^1(\XX,T) \to  H^1(\XX_k^{\rm red},T))$  in $H^1(F,T)$ is trivial.
\end{prop}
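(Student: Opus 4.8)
The plan is to translate the statement into the double-coset language of Corollaries~\ref{model double coset}--\ref{commu-diag} and then to isolate a factorization assertion ``near the identity.'' Fix $\PP,\UU,\BB$ as in Section~\ref{patching subsec} and let $\eta\in H^1(\XX,T)$ be a class whose restriction to $H^1(\XX_k^{\red},T)$ is trivial.

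First I would check that $\eta$ actually lies in $\Sha(\XX_\bullet,T)$. Each $\wh R_P=\wh{\mathcal O}_{\XX,P}$ is a complete local ring with residue field $\kappa(P)$, each pair $(\wh R_U,\sqrt{(t)})$ is Henselian with quotient $k[U]$, and hence so are $(\wh R_\UU,\sqrt{(t)})$ and $(\wh R_\PP,\sqrt{(t)})$. Since $T$ is smooth and affine over $\XX$, base change along a Henselian pair induces a bijection on $H^1$ of $T$: surjectivity is the infinitesimal lifting criterion for smoothness, and injectivity follows by applying the same criterion to the smooth affine $\mathrm{Isom}$-scheme of two torsors. Restricting the trivial class $\eta|_{\XX_k^{\red}}$ to $\Spec\,\kappa(P)$ and to $U=\Spec\,k[U]$ then shows that $\eta|_{\wh R_P}$ and $\eta|_{\wh R_U}$ are trivial, so $\eta\in\Sha(\XX_\bullet,T)$ and, by Corollary~\ref{model double coset}, corresponds to a double coset represented by some $g\in T(\wh R_\BB)$. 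In particular $\eta|_F$ lies in $\Sha_\PP(F,T)=T(F_\UU)\backslash T(F_\BB)/T(F_\PP)$ and is represented there by the image of $g$, by the top row of Corollary~\ref{commu-diag}.

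Next I would normalize $g$. By the same commutative diagram, the reduction of $[g]$ in $T(\wh R_\UU/\sqrt{(t)})\backslash T(\wh R_\BB/\sqrt{(t)})/T(\wh R_\PP/\sqrt{(t)})$ is the class of $\eta|_{\XX_k^{\red}}$, hence trivial, so $\bar g=\bar a\bar b$ with $\bar a\in T(\wh R_\UU/\sqrt{(t)})$ and $\bar b\in T(\wh R_\PP/\sqrt{(t)})$. Using smoothness of $T$ together with the Henselian pair property, I would lift $\bar a,\bar b$ to $a\in T(\wh R_\UU)$ and $b\in T(\wh R_\PP)$; replacing $g$ by $a^{-1}gb^{-1}$, which represents the same double coset and hence the same class $\eta$, we may assume $g\equiv 1\pmod{\sqrt{(t)}}$.

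After these reductions the proposition becomes: \emph{every $g\in T(\wh R_\BB)$ with $g\equiv 1\pmod{\sqrt{(t)}}$ lies in $T(F_\UU)\cdot T(F_\PP)$ inside $T(F_\BB)$.} This ``factorization near the identity'' is the heart of the matter and the step I expect to be the main obstacle. One cannot hope to factor $g$ already inside $T(\wh R_\UU)\cdot T(\wh R_\PP)$ --- that would force $\eta$ to be trivial over $\XX$, which fails in general --- so the passage to the fraction fields $F_U,F_P$ is essential. The idea would be that modulo the maximal ideal of the complete discrete valuation ring $\wh R_\wp$ the torus $T$ is controlled by its big cell, which is an affine space; a successive-approximation argument, in the spirit of the factorization lemmas underlying the patching method, should then produce the factorization branch by branch and glue the local factorizations together. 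Equivalently, this last assertion is exactly the torus case of the triviality theorem of Gille--Parimala--Suresh~\cite{GPS} cited in the introduction, and for tori it can alternatively be extracted from a flasque resolution of $T$ over $\XX$ as in~\cite{CHHKPS}. Everything preceding this last step is routine bookkeeping with the double-coset identifications of Section~\ref{cosets}.
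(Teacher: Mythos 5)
Your reductions are correct and match the paper's: showing $\eta\in\Sha(\XX_\bullet,T)$ (the paper cites Strano's theorem for the injectivity of $H^1(\wh R_\UU,T)\to H^1(\wh R_\UU/\sqrt{(t)},T)$, which is your Henselian-pair argument), passing to a double-coset representative $g\in T(\wh R_\BB)$ via Corollaries~\ref{model double coset}--\ref{commu-diag}, and using formal smoothness to normalize $g\equiv 1\pmod{\sqrt{(t)}}$. But the statement you then isolate --- that such a $g$ lies in $T(F_\UU)\cdot T(F_\PP)$ --- is exactly the content of the proposition, and none of the three routes you offer for it holds up. Citing \cite{GPS} is not available: Theorem~1.1 there concerns semisimple simply connected groups, and Proposition~\ref{GPS tori} exists precisely to supply the torus case that \cite{GPS} does not cover (see the proof of Proposition~\ref{GPS prop}, where the two inputs are used side by side); invoking it here would make the paper's logic circular. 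The ``big cell plus successive approximation'' idea is also not a proof as stated: a torus over $\XX$ need not be split or rational over the patches, the congruence you have is only modulo $\sqrt{(t)}$ rather than modulo a power of $t$ (so the standard $t$-adic Newton iteration of the patching factorization theorems does not directly apply), and, as you yourself note, any argument that produced a factorization inside $T(\wh R_\UU)\cdot T(\wh R_\PP)$ would prove too much.

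What the paper actually does at this point is neither of these. It combines \cite[Proposition~2.2]{GilleTAMS} --- Gille's specialization result, which shows that an element of $T(\wh R_\wp)$ reducing to the identity in $T(k_\wp)$ is $\R$-trivial in $T(F_\wp)$ --- with \cite[Theorem~3.1(b)]{CHHKPS}, which computes $\Sha_\PP(F,T)$ for a torus by a double coset space that factors through $\R$-equivalence classes on the branch fields. Since the normalized representative $(g_\wp)$ is $\R$-trivial in each $T(F_\wp)$, its class in that quotient, and hence in $\Sha_\PP(F,T)\subseteq H^1(F,T)$, is trivial. Your parenthetical mention of flasque resolutions points in the right direction (that is how the \cite{CHHKPS} description is proved), but without the specialization step and the $\R$-equivalence description of $\Sha_\PP(F,T)$ the argument is not complete.
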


\begin{proof}  Let $\zeta \in $  ker$(H^1(\XX,T) \to  H^1(\XX_k^{\rm red},T))$.
The image of $\zeta$ in $H^1(\XX_k^{\rm red}, T)$ is trivial, hence so is the image of $\zeta$ in $H^1(\wh{R}_\UU/\sqrt{(t)}, T )$.
 Since $\wh{R}_\UU$  is  $(t)$-adically complete and since $G$ is smooth, it follows that
 the natural map $H^1(\wh{R}_\UU, T) \to H^1(\wh{R}_\UU/\sqrt{(t)}, T)$ is injective (see \cite{Strano}, Theorem~1).
Hence the image of $\zeta $ in $H^1(\wh{R}_\UU, T )$ is trivial.
Similarly the image of $\zeta $ in $H^1(\wh{R}_\PP,  T )$ is trivial.
Thus $\zeta \in \Sha(\XX_\bullet, T) \subseteq H^1(\XX, T)$; moreover, its image in $H^1(F,T)$ lies in
$\Sha_\PP(F,T)$, by the top left part of the diagram in Corollary~\ref{commu-diag}. 

Under the isomorphism
$ \Sha(\XX_\bullet, T) \simeq T(\wh{R}_\UU)\backslash T(\wh{R}_\BB) / T(\wh{R}_\PP)$
in Corollary~\ref{model double coset},
$\zeta$ corresponds to the double coset of an element $(g_\wp) \in T(\wh{R}_\BB)$.
Since $\zeta \in $  ker$(H^1(\XX,T) \to  H^1(\XX_k^{\rm red},T))$, it follows from Corollary~\ref{commu-diag} that
the image of $(g_\wp)$ in $T(\wh{R}_\BB/\sqrt{(t)}\,)$ is in $ T(\wh{R}_\UU/\sqrt{(t)}\,)   T(\wh{R}_\PP/\sqrt{(t)}\,)$.
By formal smoothness, the maps $T(\wh{R}_\UU) \to   T(\wh{R}_\UU/\sqrt{(t)}\,)$
and  $T(\wh{R}_\PP) \to   T(\wh{R}_\PP/\sqrt{(t)}\,)$ are surjective;
so after adjusting the choice of double coset representative of $\zeta$ we may assume
that $(g_\wp) \in T(\wh{R}_\BB)$ has trivial image in $T(\wh{R}_\BB/\sqrt{(t)}\,)$.
By Proposition~2.2 of \cite{GilleTAMS}, it follows from the description of $\Sha_\PP(F,T)$ in \cite[Theorem 3.1(b)]{CHHKPS} that
the image of $\zeta$ in $\Sha_\PP(F,T) \subseteq H^1(F,T)$ is the trivial class.  (In the notation of \cite{CHHKPS}, $F_{U,P}$ denotes $F_\wp$, where $\wp$ is the branch on $U$ at $P$.  For more about the notion of R-equivalence used there, see the discussion following the proof of Lemma~\ref{equal-in-rpmt} below.)
\end{proof}

\section{Local factorization} \label{factor sec}

In this section, we prove
local factorization results for reductive groups, in the context of semi-global fields.  
Our main results here are Proposition~\ref{branch factorization} and Corollary~\ref{atp}, which will permit us afterwards to reduce the study of general reductive groups to those that are anisotropic, in order to obtain results about local-global principles.

We let $\XX$ be a regular model of a semi-global field~$F$ over a complete discretely valued field $K$ having valuation ring $R$ and residue field $k$.
We choose a non-empty finite set $\PP$ of closed points satisfying the conditions given in Section~\ref{patching subsec}, and as in that earlier discussion we 
obtain associated sets $\UU$ and $\BB$.
In addition to the notation from Section~\ref{patching subsec}, we now introduce some other terminology and notation that will be used throughout the remainder of the manuscript.

For a closed point $P \in \XX_k^{\rm red}$, we write $\kappa(P)$ for the residue field of $P$.
If $U$ is a non-empty affine open subset of an irreducible component of $\XX_k^{\red}$,
we write $\kappa(U)$ for the ring of constants on $U$; i.e., $\kappa(U) = \OO(\bar U)$, the ring of functions on the closure of $U$ in $\XX_k^{\red}$.  The field $k'=\kappa(U)$ is a finite extension of $k$; and we write $k'[U]$ and $k'(U)$ for the ring of regular functions and the field of rational functions on $U$.

If $\wp \in \BB$ is a branch at $P \in \PP$, then $\wp$ is a prime ideal of $\wh R_P$, and we may consider the residue ring $\wh R_P/\wp$.  This discrete valuation ring is the complete local ring at $P$ of the irreducible component of $\XX_k^{\red}$ on which $\wp$ lies.  The fraction field of $\wh R_P/\wp$ is the residue field $k_\wp = \wh R_\wp/\wp$ of $\wh R_\wp$; and the residue field of $\wh R_P/\wp$ is $\kappa(P)$.

For a smooth affine group scheme $G$ over $\wh R_\wp$, we write $G_\wp(\wh R_\wp)$ for the kernel of $G(\wh R_\wp) \to G(k_\wp)$; i.e., for the set of elements that become trivial on the closed fiber.

Our first lemmas deal with reductive groups that are anisotropic; i.e., that contain no copy of $\Gm$.  

\begin{lem}\label{singular factor}
Let $\XX$ be a normal crossings model of a semi-global field and let $P$ be a singular point of the reduced closed fiber.
Let $G$ be a reductive group over $\wh R_P$ that is anisotropic over $\ell := \kappa(P)$. 
Let $\wp_1, \wp_2$ be the two branches at $P$.  
\begin{enumerate}[(a)]
\item Then $G(F_{\wp_i}) = G(\wh R_{\wp_i})$ and 
$G(k_{\wp_i}) = G(\wh R_P/\wp_i)$.
\item  Suppose we are given $g_i \in G(F_{\wp_i})$ such that $s_1(g_1)=s_2(g_2)$ under the compositions
\[s_i:G(F_{\wp_i}) = G(\wh R_{\wp_i}) \to G(k_{\wp_i}) = G(\wh R_P/\wp_i) \to G(\ell)\quad (i=1,2).\]
Then there exists $g' \in G(\wh R_P)$ such that $g'g_i \in G_{\wp_i}(\wh R_{\wp_i})$ for $i = 1, 2$.
\end{enumerate}
\end{lem}

\begin{proof}
Since $G$ is anisotropic over $\ell$, it is anisotropic over $k_{\wp_i}$ by Proposition~\ref{local anisotropy} in the Appendix. So by Proposition~\ref{BTR}, we have $G(k_{\wp_i}) = G(\wh R_P/{\wp_i})$ and
$G(F_{\wp_i}) = G(\wh R_{\wp_i})$.  Thus
$g_i \in G(\wh R_{\wp_i})$, and 
the compositions are defined.
We may write $\wp_i = t_i \wh R_P$, for some $t_1,t_2 \in \wh R_P$.  Here 
$t_1, t_2$ form a regular system of parameters for $\wh R_P$, since $\XX$ is a normal crossings model.  Thus $(t_1t_2) = (t_1) \cap (t_2)$, and there is a short exact sequence
\[0 \to \wh R_\wp/\wp_1\wp_2 \, \to \, \wh R_\wp/\wp_1 \times \wh R_\wp/\wp_2 \ \buildrel - \over {\, \to} \ \wh R_\wp/(\wp_1+\wp_2) \to 0.\] 
That is, we have a pullback square of rings
\begin{equation} \label{pullback rings}
\xymatrix{
    \wh R_P/\wp_1\wp_2 \ar[r] \ar[d] & \wh R_P/\wp_1 \ar[d] \\
    \wh R_P/\wp_2 \ar[r] & \wh R_P/(\wp_1 + \wp_2),}
\end{equation}
where the lower right ring is $\kappa(P)=\ell$.
By hypothesis, the images $\bar g_i \in G(\wh R_P/\wp_i)$ of $g_i$ agree in $G(\ell)$, for $i=1,2$.  Since $g_1,g_2$ have the same image in $\wh R_P/(\wp_1 + \wp_2)$, they are induced by a common element $\bar g'' \in G(\wh R_P/\wp_1\wp_2)$.
But $G$ is smooth and $\wh R_P$ is $\wp_1\wp_2$-adically complete, so by formal smoothness we may lift $\bar g''$ to an element $g'' \in G(\wh R_P)$. Let $g':=(g'')^{-1}$. Then $g'g_i\in G(\wh R_{\wp_i})$, and in fact the product is in $G_{\wp_i}(\wh R_{\wp_i})$ because $g''$ and $g_i$ have the same image in $G(\wh R_P/\wp_i)$.
\end{proof}

For the next lemma and some later results,
we need to define rings associated with somewhat more general (not necessarily open) subsets of the reduced closed fiber.  Following \cite[Section~3.1]{HHK15a},
let $W$ be a non-empty affine open subset of an irreducible component
of the reduced closed fiber $\XX_k^{\red}$ of $\XX$.
Thus $W$ is connected.  Note
that $W$ is not necessarily open in $\XX_k^{\red}$; viz., it
will not be open if $W$ contains a singular point of $\XX_k^{\red}$
(i.e., one that lies on two distinct components of $\XX_k^{\red}$).  Let
$R_W$ be the subring of $F$ consisting of those elements that are
regular at every point of $W$, and let $J_W$ be the Jacobson radical
of $R_W$.  Thus $\Spec(R_W/J_W)$ is the reduced closed fiber $W$ of $\Spec(R_W)$ (see \cite[Lemma~3.3]{HHK15a} and the paragraph before that); hence
$R_W/J_W = \ms O(W)$, the affine coordinate ring of $W$.

Let $\wh R_W$ be the $J_W$-adic completion of $R_W$.  The extension of $J_W$
to $\wh R_W$ (which we again call $J_W$) is the Jacobson radical of $\wh R_W$.
Since $W$ is connected, the ring $\wh R_W$ is a domain (by \cite[Proposition~3.4]{HHK15a}).
Also, if $W'$ is a non-empty open
subset of $W$, then $R_W$ is contained in $R_{W'}$, and $J_W^n$ is the
restriction of $J_{W'}^n$ to $R_W$ for every $n \ge 1$; so $\wh R_W$ is contained in $\wh
R_{W'}$.

Note that in the special case that $W$ is a non-empty affine open subset
$U$ of the regular locus of $\XX_k^{\red}$, the definition above
of $\wh R_W$ agrees with the prior definition of $\wh R_U$, since $J$ is
the radical of the ideal $(t)$ in this case.

\begin{lem}\label{adjust over U}
Let $\XX$ be a regular model of a semi-global field over a complete discrete valuation ring with residue field $k$.  Choose a non-empty finite set $\PP$ as in Section~\ref{patching subsec}, with associated sets $\UU,\BB$.  Let
$W$ be a non-empty affine open subset of an irreducible component
of the reduced closed fiber $\XX_k^{\red}$.  Let $P \in \PP \cap W$ be a closed point such that $U := W \smallsetminus \{P\}$ is in $\UU$, and suppose that $\kappa(U)$ is equal to $\kappa(P) =:\ell$.  Consider a reductive group $G$ over $\wh R_W$ that is anisotropic over $\ell$, and take $h\in G(\ell)$.
Let $\wp$ be a branch at $P$ along $U$, and let $g \in G(F_\wp)$. Then
there exists $g' \in G(\wh R_U) \subset G(\wh R_\wp)$ such that $gg'$ maps to $h$ under the composition
\[s:G(F_\wp) = G(\wh R_\wp) \to G(k_\wp) = G(\wh R_P/\wp) \to G(\ell).\]
\end{lem}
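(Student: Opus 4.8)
The plan is to reduce the statement to the surjectivity of a single reduction map, and then to exploit the hypothesis $\kappa(U)=\kappa(P)=\ell$ together with the fact that the closed-fibre reduction of $G$ is a constant group.

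First, since $G$ is anisotropic over $\ell$, it is anisotropic over $k_\wp$ by Proposition~\ref{local anisotropy}, so Proposition~\ref{BTR} gives $G(F_\wp)=G(\wh R_\wp)$ and $G(k_\wp)=G(\wh R_P/\wp)$; thus $s$ is defined, and, being the composite of the group homomorphisms induced by the reductions $\wh R_\wp\to k_\wp$ and $\wh R_P/\wp\to\ell$, it is a group homomorphism. Hence, setting $\tau:=s(g)^{-1}h\in G(\ell)$, it suffices to find $g'\in G(\wh R_U)$ with $s(g')=\tau$, since then $s(gg')=s(g)s(g')=h$. I would look for $g'$ in the subgroup $G(\wh R_W)\subseteq G(\wh R_U)$, using that $\wh R_W\subseteq\wh R_U$ (as $U$ is a non-empty open subset of $W$). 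Since $\wh R_W\hookrightarrow\wh R_\wp$ factors through $\wh R_P$, and $\wh R_P/\wp$ has residue field $\kappa(P)=\ell$, the restriction of $s$ to $G(\wh R_W)$ is just reduction at $P$: it factors as $G(\wh R_W)\to G(\ms O(W))\to G(\ell)$, where the first map is reduction modulo $J_W$ and the second is evaluation at $P$. The first map is surjective by formal smoothness of $G$ ($\wh R_W$ being $J_W$-adically complete, the quotients $\wh R_W/J_W^{n+1}\to\wh R_W/J_W^{n}$ being square-zero extensions), so it is enough to show that evaluation at $P$, $G(\ms O(W))\to G(\ell)$, is surjective.

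This surjectivity is the crux, and the main obstacle: it can fail for a general smooth affine group scheme over the affine curve $\Spec\ms O(W)$, so one must use that $G$ descends to $R$. Indeed $\ms O(W)$ is a $k$-algebra, $G$ is the base change to $\wh R_W$ of a reductive group over $R$, and $R\to\wh R_W\to\ms O(W)$ factors through $k$; hence $G_{\ms O(W)}$ is the base change of the reductive $k$-group $G\times_R k$, and, base-changing further along the inclusion of the constant field $\ell=\kappa(U)\subseteq\ms O(W)$, we get $G_{\ms O(W)}\cong G_0\times_\ell\ms O(W)$ with $G_0:=G\times_R\kappa(P)$. Now the inclusion of constants $\ell\hookrightarrow\ms O(W)$ is a section, as an $\ell$-algebra map, of evaluation at $P$, $\ms O(W)\to\kappa(P)=\ell$ --- this is where $\kappa(P)=\ell$ is used --- so applying $G_0$ yields a section $G_0(\ell)\to G_0(\ms O(W))$ of $G_0(\ms O(W))\to G_0(\ell)$. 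Therefore $G(\ms O(W))=G_0(\ms O(W))\to G_0(\ell)=G(\ell)$ is surjective.

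To finish, one chooses a preimage of $\tau$ under $G(\ms O(W))\to G(\ell)$, lifts it (by formal smoothness) to $g'\in G(\wh R_W)\subseteq G(\wh R_U)$; then $s(g')=\tau$, so $gg'$ maps to $h$ under $s$, as required. The only step that is not purely formal is the surjectivity of $G(\ms O(W))\to G(\ell)$, which rests on the constancy of $G_{\ms O(W)}$: this is precisely where the hypothesis that $G$ is defined over $R$ (so that its reduction lives over the $k$-scheme $\XX_k^{\red}$) and the equality $\kappa(P)=\kappa(U)$ come into play; everything else follows from anisotropy (to make $s$ a well-defined homomorphism) and from formal smoothness along the $J_W$-adic filtration.
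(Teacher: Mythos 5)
Your overall strategy coincides with the paper's: anisotropy of $G_\ell$ enters only (via Propositions~\ref{local anisotropy} and~\ref{BTR}) to make $s$ a well-defined homomorphism on $G(F_\wp)=G(\wh R_\wp)$, and the required $g'$ is obtained by exhibiting a ``constant'' element of the closed fiber with the prescribed value at $P$ and lifting it by formal smoothness over a complete ring. The paper pushes $s(g)^{-1}h\in G(\ell)$ into $G(\ell[U])$ along the inclusion of constants $\ell=\kappa(U)\hookrightarrow \mc O(U)=\ell[U]$ and lifts to $G(\wh R_U)$; you instead reduce to the surjectivity of $G(\ms O(W))\to G(\ell)$ and lift to $G(\wh R_W)\subseteq G(\wh R_U)$. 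Up to that point the two arguments are essentially the same.

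The gap is in how you justify that surjectivity. You assert that ``$G$ is the base change to $\wh R_W$ of a reductive group over $R$'' and conclude that $G_{\ms O(W)}$ is a constant group pulled back from $k$ (hence from $\ell$). But the lemma hypothesizes only that $G$ is a reductive group over $\wh R_W$; no descent to $R$ is assumed, and none is available in the one place the lemma is used: in the proof of Proposition~\ref{anisotropic factorization} it is applied to $H/S$, where $S$ is a split torus over $\wh R_W$ obtained by lifting a torus on the closed fiber, so $H/S$ lives only over $\wh R_W$ and does not descend to $R$. As written, your argument therefore proves a statement with a strictly stronger hypothesis that fails to cover the intended application. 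The input one is actually entitled to (and the one the paper's proof uses) is the equality $\kappa(U)=\kappa(P)=\ell$, which provides the splitting $\ell\hookrightarrow\mc O(W)\to\kappa(P)$ of evaluation at $P$ and lets one regard $s(g)^{-1}h$ as a constant section of the closed fiber of $G$ over $U$; in the application this works because the closed fiber of $H/S$ over $W$ is pulled back from $\ell$ along the constant inclusion (as $S$ lifts the constant torus $\bar S_W=\bar S\times_\ell k[W]$ and $G_{k[W]}$ is itself constant), not because the group comes from $R$. So your reduction to the surjectivity of $G(\ms O(W))\to G(\ell)$ is fine, but its proof must be rerouted through the constancy of the closed fiber of $G$ along $W$ supplied by $\kappa(U)=\kappa(P)$, rather than through descent to $R$.
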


\begin{proof}
Since $\kappa(U) =\ell$,
we have an inclusion
$\ell \hookrightarrow \mc O(U) = \ell[U]$, which induces an inclusion $G(\ell) \hookrightarrow G(\ell[U])$.  Let $g_0' \in G(\ell[U])$ be the image of $s(g)^{-1}h \in G(\ell)$ under $G(\ell) \hookrightarrow G(\ell[U])$.  Since $\wh R_U$ is complete and $G$ is smooth,
there is an element $g' \in G(\wh R_U)$ whose image under the map $G(\wh R_U) \to G(\ell[U])$ is $g_0'$, by formal smoothness.
Viewing $G(\wh R_U)$ as a subgroup of $G(\wh R_\wp)$,
we have that $s(g')=s(g)^{-1}h$ by the commutativity of the following diagram:
    \[\xymatrix{
    G(\wh R_U) \ar@{^{(}->}[r]  \ar@{->>}[d] & G(\wh R_\wp)  \ar[d] &  \\
    G(\ell[U]) \ar@{^{(}->}[r] & G(k_\wp)  \ar@{=}[r] & G(\wh R_P/\wp) \ar[d] \\
    G(\ell) \ar@{^{(}->}[u] \ar[rr]^{\rm id} && G(\ell)\\
  }\]
Hence $s(gg')=h$ as asserted.
\end{proof}

Consider a smooth affine group scheme $G$ over $\wh R_P$, and let $\wp$ be a branch on $U \in \UU$ at $P \in \PP$.  Then $G(\wh R_P)$ is a subgroup of $G(\wh R_\wp)$, and
$G_\wp(\wh R_\wp)$ is a normal subgroup of $G(\wh R_\wp)$.  So we may take the product
\begin{equation}\label{specializable}G_{\rm s}(\wh R_\wp) :=
G_\wp(\wh R_\wp)G(\wh R_P) =
G(\wh R_P)G_\wp(\wh R_\wp).\end{equation} 
This is a subgroup of $G(\wh R_\wp)$, referred to as the subgroup of {\em specializable} elements. It
is the inverse image of $G(\wh R_P/\wp) = G(\wh\OO_{\bar U,P})$
under the reduction map
$G(\wh R_\wp) \to G(k_\wp) = G(k(U)_P)$; here $k(U)$ is the function field of $U$, and 
$k(U)_P$ is its completion with respect to the discrete valuation defined by $P$.
Note that $G_{\rm s}(\wh R_\wp)$ and $G(\wh R_P)$ have the same image in $G(k_\wp)$.
Let $\mathfrak m_P$ be the maximal ideal of $\wh R_P$.
Composing the reduction maps $G_{\rm s}(\wh R_\wp) \to G(\wh R_P/\wp)$
and
$G(\wh R_P/\wp) \to G(\wh R_P/\mathfrak m_P) = G(\kappa(P))$ yields a {\em specialization} map  $\Theta_\wp:G_{\rm s}(\wh R_\wp) \to G(\kappa(P))$. (This is related to the specialization maps that appear in
Sections~\ref{sectionlb} and~\ref{Sha computation section} and in the Appendix.)

\begin{lemma} \label{equal-in-rpmt}
Let $\XX$ be a normal crossings model of a semi-global field over a complete discrete valuation ring $R$, and consider a finite set $\PP$ of points on the reduced closed fiber as in Section~\ref{patching subsec}.  Let
$G$ be a reductive  group over $R$, and let $P \in \PP$. 
\renewcommand{\theenumi}{\alph{enumi}}
\renewcommand{\labelenumi}{(\alph{enumi})}
\begin{enumerate}
\item \label{equal reg}
If $P$ is a regular point of the reduced closed fiber with $\wp$ its unique branch, and if $g_\wp,g_\wp'$ are elements of $G_{\rm s}(\wh R_\wp)$, then there exists $\beta_P \in G(\wh R_P)$ such that the elements $g_\wp\beta_P$ and $g'_\wp$ of $G_{\rm s}(\wh R_\wp)$ have equal images in $G(k_\wp)$.
\item \label{equal nodal}
If $P$ is a singular point of the reduced closed fiber with branches $\wp_1,\wp_2$, and if
$g_{\wp_i},g_{\wp_i}' \in G_{\rm s}(\wh R_{\wp_i})$ are elements satisfying
$\Theta_{\wp_i}(g_{\wp_i}) = \Theta_{\wp_i}(g'_{\wp_i}) \in G(\kappa(P))$ for $i=1,2$,
then there exists $\beta_P \in G(\wh R_P)$ such that the elements
$g_{\wp_i}\beta_P$ and $g'_{\wp_i}$ of $G_{\rm s}(\wh R_{\wp_i})$ have equal images in $G(\wh R_{\wp_i}/{\wp_i})$ for $i=1,2$.
\end{enumerate}
\end{lemma}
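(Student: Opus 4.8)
The plan is to handle both parts by the same device: from the given data I write down an explicit element of $G(\wh R_P/\wp)$ (in part~(\ref{equal reg})) or of $G(\wh R_P/\wp_1\wp_2)$ (in part~(\ref{equal nodal})), and then lift it to $\beta_P \in G(\wh R_P)$ by formal smoothness, using that a complete Noetherian local ring is complete with respect to every ideal (so $\wh R_P$ is $\wp$-adically, resp.\ $\wp_1\wp_2$-adically, complete, exactly as in Lemma~\ref{singular factor}). Only smoothness of $G$ is used here; reductivity and anisotropy are not needed.

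For part~(\ref{equal reg}): since $g_\wp$ and $g'_\wp$ are specializable, their images under $G(\wh R_\wp) \to G(k_\wp)$ lie in the subgroup $G(\wh R_P/\wp)$; call them $\bar g$ and $\bar g'$. Put $\bar\beta := \bar g^{-1}\bar g' \in G(\wh R_P/\wp)$. Formal smoothness makes $G(\wh R_P) \to G(\wh R_P/\wp)$ surjective, so I choose $\beta_P \in G(\wh R_P)$ lifting $\bar\beta$. Then $g_\wp\beta_P$ lies in $G_{\rm s}(\wh R_\wp)$ (a subgroup containing both factors), and its image in $G(\wh R_P/\wp) \hookrightarrow G(k_\wp)$ is $\bar g\,\bar\beta = \bar g'$, which is the image of $g'_\wp$. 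This is the assertion.

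For part~(\ref{equal nodal}): let $\bar g_i,\bar g'_i \in G(\wh R_P/\wp_i)$ be the images of $g_{\wp_i},g'_{\wp_i}$, and set $\bar\beta_i := \bar g_i^{-1}\bar g'_i \in G(\wh R_P/\wp_i)$. The image of $\bar\beta_i$ under the reduction $G(\wh R_P/\wp_i) \to G(\kappa(P))$ is $\Theta_{\wp_i}(g_{\wp_i})^{-1}\,\Theta_{\wp_i}(g'_{\wp_i})$, which is trivial by hypothesis; hence $\bar\beta_1$ and $\bar\beta_2$ have equal (namely trivial) image in $G(\kappa(P))$. Invoking the pullback square of rings $\wh R_P/\wp_1\wp_2 = (\wh R_P/\wp_1)\times_{\kappa(P)}(\wh R_P/\wp_2)$ from the proof of Lemma~\ref{singular factor} (valid because $t_1,t_2$ is a regular system of parameters, so $\wp_1\cap\wp_2 = \wp_1\wp_2$), and using that $G$ is affine so that $G(-)$ carries it to a pullback square of sets, I obtain $\bar\beta'' \in G(\wh R_P/\wp_1\wp_2)$ restricting to $\bar\beta_i$ on each branch. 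Lifting $\bar\beta''$ to $\beta_P \in G(\wh R_P)$ by formal smoothness, I find that for $i=1,2$ the element $g_{\wp_i}\beta_P$ lies in $G_{\rm s}(\wh R_{\wp_i})$ and its image in $G(\wh R_P/\wp_i) \subseteq G(\wh R_{\wp_i}/\wp_i)$ equals $\bar g_i\bar\beta_i = \bar g'_i$, the image of $g'_{\wp_i}$.

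There is no substantial obstacle; the work is bookkeeping. The one point needing care is that $\Theta_{\wp_i}$ is exactly the composite $G_{\rm s}(\wh R_{\wp_i}) \to G(\wh R_P/\wp_i) \to G(\kappa(P))$, so that the hypothesis $\Theta_{\wp_i}(g_{\wp_i}) = \Theta_{\wp_i}(g'_{\wp_i})$ is precisely the compatibility over $\kappa(P)$ that lets $\bar\beta_1$ and $\bar\beta_2$ be glued; and one must check that $g_{\wp_i}\beta_P$ again lies in $G_{\rm s}(\wh R_{\wp_i})$, which holds since $\beta_P \in G(\wh R_P) \subseteq G_{\rm s}(\wh R_{\wp_i})$ and $G_{\rm s}(\wh R_{\wp_i})$ is a group.
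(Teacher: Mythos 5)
Your proof is correct and follows essentially the same route as the paper's: in part~(a) you lift $\bar g^{-1}\bar g'$ from $G(\wh R_P/\wp)$ (the common image of $G_{\rm s}(\wh R_\wp)$ and $G(\wh R_P)$ in $G(k_\wp)$) using formal smoothness, and in part~(b) you glue the two elements $\bar g_i^{-1}\bar g'_i$ over the pullback square from Lemma~\ref{singular factor}, using the hypothesis on $\Theta_{\wp_i}$ to ensure they agree in $G(\kappa(P))$, then lift by completeness. No gaps.
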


\begin{proof}
For part~(\ref{equal reg}), since $G_{\rm s}(\wh R_\wp)$ and $G(\wh R_P)$ have the same image in $G(k_\wp)$, there exists a lift $\beta_P \in G(\wh R_P) \subseteq G_{\rm s}(\wh R_\wp)$ of the image of $g_\wp^{-1}g'_\wp \in G_{\rm s}(\wh R_\wp)$ in $G(k_\wp)$.  Then $\beta_P$ has the desired property.

For part~(\ref{equal nodal}), 
we will proceed similarly to the proof of Lemma~\ref{singular factor}
(but using the assumption here that the given elements are specializable, to replace the anisotropy hypothesis of Lemma~\ref{singular factor}).  
Namely,
the maximal ideal of $\wh R_P$ is generated by two elements $t_1,t_2$, where
$\wh R_{\wp_i}$ is the completion of the localization of $\wh R_P$ at $(t_i)$, 
and $\wp_i = t_i \wh R_P$.
For $i=1,2$ let $\bar g_{\wp_i}, \bar g'_{\wp_i} \in G(\wh R_P/\wp_i)$ be the images of $g_{\wp_i}, g'_{\wp_i}$ and write $\bar \beta_{\wp_i} = \bar g_{\wp_i}^{-1}\bar g'_{\wp_i}$.  Since $\Theta_{\wp_i}(g_{\wp_i}) = \Theta_{\wp_i}(g'_{\wp_i}) \in G(\kappa(P))$, the image of $\bar \beta_{\wp_i}$ in $G(\kappa(P))$ is equal to $1$, for $i=1,2$.

We have the pullback diagram of rings (\ref{pullback rings}) as in the proof of Lemma~\ref{singular factor}.
Hence there is a well-defined element $\tilde \beta_P \in G(\wh R_P/\wp_1\wp_2)$ that maps to $\bar \beta_{\wp_i}$ in $G(\wh R_P/\wp_i)$ for $i=1,2$.
Since $G$ is a smooth $R$-scheme and the ring $\wh R_P$
is complete with respect to $\wp_1\wp_2$,
by formal smoothness we may choose a lift $\beta_P \in G(\wh R_P)$ of $\tilde \beta_P$.
Thus the elements $g_{\wp_i}\beta_P$ and $g'_{\wp_i}$ of $G_{\rm s}(\wh R_{\wp_i})$ have equal images in $G(\wh R_{\wp_i}/\wp_i)$ for $i=1,2$, as required.
\end{proof}

Recall that for any field $E$, two $E$-points of an $E$-variety $V$ are {\em directly $\R$-equivalent} if they are in the image of $E$-points of $\A^1_E$ under some rational map $f:\A^1_E \dashrightarrow V$.  The transitive closure of this relation is {\em $\R$-equivalence}.  If $V$ is a linear algebraic group $G$ over a field $E$, then any two $\R$-equivalent points of $G(E)$ are directly $\R$-equivalent (see \cite[II.1.1(b)]{GilleIHES}).  The points of $G(E)$ that are $\R$-equivalent to the identity are called $\R$-{\em trivial}, and these form a normal subgroup $\R = \R G(E)$ of $G(E)$.  The quotient group $G(E)/\R$ is the group of $\R$-{\em equivalence classes} of $G(E)$.

The following result will be the key ingredient in reducing to the anisotropic case
in the proof of Proposition~\ref{anisotropic factorization}; and that proposition in turn will lead to the proofs of our local-global results in the later sections, via Theorems~\ref{torsors extend} and~\ref{main}.

\begin{prop} \label{branch factorization}
Let $\XX$ be a regular model of a semi-global field over a complete discretely valued field having residue field $k$.
Let $P$ be a closed point of  ${\ms
X}_k^{\red}$, and suppose $G$ is a reductive group over $\wh R_P$.
Let $S \subseteq G$ be a maximal split torus in $G$ and let $H =
C_G(S)$, the centralizer of $S$ in $G$.
Let $\wp_1, \wp_2, \ldots, \wp_r$ be a collection of distinct branches at
$P$. Suppose we are given $g_i \in G(F_{\wp_i})$ for every $i$. Then there is an element $g \in G(F_P)$
that is $\R$-trivial in each $G(F_{\wp_i})$, together with
elements $h_i^* \in H(\wh R_P)$ for all $i$, such that $h_i^* g_ig \in G_{\wp_i}(\wh R_{\wp_i})$.  That is, each $g_i g \in H(\wh R_P) G_{\wp_i}(\wh R_{\wp_i})
\subseteq G(\wh R_P)G_{\wp_i}(\wh R_{\wp_i}) = G_{\rm s}(\wh R_{\wp_i})$.
\end{prop}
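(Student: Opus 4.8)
The plan is to concentrate, in each $g_i$, its ``isotropic'' data into subgroups that are simultaneously rational and $\R$-trivial, so that a single $\R$-trivial element of $G(F_P)$ can cancel it at all branches at once, while the ``anisotropic'' remainder is left untouched because it is automatically integral with the right reduction. First I would fix over $\wh R_P$ a minimal parabolic subgroup $P_0$ with Levi factor $H=C_G(S)$ and unipotent radical $U_0$, the opposite $P_0^-=H\cdot U_0^-$, and the subgroup $G_{\mathrm{iso}}\subseteq G$ generated by the root subgroups of the nonzero relative roots of $S$; then $U_0,U_0^-\subseteq G_{\mathrm{iso}}$, $H$ normalizes $G_{\mathrm{iso}}$, and $G=G_{\mathrm{iso}}\cdot H$. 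Since $S$ is central in $H$ and is a maximal split torus of $G$, it is also a maximal split torus of $H$, so $H$ has no proper parabolic and its anisotropic part $H'$ (the derived group together with the maximal anisotropic subtorus of $Z(H)^\circ$) is anisotropic over $\kappa(P)$, hence by Proposition~\ref{local anisotropy} over every $k_{\wp_i}$ and $F_{\wp_i}$. As split unipotent groups and split tori over the infinite field $F_{\wp_i}$ are $\R$-trivial, every element of $U_0(F_{\wp_i})$, $U_0^-(F_{\wp_i})$, or $S(F_{\wp_i})$ — and hence every element of $G_{\mathrm{iso}}(F_{\wp_i})$, which these generate — is $\R$-trivial in $G(F_{\wp_i})$.

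Next I would decompose each $g_i$. Using the Bruhat decomposition of $G(F_{\wp_i})$ relative to $P_0$ — with representatives of the relative Weyl group chosen inside $G_{\mathrm{iso}}(\wh R_P)$ — the factorization $P_0=H\cdot U_0$, and the Cartan decomposition of $H$, which (as the maximal split torus $S$ of $H$ is central) collapses to the one-sided forms $H(F_{\wp_i})=H(\wh R_{\wp_i})\cdot S(F_{\wp_i})$ and $H(k_{\wp_i})=H(\wh R_P/\wp_i)\cdot S(k_{\wp_i})$, one can move the central $S$-factors past everything in $H$ and collect the $G_{\mathrm{iso}}$-factors to write
\[ g_i \;=\; \gamma_i\,\iota_i\,z_i,\qquad \gamma_i\in H(\wh R_{\wp_i}),\ \iota_i\in G_{\mathrm{iso}}(F_{\wp_i}),\ z_i\in S(F_{\wp_i}), \]
with $\gamma_i$ integral; and, applying the second Cartan decomposition to the reduction of $\gamma_i$ and absorbing the resulting $S(k_{\wp_i})$-factor back into $z_i$, one may further arrange that the reduction of $\gamma_i$ lies in the image of $H(\wh R_P/\wp_i)\to G(k_{\wp_i})$. (Here the anisotropy of $H'$, and the consequent equality $H'(F_{\wp_i})=H'(\wh R_{\wp_i})$ of Proposition~\ref{BTR}, is what forces $\gamma_i$ to be integral once the $S$-part has been removed.) Thus all the ``non-integral, non-$\R$-trivial'' content of $g_i$ has been pushed into $\iota_iz_i\in(G_{\mathrm{iso}}\cdot S)(F_{\wp_i})$.

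Then I would construct $g$ and conclude. Set $\tau_i:=z_i^{-1}\iota_i^{-1}\in(S\cdot G_{\mathrm{iso}})(F_{\wp_i})$, a product of $\R$-trivial elements. Since $F_P$ is dense in each $F_{\wp_i}$ and the subgroups $S$, $U_0$, $U_0^-$ are rational, weak approximation with respect to the finitely many pairwise inequivalent discrete valuations $\wp_1,\dots,\wp_r$ holds for suitably shaped bounded-length products of their $F$-points; writing each $\tau_i$ as such a product via the Bruhat (Gauss) decomposition of $G_{\mathrm{iso}}$, one obtains a single $g\in G(F_P)$ — exhibited as such a product, hence $\R$-trivial in every $G(F_{\wp_i})$ — whose image in $G(F_{\wp_i})$ is $\wp_i$-adically as close to $\tau_i$ as desired, for all $i$. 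Then $g_ig$ differs from $\gamma_i\iota_iz_i\cdot z_i^{-1}\iota_i^{-1}=\gamma_i$ by an element of $G(\wh R_{\wp_i})$ of arbitrarily high congruence level, so for a sufficiently precise choice of $g$ we get $g_ig\in G(\wh R_{\wp_i})$ with reduction equal to that of $\gamma_i$, which lies in the image of $H(\wh R_P)$. Choosing $h_i^*\in H(\wh R_P)$ to lift the reduction of $\gamma_i^{-1}$ — possible since $H$ is smooth and, by completeness of $\wh R_P$, the map $H(\wh R_P)\to H(\wh R_P/\wp_i)$ is surjective — yields $h_i^*g_ig\in G_{\wp_i}(\wh R_{\wp_i})$, whence $g_ig\in H(\wh R_P)\,G_{\wp_i}(\wh R_{\wp_i})\subseteq G_{\rm s}(\wh R_{\wp_i})$, as required.

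The crux is the construction of the single $g\in G(F_P)$ that works at every branch while staying $\R$-trivial: one cannot merely approximate $g_i^{-1}$, as the approximant need not be $\R$-trivial, and weak approximation is not available for $G$ itself when $G$ is non-rational. This is precisely why the anisotropic factor $\gamma_i$ — which, being integral with the correct reduction, need never be cancelled — must be split off, leaving only the isotropic factors, for which rationality of $U_0$, $U_0^-$, $S$ makes simultaneous approximation possible. The remaining technical work is to organize the Bruhat/Gauss decompositions so that all the $\tau_i$ carry a common ``shape'' as products in these rational subgroups, and then to perform the approximation at all branches at once with enough precision.
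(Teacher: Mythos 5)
Your overall strategy is genuinely close to the paper's: both arguments split each $g_i$ into an ``anisotropic'' factor in $H$ --- handled by the one-sided decomposition $H(F_{\wp_i})=H(\wh R_{\wp_i})S(F_{\wp_i})$ of Proposition~\ref{H and S}(\ref{almost anisotropic factorization}), applied a second time over the complete discrete valuation ring $\wh R_P/\wp_i$ so that the residual $H$-part can be lifted into $H(\wh R_P)$ rather than merely $H(\wh R_{\wp_i})$ --- together with a ``rational'' factor built from $S$ and the unipotent radicals, which is cancelled by weak approximation over $F_P$ and contributes only $\R$-trivial elements. That portion of your proposal matches the paper's proof essentially step for step, including the final lifting of $h_i^*$ by formal smoothness.

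The gap is in how you arrange for all the $g_i$ to admit factorizations of a \emph{common} shape before approximating; you have deferred exactly the crux as ``remaining technical work.'' As literally described, the route fails: the Bruhat decomposition places $g_1,\dots,g_r$ in possibly different cells with different Weyl representatives $w_i$, so the elements $\tau_i=z_i^{-1}\iota_i^{-1}$ need not be expressible as products of a uniform shape $X_1\times\cdots\times X_m\to G$ with fixed rational factors $X_j$ independent of $i$; and a single $g\in G(F_P)$ can only be manufactured by factor-wise approximation when such a uniform shape exists (approximating $\tau_i$ directly inside $G$ is precisely the weak approximation for $G$ that is unavailable for non-rational $G$). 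Likewise the Gauss (big-cell) decomposition only covers a dense open subset, so it need not apply to $\tau_i$ itself. The paper's device for this is unirationality of $G_{F_P}$: one chooses a dominant $\phi:V\to G_{F_P}$ with $V\subseteq\A^N_{F_P}$ open and the identity in $\phi(V)$, observes that $V_i:=\phi^{-1}(g_i^{-1}\mc C)$ is a nonempty open of $V_{F_{\wp_i}}$, and applies weak approximation in $\A^N$ to produce one $\R$-trivial $\tilde g=\phi(v)\in G(F_P)$ with $g_i\tilde g\in\mc C(F_{\wp_i})$ for \emph{all} $i$ simultaneously; after that, every $g_i\tilde g$ has the canonical fixed shape $h_iu_iu_i'$ of the big cell $\mc C\cong H\times\mc U\times\mc U'$, and the rest of your argument goes through. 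Your approach could be repaired in the same spirit by invoking $G(F_{\wp_i})=\mc C(F_{\wp_i})\cdot\mc C(F_{\wp_i})$, but that identity again rests on unirationality and the infinitude of the field; some such input is indispensable, and it is the one idea your write-up is missing.
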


\begin{proof}
As in the statement of Proposition~\ref{decomposition} of the Appendix, the group $H$ is reductive, and there are parabolic subgroups of
$G$ with unipotent radicals $\mc U, \mc U'$ such that the morphism $H \times \mc U \times \mc U' \to G$ is an open immersion.  (See the proof of Proposition~\ref{decomposition} for more details.)
Let $\mc C$ be the image of this open immersion.

Since $G$ is reductive, it follows that $G_{F_P}$ is unirational (see \cite[Chap.~V, Theorem~18.2]{Borel}).
Choose a dominant morphism $\phi: V \to G_{F_P}$ where $V \subseteq \mbb A^N_{F_P}$ is a Zariski affine open set. We may assume that $\phi(V)$ contains the identity element of $G$.
For $i = 1, \ldots, r$, 
let $V_i = \phi^{-1}(g_i^{-1}\mc C)$; 
this is an open subscheme of $V_{F_{\wp_i}}$. Since these are all nonempty open in affine space, we may choose points $v_i \in V_i(F_{\wp_i})$.
Since $V_i$ is Zariski open, the set $V_i(F_{\wp_i})$ is open in the $\wp_i$-adic topology on $V(F_{\wp_i})$. By weak approximation, we may find $v \in \mbb A^N(F_P)$ that is sufficiently close $\wp_i$-adically to $v_i$ for each $i$,
so as to make $v \in V_i(F_{\wp_i})$ for all $i$.  
Since $V \subseteq \mbb A^N_{F_P}$ is an open subset and $\phi(V)$ contains the identity of $G$, the element
$\tilde g := \phi(v)$ is $\R$-trivial in $G(F_P)$, and hence in each $G(F_{\wp_i})$.  Moreover 
$g_i\tilde g \in \mc C(F_{\wp_i})$ for each $i$.

Hence we may write 
$g_i\tilde g = h_iu_i u_i'$, where $h_i \in H(F_{\wp_i})$, $u_i \in 
\mc U(F_{\wp_i})$, and $u_i' \in \mc U'(F_{\wp_i})$.
By Proposition~\ref{H and S}(\ref{almost anisotropic factorization}) in the Appendix, we may write $h_i = h_i's_i$ for some $h_i' \in H(\wh R_{\wp_i})$ and $s_i \in S(F_{\wp_i})$.
Let $\bar h_i'$ be the reduction of $h_i'$ modulo $\wp_i$; thus $\bar h_i' \in H(k_{\wp_i})$, where $k_{\wp_i}$ is the residue field of the complete discrete valuation ring $\wh R_{\wp_i}$.
Since $k_{\wp_i}$ is also the fraction field of the complete discrete valuation ring $\wh R_P/{\wp_i}$, it again follows from Proposition~\ref{H and S}(\ref{almost anisotropic factorization})
that $\bar h_i' = \bar h_i'' \bar s_i'$ for some $\bar h_i'' \in H(\wh R_P/{\wp_i})$ and $\bar s_i' \in S(k_{\wp_i})$.  By the smoothness of $H$
and $S$, we may lift these elements to some $h_i'' \in H(\wh R_P)$ and $s_i' \in S(\wh R_{\wp_i})$.  Thus $h_i'$ and $h_i''s_i'$ are elements of $H(\wh R_{\wp_i})$ that have the same image modulo $\wp_i$.
Hence $h_i' = h_i''s_i'h_i'''$ for some $h_i''' \in H_{\wp_i}(\wh R_{\wp_i})$ and thus
$g_i\tilde g = h_iu_i u_i' = h_i's_iu_iu_i' =
h_i''s_i'h_i'''s_iu_iu_i' = h_i''h_i'''s_i's_iu_iu_i'$.

Consider the map $\psi:S \times \mc U \times \mc U' \to G$ defined by $\psi(s,u,u')= suu'$.  Since $S$ is a split torus, $S_{F_P}$ is a rational variety.  Moreover, $\mc U_{F_P}$ and $\mc U'_{F_P}$ are also rational varieties, by \cite[Exp.~XXVI, \S 2, Corollaire~2.5]{SGA3.3} (where the notation $\mathrm{W}$ in that corollary is defined 
in \cite[I, D\'efinition~4.6.1]{SGA3.1}).
By weak approximation and continuity we may find
$s \in S(F_P)$, $u \in \mc U(F_P)$, and $u' \in \mc U'(F_P)$
such that
$s_i's_iu_iu_i'(suu')^{-1} = \psi(s_i's_i,u_i,u_i')\psi(s,u,u')^{-1}
\in G_{\wp_i}(\wh R_{\wp_i})$ for all $i$.
We can rewrite this element as
$h_i'''^{-1}h_i''^{-1}g_i\tilde g(suu')^{-1}$, by the above expression for 
$g_i\tilde g$.  But $h_i''' \in H_{\wp_i}(\wh R_{\wp_i}) \subseteq
G_{\wp_i}(\wh R_{\wp_i})$.  So $h_i''^{-1}g_i\tilde g(suu')^{-1} \in G_{\wp_i}(\wh R_{\wp_i})$,
and we can take $g = \tilde g(suu')^{-1} \in G(F_P)$
and $h_i^* = h_i''^{-1} \in H(\wh R_P)$.  We observed that the element $\tilde g$ is $\R$-trivial in each $G(F_{\wp_i})$; and so are $s,u,u'$, by the rationality of the groups $S_{F_P},\mc U_{F_P}, \mc U'_{F_P}$.  Hence $g$ is also $\R$-trivial in each $G(F_{\wp_i})$, as asserted.
\end{proof}

\begin{cor}\label{atp}
Let $\XX$, $P$, and $G$ be as in Proposition~\ref{branch factorization}, let $\wp$ be a branch of the reduced closed fiber of $\XX$ at $P$, and let $g_\wp \in G(F_\wp)$. Then there exists $g_P \in G(F_P)$ such that $g_\wp g_P \in G_\wp(\wh R_\wp)$.
\end{cor}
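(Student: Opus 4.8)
The plan is to deduce this directly from Proposition~\ref{branch factorization}, applied with $r=1$, and then use only the normality of $G_\wp(\wh R_\wp)$ in $G(\wh R_\wp)$ to remove the auxiliary factor that proposition leaves behind. So first I would invoke Proposition~\ref{branch factorization} for the single branch $\wp$ at $P$ and the given element $g_\wp \in G(F_\wp)$. This produces an element $g \in G(F_P)$ (which is moreover $\R$-trivial in $G(F_\wp)$, though that will not be needed here) together with an element $h^* \in H(\wh R_P) \subseteq G(\wh R_P)$ such that $h^* g_\wp g \in G_\wp(\wh R_\wp)$; equivalently, $g_\wp g \in G_{\rm s}(\wh R_\wp) = G(\wh R_P)\,G_\wp(\wh R_\wp)$.

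It then remains to transfer the factor $h^*$ from the left to the right, at the cost of modifying the $F_P$-component. Recall that $G_\wp(\wh R_\wp)$ is the kernel of $G(\wh R_\wp) \to G(k_\wp)$ and hence is a normal subgroup of $G(\wh R_\wp)$, and that $G(\wh R_P)$ is a subgroup of $G(\wh R_\wp)$. Writing $\nu := h^* g_\wp g \in G_\wp(\wh R_\wp)$, we have $g_\wp g = (h^*)^{-1}\nu$ in $G(F_\wp)$, so
\[
g_\wp (g h^*) = (h^*)^{-1}\,\nu\, h^* \in G_\wp(\wh R_\wp),
\]
because $(h^*)^{-1} \in G(\wh R_P) \subseteq G(\wh R_\wp)$ and $G_\wp(\wh R_\wp)$ is normal in $G(\wh R_\wp)$. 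Thus I would set $g_P := g h^*$, which lies in $G(F_P)$ since $g \in G(F_P)$ and $h^* \in G(\wh R_P) \subseteq G(F_P)$, and conclude $g_\wp g_P \in G_\wp(\wh R_\wp)$, as required.

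The entire substance of the statement is contained in Proposition~\ref{branch factorization} itself — producing the $\R$-trivial correction term $g$ and pushing the product into $G_{\rm s}(\wh R_\wp)$ — so there is no genuine obstacle left in the corollary. The only point to be careful about is purely formal: $G_{\rm s}(\wh R_\wp)$ is a double-sided object ($G(\wh R_P)G_\wp(\wh R_\wp) = G_\wp(\wh R_\wp)G(\wh R_P)$), and it is precisely the normality of $G_\wp(\wh R_\wp)$ that lets one absorb the $G(\wh R_P)$-part into the $F_P$-factor $g_P$ rather than leaving it on the $F_\wp$-side.
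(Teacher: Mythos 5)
Your proof is correct and follows essentially the same route as the paper: both deduce the corollary immediately from Proposition~\ref{branch factorization} and then absorb the leftover $G(\wh R_P)$-factor into $g_P$ using the normality of $G_\wp(\wh R_\wp)$ in $G(\wh R_\wp)$ (the paper does this implicitly via the two-sided decomposition $G_{\rm s}(\wh R_\wp) = G_\wp(\wh R_\wp)G(\wh R_P)$, setting $g_P = g g_0^{-1}$, while you conjugate explicitly). The two arguments differ only in this bookkeeping detail.
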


\begin{proof}
By Proposition~\ref{branch factorization}, there is an element $g \in G(F_P)$
such that $g_\wp g \in G_{\rm s}(\wh R_\wp)$.  That is, $g_\wp g = g'_\wp g_0$ with
$g'_\wp \in G_\wp(\wh R_\wp)$ and $g_0 \in G(\wh R_P)$.  We may thus set $g_P = g g_0^{-1}$.
\end{proof}

\section{Locally trivial torsors} \label{local triv sec}

In this section, we will prove a result on lifting torsors over a semi-global field $F$ to torsors over a model of $F$ that are trivial over the reduced closed fiber; see Theorem~\ref{torsors extend}.  This theorem will be key to proving the local-global results of Section~\ref{lgp mon tree sec}.  Throughout this section, we 
restrict attention to normal crossings models $\XX$ of a semi-global field $F$ over a complete discrete valuation ring $R$.  As before, we let $K$ and $k$ be the fraction field and residue field of $R$, respectively. 

We begin this section by proving a local-global result for torsors on a patch (Proposition~\ref{sha-u-p}).  We then use that to prove the equivalence of various notions of a $G$-torsor being locally trivial.  Namely, in Theorem~\ref{equalshas} we
show that for $G$ a reductive group over $R$, and $F$ a semi-global field over $R$, the local-global obstruction sets $\Sha_\PP(F,G)$ are independent of the choice of $\PP$ and even of the choice of normal crossings model $\XX$; viz., they are all equal to $\Sha(F,G)$.  Hence by Proposition~\ref{sha containments}, if $X$ is the reduced closed fiber of a normal crossings model $\XX$, the sets $\Sha_\PP(F,G)$ are also all equal to $\Sha_X(F,G)$.  This assertion is similar to that of \cite[Thm.~4.4]{CHHKPS}, which concerned the case of a torus over $\XX$, though the proof there was different (it relied on flasque resolutions of tori).

Afterwards, we turn our attention to a special class of normal crossings models that was defined in \cite{CHHKPS}; viz., those models whose reduction graph is a monotonic tree (see below).  
For such a model $\XX$, we show in Theorem~\ref{torsors extend} that any $G$-torsor over $F$ whose class lies in $\Sha(F,G)$ can be lifted to a torsor over $\XX$ that is trivial along the reduced closed fiber $\XX_k^{\rm red}$.  Theorem~\ref{torsors extend} is proven using Proposition~\ref{anisotropic factorization}, a global factorization result that relies on the local factorization results of Section~\ref{factor sec}.

\subsection{Comparison of obstruction sets}
In this subsection, we prove a comparison result for obstructions sets. A key ingredient is the following 

\begin{prop} \label{sha-u-p}
Let $\XX$ be a normal crossings model of a semi-global field $F$ over a discrete valuation ring $R$, take an irreducible component $Z$ of its reduced closed fiber $\XX_k^{\red}$, and let $V$ be a nonempty affine open subset of $Z$ that does not intersect any
other irreducible component of $\XX_k^{\red}$.  Let $P \in V$ be a closed point and let $W = V \smallsetminus \{ P \}$. Let $\xi \in H^1(F_V, G)$, where $G$ is
a reductive group over $R$.   If $\xi$ is trivial over $F_W$ and $F_{P}$, then $\xi$ is trivial over $F_V$.
\end{prop}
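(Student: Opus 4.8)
The plan is as follows. Since $V$ is an affine open subset of an irreducible component $Z$ of $\XX_k^{\red}$ and $V$ meets no other component, the curve $W=V\smallsetminus\{P\}$ is again an affine open subset of $Z$, and $\XX_k^{\red}$ is unibranched at $P$; let $\wp$ be the unique branch at $P$ on $W$. I would regard the patch $\wh R_V$ (which is itself $(t)$-adically complete, with reduced closed fiber $V=\Spec(\wh R_V/\sqrt{(t)})$) as subdivided into the two patches $\wh R_W$ and $\wh R_P=\wh{\mc O}_{\XX,P}$, with overlap $\wh R_\wp$, exactly as in the refinement discussion of Section~\ref{patching subsec}. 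By the patching equivalence of categories for torsors under the smooth affine group scheme $G$ (\cite[Corollary~3.1.5]{HKL} at the level of rings, \cite[Corollary~3.6]{HHK15} at the level of fields) together with Proposition~\ref{double coset lemma}, the hypothesis that $\xi$ is trivial over $F_W$ and $F_P$ means that $\xi$ lies in $\ker\bigl(H^1(F_V,G)\to H^1(F_W,G)\times H^1(F_P,G)\bigr)\simeq G(F_W)\backslash G(F_\wp)/G(F_P)$ and corresponds to the double coset of some $g_\wp\in G(F_\wp)$. It therefore suffices to show that $g_\wp\in G(F_W)\cdot G(F_P)$.

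First I would make $g_\wp$ ``small,'' using the local factorization of Section~\ref{factor sec}. By Corollary~\ref{atp}, applied at $P$ to the reductive group $G$ over $\wh R_P$ and the branch $\wp$, there is an element $g_P\in G(F_P)$ with $g_\wp g_P\in G_\wp(\wh R_\wp)$; since right multiplication by an element of $G(F_P)$ does not change the double coset, I may replace $g_\wp$ by $g_\wp g_P$ and so assume that $g_\wp\in G_\wp(\wh R_\wp)$, i.e.\ that $g_\wp\in G(\wh R_\wp)$ reduces to the identity in $G(k_\wp)$. It is precisely here that reductivity of $G$ is used (through Corollary~\ref{atp} and Proposition~\ref{branch factorization}); the rest of the argument needs only that $G$ is smooth and affine, together with the fact that $P$ has a single branch. (If $P$ lay on two components one could in general only push $g_\wp$ into $G_{\rm s}(\wh R_\wp)$, and the residual constraint on specializations is exactly the source of the obstruction in the general situation.)

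Finally I would argue that such a ``small'' transition element yields a trivial torsor over the patch. Since $g_\wp\in G(\wh R_\wp)$, gluing the trivial torsors over $\wh R_W$ and $\wh R_P$ along $g_\wp$ produces, via the ring-level equivalence above and Proposition~\ref{double coset lemma}, a $G$-torsor $\eta$ over $\wh R_V$ whose class in $G(\wh R_W)\backslash G(\wh R_\wp)/G(\wh R_P)$ is that of $g_\wp$. Because $g_\wp$ reduces to the identity modulo $\sqrt{(t)}$, the base change of $\eta$ to the reduced closed fiber $V$ is obtained by gluing trivial torsors along the identity, hence is trivial. But $\wh R_V$ is $(t)$-adically complete and $G$ is smooth, so the reduction map $H^1(\wh R_V,G)\to H^1(\wh R_V/\sqrt{(t)},G)$ is injective by \cite[Theorem~1]{Strano} (exactly as in the proof of Proposition~\ref{GPS tori}). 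Hence $\eta$ is trivial over $\wh R_V$, which by Proposition~\ref{double coset lemma} forces $g_\wp\in G(\wh R_W)\cdot G(\wh R_P)\subseteq G(F_W)\cdot G(F_P)$. Thus the double coset of $\xi$ is trivial, and $\xi$ is trivial over $F_V$, as required. The one step carrying real content is the initial reduction via Corollary~\ref{atp} (which rests on the structure theory of reductive groups recalled in the Appendix); granting that, the remainder is the formal patching and completeness bookkeeping just described.
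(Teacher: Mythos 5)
Your first half coincides with the paper's proof: both identify $\xi$ with a class in $G(F_W)\backslash G(F_\wp)/G(F_P)$ (the paper justifies this bijection via \cite[Theorem~2.4]{HHK15} together with \cite[Proposition~3.9]{HHK15a}, which is the right reference for subdividing the patch $F_V$ into $F_W$ and $F_P$ — note that \cite[Corollary~3.6]{HHK15} as you cite it is stated for the decomposition of $F$ itself, not of $F_V$), and both then apply Corollary~\ref{atp} to move the representative into $G_\wp(\wh R_\wp)$. That is indeed the only step using reductivity, as you say.

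The endgame is where you diverge, and where your write-up has a gap in justification. You invoke an equivalence of categories
$\tors_G(\wh R_V) \to \tors_G(\wh R_W)\times_{\tors_G(\wh R_\wp)}\tors_G(\wh R_P)$
(together with its compatibility with reduction modulo $\sqrt{(t)}$) in order to build $\eta$ over $\wh R_V$ and to conclude from its triviality that $g_\wp\in G(\wh R_W)G(\wh R_P)$. But the ring-level patching statements actually established in the paper (Corollaries~\ref{model double coset}, \ref{closed fiber double coset}, \ref{torsors closed fiber}, resting on \cite[Corollaries~3.1.2, 3.1.5]{HKL}) are for the full model $\XX$ with its standard $\PP/\UU/\BB$ decomposition, not for $\Spec(\wh R_V)$ subdivided along $\{W,P\}$; "the ring-level equivalence above" does not literally cover your situation. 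The paper's proof is engineered precisely to avoid this: it globalizes the datum by setting $g_{\wp'}=1$ at every other branch, obtains a torsor $\tilde\zeta$ over $\XX$ via Corollary~\ref{model double coset}, observes that its restriction to $\XX_k^{\red}$ is trivial by Corollary~\ref{closed fiber double coset}, and only then applies \cite[Theorem~1]{Strano} over $\wh R_V$. Your local argument is very likely salvageable — either by checking that the patching results of \cite{HKL} apply to $\wh R_V$ with this subdivision, or simply by performing the paper's globalization trick — but as written the existence and injectivity of the double coset description over $\wh R_V$ is asserted rather than proved, and that is the one substantive point you need to repair.
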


\begin{proof}
Let $\wp$ be the unique branch at $P$ on $V$ (or equivalently, on $W$).  In the situation of
Definition~\ref{def_sha},
let $Y_\bullet$ be the commutative diagram of schemes associated to
$Y_0=\Spec(F_\wp)$, $Y_1=\Spec(F_W)$, $Y_2=\Spec(F_P)$, and $Y=\Spec(F_V)$.
Since $\xi$ is trivial over $F_W$ and $F_{P}$, it lies in $\Sha_P(F_V, G) := \Sha(Y_\bullet, G)$.

By \cite[Theorem 2.4]{HHK15} and \cite[Proposition 3.9]{HHK15a}, the natural map
\[\Sha_P(F_V, G) \to G(F_W) \backslash G(F_{\wp}) \slash G(F_P)\]
is a bijection.  Let
$g_0 \in G(Y_0) = G(F_\wp)$ be a representative of the double coset corresponding to $\xi$.
By Corollary~\ref{atp},
there exists $g_2 \in G(Y_2) = G(F_P)$ such that $g_\wp:=g_0g_2 \in G_\wp({\wh R}_\wp) \subset G(F_\wp)$.
The elements $g_0,g_\wp$ lie in the same double coset, and so $g_\wp$ also represents $\xi$.

Let $\PP \subset \XX_k^{\red}$ be the finite set of closed points consisting of the points in $\ov W \smallsetminus W$ together with the points at which irreducible components of $\XX_k^{\rm red}$ intersect.  Let $\UU$ be the set of connected components of $\XX_k^{\rm red} \smallsetminus \PP$, and let $\BB$ be the set of branches at the points of $\PP$.
By \cite[Corollary~3.6]{HHK15}, the natural map
\[\Sha_\PP(F, G) \to \prod_{U' \in \UU} G(F_{U'}) \backslash \prod_{\wp' \in \BB} G(F_{\wp'}) \slash \prod_{P' \in \PP} G(F_{P'})\]
is a bijection.  For $\wp' \in \BB$ with $\wp' \ne \wp$, let $g_{\wp'} = 1$, and
let $\zeta \in \Sha_\PP(F, G)$
be the $G$-torsor over $F$ corresponding to the class of
$(g_{\wp'})_{\wp' \in \BB} \in \prod_{\wp' \in \BB} G(F_{\wp'})$ in this double coset space.  Thus $\zeta$ induces $\xi$ under the natural map $H^1(F,G) \to H^1(F_V,G)$.

Consider the subset $\Sha_\PP(\XX,G) \subset H^1(\XX,G)$ consisting of the $G$-torsors over $\XX$ that are trivial over each $\wh R_{P'}$ and each $\wh R_{U'}$, for $P' \in \PP$ and $U' \in \UU$.
The natural map
\[\Sha_\PP(\XX, G) \to \prod_{U' \in \UU} G(\wh R_{U'}) \backslash \prod_{\wp' \in \BB} G(\wh R_{\wp'}) \slash \prod_{P' \in \PP} G(\wh R_{P'})\]
is bijective, by Corollary~\ref{model double coset}.
Let $\tilde \zeta$ be the $G$-torsor over $\XX$ corresponding to the class of $(g_{\wp'})_{\wp' \in \BB} \in \prod_{\wp' \in \BB} G(\wh R_{\wp'})$ in this double coset space.  Thus $\tilde \zeta$ induces $\zeta$ under the natural map $H^1(\XX,G) \to H^1(F,G)$, and hence also induces $\xi \in H^1(F_W,G)$.

Consider the restriction of $\tilde \zeta$ to a $G$-torsor on $\XX_k^{\rm red}$.  Since each $g_{\wp'}$ is congruent to $1$ modulo $\wp' = \sqrt{t\wh R_{\wp'}}$,
this restriction is trivial by Corollary~\ref{closed fiber double coset}.  But the natural map $H^1(\wh R_V,G) \to H^1(k[V],G)$ is injective, by smoothness of $G$ and completeness of $R_V$ (see \cite{Strano}, Theorem~1).
Thus the generic fiber $\zeta$ of $\tilde\zeta$ is a trivial $G$-torsor over $F$.  Hence its image $\xi \in H^1(F_V,G)$ is the trivial $G$-torsor over $F_V$.
\end{proof}

The following assertion generalizes \cite[Theorem~4.4]{CHHKPS} from the case of tori to the case of reductive groups.  In the torus case we had permitted the group to be defined over $\XX$, whereas in the next theorem we require it to be defined over $R$.

\begin{thm} \label{equalshas}
Let $\XX$ be a normal crossings model of a semi-global field $F$ over a complete discrete valuation ring $R$ having residue field $k$, and let $X = \XX_k^{\red}$.  As in Section~(\ref{patching subsec}), let $\PP$ be a non-empty finite set of closed points of $\XX_k^{\rm red}$ containing all the singular points.  Let
$G$ be a reductive group over $R$.  
Then $\Sha_{\PP}(F, G) = \Sha_X(F, G) = \Sha(F, G)$.
\end{thm}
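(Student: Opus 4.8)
The plan is to establish the chain of equalities by first showing that $\Sha_\PP(F,G)$ does not depend on the choice of $\PP$ (within a fixed normal crossings model $\XX$), then that it does not depend on the choice of $\XX$, and finally identifying the common value with $\Sha(F,G)$ and with $\Sha_X(F,G)$. By Proposition~\ref{sha containments}(\ref{ShaX equals Sha}) we already know $\Sha_X(F,G) = \Sha(F,G)$ since $G$ is reductive over $\XX$, and by Proposition~\ref{sha containments}(\ref{ShaX union}) we have $\Sha_X(F,G) = \bigcup_\PP \Sha_\PP(F,G)$; so the only real content is to prove that all the $\Sha_\PP(F,G)$ coincide, i.e.\ that the direct system of double coset spaces from~(\ref{Sha_P dbl coset}) is eventually constant — in fact, that every transition map in it is a bijection.

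The first main step is \emph{$\PP$-independence for a fixed model}. By Proposition~\ref{sha containments}(\ref{ShaP containments}), it suffices to show that if $\PP' \supseteq \PP$ differ by a single new (necessarily regular) point $P_0$ lying on a component $Z$, then $\Sha_{\PP'}(F,G) \subseteq \Sha_\PP(F,G)$. So take $\xi \in \Sha_{\PP'}(F,G)$. The new point $P_0$ lies in the interior of some $U \in \UU$; writing $U = W \sqcup \{P_0\}$ with $W = U\smallsetminus\{P_0\} \in \UU'$, the class $\xi$ is trivial over $F_W$ and over $F_{P_0}$ by definition of $\Sha_{\PP'}$. Shrinking $U$ if necessary so that it meets no other component (which is harmless, as it only enlarges $\PP$ further, and one checks the argument is stable under this), Proposition~\ref{sha-u-p} applies and gives that $\xi$ is already trivial over $F_U$. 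Hence $\xi$ is trivial over all of $F_\UU$ and $F_\PP$, i.e.\ $\xi \in \Sha_\PP(F,G)$. (Concretely, in the double coset picture this is exactly the statement that the branch component $g_{\wp_0}$ at the new point can be absorbed into $G(F_U)\cdot G(F_{P_0})$, which is what Corollary~\ref{atp}, used inside the proof of Proposition~\ref{sha-u-p}, delivers.) This proves $\Sha_\PP(F,G) = \Sha_{\PP'}(F,G)$, and since any two choices of $\PP$ on $\XX$ admit a common refinement, all $\Sha_\PP(F,G)$ for a fixed $\XX$ agree. Combined with Proposition~\ref{sha containments}(\ref{ShaX union}) this already gives $\Sha_\PP(F,G) = \Sha_X(F,G) = \Sha(F,G)$ for the given model $\XX$.

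The second step, \emph{model-independence}, would then follow formally: any two normal crossings models $\XX$, $\XX'$ of $F$ are dominated by a common normal crossings model $\XX''$ (by resolution of the closure of the graph of the birational map, as in \cite{Lip75,Lip78}), and passing to $\XX''$ refines the sets of points/branches coming from either $\XX$ or $\XX'$; the previous paragraph, applied on $\XX''$, shows the corresponding obstruction sets all equal $\Sha_{X''}(F,G) = \Sha(F,G)$, which is visibly model-independent. (Alternatively, and perhaps more cleanly, one simply notes that $\Sha(F,G)$ is defined intrinsically via all divisorial valuations, and the equality $\Sha_X(F,G) = \Sha(F,G)$ from Proposition~\ref{sha containments}(\ref{ShaX equals Sha}) already holds for every regular — in particular every normal crossings — model, so model-independence of $\Sha_X$ and of each $\Sha_\PP$ is automatic once $\PP$-independence is known.)

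The main obstacle is the single-point reduction step, and that difficulty has essentially been isolated and dispatched in Proposition~\ref{sha-u-p}: the crux there is producing, via Corollary~\ref{atp} (hence ultimately via the factorization Proposition~\ref{branch factorization}), a double-coset representative whose branch component at the new point is congruent to $1$ modulo $\sqrt{(t)}$, so that the lifted torsor over the model restricts trivially to the closed fiber and one can then invoke injectivity of $H^1(\wh R_V,G)\to H^1(k[V],G)$ (\cite{Strano}, Theorem~1) to conclude triviality over $F_V$. One small point to be careful about is the hypothesis in Proposition~\ref{sha-u-p} that $V$ not meet any other component: when passing from $\PP$ to $\PP'$ by adding a regular point $P_0$, the ambient $U\in\UU$ may touch singular points on its closure, but since $P_0$ is an interior regular point we may freely shrink $U$ to a smaller affine neighborhood $V\subseteq U$ of $P_0$ disjoint from the other components before applying the proposition, and triviality over the smaller $F_V$ together with triviality over the fields attached to the complementary pieces recovers triviality over $F_U$; formally this is just another instance of the $\PP$-refinement argument, so no circularity arises.
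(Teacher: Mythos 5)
Your proposal is correct and follows essentially the same route as the paper: reduce via Proposition~\ref{sha containments}(\ref{ShaX union}),(\ref{ShaX equals Sha}) to showing $\Sha_\PP(F,G)=\Sha_{\PP'}(F,G)$ for refinements $\PP'\supseteq\PP$, and absorb the added regular points one at a time using Proposition~\ref{sha-u-p}. (The paper organizes the induction over the points of $U\smallsetminus V$ for each fixed $U\in\UU$ rather than over single-point enlargements of $\PP$, and your worry about $U$ meeting other components is moot since $\PP$ already contains all singular points, so every $U\in\UU$ automatically satisfies the hypothesis of Proposition~\ref{sha-u-p}.)
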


\begin{proof}  Since $G$ is a reductive group over $R$,
it is in particular reductive over $\XX$. So by parts~(\ref{ShaX union}) and~(\ref{ShaX equals Sha}) of Proposition~\ref{sha containments},
\[\Sha(F, G) = \Sha_X(F,G) = \cup_{\PP'} \Sha_{\PP'}(F, G),\]
where $\PP'$ runs over
all finite sets of closed points of $X$ containing  $\PP$.
Thus it suffices to show that the containment $\Sha_{\PP}(F, G) \subseteq
\Sha_{\PP'}(F, G)$ is an equality, for all finite subsets $\PP'$  of closed points of $\XX$
containing  $\PP$.

So let $\PP'$ be such a finite subset, and let $\xi \in \Sha_{\PP'}(F, G)$.
For each $P \in \PP$, we have $P \in \PP'$ and hence $\xi$ is trivial over $F_P$.

To prove $\Sha_{\PP} (F, G) = \Sha_{\PP'}(F, G)$, it remains to show that
$\xi$ is  trivial over $F_U$, for each irreducible component $U$ of $X\smallsetminus \PP$.
Since $\PP \subseteq \PP'$, there exists a  unique irreducible component  $V$ of $X\smallsetminus \PP'$ such that $V \subseteq U$.
Thus $\xi$ is trivial over $F_V$, since $\xi \in \Sha_{\PP'}(F, G)$.

Suppose $V \neq U$, and let $P \in U  \smallsetminus V$.
Thus $P \in \PP' \smallsetminus \PP$.   In particular, $P$ is a regular point of $X$. 
Also, $\xi$ is trivial over $F_P$, since $\xi \in \Sha_{\PP'}(F, G)$.
Hence,  by Proposition~\ref{sha-u-p}, $\xi$ is trivial over $F_{V \cup \{P\}}$.  Since $U \smallsetminus V$ is finite, it follows by induction that
 $\xi$ is trivial over $F_U$.  Thus $\xi \in \Sha_{\PP}(F, G)$.
 \end{proof}
 
Thus each of the double coset spaces
$\Sha_\PP(F,G) \simeq G(F_\UU)\backslash G(F_\BB)/G(F_\PP)$ in~(\ref{Sha_P dbl coset}) of Section~\ref{lgp sgp}, and not just their direct limit, becomes identified with $\Sha(F,G)$ 
under the hypotheses of Theorem~\ref{equalshas}.

\subsection{Monotonic trees}\label{subsec mono}

As before, let $F$ be a semi-global field with normal crossings model $\XX$.  Choose sets $\PP, \UU, \BB$ as in Section~\ref{patching subsec}, and let $\Gamma$ be the corresponding reduction graph as defined there.  
To each vertex $v \in \PP \cup \UU$ of $\Gamma$ we associate the field $\kappa(v)$ as before (viz., $\kappa(P)$ or $\kappa(U)$; see the beginning of Section~\ref{factor sec}).  Following \cite{CHHKPS}, Section~7, we
say that the reduction graph $\Gamma$ is a {\em monotonic tree} if it is a tree and there is some vertex $v_0$ (the {\em root}) with the following property:
If $v,w$ are vertices of $\Gamma$ and $v$ is the parent of $w$ (i.e., $v,w$ are adjacent, and $v$ lies on the unique (non-repeating) path connecting $v_0$ to $w$)
then $\kappa(v) \subseteq \kappa(w)$. 

If $P \in \PP$ lies on the closure of $U \in \UU$, then $\kappa(P)$ necessarily contains $\kappa(U)={\mc O}(\bar U)$; hence if $P$ is the parent of $U$ on a monotonic tree then necessarily $\kappa(P)=\kappa(U)$. For that reason, the root of a monotonic tree can always be chosen so as to correspond to some $U_0 \in \UU$; and we will always do so from now on.  
For every vertex $v$ of $\Gamma$, we may consider its {\em distance} from $v_0$; viz., the number of edges in the unique path from $v_0$ to $v$.  Since each edge connects an element of $\UU$ to an element of $\PP$,
the vertices corresponding to elements of $\UU$ have even distance, while those corresponding to elements of $\PP$ have odd distance.

As shown in \cite[Proposition~7.6]{CHHKPS}, if $\Gamma$ is a monotonic tree, then for every finite extension $k'/k$ of the residue field $k$, the graph associated to the base change from $k$ to $k'$ of the reduced closed fiber is also a tree.  Moreover, the converse holds if $k$ is perfect.

\begin{rem} \label{tree independent}
Enlarging the set $\PP$ preserves the monotonic tree property; so since any two choices of $\PP$ have a common refinement, the property is independent of the choice of $\PP$ on a given normal crossings model.  Moreover, the monotonic tree property is preserved under blow-up, and as a consequence it is independent of the choice of normal crossings model of $F$ (see \cite[Remark~7.1]{CHHKPS}). Thus it is meaningful to say that the reduction graph of a semi-global field $F$ is a monotonic tree, without referring to $\XX$ or $\PP$.
\end{rem}

As at the end of Section~\ref{lgp sgp}, given  a refinement $\PP',\UU',\BB'$
of $\PP,\UU,\BB$, to each element of
$G(F_\BB) = \prod_{\wp \in \BB} G(F_\wp)$ there is an associated element of $G(F_{\BB'})$, 
obtained by inserting the identity of $G$ at each entry indexed by an element of $\BB' \smallsetminus \BB$.

\begin{prop} \label{anisotropic factorization}
Let $\XX$ be a normal crossings model of a semi-global field $F$ over a complete discrete valuation ring $R$.  Let $\PP, \UU, \BB$ as in Section~\ref{patching subsec}, and assume
that the associated reduction graph is a monotonic tree.
    Let $G$ be a reductive group over $\basering$, and let $(g_\wp)_{\wp \in \BB} \in G(F_\BB)$.
  Then possibly after $\PP,\UU,\BB$ have been refined (and $(g_\wp)_\wp$ has been replaced by the associated element of $G(F_{\BB'})$),
  there exist $(g_U)_{U \in \UU} \in G(F_\UU)$ and $(g_P)_{P \in \PP} \in G(F_\PP)$  such that $g_U g_\wp g_P \in G_{\wp}(\wh R_\wp)$ whenever $\wp$ is a branch at $P$ along $U$.
\end{prop}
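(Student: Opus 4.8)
The plan is to induct on the monotonic tree $\Gamma$, working outward from the root $U_0 \in \UU$, and at each step use the local factorization result Corollary~\ref{atp} (or more precisely Proposition~\ref{branch factorization}) to absorb the branch elements into the ``specializable'' subgroups. Order the vertices of $\Gamma$ by distance from $v_0 = U_0$. I will process the points $P \in \PP$ in order of increasing distance, and for each such $P$ handle its (at most two) branches. The key structural input is that on a monotonic tree, if $P$ is the parent of $U$ then $\kappa(P) = \kappa(U)$ (noted right before Remark~\ref{tree independent}), so the residue fields along a path only grow, and when we arrive at a point $P$ all of whose neighbors except its parent $U$ are ``children,'' we have exactly the configuration needed to apply Lemma~\ref{singular factor} or Lemma~\ref{adjust over U}.

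Here is the order of steps. First, at the root component $U_0$, set $g_{U_0} = 1$. Now traverse the tree: for each $P \in \PP$ at distance $1$ from $U_0$, with parent branch $\wp_0$ (on $U_0$) and child branches $\wp_1,\dots,\wp_s$ on child components $U_1,\dots,U_s$: apply Proposition~\ref{branch factorization} at $P$ to the collection of branches $\{\wp_0,\wp_1,\dots,\wp_s\}$ and the elements $g_{U_0}g_{\wp_0}, g_{\wp_1},\dots,g_{\wp_s}$ (after possibly refining $\PP$ to create enough room, as the statement permits) to obtain $g \in G(F_P)$ that is $\R$-trivial in each $G(F_{\wp_j})$, together with $h_j^* \in H(\wh R_P) \subseteq G(\wh R_P)$, so that each product lands in $G_{\rm s}(\wh R_{\wp_j})$. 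Set $g_P := g$. This absorbs $\wp_0$ completely (via Corollary~\ref{atp}-type reasoning, since $g_{U_0}g_{\wp_0}g_P \in G_{\rm s}(\wh R_{\wp_0})$ and $g_{U_0}$ was already fixed) up to an element of $G(\wh R_P)$; that leftover $G(\wh R_P)$-factor will be killed by the freedom in choosing $g_{U_j}$ at the child components, using Lemma~\ref{adjust over U} (valid because $\kappa(U_j) = \kappa(P)$ on a monotonic tree) or, at a node internal to a single component, Lemma~\ref{singular factor} / Lemma~\ref{equal-in-rpmt}(\ref{equal nodal}). Continue inductively to the next distance level, treating each child component $U_j$'s far endpoint in turn. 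Because $\Gamma$ is a tree, each branch $\wp$ is incident to exactly one parent–child pair, so each $g_\wp$ is touched exactly once and there are no consistency conditions to verify across independent branches — this is precisely where the tree hypothesis is essential, as opposed to the cycle case treated later in the paper.

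The one subtlety, and the step I expect to be the main obstacle, is bookkeeping the interaction between the $G(F_P)$-corrections and the $G(F_U)$-corrections so that they do not interfere with branches already processed. Concretely: when we fix $g_P$ at a point $P$, this is an element of $G(F_P)$, which sits inside $G(F_\wp)$ for every branch $\wp$ at $P$ — including the parent branch $\wp_0$, where we have already committed $g_{U_0}$. We must arrange that $g_{U_0} g_{\wp_0} g_P$ lies in $G_{\wp_0}(\wh R_{\wp_0})$, not merely in $G_{\rm s}(\wh R_{\wp_0})$; the discrepancy is an element of $G(\wh R_P) = G(\wh R_{\wp_0}) \cap (\text{units})$, and it must be pushed forward onto the \emph{child} side and there absorbed into the yet-unchosen $g_{U_j}$'s. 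So the correct invariant to carry through the induction is: \emph{after processing all vertices up to distance $d$, for every branch $\wp$ between a processed point $P$ (distance $d$, odd) and its parent component $U$ (distance $d-1$), we have $g_U g_\wp g_P \in G_\wp(\wh R_\wp)$ exactly; and for every branch between $P$ and a not-yet-processed child component $U'$, we have $g_\wp g_P \in G_{\rm s}(\wh R_\wp)$, with the residual $G(\wh R_P)$-ambiguity to be eliminated when we choose $g_{U'}$.} Verifying that this invariant propagates is routine once stated correctly: the propagation step at a component $U'$ (distance $d+1$, handling its far point $P'$ at distance $d+2$) uses Lemma~\ref{adjust over U} to pick $g_{U'}$ killing the residual $G(\wh R_P)$-term on the near branch, then Proposition~\ref{branch factorization} at $P'$ for the far branches; the $\R$-triviality of all the $g_P$'s is automatic from Proposition~\ref{branch factorization} and of the $g_U$'s from the construction in Lemma~\ref{adjust over U} (which produces elements of $G(\wh R_U)$, but these need not be $\R$-trivial — and indeed the statement of the present proposition does not require $g_U, g_P$ to be $\R$-trivial, only the factorization, so no further work is needed there). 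Finally, since the refinements introduced at finitely many steps are all of the type permitted in the statement, replacing $(g_\wp)_\wp$ by its associated element of $G(F_{\BB'})$ at the outset is harmless, and the construction terminates after finitely many steps because $\Gamma$ is finite.
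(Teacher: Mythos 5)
Your inductive skeleton --- root the tree at $U_0$, set $g_{U_0}=1$, work outward by distance, apply Proposition~\ref{branch factorization} at each point, fix the parent branch exactly and push the residual onto the child side --- is the same as the paper's. The genuine gap is in the gluing step at a singular point $P$. You invoke Lemma~\ref{adjust over U} and Lemma~\ref{singular factor} to eliminate the residual $G(\wh R_P)$-ambiguity on the child branch, but both of those lemmas carry the hypothesis that the group is \emph{anisotropic} over $\kappa(P)$, and for an isotropic reductive $G$ their statements do not even parse: they rely on $G(F_\wp)=G(\wh R_\wp)$ and $G(k_\wp)=G(\wh R_P/\wp)$, which hold only in the anisotropic case (Proposition~\ref{BTR}). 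So the step you describe as ``routine once stated correctly'' is precisely where the content of the proof lies, and your argument as written does not go through for any $G$ that is isotropic over $\kappa(P)$.

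The paper's resolution, absent from your proposal, is a reduction to the anisotropic case: it lifts a maximal split torus of $G_{\kappa(P)}$ to a split torus $S$ over $\wh R_W$ with $W=U\cup\{P\}$ (this is where monotonicity, i.e.\ $\kappa(P)=\kappa(U)$, and Proposition~\ref{split tori lift} enter), forms $H=C_G(S)$ and the anisotropic quotient $H/S$, applies Lemmas~\ref{adjust over U} and~\ref{singular factor} to $H/S$, and then lifts the resulting adjustments back from $H/S$ to $H$ using Hilbert~90 and weak approximation in the rational group $S$. The lift of the adjusting element over $\wh R_U$ is obstructed by a class in $H^1(\wh R_U,S)$ and forces a further shrinking of $U$, i.e.\ a refinement of $\PP$ --- one of the two places (the other being making $W$ affine) where the ``possibly after refining'' clause of the statement is actually needed, neither of which appears in your argument. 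A repair closer to your instinct would be to stay inside $G_{\rm s}(\wh R_\wp)$ and use Lemma~\ref{equal-in-rpmt}(\ref{equal nodal}), which has no anisotropy hypothesis, after first matching the $\Theta_{\wp}$-images on the two branches at $P$ by lifting a suitable constant of $G(\kappa(P))=G(\kappa(U))$ into $G(\wh R_U)$; but that matching and gluing must be carried out explicitly, and your appeal to Lemma~\ref{adjust over U} does not accomplish it.
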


\begin{proof}
As above, we assume that the root of the reduction graph $\Gamma$ is a vertex $v_0$ corresponding to some $U_0 \in \UU$.
We need to define elements $g_v$ for all vertices $v$, and we will do this by induction on the distance from $v_0$.
Write $\PP^{\text{sing}}$ for the subset of $\PP$ consisting of singular points of $\XX_k^{\red}$ and write $\PP^{\text{reg}} \subseteq \PP$ for the subset of regular points.
Then the vertices corresponding to elements of $\PP^{\text{reg}}$ are leaves of the tree $\Gamma$, connected to the rest of the tree by just one edge; while for those in $\PP^{\text{sing}}$, there are two edges at the vertex, of which exactly one is on the path connecting the vertex to $v_0$.

The unique vertex of distance zero is $v_0$, corresponding to $U_0$, and we set $g_{v_0} = g_{U_0}$ equal to $1 \in G(F_{U_0})$.  For the inductive step, let $n \ge 0$ and assume that $g_v$ has been defined for every vertex $v$ of distance at most $2n$.  We will define $g_v$ for all vertices $v$ of distance $2n+1$ and $2n+2$ (which are respectively in $\PP$ and in $\UU$).
In the process, we may refine $\PP, \UU, \BB$ by adding finitely many regular points of $\XX_k^{\rm red}$ to $\PP$.  These points correspond to new terminal vertices of distance $2n+3$ from the root, and they are handled at the next step of the induction.  Note that the inductive process will still terminate, since refining $\PP, \UU, \BB$ multiple times can increase the maximum distance of vertices from the root by at most one in total.

Let $P\in \PP$ correspond to a vertex of distance $2n+1$ from $v_0$.  We first consider the case that $\PP^{\text{reg}}$.  Thus $P$ is unibranched on the reduced closed fiber, and is a leaf of the tree $\Gamma$.  The unique component $V \in \UU$ on which $P$ lies is the parent of $P$ in $\Gamma$, having distance $2n$ from $v_0$ (so that $g_V$ is already defined).  Let $\wp$ be the unique branch at $P$, and set $g_{\wp}'=g_Vg_{\wp} \in G(F_\wp)$.  By Corollary~\ref{atp}, there exists $g_P \in G(F_P)$ such that $g_\wp' g_P \in G_\wp(\wh R_\wp)$.
So then $g_Vg_{\wp}g_P = g_{\wp}'g_P\in G_{\wp}(\wh R_{\wp})$, as desired.

We next consider the case that $P \in \PP^{\text{sing}}$.  Then $P$ is the parent of a unique $U \in \UU$, since $P$ is not a leaf and since $\XX$ is a normal crossings model.  Here $U$ has distance $2n+2$ from $v_0$; and every vertex of distance $2n+2$ arises in this manner for a unique $P \in \PP^{\text{sing}}$ (since each vertex other than $v_0$ in the tree $\Gamma$ has a unique parent).  Let $V \in \UU$ be the unique parent of $P$; thus $V$ has distance $2n$, and $g_V$ is already defined by the inductive hypothesis.  Let $\wp_1$ be the branch at $P$ lying on $V$, and let $\wp_2$ be the branch at $P$ lying on $U$.

Let $W = U \cup \{P\}$. If needed, we delete a regular point from $U \subset W$, so that we may assume that $W$ is affine.  In this case we add to $\PP$ the point deleted from $W$, thereby refining $\PP, \UU, \BB$; and we set $g_{\wp}=1$ for $\wp$ the (unique) branch at that deleted point.  Note that this new point added to $\PP$ has distance $2n+3$ from $v_0$ in the enlarged graph.
Since $W$ is a non-empty affine open subset of an irreducible component of $\XX_k^{\rm red}$, there is an associated ring $\wh R_W$, as in the discussion before Lemma~\ref{adjust over U}.

Set $g_1:=g_Vg_{\wp_1} \in G(F_{\wp_1})$ and $g_2:=g_{\wp_2} \in G(F_{\wp_2})$, where the branches $\wp_1$ and $\wp_2$ at $P$ are as above.
We wish to define $g_P \in G(F_P)$ and $g_U \in G(F_U)$ such that
$g_Vg_{\wp_1}g_P \in G_{\wp_1}(\wh R_{\wp_1})$ and
$g_Ug_{\wp_2}g_P \in G_{\wp_2}(\wh R_{\wp_2})$.

Let $\bar S$ be maximal among the split tori in $G_{\kappa(P)} = G \times_R \kappa(P)$.  Since $P$ is the parent of $U$ in a monotonic tree, $\kappa(P)=\kappa(U)=\kappa(W)$.  Here $\kappa(W)$ is contained in the coordinate ring $k[W]$ of $W$.  
So $\bar S$
defines a torus $\bar S_W$ in $G_W := G \times_R k[W]$ via base change, and $\bar S_W$ remains split.  
By Proposition~\ref{split tori lift}(\ref{split lifting}) in the Appendix, we may therefore lift $\bar S_W$ to a split torus $S$ over $\wh R_W$. 
Here $S$ is also
split over $\wh R_P$ since $\wh R_W \subset \wh R_P$; and
$S_{\kappa(P)}=\bar S$.
Consider the group $H=C_G(S)$ over $\wh R_W$, and write $\bar H = H_{\kappa(P)}$.  Then $(H/S)_{\kappa(P)} = \bar H/\bar S$ is anisotropic over $\kappa(P)$ by applying Proposition~\ref{H and S}(\ref{aniso quotient}) over $\kappa(P)$,
since $\bar S$ is a maximal split torus in $G_{\kappa(P)}$ and since 
$\bar H = C_{G_{\kappa(P)}}(\bar S)$.
By Proposition~\ref{local anisotropy}, $H/S$ is therefore anisotropic over $F_P$ and $\wh R_P$, and thus over $\wh R_W$.

By Proposition~\ref{branch factorization}, there exist $g\in G(F_P)$ and $h_i \in  H(F_{\wp_i})$ for $i=1,2$ such that $h_ig_ig
\in G_{\wp_i}(\wh R_{\wp_i})$ for $i=1,2$.
Let $\bar h_i$ denote the image of $h_i$ in $(H/S)(F_{\wp_i})$. By Lemma~\ref{adjust over U} applied to the anisotropic reductive group $H/S$, there exists $\bar h_U \in (H/S)(\wh R_U)$ so that $\bar h_1$ and $\bar h_2 \bar h_U$
map to the same element in $(H/S)(\kappa(P))$. By Lemma~\ref{singular factor}, there is an element $\bar h_P\in (H/S)(\wh R_P)$ so that $\bar h_{\pi_1} := \bar h_P \bar h_1\in (H/S)_{\wp_1}(\wh R_{\wp_1})$
and $\bar h_{\pi_2} := \bar h_P \bar h_2 \bar h_U
\in (H/S)_{\wp_2}(\wh R_{\wp_2})$.
Since $\wh R_P$ is local and $S$ is a split torus, $H^1(\wh R_P,S)=0$ by Hilbert 90. So the cohomology exact sequence associated to $1\rightarrow S\rightarrow H\rightarrow H/S\rightarrow 1$ asserts that $H(\wh R_P)$ surjects onto $(H/S)(\wh R_P)$, and we may lift $\bar h_P$ to an element $h_P\in H(\wh R_P)$. For analogous reasons, we can lift $\bar h_{\pi_i}$ to some $h_{\pi_i}\in H_{\wp_i}(\wh R_{\wp_i})$.

We claim that after refining $\PP,\UU,\BB$ by deleting some finite collection of points from $U$ (and hence also from $W$), the element $\bar h_U$ may be lifted to some $h_U \in H(\wh R_U)$. To see this, consider the image $\xi$ of $\bar h_U$ under the coboundary
map $(H/S)(\wh R_U) \to H^1(\wh R_U,S)$.  Let $R_\eta$ be the local ring of $\XX$ at the generic point $\eta$ of $U$. Then $\xi$ has trivial image in $H^1(R_\eta,S)$, since $R_\eta$ is local and $S$ is a split torus; so there is a Zariski affine open neighborhood $\ms N = \Spec(E)$ of $\eta$ in $\XX$ such that $\xi$ maps to the trivial element in $H^1(E,S)$. Now $U' := \ms N \cap U$ is a dense open subset of $U$, and $E \subset R_{U'} \subset \wh R_{U'}$.  Thus $\bar h_U$, or equivalently its image $\bar h_{U'} \in (H/S)(\wh R_{U'})$, maps to the trivial element in $H^1(\wh R_U',S)$; hence
$\bar h_{U'}$ can be lifted to some $h_{U'} \in H(\wh R_{U'})$, as claimed.
We now proceed, replacing $U$ by $U'$ (and $W$ by $U' \cup \{P\}$).  We add the deleted points to $\PP$, and set $g_{\wp}=1$ for $\wp$ the corresponding branches.  (As before, these new points of $\PP$ have distance $2n+3$ from $v_0$, since $U'$ has distance $2n+2$.)

Since the two elements $h_1,h_P^{-1}h_{\pi_1} \in H(\wh R_{\wp_1})$ have the same image in $(H/S)(\wh R_{\wp_1})$, there exists $s_1\in S(\wh R_{\wp_1})$ such that $h_1=s_1h_P^{-1}h_{\pi_1}$.  Similarly, there exists $s_2 \in S(\wh R_{\wp_2})$ such that
$h_2 = s_2h_P^{-1}h_{\pi_2}h_U^{-1}$.
Hence $s_1h_P^{-1}h_{\pi_1}g_1g = h_1g_1g\in G_{\wp_1}(\wh R_{\wp_1})$ and
$s_2h_P^{-1}h_{\pi_2}h_U^{-1}g_2g = h_2g_2g \in G_{\wp_2}(\wh R_{\wp_2})$.
Since the split torus $S$ is rational, weak approximation yields elements $s_P\in S(\wh R_P)$ and $s_{\pi_i}\in S_{\wp_i}(\wh R_{\wp_i})$ for $i=1,2$ such that $s_i=s_{\pi_i}s_P$ for $i=1,2$.
Thus $G_{\wp_1}(\wh R_{\wp_1})$ contains $s_{\pi_1}s_Ph_P^{-1}h_{\pi_1}g_1g$, hence also
$s_Ph_P^{-1}h_{\pi_1}g_1g$.  Since $G(\wh R_P)$ normalizes $G_{\wp_1}(\wh R_{\wp_1})$, the group $G_{\wp_1}(\wh R_{\wp_1})$ also contains $h_{\pi_1}g_1gs_Ph_P^{-1}$ and thus $g_1gs_Ph_P^{-1}$.
Let $g_P = gs_Ph_P^{-1} \in G(F_P)$.  Thus
$g_Vg_{\wp_1}g_P = g_1g_P \in G_{\wp_1}(\wh R_{\wp_1})$, as desired.
Similarly, setting $g_U=h_U^{-1}$, we find that $g_Ug_{\wp_2}g_P
= h_U^{-1}g_2gs_Ph_P^{-1} \in G_{\wp_2}(\wh R_{\wp_2})$, also as desired.
This completes the inductive step for points of $\PP$ of distance $2n+1$ and elements of $\UU$ of distance $2n+2$, and thus completes the proof.
\end{proof}

Combined with the results in Section~\ref{cosets}, and in the notation of Section~\ref{lgp sgp}, the above proposition gives the following.

\begin{thm} \label{torsors extend}
Let $\XX$ be a normal crossings model of a semi-global field $F$ over a discrete valuation ring $R$, and assume that the reduction graph is a monotonic tree.
Let $G$ be a reductive group over $R$, and let $Z$ be a $G$-torsor over $F$ whose class is in $\Sha(F,G)$. Then $Z$ extends to a torsor over $\XX$ which is trivial when restricted to the reduced closed fiber ${\XX}_k^{\red}$.
\end{thm}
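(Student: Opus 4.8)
The plan is to combine the double coset description of $\Sha_\PP(F,G)$ with the global factorization statement of Proposition~\ref{anisotropic factorization}, using the compatibility diagram of Corollary~\ref{commu-diag} to pass between the semi-global field, the model, and the reduced closed fiber. Concretely, I would start by choosing a normal crossings model $\XX$ of $F$ whose reduction graph is a monotonic tree (which exists by hypothesis, and the property is model-independent by Remark~\ref{tree independent}), together with a set $\PP$ of closed points as in Section~\ref{patching subsec}, with associated $\UU$ and $\BB$. Since $G$ is reductive over $R$, Theorem~\ref{equalshas} gives $\Sha(F,G)=\Sha_\PP(F,G)$, so the class of $Z$ lies in $\Sha_\PP(F,G)$. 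By the double coset description~(\ref{Sha_P dbl coset}), $Z$ corresponds to the class of some $(g_\wp)_{\wp\in\BB}\in G(F_\BB)$.

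\textbf{Applying the factorization.} Next I would invoke Proposition~\ref{anisotropic factorization}: after refining $\PP,\UU,\BB$ to $\PP',\UU',\BB'$ (and replacing $(g_\wp)_\wp$ by the associated element of $G(F_{\BB'})$, which represents the same class since $\Sha_\PP(F,G)\subseteq\Sha_{\PP'}(F,G)$ compatibly with the double coset inclusion, again by Theorem~\ref{equalshas} the refinement does not change the set), we obtain $(g_U)_{U}\in G(F_\UU)$ and $(g_P)_P\in G(F_\PP)$ with $g_Ug_\wp g_P\in G_\wp(\wh R_\wp)$ for every branch $\wp$ at $P$ along $U$. Setting $g_\wp' := g_Ug_\wp g_P$, the element $(g_\wp')_{\wp}\in G(F_\BB)$ represents the same double coset, hence the same class $Z\in\Sha_\PP(F,G)$; moreover each $g_\wp'$ now lies in $G_\wp(\wh R_\wp)\subseteq G(\wh R_\wp)$.

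\textbf{Lifting to the model and restricting to the closed fiber.} Since each $g_\wp'$ actually lies in $G(\wh R_\wp)$, the element $(g_\wp')_\wp$ defines a class in the double coset space $G(\wh R_\UU)\backslash G(\wh R_\BB)/G(\wh R_\PP)$, which by Corollary~\ref{model double coset} is identified with $\Sha(\XX_\bullet,G)\subseteq H^1(\XX,G)$. Let $\til Z$ be the corresponding $G$-torsor over $\XX$. By the compatibility of the identifications in Corollary~\ref{commu-diag} (the top right square), $\til Z$ maps to the class of $(g_\wp')_\wp$ in $G(F_\UU)\backslash G(F_\BB)/G(F_\PP)=\Sha_\PP(F,G)$, i.e.\ to $Z$; so $\til Z$ is an extension of $Z$ to $\XX$. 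Finally, each $g_\wp'\in G_\wp(\wh R_\wp)$ has trivial image in $G(k_\wp)$, hence trivial image in $G(\wh R_\BB/\sqrt{(t)}\,)$; so by Corollary~\ref{torsors closed fiber}(\ref{torsors triv cl fib}) the restriction of $\til Z$ to $\XX_k^{\red}$ is the trivial $G$-torsor. This is exactly the assertion.

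\textbf{The main obstacle.} The substantive content is entirely packaged into Proposition~\ref{anisotropic factorization}, so the only real care needed here is bookkeeping about refinements: one must check that refining $\PP$ does not disturb the identifications, that the new model obtained by blow-ups/refinement still has a monotonic reduction graph, and that the representative $(g_\wp')_\wp$ lands in the $\wh R_\wp$-integral double coset space (and not merely the $F_\wp$ one) so that Corollary~\ref{model double coset} applies. All of these are handled by Theorem~\ref{equalshas}, Remark~\ref{tree independent}, and the explicit description of the inclusion of double coset spaces under refinement given at the end of Section~\ref{lgp sgp}; so the proof is essentially a matter of assembling those pieces in the right order.
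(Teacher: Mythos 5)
Your proof is correct and follows essentially the same route as the paper's: reduce to $\Sha_\PP(F,G)$ via Theorem~\ref{equalshas}, represent the class by $(g_\wp)_\wp$ in the double coset space, apply Proposition~\ref{anisotropic factorization} after refinement to obtain a representative in $\prod_\wp G_\wp(\wh R_\wp)$, and conclude via Corollary~\ref{torsors closed fiber}(\ref{torsors triv cl fib}). The extra bookkeeping you supply about refinements and the integral double coset space is exactly what the paper's (terser) proof implicitly relies on.
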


\begin{proof}
Consider an element $\xi$ in $\Sha(F,G)$.  Consider a non-empty collection of closed points $\PP$ on a normal crossings model $\XX$ of $F$, with $\PP$ containing all the singular points of the reduced closed fiber $\XX_k^{\red}$.  By Theorem~\ref{equalshas}, $\xi$ lies in $\Sha_{\PP}(F,G)$. By the double coset description of $\Sha_{\PP}(F,G)$ given in Corollary~3.6 of \cite{HHK15}, the cocycle $\xi$ is represented by a collection of elements $g_{\wp} \in G(F_{\wp})$, where $\wp$ runs over the branches at elements of $\PP$. By Proposition~\ref{anisotropic factorization}, this collection gives rise to a collection of elements $g_{\wp}'\in G_{\wp}(\wh R_{\wp})$ representing the same double coset class, and hence the same torsor over $F$, if $\PP$ is sufficiently large (i.e., possibly after refining the initial choice of $\PP$).
By Corollary~\ref{torsors closed fiber}(\ref{torsors triv cl fib}), this new collection
defines a torsor over $\XX$ whose reduction to the closed fiber is trivial.
\end{proof}

\begin{rem}
\begin{enumerate}
\renewcommand{\theenumi}{\alph{enumi}}
\renewcommand{\labelenumi}{(\alph{enumi})}
\item
It was already shown in \cite[Theorem~4.2]{CTPS12} that every $G$-torsor over $F$ arises from a $G$-torsor over $\XX$, even without assuming that the class of the given torsor lies in $\Sha_{\PP}(F,G)$ or assuming that the reduction graph is a monotonic tree.  But the key new point of Theorem~\ref{torsors extend} is that under the hypotheses stated there, the torsor over $\XX$ may be chosen so as to be trivial along the reduced closed fiber.  This will be essential in the next section. 
\item
In the above proof, Proposition~\ref{sha containments}(\ref{ShaX union}) could have been cited instead of Theorem~\ref{equalshas}, since the set $\PP$ is allowed to grow in the proof.  But the full strength of Theorem~\ref{equalshas} will be needed in the proof of Proposition~\ref{dbl coset surj}, and hence in the results in Sections~\ref{Sha computation section} and~\ref{sect: counterex} that build on that.
\end{enumerate}
\end{rem}

\section{Local-global principles for monotonic trees} \label{lgp mon tree sec}

In this section, building on Theorem~\ref{torsors extend}, we prove local-global principles for torsors in the context of monotonic trees.  First, in Theorem~\ref{main}, we obtain such a result for reductive groups over a complete discrete valuation ring $R$, extending \cite[Theorem~7.3]{CHHKPS}, which had considered the case of tori.
Afterwards we prove related results for groups that are not necessarily connected or reductive, especially in the case where $R$ is of equicharacteristic zero; i.e., where $R = k[[t]]$ for some field $k$ and parameter~$t$.

The proof of Theorem~\ref{main} below relies on \cite{GPS}, and to apply that we need additional mild assumptions.  Given a reductive group $G$ over a scheme $S$, there is a canonical semisimple normal subgroup $G^{\rm ss}$, called the derived subgroup of $G$, such that the quotient $G/G^{\rm ss}$ is a torus (see \cite[Exp.~XXII, Th\'eor\`eme~6.2.1, Remarque~6.2.2]{SGA3.3}).
The semisimple group $G^{\rm ss}$ has a simply connected cover $G^{\rm sc} \to G^{\rm ss}$; this is the universal central isogeny to $G^{\rm ss}$.  (See \cite[beginning of Section~1.2]{harder}.)  The kernel of this isogeny is a finite commutative $S$-group scheme $\mu(G^{\rm ss})$ of multiplicative type, called the {\em algebraic fundamental group} of $G^{\rm ss}$ (e.g., see \cite[Remark~2.3]{gonzalez}); this agrees with the \'etale fundamental group (as in \cite{SGA1}) over a field of characteristic zero.  More generally, write
\[\mu(G) := \ker(G^{\rm sc} \times_S {\rm rad}(G) \to G),\]
where ${\rm rad}(G)$ is the radical of the reductive group $G$ (this is trivial if $G$ is semisimple).  The group $\mu(G)$ is an extension of $\ker(G^{\rm ss} \times_S {\rm rad}(G) \to G)$ by $\ker(G^{\rm sc} \to G^{\rm ss})$; and so $\mu(G)$, like the latter two groups, is a finite $S$-group scheme of multiplicative type (see \cite[Exp.~XXII, 6.2.3]{SGA3.3}).

Now let $R$ be a complete discrete valuation ring having residue field $k$, let $S = \Spec(R)$, and let $G$ be a reductive group over $R$ (or equivalently, over $S$).
The group $\mu(G)$ is \'etale provided $\mu(G_k) = \mu(G)_k$ is; and that condition is automatic if $\cha(k)=0$ since then every $k$-group scheme is smooth.  More generally, since $\mu(G)$ is a finite central subgroup of $G^{\rm sc}$, it is \'etale if $\cha(k)$ does not divide the order of the center of $G^{\rm sc}$.  As the list on \cite[page~332]{PlaRap} shows, this includes the cases where 
$G^{\rm ss}$ is of type $B_n$, $C_n$, or $D_n$, with $\cha(k)\ne 2$;
or of type ${}^1A_n$ or ${}^2A_n$ if $\cha(k)$ does not divide $n+1$. 

The following proposition is a consequence of the main theorem (Theorem~1.1) of the paper~\cite{GPS} by Gille, Parimala and Suresh.

\begin{prop} \label{GPS prop}
Let $F$ be a semi-global field over a complete discrete valuation ring $R$ with residue field $k$, let $\XX$ be a regular model of $F$ over $R$,
and let $G$ be a reductive group over $\XX$ such that $\mu(G)$ is \'etale.
If an element $\zeta \in H^1(\XX, G)$ has trivial image in $H^1(\XX_k, G)$, then its image in $H^1(F, G)$ is also trivial.  More generally, if two elements
$\zeta, \zeta' \in H^1(\XX, G)$ have equal images in $H^1(\XX_k, G)$, then their images in $H^1(F, G)$ are also equal.
\end{prop}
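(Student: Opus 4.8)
The plan is to deduce the first assertion directly from \cite[Theorem~1.1]{GPS}, and then to reduce the second assertion to the first by the standard twisting argument. The first assertion is immediate: $\XX$ is a regular model of $F$ and $G$ is reductive over $\XX$ with $\mu(G)$ \'etale, so \cite[Theorem~1.1]{GPS} applies and says precisely that a $G$-torsor over $\XX$ which becomes trivial over the closed fiber becomes trivial over $F$. (Since $G$ is smooth, $H^1(\XX_k,G)=H^1(\XX_k^{\red},G)$, so it is immaterial whether we work with $\XX_k$ or with its reduction.)

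For the second assertion, suppose $\zeta,\zeta'\in H^1(\XX,G)$ have equal image in $H^1(\XX_k,G)$. Let $G'$ be the inner form of $G$ over $\XX$ determined by $\zeta'$, and let $\tau\colon H^1(\XX,G)\iso H^1(\XX,G')$ be the associated twisting bijection, which carries $\zeta'$ to the trivial class and is compatible with base change to $\XX_k$ and to $\Spec F$. Set $\zeta''=\tau(\zeta)\in H^1(\XX,G')$. Restricting to $\XX_k$ and using that $\zeta$ and $\zeta'$ have the same image there, we see that the image of $\zeta''$ in $H^1(\XX_k,G')$ is trivial.

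To finish, I would apply the first assertion with $G'$ in place of $G$, for which one must check that $G'$ again satisfies the hypotheses, i.e.\ that it is reductive over $\XX$ and that $\mu(G')$ is \'etale. Reductivity is clear, as $G'$ is an \'etale-local form of $G$. For the fundamental group, the formations of $G^{\rm sc}$ and ${\rm rad}(G)$, and hence of $\mu(G)=\ker(G^{\rm sc}\times_\XX{\rm rad}(G)\to G)$, commute with \'etale base change; over an \'etale cover of $\XX$ trivializing $\zeta'$ the group $G'$ becomes isomorphic to $G$, so $\mu(G')$ becomes isomorphic to $\mu(G)$ over that cover, and \'etaleness of a finite group scheme can be checked \'etale-locally. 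Hence $\mu(G')$ is \'etale, so the first assertion applied to $(\XX,G')$ and $\zeta''$ shows that the image of $\zeta''$ in $H^1(F,G')$ is trivial. Applying the base change of $\tau$ to $\Spec F$ (under which $\zeta'$ again goes to the trivial class) then gives that $\zeta$ and $\zeta'$ have the same image in $H^1(F,G)$.

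The substantive content here is \cite[Theorem~1.1]{GPS} itself; on our side the only point that requires any care is the verification that the inner twist $G'$ still has \'etale algebraic fundamental group, so that the cited theorem is available for it as well.
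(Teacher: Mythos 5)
Your treatment of the second assertion (twisting by a cocycle representing $\zeta'$, checking that the inner form $G'$ is again reductive with $\mu(G')$ \'etale, and transporting back to $H^1(F,G)$) is exactly the paper's argument and is fine. The problem is the first assertion, which you declare ``immediate'' from \cite[Theorem~1.1]{GPS}. That theorem concerns \emph{semisimple simply connected} group schemes over $\XX$, not arbitrary reductive ones; the entire content of the proposition is the d\'evissage from a general reductive $G$ to that case. (This is also why the hypothesis that $\mu(G)$ be \'etale appears at all: it is vacuous for simply connected groups, so if the cited theorem applied to $G$ directly the hypothesis would be pointless.)

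The paper's actual argument runs as follows. Writing $T={\rm rad}(G)$ and $\mu=\mu(G)$, one has the central extension $1\to\mu\to G^{\rm sc}\times_\XX T\to G\to 1$ and the resulting commutative ladder of exact cohomology sequences over $\XX$ and over $\XX_k$. Since $\mu$ is finite \'etale, proper base change gives bijections $H^i(\XX,\mu)\iso H^i(\XX_k,\mu)$ for $i=1,2$, and a diagram chase shows that $\ker\bigl(H^1(\XX,G)\to H^1(\XX_k,G)\bigr)$ is covered by $\ker(\alpha_1)\times\ker(\alpha_2)$, where $\alpha_1$ and $\alpha_2$ are the restriction maps on $H^1(\cdot,G^{\rm sc})$ and $H^1(\cdot,T)$. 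Only now is \cite[Theorem~1.1]{GPS} invoked, to kill the image of $\ker(\alpha_1)$ in $H^1(F,G^{\rm sc})$; the torus part $\ker(\alpha_2)$ is handled by the separate Proposition~\ref{GPS tori}. Without some such reduction your proof does not get off the ground, so you should either carry out this d\'evissage or locate a statement in the literature that genuinely covers reductive $G$ with \'etale $\mu(G)$.
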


\begin{proof}  Let $T = {\rm rad}(G)$ and write $\mu=\mu(G)$.  As in the proof of  \cite[Theorem~7.1]{GPS},
there is a commutative diagram
\[\xymatrix{
H^1(\XX,\mu) \ar[r] \ar[d] & H^1(\XX,G^{\rm sc}) \times H^1(\XX,T) \ar[r] \ar[d]^\alpha & H^1(\XX,G)  \ar[r] \ar[d]^\beta & H^2(\XX,\mu) \ar[d]  \\
H^1(\XX_k,\mu) \ar[r]  & H^1(\XX_k,G^{\rm sc}) \times H^1(\XX_k,T) \ar[r] & H^1(\XX_k,G)  \ar[r] & H^2(\XX_k,\mu)
}\]
with exact rows.  
Here the left and right vertical arrows are bijective by proper base change (see \cite[Exp.~XII, Corollaire~5.5]{SGA4}).  A
diagram chase then implies that the natural map $\ker(\alpha) \to \ker(\beta)$
is surjective.  Here $\ker(\alpha) = \ker(\alpha_1) \times \ker(\alpha_2)$, where
$\alpha_1:  H^1(\XX,G^{\rm sc}) \to H^1(\XX_k,G^{\rm sc})$ and
$\alpha_2: H^1(\XX,T) \to H^1(\XX_k,T)$ are the natural maps.
Every element of $\ker(\alpha_1)$ has trivial image in
$H^1(F,G^{\rm sc})$ by \cite[Theorem~1.1]{GPS}.  Moreover, every element of $\ker(\alpha_2)$ is contained in the kernel of $H^1(\XX,T) \to H^1(\XX_k^{\rm red},T)$ and thus
has trivial image in $H^1(F,T)$ by Proposition~\ref{GPS tori}.  So by the surjectivity of $\ker(\alpha) \to \ker(\beta)$, it follows that every element in $\ker(\beta)$ has trivial image in $H^1(F,G)$.  This proves the first assertion.

For the second assertion, recall that given a scheme $\ZZ$, a group scheme $G$ over $\ZZ$, and a cocycle $\tau \in Z^1(\ZZ,G)$, there is an associated twist $G^\tau$ of $G$ by $\tau$, together with a functorial bijection between $H^1(\ZZ,G)$ and $H^1(\ZZ,G^\tau)$, such that the neutral element of $H^1(\ZZ,G^\tau)$ corresponds to the class of $\tau$ in $H^1(\ZZ,G)$.  (See \cite[Section~I.5.3]{Serre:CG}, especially \cite[Proposition~I.5.3.35]{Serre:CG}, for the case 
that $\ZZ$ is the spectrum of a field.  This was generalized in \cite[Chapitre~III, Sections~2.3,~2.6]{Giraud}; see in particular \cite[Chapitre~III, Remarque 2.6.3]{Giraud}.)
Thus if $\zeta, \zeta' \in H^1(\XX,G)$ have the same image in $H^1(\XX_k, G)$, and if we pick an element $\tau \in Z^1(\XX,G)$ in the class of $\zeta$ with image $\tau_k \in  Z^1(\XX_k,G)$, 
then the element of $H^1(\XX,G^\tau)$ corresponding to $\zeta'$ is in the kernel of the map 
$H^1(\XX, G^\tau) \to H^1(\XX_k, G^{\tau_k})$.  So by the first assertion, that element is the neutral class of $H^1(\XX, G^\tau)$, and thus $\zeta'=\zeta$.
\end{proof}

The next result is an analogue of Theorem~7.3 of \cite{CHHKPS}, where the group was required to be a torus.   Recall the definition of a ``monotonic tree'' from the beginning of Subsection~\ref{subsec mono}, and also recall that whether this property holds for the reduction graph of a normal crossings model of a semi-global field $F$ is independent of the choice of the model $\XX$ and finite set $\PP$ (Remark~\ref{tree independent}).

\begin{thm}\label{main}
Let $\XX$ be a normal crossings model of a semi-global field $F$ over a complete discrete valuation ring $R$.  Assume that the closed fiber of $\XX$ is reduced and that the associated reduction graph $\Gamma$ is a monotonic tree. If $G$ is any reductive group over $R$ such that $\mu(G)$ is \'etale, then
$\Sha(F,G)=1$.
\end{thm}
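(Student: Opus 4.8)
The plan is to combine the lifting result of Theorem~\ref{torsors extend} with the comparison result of Proposition~\ref{GPS prop}. First I would take a $G$-torsor $Z$ over $F$ whose class $[Z]$ lies in $\Sha(F,G)$, and show $[Z]=1$ in $H^1(F,G)$. Since $G$ is reductive over $R$ (hence over any regular model), we may fix a normal crossings model $\XX$ of $F$ over $R$; by hypothesis its closed fiber is reduced, so $\XX_k^{\red}=\XX_k$, and the reduction graph $\Gamma$ is a monotonic tree.

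The key step: by Theorem~\ref{torsors extend}, the torsor $Z$ extends to a $G$-torsor $\mc Z$ over $\XX$ (i.e., a class $\zeta\in H^1(\XX,G)$ mapping to $[Z]$ in $H^1(F,G)$) that becomes \emph{trivial} when restricted to the reduced closed fiber $\XX_k^{\red}=\XX_k$. Thus $\zeta\in\ker\bigl(H^1(\XX,G)\to H^1(\XX_k,G)\bigr)$. Now I would invoke Proposition~\ref{GPS prop}: since $G$ is reductive over $\XX$ and $\mu(G)$ is \'etale by assumption, any element of $H^1(\XX,G)$ with trivial image in $H^1(\XX_k,G)$ also has trivial image in $H^1(F,G)$. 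Applying this to $\zeta$ gives that the image of $\zeta$ in $H^1(F,G)$ — which is precisely $[Z]$ — is trivial. Hence $\Sha(F,G)=1$.

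I do not expect a genuine obstacle here, since both of the heavy lifting results are already in place; the only points requiring a word of care are that the reducedness hypothesis guarantees $\XX_k=\XX_k^{\red}$ so that the triviality statement of Theorem~\ref{torsors extend} matches the hypothesis of Proposition~\ref{GPS prop}, and that the existence of a normal crossings model (recalled in Section~\ref{patching subsec}) together with Remark~\ref{tree independent} lets us freely pick such a model knowing the monotonic tree property is model-independent. The genuinely substantial input — reducing to the anisotropic case via the local factorization results of Section~\ref{factor sec}, and then the global factorization of Proposition~\ref{anisotropic factorization} — has already been absorbed into Theorem~\ref{torsors extend}, and the passage from triviality on the closed fiber to triviality over $F$ is exactly the content of \cite{GPS} as packaged in Proposition~\ref{GPS prop}.
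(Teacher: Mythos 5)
Your proposal is correct and follows exactly the paper's own argument: Theorem~\ref{torsors extend} lifts a class in $\Sha(F,G)$ to a torsor over $\XX$ trivial on the reduced closed fiber, the reducedness hypothesis identifies $\XX_k^{\red}$ with $\XX_k$, and Proposition~\ref{GPS prop} (using that $\mu(G)$ is \'etale) then yields triviality over $F$. Nothing is missing.
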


\begin{proof}
Since $\Gamma$ is a monotonic tree, 
Theorem~\ref{torsors extend} applies.  Thus each element $\xi$ in $\Sha(F,G)$ corresponds to a torsor over $F$ that extends to $\XX$ and is trivial over the reduced closed fiber; and this is the same as the closed fiber since the closed fiber is reduced.  Since $\mu(G)$ is \'etale,
we can apply Proposition~\ref{GPS prop}, and conclude that $\xi$ is isomorphic to the trivial $G$-torsor over~$F$.  That is, an arbitrary element $\xi \in \Sha(F,G)$ is trivial.
\end{proof}

In the special case of an equicharacteristic zero ring $R=k[[t]]$ and a group $G$ over $k$,
the reductivity hypothesis can be dropped (and $\mu(G)$ is automatically \'etale):

\begin{cor} \label{cor:char0conn}
Let $G$ be a connected linear algebraic group over a field $k$ of characteristic zero, and  let $F$ be a semi-global field over $R:=k[[t]]$. Suppose that $F$ has a normal crossings model over $R:=k[[t]]$ whose
closed fiber is reduced and whose
reduction graph  is a monotonic tree.  Then $\Sha(F,G)$ is trivial.
\end{cor}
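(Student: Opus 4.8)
The plan is to reduce the general connected linear algebraic group $G$ over the characteristic zero field $k$ to the reductive case already handled by Theorem~\ref{main}. First I would invoke the structure theory of connected linear algebraic groups in characteristic zero: the unipotent radical $R_u(G)$ is a smooth connected normal $k$-subgroup, the quotient $G^{\rm red} := G/R_u(G)$ is reductive over $k$, and the quotient map $\pi\colon G \to G^{\rm red}$ admits a Levi decomposition (Mostow), so in particular $G \cong R_u(G) \rtimes G^{\rm red}$ as $k$-varieties. Since $\cha(k) = 0$, the group $\mu(G^{\rm red})$ is automatically \'etale (every $k$-group scheme is smooth), so Theorem~\ref{main} will apply to $G^{\rm red}$ once we know the hypotheses on $\XX$ and $\Gamma$ are preserved — and they are, since they are assumptions on the model, not on the group.

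Next I would use the cohomology of the unipotent radical. Over any field of characteristic zero, and more generally over any ring in which such a group is defined and is split by a filtration with successive quotients $\G_a$ (which holds for a split unipotent group, and $R_u(G)$ over a perfect field is split), one has $H^1(A, R_u(G)) = 1$ for $A$ a field of characteristic zero, by d\'evissage from $H^1(A,\G_a) = 0$. Hence the sequence $1 \to R_u(G) \to G \to G^{\rm red} \to 1$ gives, over each relevant field $F_\omega$, that $H^1(F_\omega, G) \to H^1(F_\omega, G^{\rm red})$ is injective, and similarly $H^1(F,G) \to H^1(F,G^{\rm red})$ is injective (note $R_u(G)$ and all its twists by $G$-cocycles remain split unipotent in characteristic zero, so the twisting argument causes no trouble). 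Then a diagram chase comparing $\Sha(F,G)$ with $\Sha(F,G^{\rm red})$: given $\xi \in \Sha(F,G)$, its image $\bar\xi \in H^1(F, G^{\rm red})$ lies in $\Sha(F, G^{\rm red})$, which is trivial by Theorem~\ref{main}; so $\xi$ comes from $H^1(F, R_u(G))$ (after twisting $G$ by a cocycle representing $\xi$, or more precisely working with the exact sequence of pointed sets and the fact that the fiber of $\pi_*$ over the basepoint is the image of $H^1(F, R_u(G)) = 1$), hence $\xi$ is trivial.

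The one point requiring a little care — and the place I expect the main technical obstacle — is the passage through the twisted exact sequence: to conclude from $\bar\xi = 1$ that $\xi$ is trivial, one twists $G$ by a cocycle $z$ representing $\xi$, obtaining ${}_zG$ with unipotent radical ${}_z(R_u(G))$ and reductive quotient ${}_z(G^{\rm red})$, and the fiber of $H^1(F, {}_zG) \to H^1(F, {}_z(G^{\rm red}))$ over the basepoint is the image of $H^1(F, {}_z(R_u(G)))$. Since ${}_z(R_u(G))$ is again a connected split (in char.\ $0$) unipotent group over $F$, this $H^1$ vanishes, so the fiber is trivial and $\xi = 1$. One must check that ${}_z(R_u(G))$ is indeed split unipotent over $F$; in characteristic zero every connected unipotent group is split (it admits a composition series with $\G_a$ quotients, by the structure theory over a perfect field), so this is immediate. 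Thus the reductivity hypothesis in Theorem~\ref{main} can be removed in the equal-characteristic-zero setting, giving $\Sha(F,G) = 1$.

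I should also note that the hypothesis ``$F$ has a normal crossings model over $R = k[[t]]$ with reduced closed fiber and monotonic-tree reduction graph'' is exactly what Theorem~\ref{main} needs, and that the existence and choice of such a model is independent of $G$; so no compatibility issue arises between the model used for $G^{\rm red}$ and the hypotheses we are handed. Hence the corollary follows by combining the structure theory of $G$ in characteristic zero, the vanishing of $H^1$ of (twisted) unipotent radicals over fields of characteristic zero, and Theorem~\ref{main} applied to $G^{\rm red}$.
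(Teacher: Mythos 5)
Your proposal is correct and follows essentially the same route as the paper: pass to the reductive quotient $G/\mathcal{R}_{\mathrm u}(G)$, use the vanishing of $H^1(F,\mathcal{R}_{\mathrm u}(G))$ (unipotent group over a perfect field) to see that $H^1(F,G)\to H^1(F,G/\mathcal{R}_{\mathrm u}(G))$ has trivial kernel, and apply Theorem~\ref{main} to the reductive quotient, whose $\mu$ is automatically \'etale in characteristic zero. The Levi decomposition and the twisting discussion are superfluous, since, as you yourself note, only the fiber over the basepoint of the untwisted exact sequence of pointed sets is needed.
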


\begin{proof}
Let ${\mathcal R}_{\mathrm u}(G)$ be the unipotent radical of $G$.  Thus the group $G/{\mathcal R}_{\mathrm u}(G)$ is reductive over $k$.  The exactness of
$1 \to {\mathcal R}_{\mathrm u}(G) \to G \to G/{\mathcal R}_{\mathrm u}(G) \to 1$ yields that
$H^1(F,{\mathcal R}_{\mathrm u}(G)) \to H^1(F,G) \to H^1(F,G/{\mathcal R}_{\mathrm u}(G))$ is exact.  But $H^1(F,{\mathcal R}_{\mathrm u}(G))$ is trivial, because
${\mathcal R}_{\mathrm u}(G)$ is unipotent and $F$ is perfect (see \cite[III \S2.1, Prop.~6]{Serre:CG}).
So the map $H^1(F,G) \to H^1(F,G/{\mathcal R}_{\mathrm u}(G))$ has trivial kernel.  But this map sends
$\Sha(F,G)$ to $\Sha(F,G/{\mathcal R}_{\mathrm u}(G))$, and the latter is trivial by Theorem~\ref{main}, using that $\mu(G)$ is \'etale in residue characteristic zero.  It thus follows that $\Sha(F,G)$ is also trivial.
\end{proof}

So far, we have been focusing on connected groups, but new issues arise for local-global principles for groups that are disconnected.  For example, in \cite{HHK15}, it was shown that if a (smooth) linear algebraic group $G$ over a semi-global field $F$ is connected and rational as an $F$-variety then $\Sha_X(F,G)$ is trivial, where $X$ is the reduced closed fiber; whereas if $G$ is disconnected and each connected component of the $F$-variety $G$ is rational, then the vanishing of $\Sha_X(F,G)$ depends on the reduction graph.  
Below, in Theorems~\ref{char0 triv Sha surj} and~\ref{char0disconn}, we prove analogues of Corollary~\ref{cor:char0conn} in the disconnected case.

In preparation, we consider the finite case, in Lemma~\ref{la:tree} and Proposition~\ref{tree_fin_grp}.  Namely, let $G$ be a finite \'etale group scheme over a semi-global field $F$ over a complete discretely valued field $K$ with valuation ring $R$.  Thus $G_{F^\sep}$ is a finite constant group over the separable closure $F^\sep$ of $F$, equipped with an action of $\Gal(F^\sep/F)$.  There is then
a finite Galois extension $L/F$ that splits $G$; i.e., such that $G_L$ is a constant finite group over $L$.
(In fact, the minimal such finite extension is the fixed field $L_0 := (F^\sep)^N$ under the kernel $N$ of the Galois action on $G_{F^\sep}$.)

Pick such a field $L$ and let $\Delta = \Gal(L/F)$.  Take a normal crossings model $\XX$ for $F$ over $R$, and let $\ms Y$ be the normalization of $\XX$ in $L$.  Here $\ms Y$ need not be regular; but
$\pi:\ms Y \to \XX$ is a $\Delta$-Galois branched cover of normal $R$-curves, with reduced closed fibers $\ms Y_k^{\rm red}$ and $\XX_k^{\rm red}$.  If a closed point $Q \in \ms Y_k^{\rm red}$ lies over $P \in \XX_k^{\rm red}$, then the set of branches of $\ms Y_k^{\rm red}$ at $Q$ surjects onto the set of branches of $\XX_k^{\rm red}$ at $P$, by the going-down theorem applied to the extension $\wh R_P \subseteq \wh R_Q$.
Let $\PP$ be a non-empty finite set of closed points on $\XX_k^{\rm red}$, and assume that
\[ \PP' := \pi^{-1}(\PP) \quad \text{contains the set of points where $\ms Y_k^{\rm red}$ is not regular}. \eqno{(*)} \]
 Thus $\PP'$ is a non-empty $\Delta$-stable finite set of closed points of $\ms Y_k^{\rm red}$.  Note that $\PP$ contains all the non-unibranched points of $\XX_k^{\rm red}$, by the above surjectivity on branches, since $\PP'$ contains all the non-unibranched points of $\ms Y_k^{\rm red}$.
Consider the set $\UU$ of connected components of the complement of $\PP$ in $\XX_k^{\rm red}$, and similarly the set $\UU'$ with respect to $\PP'$ and $\ms Y_k^{\rm red}$.

Let $\Gamma_F$ and $\Gamma_L$ be the reduction graphs associated to these models and sets, as in Section~\ref{patching subsec} (where there is no regularity requirement).  Up to homotopy equivalence, these are determined by $F$ and $L$; i.e., they do not depend on the choice of $\PP$ and $\PP'$.  (See \cite[Remark 6.1(b)]{HHK15} and the sentence
just before Corollary~6.5 in that paper.  Note that these also do not require $\ms Y$ to be a regular model.)  Observe that $\Delta$ acts on $\Gamma_L$, and 
$\Gamma_F = \Gamma_L/\Delta$.
This is because the action of $\Delta$ on $\ms Y$ over $\XX$ induces
actions of $\Delta$ on $\PP'$ and on $\UU'$, with the elements of $\PP$ and $\UU$ being the orbits of this action (because $\XX_k^{\rm red} = \ms Y_k^{\rm red}/\Delta$).

Let $\mc V(\Gamma)$ denote the set of vertices of a graph $\Gamma$.  For $\xi \in \mc V(\Gamma_F)$, let $\mc V_\xi(\Gamma_L) \subseteq \mc V(\Gamma_L)$ be the set of vertices that map to $\xi$.
The tensor product $F_\xi \otimes_F L$ is a $\Delta$-Galois \'etale $F_\xi$-algebra consisting of a direct
product of the fields $L_\zeta$ (ranging over $\zeta \in \mc V_\xi(\Gamma_L)$).  So
if $\zeta \in \mc V_\xi(\Gamma_L)$, then $L_\zeta$ is a finite Galois extension of $F_\xi$, and its Galois group $\Delta_\zeta$ is a subgroup of $\Delta$.
More precisely, $\Delta_\zeta \subseteq \Delta$ is the stabilizer of the factor $L_\zeta$ in the direct product
(or equivalently, the stabilizer of $\zeta$) under the action of $\Delta$.

\begin{lem} \label{la:tree}
As above, let $G$ be a finite \'etale group scheme over a semi-global field $F$ and let $L$ be a finite Galois extension of $F$ that splits $G$.  Let $\XX$ be a normal crossings model for~$F$, let $\ms Y$ be the normalization of $\XX$ in $L$, and let $\Gamma_F, \Gamma_L$ be the reduction graphs for $\XX, \ms Y$ associated to finite subsets $\PP, \PP'$ of their reduced closed fibers such that $\PP'$ is the inverse image of $\PP$.  If
$\Gamma_L$ is a tree, then
\[\Sha_{\PP}(F,G) = \ker\bigl(H^1(L/F,G(L)) \to \prod_{\substack{\xi \in \mc V(\Gamma_F) \\ \zeta \in \mc V_\xi(\Gamma_L)}} H^1(L_\zeta/F_\xi,G(L_\zeta))\bigr).\]
\end{lem}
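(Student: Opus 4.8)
Here I would compare $G$--torsors over $F$ with their base changes to $L$, using the $\Delta$--Galois cover $\pi:\ms Y\to\XX$ (with $\Delta=\Gal(L/F)$) and the fact that $G_L$ is the constant group scheme attached to the finite group $\Gamma:=G(L)$. \emph{Step 1: every class in $\Sha_\PP(F,G)$ is split by $L$.} Let $\alpha\in\Sha_\PP(F,G)$. For each vertex $\zeta$ of $\Gamma_L$, lying over $\xi:=\pi(\zeta)\in\mc V(\Gamma_F)$, we have $F_\xi\subseteq L_\zeta$ (as recalled just before the statement), so $\alpha_{L_\zeta}=(\alpha_{F_\xi})_{L_\zeta}$ is trivial because $\alpha_{F_\xi}$ is. Hence the base change $\alpha_L\in H^1(L,G_L)$ lies in $\Sha_{\PP'}(L,G_L)$. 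Since $G_L$ is constant, the double coset description (\cite[Cor.~3.6]{HHK15}, valid over the normal model $\ms Y$ by \cite{HHK15a}, as in the proof of Proposition~\ref{sha-u-p}) gives $\Sha_{\PP'}(L,G_L)\simeq \Gamma^{\UU'}\backslash\Gamma^{\BB'}/\Gamma^{\PP'}$, with $\Gamma$ occupying every vertex and edge. As $\Gamma_L$ is a connected finite tree, this double coset space is trivial: rooting the tree and prescribing the group element at each vertex by induction on its distance from the root, one solves one group equation at each edge to move any $(g_\wp)_{\wp\in\BB'}$ into the trivial double coset. Therefore $\alpha_L=1$, i.e.\ $\alpha$ is split by~$L$.

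\emph{Step 2: identifying the patch--restriction maps with restriction of cocycles.} Inflation along the Galois extensions gives injections $H^1(L/F,G(L))=H^1(\Delta,\Gamma)\hookrightarrow H^1(F,G)$ and, for $\zeta\in\mc V_\xi(\Gamma_L)$, $H^1(L_\zeta/F_\xi,G(L_\zeta))=H^1(\Delta_\zeta,\Gamma)\hookrightarrow H^1(F_\xi,G)$, with images the $L$--split (resp.\ $L_\zeta$--split) classes; here $G(L_\zeta)$ is canonically $\Gamma$ as a $\Delta_\zeta$--group. So if $\alpha$ is $L$--split, say the inflation of $[c]\in H^1(\Delta,\Gamma)$, then $\alpha_{F_\xi}$ is split by $F_\xi\otimes_FL\cong\prod_{\zeta\in\mc V_\xi(\Gamma_L)}L_\zeta$; and by Galois descent along this $\Delta$--Galois \'etale $F_\xi$--algebra (whose factors form a single $\Delta$--orbit with point stabilizers the $\Delta_\zeta$), equivalently by Shapiro's lemma for nonabelian $H^1$, the class $\alpha_{F_\xi}$, viewed as an $L_\zeta$--split class over $F_\xi$, is identified with the restriction $[c|_{\Delta_\zeta}]\in H^1(\Delta_\zeta,\Gamma)$ for any chosen $\zeta$ over $\xi$. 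Since the $\Delta_\zeta$ for the various $\zeta$ over a fixed $\xi$ are $\Delta$--conjugate, $[c|_{\Delta_\zeta}]$ is trivial for one such $\zeta$ exactly when it is trivial for all. Hence $\alpha_{F_\xi}=1$ if and only if $[c|_{\Delta_\zeta}]=1$ for the vertices $\zeta$ over $\xi$.

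\emph{Step 3: conclusion and the main obstacle.} For an $L$--split class $\alpha$, the inflation of $[c]\in H^1(\Delta,\Gamma)$, Step~2 shows that $\alpha\in\Sha_\PP(F,G)$ — that is, $\alpha_{F_\xi}=1$ for every $\xi\in\mc V(\Gamma_F)=\PP\cup\UU$ — if and only if $[c|_{\Delta_\zeta}]=1$ for every $\zeta\in\mc V(\Gamma_L)$, i.e.\ iff $[c]$ lies in the displayed kernel; and Step~1 guarantees that every element of $\Sha_\PP(F,G)$ arises in this way. This yields the asserted equality of subsets of $H^1(F,G)$. The substantive point is Step~2: setting up the descent identification of the patch--restriction map with restriction of cocycles to the decomposition subgroups $\Delta_\zeta$, and matching the bookkeeping (the product over pairs $(\xi,\zeta)$ in the statement is just the product over $\zeta\in\mc V(\Gamma_L)$, organized by image in $\Gamma_F$). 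Step~1 needs only the routine extension of the double coset description to the (possibly non--regular) normal model $\ms Y$, handled exactly as in the proof of Proposition~\ref{sha-u-p}, and the graph--theoretic input is elementary.

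Finally, I note in passing that $\Delta$ acts on the tree $\Gamma_L$ without inverting any edge, since it preserves the bipartition $\mc V(\Gamma_L)=\UU'\sqcup\PP'$; hence, by the standard fixed--point property of finite groups acting on trees, it fixes a vertex $\zeta_*$, for which $\mc V_{\pi(\zeta_*)}(\Gamma_L)=\{\zeta_*\}$ and $\Delta_{\zeta_*}=\Delta$, so $L_{\zeta_*}=F_{\pi(\zeta_*)}\otimes_FL$ and the corresponding factor of the displayed product is $H^1(\Delta,\Gamma)$ with restriction the identity map. Thus under these hypotheses both sides of the stated identity are in fact trivial; the kernel form is the one convenient for the applications in the sequel.
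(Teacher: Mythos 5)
Your proof is correct and follows essentially the same route as the paper: first use the vanishing of $\Sha_{\PP'}(L,G_L)$ for a tree with constant coefficients (which the paper simply cites as \cite[Corollary~6.5]{HHK15} rather than re-deriving from the double coset description) to see that $\Sha_\PP(F,G)$ consists of $L$-split classes, then identify the local restriction maps with restriction to the decomposition subgroups $\Delta_\zeta$ via $F_\xi\otimes_F L\cong\prod_\zeta L_\zeta$. Your closing observation about the fixed vertex of the $\Delta$-action is also correct, but it is the content of the subsequent Proposition~\ref{tree_fin_grp} rather than of this lemma.
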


\begin{proof}
Consider the commutative diagram
\[
\xymatrix{
\Sha_{\PP}(F,G) \ar[r] \ar[d]
& H^1(F,G) \ar[d]
\\
\Sha_{\PP'}(L,G_L) \ar[r]
& H^1(L,G_L),
}
\]
where the horizontal arrows are inclusions.  Since $\Gamma_L$ is a tree
and since $G_L$ is a constant finite group, it follows from \cite[Corollary~6.5]{HHK15} that
$\Sha_{\PP'}(L,G_L)$ is trivial.  Thus $\Sha_{\PP}(F,G)$ is contained in
$H^1(L/F,G) = \ker(H^1(F,G) \to H^1(L,G_L))$. Hence
\begin{eqnarray*}
\Sha_{\PP}(F,G) &=& \ker\bigl(H^1(F,G(F^\sep)) \to \prod_{\xi \in \mc V(\Gamma_F)} H^1(F_\xi,G(F_\xi^\sep)\bigr)\\
&=& \ker\bigl(H^1(L/F,G(L)) \to \prod_{\xi \in \mc V(\Gamma_F)} H^1(F_\xi,G(F_\xi^\sep)\bigr)\\
&=& \ker\bigl(H^1(L/F,G(L)) \to \prod_{\substack{\xi \in \mc V(\Gamma_F) \\ \zeta \in \mc V_\xi(\Gamma_L)}} H^1(L_\zeta/F_\xi,G(L_\zeta)\bigr),
\end{eqnarray*}
as asserted.
\end{proof}

We then obtain the following sufficient criterion for the vanishing of $\Sha(F,G)$ if $G$ is a finite \'etale group scheme over a semi-global field $F$:

\begin{prop} \label{tree_fin_grp}
Let $F$ be a semi-global field over a complete discretely valued field with valuation ring $R$, and let
$\XX$ be a normal crossings model of $F$ over $R$.  Let
$G$ be a finite \'etale group scheme over $F$, and let $L$ be a finite Galois extension of $F$ such that $G_L$ is a constant finite group.  Let $\ms Y$ be the normalization of $\XX$ in $L$, and consider its associated reduction graph $\Gamma_L$.  If $\Gamma_L$ is a tree, then $\Sha(F,G)$ is trivial.
\end{prop}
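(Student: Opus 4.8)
The plan is to reduce the statement to the cohomological description in Lemma~\ref{la:tree} and then to exploit the fact that a finite group acting without inversion on a tree fixes a vertex. First I would note that $G$, being finite \'etale over $F$, is in particular a finite (smooth) linear algebraic group over $F$; and $\XX$, being a normal crossings model, is a regular model. Hence parts (\ref{ShaX union}), (\ref{ShaX subset Sha}), (\ref{ShaX equals Sha}) of Proposition~\ref{sha containments} give
\[\Sha(F,G) = \Sha_X(F,G) = \bigcup_{\PP_0} \Sha_{\PP_0}(F,G),\]
the union running over all finite sets $\PP_0$ of closed points of $\XX_k^{\rm red}$ as in Section~\ref{patching subsec}. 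So it suffices to prove $\Sha_{\PP_0}(F,G) = 1$ for each such $\PP_0$. Given $\PP_0$, I would enlarge it by adjoining the finitely many points $\pi(Q)$, where $Q$ runs over the (finitely many) points at which $\ms Y_k^{\rm red}$ is not regular, obtaining a set $\PP \supseteq \PP_0$ that still satisfies the conditions of Section~\ref{patching subsec} and for which $\PP' := \pi^{-1}(\PP)$ satisfies $(*)$. By Proposition~\ref{sha containments}(\ref{ShaP containments}), $\Sha_{\PP_0}(F,G) \subseteq \Sha_\PP(F,G)$, so we are reduced to showing $\Sha_\PP(F,G) = 1$.

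Next I would apply Lemma~\ref{la:tree} to this $\PP$: its hypotheses hold since $\PP'=\pi^{-1}(\PP)$ and $\Gamma_L$ is a tree, so
\[\Sha_\PP(F,G) = \ker\Bigl(H^1(L/F,G(L)) \to \prod_{\substack{\xi \in \mc V(\Gamma_F) \\ \zeta \in \mc V_\xi(\Gamma_L)}} H^1(L_\zeta/F_\xi,G(L_\zeta))\Bigr),\]
and for each pair the group $\Gal(L_\zeta/F_\xi) = \Delta_\zeta$ is the $\Delta$-stabilizer of the vertex $\zeta$, while $G(L_\zeta) = G(L)$ as $\Delta_\zeta$-modules because $G_L$ is constant and $L \subseteq L_\zeta$. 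The key point is that $\Delta = \Gal(L/F)$ acts on the finite tree $\Gamma_L$ by graph automorphisms, preserving the bipartition of $\mc V(\Gamma_L)$ into vertices coming from closed points of $\ms Y_k^{\rm red}$ and vertices coming from components; in particular $\Delta$ cannot invert an edge, so it acts without inversion. Therefore $\Delta$ fixes some vertex $\zeta_0 \in \Gamma_L$, i.e.\ $\Delta_{\zeta_0} = \Delta$. For that $\zeta_0$, with $\xi_0$ its image in $\Gamma_F$, the corresponding factor of the product is $H^1(\Delta, G(L))$ and the projection onto it is the identity restriction map; hence the kernel above is trivial, and $\Sha_\PP(F,G) = 1$.

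The main obstacle is the fixed-vertex step: one must check carefully that $\Delta$ genuinely acts on $\Gamma_L$ by automorphisms (this uses the $\Delta$-equivariance of $\pi: \ms Y \to \XX$ and of the chosen data $\PP', \UU', \BB'$, as recorded in the discussion preceding Lemma~\ref{la:tree}), that this action respects the bipartition (a closed point cannot be carried to a one-dimensional component), and hence that no edge is inverted, so that the standard fact that a finite group acting without inversion on a tree has a fixed point (Serre, \emph{Trees}) applies. The remaining steps --- the reduction via Proposition~\ref{sha containments}, the enlargement of $\PP_0$ to arrange $(*)$, and the identification $G(L_\zeta)=G(L)$ --- are routine bookkeeping.
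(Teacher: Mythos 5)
Your proposal is correct and follows essentially the same route as the paper's proof: reduce via Proposition~\ref{sha containments} to $\Sha_\PP(F,G)$ for $\PP$ large enough that $(*)$ holds, apply Lemma~\ref{la:tree}, observe that $\Delta$ acts on the bipartite tree $\Gamma_L$ without inversion and hence (by Serre's fixed-point theorem for trees) fixes a vertex $\zeta$ with $\Delta_\zeta=\Delta$, and conclude from $G(L)=G(L_\zeta)$ that the restriction map on that factor is the identity, forcing the kernel to be trivial.
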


\begin{proof}
Let $X = \XX_k^{\red}$.
By Proposition~\ref{sha containments}, 
$\Sha(F,G) = \Sha_X(F,G) = \bigcup \Sha_{\PP}(F,G)$, where the right hand side is an increasing union over the non-empty finite sets $\PP$ of closed points of $X$ that contain all the singular points of $X$. 
So it suffices to show that $\Sha_{\PP}(F,G)$ is trivial for all such sets $\PP$ that are sufficiently large; in particular, we may now restrict attention to sets $\PP$ that satisfy the condition $(*)$ in the discussion before Lemma~\ref{la:tree}.

As above, let $\Delta = \Gal(L/F)$ and $\Delta_\zeta = \Gal(L_\zeta/F_\xi)$ for $\zeta \in \mc V(\Gamma_L)$ lying over $\xi \in \mc V(\Gamma_F)$.  For $\PP$ as above, Lemma~\ref{la:tree} 
applies since $\Gamma_L$ is a tree, and yields
\[\Sha_{\PP}(F,G) = \ker\bigl(H^1(\Delta,G(L)) \to \prod_{\substack{\xi \in \mc V(\Gamma_F) \\ \zeta \in \mc V_\xi(\Gamma_L)}} H^1(\Delta_\zeta,G(L_\zeta))\bigr),\]
where each map $H^1(\Delta,G(L)) \to H^1(\Delta_\zeta,G(L_\zeta))$ is induced by restriction.

As noted before Lemma~\ref{la:tree}, the action of $\Delta$ on $\ms Y$ stabilizes $\PP'$ and thus also the associated set $\UU'$.  Hence it acts on $\Gamma_L$ without inversion, in the sense of \cite[Section~I.3.1]{Serre:Trees}; i.e.,
no element of $\Delta$ can interchange two adjacent vertices of the bipartite tree $\Gamma_L$.  But by \cite[Theorem~I.6.1.15]{Serre:Trees}, any action of a finite group on a tree without inversion has a global fixed point; i.e., there is a vertex that is fixed by the entire group.  So there exists
$\zeta \in \mc V(\Gamma_L)$ such that $\Delta_\zeta=\Delta$.  
But $G_L$ is a constant finite group, and so the inclusion $G(L) \to G(L_\zeta)$ is an equality; hence 
$H^1(\Delta,G(L)) = H^1(\Delta_\zeta,G(L_\zeta))$.
Thus  $\Sha_{\PP}(F,G)$ is trivial, as required.
\end{proof}

\begin{cor} \label{finite monotonic}
Let $R,F,\XX$ be as in Proposition~\ref{tree_fin_grp}, and let $G$ be a finite \'etale group scheme over $R$.  Suppose that the reduction graph associated to $\XX$ is a monotonic tree.   Then $\Sha(F,G)$ is trivial.
\end{cor}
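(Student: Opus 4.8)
The plan is to deduce Corollary~\ref{finite monotonic} from Proposition~\ref{tree_fin_grp} by exhibiting a finite Galois extension of $F$ that splits $G$ and whose associated reduction graph is a tree. First I would use that $G$ is finite étale over the complete discrete valuation ring $R$, not merely over $F$. The action of $\Gal(k^\sep/k)$ on the finite group $G(k^\sep)$ factors through $\Gal(k'/k)$ for some finite Galois extension $k'/k$ (take the fixed field of the kernel of the action), so that $G_{k'}$ is a constant finite group. Let $K'/K$ be the corresponding finite unramified extension, which is Galois with group $\Gal(k'/k)$, and let $R'$ be its ring of integers; this is a complete discrete valuation ring, finite étale over $R$, with residue field $k'$. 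Since $R'$ is Henselian, reduction to the special fibre gives an equivalence between finite étale $R'$-schemes and finite étale $k'$-schemes, so $G_{R'}$, and hence $G_L$ with $L := F \otimes_K K'$, is a constant finite group. Because $K$ is algebraically closed in the semi-global field $F$ and $K'/K$ is finite, $L$ is a field, and $L/F$ is Galois with $\Gal(L/F) \cong \Gal(k'/k)$.

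Next I would identify the normalization $\ms Y$ of $\XX$ in $L$ with the base change $\XX \times_R R'$. Indeed $\XX$ is a normal crossings model, hence regular, so $\XX \times_R R'$ is regular (being étale over $\XX$), integral with function field $F \otimes_K K' = L$, and finite over $\XX$; thus it is the normalization of $\XX$ in $L$. Its closed fibre is $\XX_k \times_k k'$ (as $R' \otimes_R k = k'$, the extension $R'/R$ being unramified), so, since $k'/k$ is separable, its reduced closed fibre is $\XX_k^{\red} \times_k k'$. Consequently the reduction graph $\Gamma_L$ of $\ms Y$ is precisely the graph associated to the base change to $k'$ of the reduced closed fibre of $\XX$. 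Since the reduction graph of $\XX$ is a monotonic tree, $\Gamma_L$ is a tree by \cite[Proposition~7.6]{CHHKPS} (the assertion recalled just before Remark~\ref{tree independent}).

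Finally I would apply Proposition~\ref{tree_fin_grp} to the finite étale $F$-group scheme $G_F$, the splitting field $L$, the model $\XX$, and its normalization $\ms Y$ in $L$, whose reduction graph $\Gamma_L$ is a tree; this yields $\Sha(F,G) = 1$. The only slightly delicate points are the identification $\ms Y \cong \XX \times_R R'$ together with the resulting computation of the reduced closed fibre (where separability of $k'/k$ is used to commute the base change with passing to the reduction), and the verification that $L/F$ is a Galois field extension with the expected group; both are routine. I do not expect any genuine obstacle here, since the real content is already carried by Proposition~\ref{tree_fin_grp} and by the persistence of the monotonic-tree property under finite residue field extensions.
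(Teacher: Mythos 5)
Your proposal is correct and follows essentially the same route as the paper's proof: both pass to a finite étale (unramified) extension of $R$ splitting $G$, identify the base-changed model with the normalization of $\XX$ in the resulting Galois extension $L/F$, observe that its reduction graph is a tree by the monotonic-tree property under residue field extension, and then invoke Proposition~\ref{tree_fin_grp}. Your version merely makes the choice of splitting étale algebra and the Galois/field-theoretic verifications more explicit.
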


\begin{proof}
By smoothness, there is
an \'etale algebra $S/R$ that splits $G$.  Let $\ell$ be the residue field of $S$, and let $L = F\otimes_R S$.  Thus $G_\ell$ and $G_L$ are finite constant groups.  Also, $\XX_S := \XX \times_R S$ is finite \'etale over $\XX$; it is the normalization of $\XX$ in $L$; and it is a normal crossings model of its function field $L$ over $S$.
The reduced closed fiber $(\XX_S)_\ell^{\rm red}$ of $\XX_S$ is the base change $X_\ell := X \times_k \ell$ of the reduced closed fiber $X$ of $\XX$ (with $k$ the residue field of $R$); and $X_\ell \to X$ is \'etale since $S/R$ is.  The reduction graph of $\XX_S$, which is the graph associated to $X_\ell$, is a tree because the reduction graph of $\XX$ is a monotonic tree (see \cite[Proposition~7.6(a)]{CHHKPS}).
By Proposition~\ref{tree_fin_grp}, it follows that $\Sha(F,G)$ is trivial (where we view $G$ as a finite \'etale group scheme over $F$ here).  
\end{proof}

\begin{rem}
The conclusion of Corollary~\ref{finite monotonic} can fail if the reduction graph is not a monotonic tree.  
For example, suppose the closed fiber of $\XX$ consists of two copies of $\P^1_k$ that meet transversally at a single point $P$ having a bigger residue field $k'$.  (Such a model is given in Example~\ref{nonmono tree}.)  Suppose that $G$ is a finite \'etale group scheme over $R$ such that 
$G(k)$ is trivial but $G(k')$ is non-trivial.  Take $\PP = \{P\}$, so that we then have $\UU = \{U_1,U_2\}$ and $\BB = \{\wp_1,\wp_2\}$, where $U_1,U_2$ are affine open subsets of projective $k$-lines, and $\wp_1,\wp_2$ are the branches at $P$.  Since $G$ is a finite \'etale group scheme, 
$G(A)=G(L)$ for an $R$-algebra $A$ that is a domain with fraction field $L$; and $G(A) = G(A/I)$
if $A$ is $I$-adically complete.  In particular, $G(F_P) = G(\wh R_P) = G(\kappa(P)) = G(k')$, and $G(F_{U_i}) = G(\wh R_{U_i}) = G(k[U_i]) = G(k(x)) = G(k)$.  Similarly, $G(F_{\wp_i}) = G(\wh R_{\wp_i}) = G(\wh R_{\wp_i}/\wp_i) = G(\wh R_P/\wp_i) = G(\kappa(P)) = G(k')$.  
By the double coset description of $\Sha_\PP(F,G) \subseteq \Sha(F,G)$ (Equation~(\ref{Sha_P dbl coset}) of Section~\ref{lgp sgp}), we have that $\Sha(F,G) \supseteq \Sha_\PP(F,G) = (G(k')\times G(k'))/G(k') \ne 1$.
\end{rem}

By combining Corollary~\ref{cor:char0conn} with Proposition~\ref{tree_fin_grp}, we will obtain Theorem~\ref{char0 triv Sha surj} below, concerning local-global principles for groups that need not be connected.  First we prove a lemma that is analogous to \cite[Corollary~2.6]{HHK15}, though that result applied to $\Sha_\PP(F,G)$ rather than to the situation of discrete valuations.

\begin{lem} \label{disconn trivial Sha lem}
Let $F$ be a field equipped with a set $\Omega$ of discrete valuations.  Let $G$ be a linear algebraic group over $F$, with identity component $G^0$, and write $\bar G = G/G^0$.  Suppose that the associated obstruction sets $\Sha_\Omega(F,G^0)$ and $\Sha_\Omega(F,\bar G)$ are both trivial.  If $G(F_v) \to \bar G(F_v)$ is surjective for all discrete valuations $v \in \Omega$, then $\Sha_\Omega(F,G)$ is trivial.
\end{lem}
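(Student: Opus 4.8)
The plan is to exploit the short exact sequence of $F$-group schemes $1 \to G^0 \to G \to \bar G \to 1$ together with the associated exact sequences of pointed sets in Galois cohomology, following the pattern of \cite[Corollary~2.6]{HHK15} but with the set $\Omega$ of discrete valuations in place of the patches. The idea is: given a class in $\Sha_\Omega(F,G)$, first use triviality of $\Sha_\Omega(F,\bar G)$ to lift it to a class in $H^1(F,G^0)$, and then use the surjectivity hypothesis on local points to show that this lift already lies in $\Sha_\Omega(F,G^0)$, which is trivial by assumption; hence so is the original class.

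First I would take $\xi\in\Sha_\Omega(F,G)$ and consider its image $\bar\xi\in H^1(F,\bar G)$ under the map induced by $G\to\bar G$. Since cohomology commutes with the base changes $F\to F_v$ and $\xi_v=1$ in $H^1(F_v,G)$ for every $v\in\Omega$, one gets $\bar\xi\in\Sha_\Omega(F,\bar G)=1$, so $\bar\xi$ is the trivial class. By exactness of the sequence of pointed sets $H^1(F,G^0)\to H^1(F,G)\to H^1(F,\bar G)$ (see \cite[Section~I.5]{Serre:CG}), there is then a class $\eta\in H^1(F,G^0)$ whose image in $H^1(F,G)$ is $\xi$.

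Next I would check that $\eta\in\Sha_\Omega(F,G^0)$. Fix $v\in\Omega$; the image of $\eta_v\in H^1(F_v,G^0)$ in $H^1(F_v,G)$ is $\xi_v=1$, so $\eta_v$ lies in the kernel of $H^1(F_v,G^0)\to H^1(F_v,G)$. By the exactness of $G(F_v)\to\bar G(F_v)\xrightarrow{\ \delta_v\ }H^1(F_v,G^0)\to H^1(F_v,G)$, this kernel is the image of the connecting map $\delta_v$; and exactness at $\bar G(F_v)$ says $\delta_v(b)$ is trivial precisely when $b$ lies in the image of $G(F_v)\to\bar G(F_v)$. Since that map is surjective by hypothesis, $\delta_v$ is the trivial map, so $\ker\bigl(H^1(F_v,G^0)\to H^1(F_v,G)\bigr)$ is trivial and hence $\eta_v=1$. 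As $v$ was arbitrary, $\eta\in\Sha_\Omega(F,G^0)=1$, so $\eta=1$ and therefore $\xi=1$.

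This is essentially a diagram chase, so I do not expect a serious obstacle; the one point that needs care is the precise form of exactness at $\bar G(F_v)$, namely that $\ker(\delta_v)=\operatorname{im}\bigl(G(F_v)\to\bar G(F_v)\bigr)$, since this is exactly what converts the surjectivity hypothesis on rational points into the triviality of $\ker\bigl(H^1(F_v,G^0)\to H^1(F_v,G)\bigr)$ needed in the last step. (Without that hypothesis the conclusion can fail, which is why it is imposed.) No geometric input beyond the exact sequences of nonabelian cohomology is required.
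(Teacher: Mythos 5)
Your proof is correct and follows essentially the same route as the paper: both lift a class $\xi\in\Sha_\Omega(F,G)$ to $H^1(F,G^0)$ using the triviality of $\Sha_\Omega(F,\bar G)$ and the exact sequence of pointed sets, then use the surjectivity of $G(F_v)\to\bar G(F_v)$ to show the connecting maps $\delta_v$ have trivial image, forcing the lift into $\Sha_\Omega(F,G^0)=1$. Your spelling out of the exactness at $\bar G(F_v)$ is just a slightly more explicit version of the paper's diagram chase.
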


\begin{proof}
We have the following commutative diagram with exact rows and columns:
{\small
\[\xymatrix{
& & 1 \ar[d] & 1 \ar[d] & 1 \ar[d]\\
& & \Sha_\Omega(F,G^0) \ar[d] & \Sha_\Omega(F,G) \ar[d] & \Sha_\Omega(F,\bar G) \ar[d] \\
& & H^1(F,G^0)  \ar[r] \ar[d] & H^1(F,G)  \ar[r] \ar[d] & H^1(F,\bar G) \ar[d] \\
\prod_v G(F_v) \ar[r] & \prod_v \bar G(F_v) \ar[r] & \prod_v H^1(F_v,G^0)  \ar[r] & \prod_v H^1(F_v,G)  \ar[r] & \prod_v H^1(F_v,\bar G)
}\]}
\ \ The assertion now follows by a diagram chase.
Namely, if $\xi \in \Sha_\Omega(F,G) \subseteq H^1(F,G)$,
then the commutativity of the right hand square implies that the image of $\xi$ in $H^1(F,\bar G)$ lies in $\Sha_\Omega(F,\bar G)$ and hence is trivial.  Thus $\xi$ is the image of some $\xi^0 \in H^1(F,G^0)$.  Since $\xi \in \Sha_\Omega(F,G)$, the image $(\xi_v^0)_v$ of $\xi^0$ in $\prod_v H^1(F_v,G^0)$ is in the kernel of the map to $\prod_v H^1(F_v,G)$; hence it is the image of an element of $\prod_v \bar G(F_v)$.  By the surjectivity hypothesis, it follows that $(\xi_v^0)_v$ is trivial.  Hence $\xi^0 \in \Sha_\Omega(F,G^0)$, which is trivial.  Thus $\xi$ is trivial.
\end{proof}

\begin{thm} \label{char0 triv Sha surj}
Let $F$ be a semi-global field over a complete discrete valuation ring $R$, with a normal crossings model $\XX$\!\!.  Assume that the closed fiber of $\XX$ is reduced and the reduction graph is a monotonic tree.  Suppose that either
\begin{enumerate}
\renewcommand{\theenumi}{\roman{enumi}}
\renewcommand{\labelenumi}{(\roman{enumi})}
\item \label{reduc comp hyp}
$G$ is a smooth affine group scheme over $R$ such that $G^0$ is reductive and $\mu(G^0)$ is \'etale; or
\item \label{eq char 0 hyp}
$R=k[[t]]$ for some field $k$ of characteristic zero, and $G$ is a linear algebraic group over $k$.
\end{enumerate}
Let $\bar G = G/G^0$.  We then have the following conclusions:
\begin{enumerate}
\renewcommand{\theenumi}{\alph{enumi}}
\renewcommand{\labelenumi}{(\alph{enumi})}
\item \label{outer Shas trivial}
Both $\Sha(F,G^0)$ and $\Sha(F,\bar G)$ are trivial.
\item \label{char0 disconn Sha}
If $G(F_v) \to \bar G(F_v)$ is surjective for all divisorial discrete valuations $v$, then $\Sha(F,G)$ is trivial.
\end{enumerate}
\end{thm}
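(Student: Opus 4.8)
The plan is to deduce both parts from results already proved in this section, once we record that the identity component $G^0$ and the component group $\bar G = G/G^0$ are the right kind of group schemes over $R$. In case~(\ref{reduc comp hyp}), $G$ is a smooth affine group scheme of finite type over $R$, so the open-and-closed subscheme $G^0 \subseteq G$ meeting each fiber in its identity component is a smooth affine subgroup scheme (by the theory of the identity component of a smooth group scheme, SGA3, Exp.~VIB), and $\bar G := G/G^0$ is then a finite \'etale group scheme over $R$; by hypothesis $G^0$ is reductive with $\mu(G^0)$ \'etale. In case~(\ref{eq char 0 hyp}), $G^0$ is a connected linear algebraic group over the characteristic-zero field $k$ and $\bar G$ is a finite \'etale $k$-group scheme, which after base change we view as a finite \'etale group scheme over $R = k[[t]]$. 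In either case the formation of $G^0$ and $\bar G$ commutes with the base change to $F$, so that the exact sequence $1 \to G^0_F \to G_F \to \bar G_F \to 1$ is the connected-component sequence of the linear algebraic group $G_F$ over $F$.

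For part~(\ref{outer Shas trivial}): to see $\Sha(F, G^0) = 1$, in case~(\ref{reduc comp hyp}) we apply Theorem~\ref{main} to the reductive group $G^0$ over $R$ (which has $\mu(G^0)$ \'etale), using that the closed fiber of $\XX$ is reduced and the reduction graph is a monotonic tree; in case~(\ref{eq char 0 hyp}) we instead apply Corollary~\ref{cor:char0conn} to the connected linear algebraic group $G^0$ over $k$ (no reductivity is needed there). To see $\Sha(F, \bar G) = 1$, in both cases $\bar G$ is a finite \'etale group scheme over $R$ and the reduction graph of $\XX$ is a monotonic tree, so Corollary~\ref{finite monotonic} applies directly.

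For part~(\ref{char0 disconn Sha}): we invoke Lemma~\ref{disconn trivial Sha lem} with $\Omega$ the set of divisorial discrete valuations on $F$, so that $\Sha_\Omega = \Sha$ throughout. By part~(\ref{outer Shas trivial}) the obstruction sets $\Sha_\Omega(F, G^0)$ and $\Sha_\Omega(F, \bar G)$ are both trivial, and by hypothesis $G(F_v) \to \bar G(F_v)$ is surjective for every $v \in \Omega$; the lemma then gives $\Sha(F,G) = \Sha_\Omega(F,G) = 1$.

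The only real subtlety, and hence the step to be careful with, is the opening bookkeeping: verifying that $G^0$ over $R$ genuinely satisfies the hypotheses of Theorem~\ref{main} (resp.\ Corollary~\ref{cor:char0conn}), that $\bar G$ is genuinely finite \'etale over $R$ so that Corollary~\ref{finite monotonic} applies, and that these constructions are compatible with passage to $F$ so that Lemma~\ref{disconn trivial Sha lem} is being applied to the correct component sequence. Once this is settled, each of the three citations is immediate.
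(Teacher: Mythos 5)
Your proposal is correct and follows essentially the same route as the paper: part~(\ref{outer Shas trivial}) via Corollary~\ref{finite monotonic} for $\bar G$ and via Theorem~\ref{main} (case~(\ref{reduc comp hyp})) or Corollary~\ref{cor:char0conn} (case~(\ref{eq char 0 hyp})) for $G^0$, and part~(\ref{char0 disconn Sha}) by Lemma~\ref{disconn trivial Sha lem} applied with $\Omega$ the set of divisorial discrete valuations. The extra bookkeeping you include on why $G^0$ and $\bar G = G/G^0$ satisfy the hypotheses of the cited results is a reasonable elaboration that the paper leaves implicit.
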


\begin{proof}
By Corollary~\ref{finite monotonic}, 
$\Sha(F,\bar G)$ is trivial.  But $\Sha(F,G^0)$ is also trivial, by Theorem~\ref{main} in case~(\ref{reduc comp hyp}) and by
Corollary~\ref{cor:char0conn} in case~(\ref{eq char 0 hyp}).  So part~(\ref{outer Shas trivial}) holds; and part~(\ref{char0 disconn Sha}) then follows by applying Lemma~\ref{disconn trivial Sha lem} with $\Omega$ equal to the set of divisorial discrete valuations on $F$.
\end{proof}

Note that the surjectivity hypothesis of Theorem~\ref{char0 triv Sha surj}(\ref{char0 disconn Sha}) is satisfied in particular if $k$ is algebraically closed, or if the morphism $G \to G/G^0$ has a section.
Even without that surjectivity hypothesis, we can still obtain a local-global principle, though for $\Sha_\PP(F,G)$, or equivalently $\Sha_X(F,G)$, where $X$ is the reduced closed fiber of a normal crossing model (see Proposition~\ref{sha containments}(\ref{ShaX union})).  We first prove the next lemma.

\begin{lem} \label{pts_on_comps}
Let $\XX$ be a normal crossings model of a semi-global field $F$ over a complete discrete valuation ring $R$.  Let $\PP, \UU, \BB$ be
as in Section~\ref{patching subsec}, with associated reduction graph $\Gamma$. 
Suppose that either
\begin{enumerate}
\renewcommand{\theenumi}{\roman{enumi}}
\renewcommand{\labelenumi}{(\roman{enumi})}
\item \label{reduc comp hypoth}
$G$ is a smooth affine group scheme over $R$ such that $G^0$ is reductive; or
\item \label{eq char 0 hypoth}
$R=k[[t]]$ for some field $k$ of characteristic zero, and $G$ is a linear algebraic group over $k$.
\end{enumerate}
Let
$\wp \in \BB$ be a branch of $\XX_k^{\rm red}$ at $P \in \PP$, lying on $U \in \UU$.  
Let $G_1$ be a connected component of $G$, and suppose that $G_1(F_\wp)$ is non-empty.  Then $G_1(F_P)$ is also non-empty; and if $\kappa(U)=\kappa(P)$ then $G_1(F_U)$ is non-empty.
\end{lem}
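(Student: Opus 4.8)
The plan is to reduce the problem to the closed fiber by tracking separately the two pieces of data that determine whether $G_1$ has a point over a field: a structure map to the component scheme of $G$, and the triviality of an associated $G^0$-torsor. Write $\bar G = G/G^0$; this is a finite \'etale group scheme over $R$ in case~(\ref{reduc comp hypoth}), and over $k$ in case~(\ref{eq char 0 hypoth}). Since the base is connected, $\bar G$ is a finite disjoint union of spectra of finite \'etale local algebras, and a connected component $G_1$ of $G$ is the preimage $G\times_{\bar G}\Spec B_1$ of one such factor $\Spec B_1$ (with $B_1$ a complete discrete valuation ring finite \'etale over $R$ in case~(\ref{reduc comp hypoth}), or a finite separable field extension $\ell_1$ of $k$ in case~(\ref{eq char 0 hypoth})); in either case write $\ell_1$ for the residue field of $B_1$. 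Then $G_1\to\Spec B_1$ is a $G^0$-torsor, and for a field $E$ over $k$, giving an $E$-point of $G_1$ amounts to giving a $k$-embedding of $\ell_1$ into the relevant residue or constant field together with a trivialization of the induced $G^0$-torsor over $E$.

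First I would handle the component data. Since $\bar G$ is finite over its base and $\wh R_\wp,\wh R_P,\wh R_U$ are normal domains containing $R$ (resp.\ $k$), any morphism to $\bar G$ from the spectrum of their fraction fields extends over the ring itself (by integrality, or by the valuative criterion in the discrete valuation ring case), and since $\bar G$ is moreover \'etale such an extension corresponds to a $k$-embedding of $\ell_1$ into the relevant residue or constant field by Hensel lifting over a complete local ring. From $G_1(F_\wp)\neq\emptyset$ we thus obtain a $k$-embedding $\ell_1\hookrightarrow k_\wp$. As $\ell_1/k$ is finite and $\wh R_P/\wp$ is a discrete valuation ring with fraction field $k_\wp$ and residue field $\kappa(P)\supseteq k$, the image of $\ell_1$ lies in $\wh R_P/\wp$ and reduces injectively into $\kappa(P)$, yielding $\ell_1\hookrightarrow\kappa(P)$ and hence an $R$-algebra map $B_1\to\wh R_P$; when $\kappa(U)=\kappa(P)$ we also get $\ell_1\hookrightarrow\kappa(U)\subseteq k[U]$ and an $R$-algebra map $B_1\to\wh R_U$.

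Next I would handle the torsor data. Let $E_\wp$ be the $G^0$-torsor over $\wh R_\wp$ obtained by pulling $G_1$ back along $B_1\to\wh R_\wp$; by hypothesis $E_\wp$ is trivial over $F_\wp$. The key input is that $\ker\bigl(H^1(A,G^0)\to H^1(\operatorname{Frac}A,G^0)\bigr)$ is trivial when $A$ is a complete discrete valuation ring: in case~(\ref{reduc comp hypoth}) this is the standard fact for reductive group schemes over a henselian discrete valuation ring (Nisnevich, Bruhat--Tits), and in case~(\ref{eq char 0 hypoth}) it follows by applying that fact to a Levi subgroup of $G^0$ (which exists since $\cha k=0$) together with the vanishing $H^1(A,{\mathcal R}_{\mathrm u}(G^0))=0$, valid because ${\mathcal R}_{\mathrm u}(G^0)$ is split unipotent and $\Spec A$ is affine. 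Applying this to $A=\wh R_\wp$ shows $E_\wp$ is trivial, hence so is its reduction over $k_\wp$. One checks that $B_1\to\wh R_P\to\wh R_\wp$ agrees with the chosen map $B_1\to\wh R_\wp$, so that the $G^0$-torsor $E_P$ over $\wh R_P$ coming from $B_1\to\wh R_P$ restricts over $\wh R_P/\wp$ to a torsor whose reduction to $k_\wp$ is trivial; applying the key input again to $A=\wh R_P/\wp$ shows this torsor is trivial over $\wh R_P/\wp$, hence over $\kappa(P)$. Combined with the embedding $\ell_1\hookrightarrow\kappa(P)$ this gives $G_1(\kappa(P))\neq\emptyset$, which lifts to $G_1(\wh R_P)\neq\emptyset$ by formal smoothness of $G_1$ and completeness of $\wh R_P$; passing to the generic point gives $G_1(F_P)\neq\emptyset$. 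If $\kappa(U)=\kappa(P)$, then base-changing the $\kappa(U)$-point of $G_1$ along $\kappa(U)\hookrightarrow k[U]=\wh R_U/\sqrt{(t)}$ produces a $k[U]$-point, which lifts to an $\wh R_U$-point by formal smoothness of $G_1$ and $\sqrt{(t)}$-adic completeness of $\wh R_U$; passing to the generic point gives $G_1(F_U)\neq\emptyset$.

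The main obstacle is the triviality of $\ker\bigl(H^1(A,G^0)\to H^1(\operatorname{Frac}A,G^0)\bigr)$ for $A$ a complete discrete valuation ring; I expect to cite this in the reductive case and to deduce the general connected characteristic-zero case from it via the Levi decomposition as above. Everything else is bookkeeping: the finiteness and \'etaleness of $\bar G$, Hensel lifting for \'etale algebras (as already used in Section~\ref{factor sec}), and the compatibility between the embedding of $\ell_1$ extracted from the given $F_\wp$-point and the one used in the descent to $\wh R_P$ and $\wh R_U$ --- all of which follow by tracking the maps through the commutative diagram of the rings $\wh R_\wp$, $\wh R_P$, $\wh R_P/\wp$, $k_\wp$ and $\kappa(P)$.
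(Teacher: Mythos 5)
Your proof is correct, and for case~(\ref{reduc comp hypoth}) it follows essentially the same route as the paper: the key input in both is the triviality of $\ker\bigl(H^1(A,G^0)\to H^1(\operatorname{Frac}A,G^0)\bigr)$ for a (complete) discrete valuation ring $A$, applied first to $A=\wh R_\wp$ and then to the complete local ring of $\bar U$ at $P$ (your $\wh R_P/\wp$), followed by formal smoothness to lift the resulting $\kappa(P)$-point to $\wh R_P$ and, when $\kappa(U)=\kappa(P)$, to $\wh R_U$. Two points of comparison are worth noting. First, you treat the ``component data'' (the structure map to $\bar G=G/G^0$ and the embedding of $\ell_1$) explicitly via properness of $\bar G$ and uniqueness of \'etale lifts; the paper suppresses this bookkeeping and simply speaks of ``the $G^0$-torsor $G_1$,'' so your version fills in a detail that is genuinely needed to regard $G_1$ as a $G^0$-torsor over the relevant rings rather than merely over a finite \'etale cover. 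Second, for case~(\ref{eq char 0 hypoth}) the two arguments diverge: the paper passes to the quotient $\tilde G=G/\mathcal R_{\mathrm u}(G)$, applies case~(\ref{reduc comp hypoth}) there, and lifts points back using $H^1(F_\xi,\mathcal R_{\mathrm u}(G))=1$ over the characteristic-zero fields $F_P,F_U$ together with connectedness of $\mathcal R_{\mathrm u}(G)$; you instead extend the Grothendieck--Serre-type kernel statement to the non-reductive connected group $G^0$ itself over the complete discrete valuation rings, via a Levi decomposition and the vanishing of $H^1(A,\mathcal R_{\mathrm u}(G^0))$ (and of its twists) over affine bases. Both are valid; the paper's reduction keeps all cohomological vanishing at the level of fields, while yours requires the slightly stronger integral statement but then runs case~(\ref{eq char 0 hypoth}) through the identical mechanism as case~(\ref{reduc comp hypoth}).
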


\begin{proof}
We first consider case~(\ref{reduc comp hypoth}).  Since $G^0$ is reductive over $R$, it is also reductive over the discrete valuation ring $\wh R_\wp$, whose fraction field is $F_\wp$.  It follows that the kernel of $H^1(\wh R_\wp,G^0) \to H^1(F_\wp,G^0)$ is trivial (see \cite{Nis84} and \cite[Th\'eor\`eme~I.1.2.2]{Gil94}).  Hence the $G^0$-torsor $G_1$ is trivial over $\wh R_\wp$; i.e., it has an $\wh R_\wp$-point.  Reducing that point modulo the maximal ideal $\wp$ of $\wh R_\wp$, we obtain an $E$-point of $G_1$, where $E$ is the fraction field of the complete local ring $A$ of $\bar U$ at $P$.  Since $A$ is a discrete valuation ring, applying the above cited result
a second time shows that $G_1$ has an $A$-point.  Reducing that point modulo the maximal ideal of $A$ yields a $\kappa(P)$-point of $G_1$.  By formal smoothness, the latter lifts to an $\wh R_P$-point of $G_1$.  Thus there is also an $F_P$-point of $G_1$, as asserted.

If in addition $\kappa(U)=\kappa(P)$, the above $\kappa(P)$-point of $G_1$ is thus also a $\kappa(U)$-point of $G_1$, and that induces a $k[U]$-point of $G_1$.  By formal smoothness, this lifts to an $\wh R_U$-point of $G_1$, and so there is an $F_U$-point on $G_1$.

We next consider case~(\ref{eq char 0 hypoth}).  Let
$\mc R_{\mathrm u}(G)$ be the unipotent radical of $G$ (or equivalently, of $G^0$), and let $\tilde G = G/\mc R_{\mathrm u}(G)$.   Since $G_1$ has an $F_\wp$-point $g_\wp$, the image $\tilde G_1$ of $G_1$ in $\tilde G$ also contains an $F_\wp$-point, viz.\ the image $\tilde g_\wp$ of $g_\wp$.  But the identity component $\tilde G^0$ of $\tilde G$ is $G^0/\mc R_{\mathrm u}(G)$, which is reductive.
So by case~(\ref{reduc comp hyp}) of the lemma,
$\tilde G_1$ contains an $F_P$-point $\tilde g_P$, and also an $F_U$-point $\tilde g_U$ if $\kappa(U)=\kappa(P)$.  Since $H^1(F_\xi,\mc R_{\mathrm u}(G))$ is trivial for $\xi = P,U$ (using $\cha(k)=0$, as in the proof of Corollary~\ref{cor:char0conn}), it follows from the cohomology exact sequence that the map $G(F_\xi) \to \tilde G(F_\xi)$ is surjective.  Pick a point $g_P \in G(F_P)$ that lies over $\tilde g_P$; and also pick $g_U \in G(F_U)$ that lies over $\tilde g_U$ if $\kappa(U)=\kappa(P)$.  Since
$\tilde g_P$ (resp., $\tilde g_U$) lies on the connected component $\tilde G_1$ of $\tilde G = G/\mc R_{\mathrm u}(G)$, and since the group $\mc R_{\mathrm u}(G)$ is connected, it follows that $g_P$ (resp., $g_U$) lies on $G_1$, as asserted.
\end{proof}

We now obtain the following local-global result for groups that need not be connected, but which are given over a field of characteristic zero.

\begin{thm} \label{char0disconn}
Let $G$ be a linear algebraic group over a field $k$ of characteristic zero, let $F$ be a semi-global field over $K:=k((t))$, and let $\XX$ be a normal crossings model of $F$ over $R:=k[[t]]$.  Assume that the closed fiber $X$ of $\XX$ is reduced, and that the associated reduction graph is a monotonic tree.  Then $\Sha_X(F,G)$ is trivial.
\end{thm}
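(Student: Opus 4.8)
The plan is to reduce to the connected case by factoring through the finite \'etale quotient $\bar G = G/G^0$, using the double coset description of $\Sha_\PP$ and an induction along the monotonic tree. By Proposition~\ref{sha containments}, $\Sha_X(F,G) = \bigcup_\PP \Sha_\PP(F,G)$ over finite sets $\PP$ of closed points of $X$ containing the singular points, so it suffices to prove $\Sha_\PP(F,G) = 1$ for each such $\PP$. Fix one, with associated $\UU$, $\BB$, and reduction graph $\Gamma$, which is a monotonic tree by Remark~\ref{tree independent}; choose its root $v_0$ to correspond to some $U_0 \in \UU$. I would first record two inputs: $\Sha_\PP(F,G^0) = 1$, since $\Sha_\PP(F,G^0) \subseteq \Sha_X(F,G^0) \subseteq \Sha(F,G^0)$ by parts (a) and (d) of Proposition~\ref{sha containments} and the latter vanishes by Corollary~\ref{cor:char0conn}; and $\Sha_\PP(F,\bar G) = 1$, since (viewing $\bar G$ over $R$ by base change) $\Sha_\PP(F,\bar G) \subseteq \Sha_X(F,\bar G) = \Sha(F,\bar G)$ by parts (a) and (e) of that proposition and this vanishes by Corollary~\ref{finite monotonic}.

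Now let $\xi \in \Sha_\PP(F,G)$, represented by $(g_\wp)_{\wp\in\BB} \in G(F_\BB)$ via~(\ref{Sha_P dbl coset}). The goal is to replace $(g_\wp)$, by the left--right action of $G(F_\UU)\times G(F_\PP)$, with a tuple all of whose entries lie in $G^0(F_\wp)$; that tuple then represents a class in $\Sha_\PP(F,G^0) = G^0(F_\UU)\backslash G^0(F_\BB)/G^0(F_\PP) = 1$, and unwinding the cosets in $G$ gives $\xi = 1$. To do this I would construct $a_U \in G(F_U)$ and $c_P \in G(F_P)$ by induction on the distance from $v_0$, starting with $a_{U_0} = 1$, so that $a_U g_\wp c_P \in G^0(F_\wp)$ whenever $\wp$ is a branch at $P$ on $U$. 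The crucial local input is Lemma~\ref{pts_on_comps} (and its proof): since $\cha(k) = 0$, the relative algebraic closure of $k$ in each of $F_P$, $F_\wp$ is $\kappa(P)$ and in $F_U$ is $\kappa(U)$, so the natural maps $\bar G(F_P) \to \bar G(F_\wp)$, and $\bar G(F_U) \to \bar G(F_\wp)$ when $\kappa(U) = \kappa(P)$, are bijections; and Lemma~\ref{pts_on_comps} produces, for any connected component of $G$ that has an $F_\wp$-point at a prescribed embedding of its field of constants, such a point over $F_P$, and over $F_U$ if $\kappa(U) = \kappa(P)$. Thus, at a point $P$ of odd distance with parent $V$ (so $a_V$ is already defined), the element $a_V g_\wp$ lies on some component of $G$, which by the lemma has a point $c_P^{-1} \in G(F_P)$ with $\overline{c_P^{-1}} = \overline{a_V g_\wp}$ in $\bar G(F_\wp)$, yielding $a_V g_\wp c_P \in G^0(F_\wp)$; and at the component $U$ of even distance below a singular point $P$ (where the monotonic tree forces $\kappa(P) = \kappa(U)$, $P$ being the parent of $U$), the same lemma produces $a_U \in G(F_U)$ with $a_U g_{\wp'} c_P \in G^0(F_{\wp'})$ for the branch $\wp'$ of $U$ at $P$.

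The combinatorial bookkeeping --- every branch of the bipartite tree has a unique parent endpoint, hence is handled once; every component other than the root has its parent in $\PP^{\mathrm{sing}}$, so that node is a double point with exactly two branches; and the induction terminates since $\Gamma$ is finite (here, unlike in Proposition~\ref{anisotropic factorization}, no refinement of $\PP$ is needed) --- is straightforward. Once $(a_U g_\wp c_P)_\wp \in G^0(F_\BB)$ is produced, triviality of its class in $\Sha_\PP(F,G^0)$ yields $u_U \in G^0(F_U)$, $v_P \in G^0(F_P)$ with $u_U a_U g_\wp c_P v_P = 1$ for all $\wp$, and reading these equalities in $G(F_\wp)$ shows the double coset of $(g_\wp)$ is trivial, i.e.\ $\xi = 1$. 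I expect the main obstacle to be the careful use of Lemma~\ref{pts_on_comps}: one must track the embedding of the field of constants of each connected component so that the local points produced actually represent the prescribed elements of $\bar G(F_\wp)$ --- not merely some points of the right component --- which is what makes the identifications $\bar G(F_P) \cong \bar G(F_\wp) \cong \bar G(F_U)$ above usable. Granted that, the tree induction is a direct adaptation of the one already used for Proposition~\ref{anisotropic factorization}.
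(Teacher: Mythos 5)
Your proposal is correct and follows essentially the same route as the paper: reduce to $\Sha_\PP(F,G)$, use the double coset description, invoke the triviality of $\Sha_\PP(F,G^0)$ from Corollary~\ref{cor:char0conn}, and run an induction along the monotonic tree using Lemma~\ref{pts_on_comps} to move each branch entry into $G^0(F_\wp)$, with the monotonicity hypothesis supplying $\kappa(U)=\kappa(P)$ exactly where the lemma needs it. The only (harmless) difference is that you record the triviality of $\Sha_\PP(F,\bar G)$ as an input, which the argument never actually uses.
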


\begin{proof}
It suffices by Proposition~\ref{sha containments}(\ref{ShaX union}) to show that if $\PP \subset X$ is a finite set as in Section~\ref{patching subsec} then $\Sha_\PP(F, G)$ is trivial.  Given $\PP$, we have associated sets $\UU, \BB$, along with a reduction graph $\Gamma$, which is a monotonic tree.
As explained before Remark~\ref{tree independent},
we choose the root of this monotonic tree to be an element $U_0 \in \UU$.  
For each element of $\PP \cup \UU$, we may consider its distance $n \ge 0$ from $U_0$ in the associated reduction graph $\Gamma$; here $n=0$ for $U_0$ itself.

As in Section~\ref{lgp sgp}, write
$F_\UU = \prod_{U \in \UU} F_U$,
$F_\PP = \prod_{P \in \PP} F_P$, and
$F_\BB = \prod_{\wp \in \BB} F_\wp$,
and identify $\Sha_\PP(F,G)$ with the double coset space $G(F_\UU)\backslash G(F_\BB)/G(F_\PP)$,
by \cite[Corollary 3.6]{HHK15}.
It suffices to show that $G(F_\UU)\backslash G(F_\BB)/G(F_\PP)$ consists just of the trivial double coset.  Since $\Sha_\PP(F,G^0) =  G^0(F_\UU)\backslash G^0(F_\BB)/G^0(F_\PP)$ is trivial by Corollary~\ref{cor:char0conn}, it suffices to show that every element of $ G(F_\UU)\backslash G(F_\BB)/G(F_\PP)$ contains a representative $(g^0_\wp)_{\wp \in \BB}$ such that each $g^0_\wp \in G^0(F_\wp)$.  Equivalently, we will show that
for every $(g_\wp)_{\wp \in \BB} \in G(F_\BB)$,
there exist elements $g_P \in G(F_P)$ and $g_U \in G(F_U)$ for all $P \in \PP$ and $U \in \UU$ such that
$g_U^{-1}g_\wp g_P \in G^0(F_\wp)$ for each branch $\wp \in \BB$ at $P \in \PP$ lying on $U \in \UU$.  We will construct these elements $g_\xi$, for all $\xi \in \UU \cup \PP$, by induction on the distance in $\Gamma$ from the root $U_0$ to~$\xi$.

If the distance is zero, then $\xi = U_0$, and we set $g_\xi = g_{U_0} = 1 \in G^0(F_{U_0})$.  Now take $i \ge 1$ and assume that $g_\xi$ has been defined for all $\xi \in \UU \cup \PP$ of distance less than $i$ from the root.  Let $\xi$ be a vertex of $\Gamma$ that has distance $i$ from the root $U_0$.

If $i$ is odd, then $\xi$ is an element $P \in \PP$.
In this case the vertex of $\Gamma$ that is adjacent to $P$ and that lies between $P$ and the root is an element $U \in \UU$ of distance $i-1 \ge 0$ from the root.  By the inductive hypothesis, $g_U \in G(F_U)$ has been defined.  Let $g_P$ be an element of $G(F_P)$ that lies on the same connected component of $G$ as $g_\wp^{-1} g_U \in G(F_\wp)$, where $\wp \in \BB$ is the branch at $P$ on $U$; such an element exists by Lemma~\ref{pts_on_comps}(\ref{eq char 0 hypoth}).  Thus $g_U^{-1}g_\wp g_P \in G^0(F_\wp)$.

On the other hand, if $i$ is even, then $\xi$ is an element $U \in \UU$, and the adjacent vertex between $\xi$ and the root is some $P \in \PP$ of distance $i-1$ from $U_0$.  The element $g_P \in G(F_P)$ has then been inductively defined, and again by Lemma~\ref{pts_on_comps}(\ref{eq char 0 hypoth}) there is an element $g_U \in G(F_U)$ that lies on the same connected component of $G$ as $g_\wp g_P \in G(F_\wp)$, where $\wp \in \BB$ is the branch at $P$ on $U$.  (Here $\kappa(U) \subseteq \kappa(P)$ since $P \in \bar U$, and the reverse containment holds because $\Gamma$ is monotonic.  So Lemma~\ref{pts_on_comps} indeed applies in this case.)  Again, we have $g_U^{-1}g_\wp g_P \in G^0(F_\wp)$, as asserted.
\end{proof}

\begin{rem}
Concerning the relationship between Theorem~\ref{char0 triv Sha surj}(\ref{char0 disconn Sha}) and Theorem~\ref{char0disconn}, it is not known in general whether the containment
$\Sha_X(F,G) \to \Sha(F,G)$ is an equality.  Theorem~8.10(ii) of \cite{HHK15} provides a set of conditions under which equality holds, for a linear algebraic group $G$ over $F$: that $G^0$ is a reductive group over the model $\XX$; that $\bar G := G/G^0$ is a constant finite group scheme; and that moreover the homomorphism $G((F_P)_v) \to \bar G((F_P)_v)$ is surjective for every point $P \in X$ and every discrete valuation $v$ on~$F_P$.
\end{rem}

\section{A lower bound on the Tate-Shafarevich set}\label{sectionlb}

In this section, we give a combinatorial description of an explicit quotient set of the Tate-Shafarevich set associated to a reductive group $G$ over the ground ring $R$ of our semi-global field $F$.  This ``lower bound'' on $\Sha(F,G)$, which is given in Proposition~\ref{dbl coset surj}(\ref{kappa-surj}), is in the context of a model $\XX$ such that each irreducible component of the reduced closed fiber
is isomorphic to a projective line (over some extension of the residue field $k$).  In that situation, it will afterwards be used to obtain counterexamples to a local-global principle in Section~\ref{sect: counterex}.

Our result on $\Sha(F,G)$ below relies on the notion of $\R$-equivalence (see the discussion just before Proposition~\ref{branch factorization}).
By Theorems~\ref{dvr specialization} and~\ref{main appendix} of the Appendix, 
if $A$ is a regular local ring of dimension at most two, with fraction field $L$ and residue field $\ell$, and for any reductive group $G$ over $A$, there is an associated homomorphism on $\R$-equivalence classes $\spcl_A : G(L)/\R \to G(\ell)/\R$, known as the {\it specialization map}, which is compatible with the natural reduction map $G(A) \to G(\ell)$.
Here the compatibility condition is that the compositions $G(A) \to G(L) \to G(L)/\R \overset{\spcl_A}\to G(\ell)/\R$ and $G(A) \to G(\ell) \to G(\ell)/\R$ agree, where $G(A) \to G(\ell)$ is the natural pullback map. If $A$ is complete, the specialization map is surjective 
since $G(A) \to G(\ell)$ is.

We preserve the notation from Section~\ref{patching subsec}, with $F$ a semi-global field over a complete discrete valuation ring $R$ having fraction field $K$, and with $\XX$ a normal crossings model of $F$ together with sets $\PP, \UU, \BB$.
As in Section~\ref{factor sec}, we write $\kappa(P)$ for the residue field at a point $P \in \PP$ and $\kappa(U)$ for the constant field of $U \in \UU$.  We write $k(U)$ for the function field of $U \in \UU$ and $k[U]$ for the coordinate ring of $U$; these each contain $\kappa(U)$.  
If $\wp$ is a branch on $U \in \UU$ at $P \in \PP$, we define $\kappa(\wp)$ as $\kappa(\wp) := \kappa(P)$. This is not to be confused
with the residue field $k_{\wp}$ of the discrete valuation ring $\wh{R}_{\wp} \subset F_{\wp}$.

\begin{lem} \label{thetas}
With notation as above, let $G$ be a reductive group over $R$.
\renewcommand{\theenumi}{\alph{enumi}}
\renewcommand{\labelenumi}{(\alph{enumi})}
\begin{enumerate}
\item \label{thetas exist}
The specialization maps on regular local rings induce specialization maps
\[\xymatrix{\theta_\wp  :  G(F_\wp) \to  G(\kappa(\wp))/\R}, \ \ \theta_P : G(F_P) \to G(\kappa(P))/\R, \ \ \theta_{U} : G(F_U)\to G(k(U))/\R, \]
for each $\wp\in \BB$, $P\in \PP$, $U\in \UU$, which 
are homomorphisms that
factor through $G(F_\wp)/\R$, $G(F_P)/\R$, and $G(F_U)/\R$, respectively.
\item \label{theta P wp}
If $\wp \in \BB$ is a branch at $P \in \PP$, then
the maps $\theta_P$ and $\theta_\wp$ are surjective, and $\theta_P$ is the restriction of $\theta_\wp$ to $G(F_P)$.
\item \label{theta U}
The restriction of $\theta_\wp$ to $G(F_U)$ factors through $\theta_U$, if $\wp \in \BB$ is a branch on $U\in \UU$.
\item \label{U lines}
If $U\in \UU$ is isomorphic to an open subset of a projective line over a finite field extension of $k$, then the natural map $G(\kappa(U))/\R \to G(k(U))/\R$ is an isomorphism, and $\theta_U$ is surjective.
\end{enumerate}
\end{lem}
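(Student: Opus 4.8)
The plan is to manufacture all three maps from the specialization maps supplied by Theorems~\ref{dvr specialization} and~\ref{main appendix} of the Appendix, and then to read off parts~(\ref{theta P wp})--(\ref{U lines}) from the standard properties of those maps: they are group homomorphisms factoring through $\R$-equivalence on the source, they are compatible with the reduction map $G(A)\to G(\ell)$ when $A$ is the base ring, they are surjective when $A$ is complete, and they are functorial in $A$ (in particular invariant under completion of a discrete valuation ring). For part~(\ref{thetas exist}) I would set $\theta_P:=\spcl_{\wh R_P}$, using that $\wh R_P$ is a complete two-dimensional regular local ring with fraction field $F_P$ and residue field $\kappa(P)$; $\theta_U:=\spcl_{(\wh R_U)_{J_U}}$, where $(\wh R_U)_{J_U}$ is the discrete valuation ring obtained by localizing $\wh R_U$ at its height-one prime $J_U=\sqrt{(t)}$ (this is a discrete valuation ring because $\XX$ is a normal crossings model and $U$ lies in its regular locus), a ring with fraction field $F_U$ and residue field $k(U)$; and $\theta_\wp$ to be the composite $\spcl_{\wh R_P/\wp}\circ\spcl_{\wh R_\wp}\colon G(F_\wp)\to G(k_\wp)/\R\to G(\kappa(\wp))/\R$, which makes sense because $\wh R_\wp$ is a complete discrete valuation ring with fraction field $F_\wp$ and residue field $k_\wp$, while $\wh R_P/\wp$ is a complete discrete valuation ring with fraction field $k_\wp$ and residue field $\kappa(P)=\kappa(\wp)$. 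Since each $\spcl$ is a homomorphism factoring through $\R$-classes, so are $\theta_P$ and $\theta_U$, and hence so is their composite $\theta_\wp$.

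For part~(\ref{theta P wp}): surjectivity of $\theta_P$ is immediate from completeness of $\wh R_P$, and $\theta_\wp$ is a composite of two such surjections. To see that $\theta_P=\theta_\wp|_{G(F_P)}$, I would pick a regular parameter $t_1$ of $\wh R_P$ with $\wp=t_1\wh R_P$ (possible since $\wh R_P/\wp$ is regular), use the description of the two-dimensional specialization map as the composite $\spcl_{\wh R_P}=\spcl_{\wh R_P/\wp}\circ\spcl_{(\wh R_P)_\wp}$ along this parameter, and invoke invariance of $\spcl$ under completion of the discrete valuation ring $(\wh R_P)_\wp=R_\wp$ (whose completion is $\wh R_\wp$, with the same residue field $k_\wp$) to rewrite $\spcl_{(\wh R_P)_\wp}=\spcl_{\wh R_\wp}|_{G(F_P)}$; composing yields the claim.

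For part~(\ref{theta U}): the factoring map $G(k(U))/\R\to G(\kappa(P))/\R$ I would use is $\spcl_{\OO_{\bar U,P}}$, the specialization of the component $\bar U$ of $\XX_k^{\red}$ at $P$ --- a discrete valuation ring with fraction field $k(U)$, residue field $\kappa(P)$, and completion $\wh R_P/\wp$. The inclusion $\wh R_U\hookrightarrow\wh R_\wp$ carries $J_U$ into the maximal ideal of $\wh R_\wp$ (since $t\in\wp\wh R_\wp$), so it induces a local homomorphism of discrete valuation rings $(\wh R_U)_{J_U}\to\wh R_\wp$; functoriality of $\spcl$ then gives $\spcl_{\wh R_\wp}|_{G(F_U)}=\rho_*\circ\theta_U$ with $\rho_*\colon G(k(U))/\R\to G(k_\wp)/\R$ induced by $k(U)\hookrightarrow k_\wp$. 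Applying $\spcl_{\wh R_P/\wp}$ and using that $k(U)\hookrightarrow k_\wp$ is the completion map of $\OO_{\bar U,P}$, one gets $\spcl_{\wh R_P/\wp}\circ\rho_*=\spcl_{\OO_{\bar U,P}}$, hence $\theta_\wp|_{G(F_U)}=\spcl_{\OO_{\bar U,P}}\circ\theta_U$.

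For part~(\ref{U lines}), where $\kappa(U)=k'$ and $k(U)=k'(x)$ is a rational function field: the map $G(k')/\R\to G(k'(x))/\R$ is split injective, a retraction being $\spcl_{k'[x]_{(x)}}$ (a constant section reduces to itself, so the composite is the identity by compatibility with reduction), and it is surjective by the $\A^1$-invariance of $\R$-equivalence for linear algebraic groups, i.e.\ the natural bijection $G(E)/\R\iso G(E(x))/\R$ (see~\cite{GilleIHES}; for tori this goes back to Colliot-Th\'el\`ene--Sansuc). Then for surjectivity of $\theta_U$ I would, given $h\in G(\kappa(U))$, regard $h$ as a constant section in $G(\wh R_U/J_U)=G(k[U])$, lift it through the surjection $G(\wh R_U)\twoheadrightarrow G(\wh R_U/J_U)$ (formal smoothness, $\wh R_U$ complete) to $g\in G(\wh R_U)\subseteq G(F_U)$, and observe that $\theta_U(g)$ is the class of $h$ in $G(k(U))/\R$ by compatibility of $\spcl$ with reduction; such classes exhaust $G(k(U))/\R$ by the isomorphism just proved. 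The step I expect to be the main obstacle is not any single deduction but verifying that the Appendix really supplies the functoriality I lean on --- the two-step decomposition of the two-dimensional specialization map along a regular parameter, its invariance under completion of a discrete valuation ring, and its functoriality for an arbitrary local homomorphism of discrete valuation rings --- together with the surjectivity half of part~(\ref{U lines}) when $k'$ is finite, where the naive ``evaluate at an $\A^1$-rational point'' argument breaks down and one must appeal to the $\A^1$-invariance theorem.
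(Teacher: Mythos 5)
Your construction of $\theta_P$, $\theta_\wp$, $\theta_U$ from the Appendix's specialization maps, and your verifications of (b) and (c) via the factorization $\spcl_{\wh R_P}=\spcl_{\wh R_P/\wp}\circ\spcl_{\wh R_\wp}|_{G(F_P)}$ along the regular prime $\wp$ together with functoriality and invariance under completion, are exactly the paper's argument; the lifting of constant sections through $G(\wh R_U)\twoheadrightarrow G(k[U])$ for surjectivity of $\theta_U$ also matches. The one divergence is in (d): where you invoke $\A^1$-invariance of $\R$-equivalence as a known theorem, the paper proves surjectivity of $G(\kappa(U))/\R \to G(k(U))/\R$ directly via the explicit homotopy $f(s,x)=g(sx)$ contracting a rational map $g:S\to G$ to its value $g(0)$; your citation route is legitimate for reductive $G$, and as you note it sidesteps the implicit need in the direct argument for the domain of definition $S$ to contain a $\kappa(U)$-rational point.
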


\begin{proof}
The map $\theta_P$ is defined to be the composition of $G(F_P) \to G(F_P)/\R$ with the 
specialization map of Theorem~\ref{main appendix} with respect to the local ring $\wh R_P$; and this is surjective since $\wh R_P$ is complete.
If $\wp \in \BB$ is a branch at $P \in \PP$, then
the map $\theta_\wp$ is defined to be the composition
\[G(F_\wp) \to G(F_\wp)/\R \to G(k(U)_P)/\R \to G(\kappa(P))/\R = G(\kappa(\wp))/\R,\]
where the second and third maps in this composition are specializations,
and $k(U)_P$ is the completion of $k(U)$ at $P$ (which is the same as the residue field $k_\wp$ of $F_\wp$).  The first map is trivially surjective, and the second and third maps are surjective because they are specializations with respect to complete discrete valuation rings.
By Theorem~\ref{main appendix}(\ref{sp hom})
together with the definitions of $\theta_P$ and $\theta_\wp$, 
the map $\theta_P$ coincides with the composition $G(F_P) \hookrightarrow G(F_\wp) \,{\buildrel \theta_\wp \over \to}\, G(\kappa(\wp))/\R = G(\kappa(P))/\R$.
Note that this composition factors through $G(F_P)/\R$ and $G(F_\wp)/\R$ by the properties of the specialization map in Theorem~\ref{main appendix}.

For $U \in \UU$, let $\eta$ be the generic point of the curve $U$, so that the localization $\wh R_{U,\eta}$ of $\wh R_U$ at $\eta$ is a discrete valuation ring with fraction field $F_U$, residue field $k(U)$, and completion $\wh R_\eta$.  We
define the map $\theta_U$ as the composition of the natural map $G(F_U) \to G(F_U)/\R$ with the specialization map $G(F_U)/\R \to G(k(U))/\R$ with respect to the discrete valuation ring $\wh R_{U,\eta}$.
By Remark~\ref{dvr spcl rk}(\ref{spcl via completion}), $\theta_U$ is the same as the composition
$G(F_U) \to G(F_\eta) \overset{\spcl}\to G(k(U))/\R$, where $F_\eta$ is the fraction field of $\wh R_\eta$ and the second map is specialization with respect to $\wh R_\eta$. Note that if $\wp \in \BB$ is a branch on $U$, then the restriction of $\theta_\wp$ to $G(F_U)$ is the composition of $\theta_{U}$ with the map $G(k(U))/\R \to G(k(U)_P)/\R \to G(\kappa(P))/\R$, where the latter map is given by specialization as above.
This completes the proof of parts~(\ref{thetas exist})-(\ref{theta U}).

For part~(\ref{U lines}), suppose that $U$ is isomorphic to an open subset of a projective line $\P^1_{k'}$ over some finite extension $k'$ of $k$.
Consider the natural map $\alpha:G(\kappa(U))/\R \to G(k(U))/\R$ that is induced by the inclusion of $k'=\kappa(U)$ into $k(U) \simeq k'(x)$, where $x$ is a coordinate function on the affine line.  Then $\beta\alpha$ is the identity on $G(k(U))/\R$, where $\beta:G(k(U))/\R \to G(\kappa(U))/\R$ is given by specialization at a $k'$-point of $\bar U$.  Hence $\alpha$ is injective.  For surjectivity, we want that every element of $G(k'(x))$ is R-equivalent
to an element in the image of $G(k')$.  An element of $G(k'(x))$ is given by a
map $g:S \to G$, for $S$ an open  subset of the $x$-line over $k'$ that we may assume
contains $x=0$ (after making a change of coordinates on the line).
View $S \times S$, with coordinates $s,x$, as an open subset of a family of $x$-lines parametrized by the $s$-line; and consider the rational map $f:S \times S \dashrightarrow G$ given by $f(s,x)=g(sx)$.
The restriction of $f$ to $s=0$ is the constant morphism $S \to G$ with value $g(0) \in G(k') \subset
G(k'(x))$, and the restriction of $f$ to $s=1$ is $g:S \to G$.  So $f$ defines a rational map
$\A^1_{k'(x)} \dashrightarrow G_{k'(x)}$ that is defined at $s=0,1$ and
connects $g$ to $g(0)$, proving that $\alpha$ is surjective and hence an isomorphism.

Still under the hypothesis that $U$ is isomorphic to an open subset of a projective line~$\P^1_{k'}$, we then want to show that $\theta_U: G(F_U) \to G(k(U))/\R \simeq G(k')/\R$ is surjective, where $k'=\kappa(U)$.  Every class in $G(k(U))/\R$ is represented by an element of $G(k')$ and hence by an element of $G(k[U])$, where we view $G(k') \subseteq G(k[U]) \subseteq G(k(U))$.  Also, the reduction map $G(\wh R_U) \to G(k[U])$ is surjective by formal smoothness.
But the composition $G(\wh R_U) \to G(k[U]) \to G(k(U)) \to G(k(U))/\R$ is the restriction of $\theta_U$ to $G(\wh R_U)$, since the restriction of $G(F_\eta) \overset{\spcl}\to G(k(U))/\R$ 
to the subgroup $G(\wh R_\eta)$ is induced by the reduction map.  Hence
$\theta_U$ is surjective under the additional hypothesis on $U$.
\end{proof}

By Theorem~\ref{equalshas} above, together with the double coset formula
\cite[Corollary~3.6]{HHK15}, we have bijections
\[\Sha(F, G) \simeq \Sha_\PP(F, G) \simeq \prod_{U \in \UU} G(F_{U}) \,\backslash \prod_{\wp \in \BB} G(F_\wp) \,/ \prod_{P \in \PP} G(F_{P}).\]  
The first part of the next result shows that this double coset description is compatible with the specialization map, while the second part gives our explicit quotient of $\Sha(F,G)$.

\begin{prop} \label{dbl coset surj}
With notation as above, let $G$ be a reductive group over $R$.
\renewcommand{\theenumi}{\alph{enumi}}
\renewcommand{\labelenumi}{(\alph{enumi})}
\begin{enumerate}
\item \label{k-surj}
The maps $\theta_\wp$ together define a surjection
\[\theta_0:\Sha(F, G) \to
\prod_{U \in \UU} (G(k(U))/\R)  \backslash \prod_{\wp \in \BB}  (G(\kappa(\wp))/\R)  \slash \prod_{P \in \PP} (G(\kappa(P))/\R)\]
via the above identification
\[\Sha(F, G) \simeq \prod_{U \in \UU} G(F_{U}) \,\backslash \prod_{\wp \in \BB} G(F_\wp) \,/ \prod_{P \in \PP} G(F_{P}).\]
\item \label{kappa-surj}
The natural surjection
\[\xymatrix{
\prod_{U \in \UU} (G(\kappa(U))/\R)  \backslash \prod_{\wp \in \BB}  (G(\kappa(\wp))/\R)  \slash \prod_{P \in \PP} (G(\kappa(P))/\R \ar@{->>}^\pi[d] \\
\prod_{U \in \UU} (G(k(U))/\R)  \backslash \prod_{\wp \in \BB}  (G(\kappa(\wp))/\R)  \slash \prod_{P \in \PP} (G(\kappa(P))/\R,\\
 }\]
is an isomorphism if each $U \in \UU$ is an open subset of a projective line over a finite field extension of $k$.  Hence in that case $\theta_0$ becomes identified with the surjection
\[\theta = \pi^{-1} \circ \theta_0:\Sha(F, G) \to
\prod_{U \in \UU} (G(\kappa(U)/\R)  \backslash \prod_{\wp \in \BB}  (G(\kappa(\wp))/\R)  \slash \prod_{P \in \PP} (G(\kappa(P))/\R).\]
\end{enumerate}
\end{prop}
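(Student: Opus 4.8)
The plan is to transport the double coset description of $\Sha(F,G)$ to the level of $\R$-equivalence classes via the specialization maps of Lemma~\ref{thetas}. By Theorem~\ref{equalshas} and \cite[Corollary~3.6]{HHK15} we have $\Sha(F,G) \simeq \prod_{U} G(F_U) \backslash \prod_\wp G(F_\wp) / \prod_P G(F_P)$, where $G(F_U)$ sits in $\prod_{\wp \text{ on } U} G(F_\wp)$ via the inclusions $\iota_U \colon G(F_U) \hookrightarrow G(F_\wp)$ and acts on the left, and $G(F_P)$ sits in $\prod_{\wp \text{ at } P} G(F_\wp)$ via $\iota_P \colon G(F_P) \hookrightarrow G(F_\wp)$ and acts on the right. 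For part~(\ref{k-surj}), the first step would be to show that $\prod_\wp \theta_\wp$ descends to double coset spaces. Each $\theta_\wp$ is a homomorphism by Lemma~\ref{thetas}(\ref{thetas exist}), so I would record the compatibilities $\theta_\wp \circ \iota_P = \theta_P$ (Lemma~\ref{thetas}(\ref{theta P wp})) and $\theta_\wp \circ \iota_U = \sigma_\wp \circ \theta_U$ (Lemma~\ref{thetas}(\ref{theta U})), where $\sigma_\wp \colon G(k(U))/\R \to G(\kappa(\wp))/\R$ is the specialization-at-$P$ map occurring in that lemma. Combining these with the homomorphism property of $\theta_\wp$ shows that $\prod_\wp\theta_\wp$ carries a $\bigl(\prod_U G(F_U), \prod_P G(F_P)\bigr)$-double coset into a $\bigl(\prod_U (G(k(U))/\R), \prod_P (G(\kappa(P))/\R)\bigr)$-double coset, where the left factor acts on the $\wp$-entry through $\sigma_\wp$; this gives the well-defined map $\theta_0$. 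Surjectivity of $\theta_0$ would then be immediate: given a representing tuple $(\bar g_\wp)_\wp$ in the target, I would lift each entry along the surjection $\theta_\wp$ (Lemma~\ref{thetas}(\ref{theta P wp})) to some $g_\wp \in G(F_\wp)$, and note that the class of $(g_\wp)_\wp$ in $\Sha(F,G)$ maps to the prescribed double coset.

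For part~(\ref{kappa-surj}), the observation is that $\pi$ is just the map on double coset spaces induced by the natural homomorphisms $G(\kappa(U))/\R \to G(k(U))/\R$ coming from the inclusions $\kappa(U) \hookrightarrow k(U)$, which are compatible with the two left actions on $\prod_\wp (G(\kappa(\wp))/\R)$ because both act on the $\wp$-entry through $\sigma_\wp$. Under the hypothesis that each $U$ is an open subset of a projective line over a finite extension of $k$, Lemma~\ref{thetas}(\ref{U lines}) tells us each such map is an isomorphism; replacing every left-acting group by an isomorphic one in a way compatible with the action does not change the double coset space, so $\pi$ is a bijection. Hence $\theta := \pi^{-1}\circ\theta_0$ is the asserted surjection.

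The only delicate point — more a matter of careful bookkeeping than a genuine difficulty — will be the first step of part~(\ref{k-surj}): one must line up $\theta_\wp$, $\theta_P$, $\theta_U$ and the maps $\sigma_\wp$ so that the induced map on double cosets is both well-defined and given by the expected formula, in particular so that the left-hand quotient is by $\prod_U (G(k(U))/\R)$ and not merely by $\prod_U (G(\kappa(U))/\R)$. All of the substantive input — surjectivity of the $\theta_\wp$, the two compatibility identities, and the fact that $G(\kappa(U))/\R \to G(k(U))/\R$ is an isomorphism when $U$ is an open subset of a projective line — is already packaged in Lemma~\ref{thetas}, so once these compatibilities are laid out cleanly the remainder of the argument is purely formal.
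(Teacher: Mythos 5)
Your proposal is correct and follows essentially the same route as the paper: identify $\Sha(F,G)$ with the double coset space via Theorem~\ref{equalshas} and \cite[Corollary~3.6]{HHK15}, use the compatibilities in Lemma~\ref{thetas}(\ref{theta P wp}),(\ref{theta U}) to descend $\prod_\wp\theta_\wp$ to double cosets, deduce surjectivity from that of the $\theta_\wp$, and invoke Lemma~\ref{thetas}(\ref{U lines}) for part~(\ref{kappa-surj}). Your explicit bookkeeping with the maps $\sigma_\wp$ matches the paper's parenthetical remark that the left and right quotients are taken by the \emph{images} of $\prod_U (G(k(U))/\R)$ and $\prod_P (G(\kappa(P))/\R)$ in $\prod_\wp (G(\kappa(\wp))/\R)$.
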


\begin{proof}
As $\wp$ ranges over $\BB$, the specialization maps $\theta_\wp$ in Lemma~\ref{thetas}(\ref{thetas exist})
together define a homomorphism $\prod_\wp G(F_\wp) \to \prod_\wp G(\kappa(\wp)/\R$.
Similarly, there are homomorphisms $\prod_U G(F_U) \to \prod_U G(k(U)/\R$
and $\prod_P G(F_P) \to \prod_P G(\kappa(P)/\R$.  By parts (\ref{theta P wp}) and (\ref{theta U}) of Lemma~\ref{thetas}, these maps are compatible.  So together they induce a homomorphism $\theta_0$ on the double coset space  $\prod_{U \in \UU} G(F_{U}) \,\backslash \prod_{\wp \in \BB} G(F_\wp) \,/ \prod_{P \in \PP} G(F_{P})$, which, as above, we identify with
$\Sha(F, G)$, via Theorem~\ref{equalshas} and
\cite[Corollary~3.6]{HHK15}.  The map $\theta_0$ is surjective by the surjectivity of $\theta_\wp$ in
Lemma~\ref{thetas}(\ref{theta P wp}).  This proves part~(\ref{k-surj}).
(Note here that in writing these quotients, we do not assume that the maps from $\prod_U G(k(U))/\R$ and $\prod_P G(\kappa(P))/\R$ to $\prod_\wp  G(\kappa(\wp))/\R$ are injective; just that we are quotienting by their images in $\prod_\wp  G(\kappa(\wp))/\R$.)

For part~(\ref{kappa-surj}), the vertical map is induced by the identity map on
$\prod_{\wp \in \BB}  (G(\kappa(\wp))/\R)$ via
the inclusion $\kappa(U) \hookrightarrow k(U)$, and is therefore surjective.  If each $U \in \UU$ is an open subset of a projective line, then the stated properties follow from Lemma~\ref{thetas}(\ref{U lines}).
\end{proof}

\begin{rem}
\renewcommand{\theenumi}{\alph{enumi}}
\renewcommand{\labelenumi}{(\alph{enumi})}
\begin{enumerate}
\item
The above maps $\theta_0$ and $\theta$ on 
\[\Sha(F,G) \simeq \prod_{U \in \UU} G(F_{U}) \,\backslash \prod_{\wp \in \BB} G(F_\wp) \,/ \prod_{P \in \PP} G(F_{P})\] 
factor through $\prod_U (G(F_U)/\R) \backslash \prod_\wp (G(F_\wp)/\R) \slash \prod_P (G(F_P)/\R)$, since each of the maps $\theta_\wp$, $\theta_P$, $\theta_U$ factors through the corresponding group of $\R$-equivalence classes
by Lemma~\ref{thetas}(\ref{thetas exist}).
\item
In the case that $G$ is a torus, the maps $G(F_P)/\R \to G(\kappa(P))/\R$ and $G(F_\wp)/\R \to G(\kappa(\wp))/\R$ are isomorphisms, and so is the surjection $\theta_0$ in Proposition~\ref{dbl coset surj}(\ref{k-surj}).  This follows from \cite[Proposition~4.6(a,b) and Lemma~6.2]{CHHKPS} (which are given in terms of flasque resolutions), together with \cite[Th\'eor\`eme~2, p.199]{CTS77} (which interprets $T(k)/\R$ in terms of flasque resolutions if $T$ is a torus; see also \cite[Theorem~3.1]{CTS87}).
As a result, if $G$ is a torus, then the vertical map in Proposition~\ref{dbl coset surj}(\ref{kappa-surj}) defines an epimorphism
\[\prod_{U \in \UU} (G(\kappa(U))/\R)  \backslash \prod_{\wp \in \BB}  (G(\kappa(\wp))/\R)  \slash \prod_{P \in \PP} (G(\kappa(P))/\R \to \Sha(F,G),\]
which is an isomorphism (inverse to $\theta$) if each $U \in \UU$ is an open subset of a projective line over a finite field extension of $k$.
See \cite[Proposition~6.3(a,c)]{CHHKPS}.
\end{enumerate}
\end{rem}

In the situation of Proposition~\ref{dbl coset surj}(\ref{kappa-surj}), if $g \in \prod_{\wp \in \BB} G(F_\wp)$, we will write $\theta(g)$ for $\theta([g])$, where $[g]$ is the class of $g$ in $\prod_U G(F_{U}) \,\backslash \prod_\wp G(F_\wp) \,/ \prod_P G(F_{P}) \simeq \Sha(F,G)$.  This is the same as the class of $(\theta_\wp(g))_\wp$ in
$\prod_U (G(\kappa(U))/\R)  \backslash \prod_\wp  (G(\kappa(\wp))/\R)  \slash \prod_P (G(\kappa(P))/\R)$.

Recall (from Section~\ref{factor sec}, Equation~(\ref{specializable}))
that if $\wp$ is a branch on $U$ at $P$, there is the subgroup $G_{\rm s}(\wh R_\wp) = G(\wh R_P)G_\wp(\wh R_\wp) \subseteq G(\wh R_\wp)$ of specializable elements, together with the
specialization map $\Theta_\wp:G_{\rm s}(\wh R_\wp) \to G(\kappa(\wp))$ obtained by composing the reduction maps $G_{\rm s}(\wh R_\wp) \to G(\wh R_P/\wp) \to G(\kappa(P)) = G(\kappa(\wp))$. By Theorem~\ref{main appendix}(\ref{special restriction}), the map
$\Theta_\wp:G_{\rm s}(\wh R_\wp) \to G(\kappa(\wp))$ lifts the restriction of $\theta_\wp:G(F_\wp) \to
G(\kappa(\wp))/\R$ to $G_{\rm s}(\wh R_\wp)$.  Note also that by Proposition~\ref{branch factorization}, for any $(\tilde g_\wp) \in \prod_{{\wp \in \BB}} G(F_\wp)$ there is a representative $(g_\wp)$ of the same double coset in $\Sha(F,G)$ such that each $g_\wp$ is R-equivalent to $\tilde g_\wp$ and lies in $G_{\rm s}(\wh R_\wp)$.

The next result will be useful in proving Theorem~\ref{computation-of-sha}.

\begin{cor} \label{equal-in-gkr}
Let $G$ be a reductive group over $R$, and
suppose that every irreducible component of the reduced closed fiber of the model $\XX$ is isomorphic to a projective line over a finite field extension of $k$.  Let
$g,g' \in \prod_{\wp \in \BB} G(F_\wp)$, with $g = (g_\wp)_\wp$, $g' = (g'_\wp)_\wp$.  Suppose that $\theta(g) = \theta(g')$, with $\theta$ as in Proposition~\ref{dbl coset surj}(\ref{kappa-surj}).  Then there is a family $(\alpha_\xi)_{\xi \in \UU \cup \PP}$ with $\alpha_\xi \in G(F_\xi)$ such that
\[\alpha_U g'_\wp \alpha_P \in G_{\rm s}(\wh R_\wp) \ {\rm and} \ \theta_\wp(\alpha_U g'_\wp \alpha_P) = \theta_\wp(g_\wp) \in G(\kappa(\wp))/\R\]
for each branch $\wp \in \BB$ at $P \in \PP$ lying on $U \in \UU$.
\end{cor}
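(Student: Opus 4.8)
The plan is to unwind the hypothesis $\theta(g)=\theta(g')$ into a concrete double-coset identity, lift the resulting residual data to the patch fields using the surjectivity statements of Lemma~\ref{thetas}, and then correct the lifts so the relevant products become specializable, using Proposition~\ref{branch factorization}. First I would make the hypothesis explicit. Under the identification of Proposition~\ref{dbl coset surj}(\ref{kappa-surj}) we have $\theta(g)=[(\theta_\wp(g_\wp))_\wp]$ and $\theta(g')=[(\theta_\wp(g'_\wp))_\wp]$ in $\prod_U (G(\kappa(U))/\R)\backslash \prod_\wp (G(\kappa(\wp))/\R)/\prod_P (G(\kappa(P))/\R)$, so equality of these double cosets yields families $(\bar a_U)_{U\in\UU}$ with $\bar a_U\in G(\kappa(U))/\R$ and $(\bar b_P)_{P\in\PP}$ with $\bar b_P\in G(\kappa(P))/\R$ such that for each branch $\wp\in\BB$ on $U$ at $P$ one has $\bar a_U\cdot\theta_\wp(g'_\wp)\cdot\bar b_P=\theta_\wp(g_\wp)$ in $G(\kappa(\wp))/\R$, where $\bar a_U$ acts through the natural map $G(\kappa(U))/\R\simeq G(k(U))/\R\to G(\kappa(\wp))/\R$ of Lemma~\ref{thetas}(\ref{theta U}),(\ref{U lines}) and $\bar b_P$ acts through the identification $G(\kappa(P))/\R=G(\kappa(\wp))/\R$.

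Next I would produce provisional lifts. By Lemma~\ref{thetas}(\ref{theta P wp}) the map $\theta_P:G(F_P)\to G(\kappa(P))/\R$ is surjective, so I choose $\alpha_P^{(0)}\in G(F_P)$ with $\theta_P(\alpha_P^{(0)})=\bar b_P$. By Lemma~\ref{thetas}(\ref{U lines})\,--\,this is exactly where the hypothesis that each $U\in\UU$ is an open subset of a projective line over a finite extension of $k$ is used\,--\,the map $\theta_U:G(F_U)\to G(k(U))/\R$ is surjective, so I choose $\alpha_U\in G(F_U)$ mapping to $\bar a_U$ (via $\theta_U$ and the isomorphism $G(\kappa(U))/\R\simeq G(k(U))/\R$). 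Viewing $\alpha_U,\alpha_P^{(0)}$ inside $G(F_\wp)$ through $F_U,F_P\subseteq F_\wp$, the fact that $\theta_\wp$ is a homomorphism (Lemma~\ref{thetas}(\ref{thetas exist})), together with parts (\ref{theta P wp}) and (\ref{theta U}) of Lemma~\ref{thetas}, gives
$\theta_\wp(\alpha_U g'_\wp\alpha_P^{(0)})=\theta_\wp(\alpha_U)\,\theta_\wp(g'_\wp)\,\theta_\wp(\alpha_P^{(0)})=\bar a_U\,\theta_\wp(g'_\wp)\,\bar b_P=\theta_\wp(g_\wp)$
in $G(\kappa(\wp))/\R$. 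Thus the $\theta_\wp$-condition already holds for the provisional lifts, and only specializability remains.

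Then I would fix specializability one point at a time. For a fixed $P\in\PP$, let $\wp_1,\dots,\wp_r$ be the branches at $P$ (with $r\le 2$ since $\XX$ is a normal crossings model, hence regular), lying on components $U_1,\dots,U_r$. Applying Proposition~\ref{branch factorization} to the elements $\alpha_{U_i}g'_{\wp_i}\alpha_P^{(0)}\in G(F_{\wp_i})$ produces $\gamma_P\in G(F_P)$ that is $\R$-trivial in each $G(F_{\wp_i})$ and for which $\alpha_{U_i}g'_{\wp_i}\alpha_P^{(0)}\gamma_P\in G_{\rm s}(\wh R_{\wp_i})$ for all $i$. Setting $\alpha_P:=\alpha_P^{(0)}\gamma_P$, we get $\alpha_U g'_\wp\alpha_P\in G_{\rm s}(\wh R_\wp)$ for each branch $\wp$ at $P$ on $U$; and since $\gamma_P$ is $\R$-trivial in $G(F_\wp)$ while $\theta_\wp$ factors through $G(F_\wp)/\R$, we still have $\theta_\wp(\alpha_U g'_\wp\alpha_P)=\theta_\wp(\alpha_U g'_\wp\alpha_P^{(0)})\,\theta_\wp(\gamma_P)=\theta_\wp(g_\wp)$. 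Carrying this out over all $P\in\PP$ (keeping the $\alpha_U$ unchanged) yields the required family $(\alpha_\xi)_{\xi\in\UU\cup\PP}$.

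I expect the main obstacle to be bookkeeping rather than substance: one must check that the correction step, which by Proposition~\ref{branch factorization} is only allowed to alter the $G(F_P)$-factor, does not disturb the $\theta_\wp$-values arranged in the lifting step\,--\,this works precisely because the correcting elements are $\R$-trivial on the overlaps and the specialization homomorphisms kill $\R$-equivalence\,--\,and one must keep straight the several natural maps $G(\kappa(U))/\R\to G(\kappa(\wp))/\R$ and the identification $G(\kappa(P))/\R=G(\kappa(\wp))/\R$ through which the double-coset relation of Proposition~\ref{dbl coset surj}(\ref{kappa-surj}) is phrased, so that the unwound relation really matches $\theta_\wp(\alpha_U g'_\wp\alpha_P^{(0)})$.
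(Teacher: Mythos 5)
Your proposal is correct and follows essentially the same route as the paper's proof: unwind the double-coset equality into pointwise relations $\theta_\wp(g_\wp) = g_U\,\theta_\wp(g'_\wp)\,g_P$, lift $g_U, g_P$ via the surjectivity of $\theta_U$ and $\theta_P$ from Lemma~\ref{thetas}, and then correct the $P$-components by the $\R$-trivial elements supplied by Proposition~\ref{branch factorization}, which leaves the $\theta_\wp$-values unchanged since $\theta_\wp$ factors through $\R$-equivalence. No gaps.
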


\begin{proof}
By hypothesis, there exist $g_U \in G(\kappa(U))/\R$ for all $U \in \UU$, and $g_P \in G(\kappa(P))/\R$ for all $P \in \PP$, such that $\theta_\wp(g_\wp)=g_U \theta_\wp(g'_\wp) g_P \in G(\kappa_\wp)/\R$ for every branch $\wp \in \BB$ at $P \in \PP$ lying on $U \in \UU$.
Since the morphisms $\theta_P$ and $\theta_U$ are surjective by Lemma~\ref{thetas}(\ref{theta P wp}),(\ref{U lines}),
there exist $\alpha_U \in G(F_U)$ and $\alpha_P' \in G(F_P)$ such that $\theta_{U}(\alpha_U) = g_U$ and
$\theta_P(\alpha_P') = g_P$ (where we identify $G(\kappa(U))/\R$ with $G(k(U))/\R$ via Lemma~\ref{thetas}(\ref{U lines})).
Thus
$\theta_\wp(\alpha_U g'_\wp \alpha_P') = \theta_\wp(g_\wp) \in G(\kappa(\wp))/\R$ for each branch $\wp \in \BB$.  By Proposition~\ref{branch factorization}, there is an element $\tilde g = (\tilde g_P)_P \in \prod_{P \in \PP} G(F_P)$ such that $(\alpha_U g'_\wp \alpha_P') \tilde g_P \in G_{\rm s}(\wh R_\wp)$ and $\tilde g_P$ is R-trivial in $G(F_\wp)$, for every branch $\wp \in \BB$ at $P \in \PP$ on $U \in \UU$.  Setting $\alpha_P = \alpha_P' \tilde g_P$ completes the proof, since
$\theta_\wp(\alpha_U g'_\wp \alpha_P) = \theta_\wp(\alpha_U g'_\wp \alpha_P') = \theta_\wp(g_\wp) \in G(\kappa(\wp))/\R$ by the R-triviality of $\tilde g_P$.
\end{proof}

We can view the above corollary as saying that the adjustment of elements appearing in Proposition~\ref{branch factorization} can be done compatibly with $\theta$. 

\section{An exact computation of the Tate-Shafarevich set} \label{Sha computation section}

In this section, we build on the results of Section~\ref{sectionlb} to obtain an explicit computation of $\Sha(F,G)$ under a somewhat stronger hypothesis; see Theorem~\ref{explicit Sha}.  As before, we let $G$ be a reductive group over a complete discrete valuation ring $R$; we let $\XX$ be a normal crossings model of a semi-global field $F$ over $R$; and we take $\PP, \UU, \BB$ as in Section~\ref{patching subsec}.  The additional condition that we will need below is the following:

\begin{hyp} \label{rational comps ints}
Assume that each irreducible component of the reduced closed fiber of $\XX$ is a projective line over the residue field $k$ of $R$, and that each intersection point of components is defined over $k$.  Thus $\kappa(P)=\kappa(U)=k$ for all $P \in \PP$ and all $U \in \UU$.
\end{hyp}

Using Theorem~\ref{explicit Sha}, we will obtain counterexamples to the local-global principle for semisimple simply connected groups in Section~\ref{sect: counterex}, beyond those that will arise via Proposition~\ref{dbl coset surj}(\ref{kappa-surj}).  These will
include an example in which $\Sha(F,G)$ is infinite.  

\subsection{A double coset description of the Tate-Shafarevich set}

Under Hypothesis~\ref{rational comps ints}, we show in Theorem~\ref{computation-of-sha} that the surjective specialization map
\begin{eqnarray*}
\theta:\Sha(F, G) &\to& \prod_{U \in \UU} (G(\kappa(U))/\R)  \backslash \prod_{\wp \in \BB}  (G(\kappa(\wp))/\R)  \slash \prod_{P \in \PP} (G(\kappa(P))/\R\\
&=& \prod_{U \in \UU} (G(k)/\R)  \backslash \prod_{\wp \in \BB}  (G(k)/\R)  \slash \prod_{P \in \PP} (G(k)/\R)
\end{eqnarray*}
defined in Proposition~\ref{dbl coset surj}(\ref{kappa-surj})
is bijective, thereby providing a computable double coset description of
$\Sha(F, G)$.
Afterwards, in Corollary~\ref{explicit Sha}, we make this more explicit, describing $\Sha(F,G)$ just in terms of the number of loops in the reduction graph.

We first prove a lemma about specialization.  Here we preserve the notation from Section~\ref{sectionlb}.

\begin{lemma} \label{equal-in-gk}
Under Hypothesis~\ref{rational comps ints}, let $G$ be a reductive group over $R$ and let $g,g' \in \prod_{\wp \in \BB} G_{\rm s}(\wh R_\wp)$, with $g = (g_\wp)_\wp, g' = (g'_\wp)_\wp$.  Suppose that $\theta_\wp(g_\wp) = \theta_\wp(g'_\wp) \in G(\kappa(\wp))/\R$ for all branches $\wp \in \BB$.
Then there exists a finite set of closed points $\PP^*$ containing $\PP$ such that
the set $\UU^*$ of connected components of the complement of $\PP^*$ in the reduced closed fiber has the following property: there exists a family $(\alpha_V)_{V \in \UU^*}$ of elements $\alpha_V \in G(\wh R_V)$ for $V \in \UU^*$ such that if $P \in \PP$ is in the closure of $V \in \UU^*$ and $\wp$ is a branch on $V$ at $P$, then $\alpha_V g'_\wp \in G_{\rm s}(\wh R_\wp)$ and
$\Theta_\wp(\alpha_V g'_\wp ) = \Theta_\wp(g_\wp) \in G(\kappa(\wp))$.
\end{lemma}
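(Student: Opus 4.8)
The plan is to build the elements $\alpha_V$ one component at a time, using that each component of the reduced closed fiber is a copy of $\P^1_k$ and that all the relevant points are $k$-rational. First I would reduce everything to a rational interpolation problem on $\P^1_k$. Under Hypothesis~\ref{rational comps ints} we have $\kappa(\wp)=\kappa(P)=\kappa(U)=k$ for every $\wp\in\BB$, $P\in\PP$, $U\in\UU$, and by Theorem~\ref{main appendix}(\ref{special restriction}) the map $\Theta_\wp\colon G_{\mathrm s}(\wh R_\wp)\to G(\kappa(\wp))=G(k)$ lifts the restriction of $\theta_\wp$ to $G_{\mathrm s}(\wh R_\wp)$. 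Hence $\Theta_\wp(g_\wp)$ and $\Theta_\wp(g'_\wp)$ have the same image in $G(k)/\R$, so, since $\R G(k)$ is a normal subgroup of $G(k)$, the element $\delta_\wp:=\Theta_\wp(g_\wp)\Theta_\wp(g'_\wp)^{-1}\in G(k)$ is $\R$-trivial. By \cite[II.1.1(b)]{GilleIHES} it is therefore directly $\R$-equivalent to $1$: there is a rational map $f_\wp\colon\A^1_k\dashrightarrow G$, defined at the $k$-points $0$ and $1$, with $f_\wp(0)=1$ and $f_\wp(1)=\delta_\wp$. The goal then becomes: for each $U\in\UU$, with $P_1,\dots,P_r$ the (necessarily $k$-rational) points of $\PP$ on $\overline U\cong\P^1_k$ and $\wp_1,\dots,\wp_r$ the corresponding branches, produce a rational map $\overline U\dashrightarrow G$ that is regular at each $P_j$ and takes the value $\delta_{\wp_j}$ there.

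I would do this by a ``bump function'' construction. For each $j$, choose a rational function $\psi_j\colon\P^1_k\dashrightarrow\A^1_k$ that is regular at every $P_i$ and satisfies $\psi_j(P_j)=1$ and $\psi_j(P_i)=0$ for $i\ne j$; this is elementary since the $P_i$ are $k$-points and $\P^1_k$ has infinitely many closed points, so the poles of $\psi_j$ can be placed outside $\{P_1,\dots,P_r\}$. Then $\phi_j:=f_{\wp_j}\circ\psi_j\colon\P^1_k\dashrightarrow G$ is a rational map, regular at each $P_i$, with $\phi_j(P_j)=\delta_{\wp_j}$ and $\phi_j(P_i)=1$ for $i\ne j$. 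Setting $\alpha_U^{\circ}:=\phi_1\cdots\phi_r$ (the pointwise product in $G$, in any fixed order) gives a rational map $\alpha_U^{\circ}\colon\P^1_k\dashrightarrow G$, regular on a dense open $W_U\supseteq\{P_1,\dots,P_r\}$, with $\alpha_U^{\circ}(P_j)=\delta_{\wp_j}$ for all $j$, since the remaining factors take the value $1$ at $P_j$.

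Next I would set $\PP^*$ to be $\PP$ together with the (finitely many) points of each $\overline U\smallsetminus W_U$, and let $\UU^*$ be the set of connected components of the complement of $\PP^*$; then $\PP^*\supseteq\PP$ still contains all singular points. For $V\in\UU^*$ contained in $U\in\UU$, the restriction $\alpha_U^{\circ}|_V$ is a morphism $V\to G$, i.e.\ an element of $G(k[V])$, and by formal smoothness of $G$ together with $\sqrt{(t)}$-adic completeness of $\wh R_V$ it lifts to some $\alpha_V\in G(\wh R_V)$. It remains to check, for $P\in\PP$ on $\overline V$ with branch $\wp$ on $V$ at $P$: via the inclusion $\wh R_V\subseteq\wh R_\wp$, the image of $\alpha_V$ in $G(k_\wp)$ equals the image of $\alpha_U^{\circ}|_V\in G(k[V])$ under $G(k[V])\to G(k(U))\to G(k_\wp)$, and this lies in $G(\wh R_P/\wp)$ because $\alpha_U^{\circ}$ is actually a morphism on a neighbourhood of $P$; hence $\alpha_V\in G_{\mathrm s}(\wh R_\wp)$ and so $\alpha_V g'_\wp\in G_{\mathrm s}(\wh R_\wp)$. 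Moreover $\Theta_\wp$ is a homomorphism (a composite of reduction maps), and $\Theta_\wp(\alpha_V)$ is the value of $\alpha_U^{\circ}$ at $P$, namely $\delta_\wp$; therefore $\Theta_\wp(\alpha_V g'_\wp)=\delta_\wp\,\Theta_\wp(g'_\wp)=\Theta_\wp(g_\wp)$, as required.

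The main obstacle is precisely the interpolation step of the second paragraph: producing a single rational map $\P^1_k\dashrightarrow G$ with prescribed $\R$-trivial values at finitely many $k$-points. The bump-function trick reduces it to the one-point case, which is exactly the definition of direct $\R$-triviality; the remaining care is the bookkeeping of poles (so that $\PP$ can be enlarged to avoid them) and, more importantly, checking that the integral element $\alpha_V$ one obtains is genuinely specializable and carries the correct \emph{honest} value $\Theta_\wp(g_\wp)\in G(\kappa(\wp))$ rather than merely the correct value modulo $\R$. This last point is where Hypothesis~\ref{rational comps ints}, forcing $\overline U\cong\P^1_k$ and the marked points to be $k$-rational, is essential, and where the argument goes beyond Corollary~\ref{equal-in-gkr}.
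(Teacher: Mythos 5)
Your proposal is correct and follows essentially the same route as the paper's proof: both express $\Theta_\wp(g_\wp)\Theta_\wp(g'_\wp)^{-1}$ as the value of a rational curve in $G$ witnessing $\R$-triviality, pull it back along a "bump" rational function on $\overline U\cong\P^1_k$ that isolates each marked point, multiply the resulting maps, lift by formal smoothness, and enlarge $\PP$ to avoid the poles. The only (cosmetic) differences are the normalization of the bump functions (the paper uses $f_i$ with a pole at $P_i$ and zeros at the other $P_j$, composed with a curve normalized at $0$ and $\infty$) and that the paper takes the integral lift over an affine $W$ containing the points of $\PP$ rather than over $V$, which makes the membership in $G_{\rm s}(\wh R_\wp)$ slightly more immediate; your verification of that membership via regularity of $\alpha_U^{\circ}$ at $P$ is also valid.
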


Note that in the above lemma, each element of $\UU^*$ is a dense open subset of an element of $\UU$, and so the sets $\UU$ and $\UU^*$ are in natural bijection.  Note also that the elements $g_\wp, g'_\wp$ are defined only for $\wp \in \BB$; i.e., for $\wp$ a branch at a point $P \in \PP$.  For this reason, the conclusion of the above lemma considers just points $P \in \PP$.

\begin{proof}
Let  $U \in \UU$ and
let $\bar{U} \cap \PP = \{P_1,\dots,P_n\}$, with $n \ge 1$.  For each $i = 1,\dots,n$, let $f_i$
 be a rational function on $\bar U$ that has a pole (of some order) at $P_i$ and is regular elsewhere, with zeros (of order at least one) at each $P_j$ for $j \ne i$.  Let $\wp_i$ be the branch at $P_i$ that lies on $U$.

Since $\theta_{\wp_i}(g_{\wp_i}) =  \theta_{\wp_i}(g'_{\wp_i})  \in G(\kappa(\wp_i))/\R
= G(k)/\R$, the element $\Theta_{\wp_i}(g_{\wp_i}) \Theta_{\wp_i}(g'_{\wp_i})^{-1} \in G(k)$ is $\R$-trivial.  So by \cite[Lemma~II.1.1]{GilleIHES}
there exists some $\gamma_{U,i} \in G(k({\mathbb P}^1))$
 such that
${\gamma}_{U,i}(0) = 1  \in G(k)$
  and ${\gamma}_{U,i}(\infty)  = \Theta_{\wp_i}(g_{\wp_i}) \Theta_{\wp_i}(g'_{\wp_i})^{-1} \in G(k)$ (viewing ${\gamma}_{U,i}$ as a rational map from ${\mathbb P}^1_k$ to $G$).
  Pulling back $\gamma_{U,i}$ by $f_i$, we obtain an element
$\beta_{U,i}  \in G(k(\bar{U}))$
such that
$\beta_{U,i}(P_i) = \Theta_{\wp_i}(g_{\wp_i}) \Theta_{\wp_i}(g'_{\wp_i})^{-1}  \in G(\kappa(\wp_i)) = G(k)$
  and $\beta_{U,i}(P_j)  = 1 \in G(k)$ for $j \ne i$.
Since $\beta_{U,i}$ is a rational map from $\bar U$ to $G$
that is defined at $\{P_1,\dots,P_n\}$, there is an affine open subset $W_i \subset \bar U$ containing $\{P_1,\dots,P_n\}$ such that $\beta_{U,i} \in  G(k[W_i])$.
Let $\tilde{\alpha}_{W_i} \in G(\wh R_{W_i})$ be a lift of $\beta_{U,i}$.  Note that
$\wh R_{W_i}$ is contained in $\wh R_{P_1},\dots,\wh R_{P_n}$, and also in $\wh R_{V_i}$ where $V_i = W_i \cap U$.  Thus $\tilde{\alpha}_{W_i}$ is an element of $G(\wh R_{V_i})$ and of $G(\wh R_{P_j}) \subseteq G_{\rm s}(\wh R_{\wp_j}) \subseteq G(\wh R_{\wp_j})$ for $j=1,\dots,n$.
By the definition of $\Theta_{\wp_i}$, we have
\[ \Theta_{\wp_i}(\tilde{\alpha}_{W_i} )  = \beta_{U,i}(P_i)
  = \Theta_{\wp_i}(g_{\wp_i}) \Theta_{\wp_i}(g'_{\wp_i})^{-1}  \in G(k);\]
and for $j \ne i$ we have
\[ \Theta_{\wp_j}(\tilde{\alpha}_{W_i} )  = \beta_{U,i}(P_j)
  = 1  \in G(k).\]

Let $W_U = W_1 \cap \cdots \cap W_n \subseteq \bar U$; let
$V_U = V_1 \cap \cdots \cap V_n = W_U \cap U \subseteq U$;
and let $\alpha_{W_U} = \prod_{i=1}^n \tilde{\alpha}_{W_i}\in G(\wh R_{W_U})$.
Then for each $i$, $\alpha_{W_U}$
is an element of $G(\wh R_V)$ and of $G(\wh R_{P_j}) \subseteq G_{\rm s}(\wh R_{\wp_j}) \subseteq G(\wh R_{\wp_j})$ for all $j$; and
we have
\[\Theta_{\wp_i}(\alpha_{W_U}) = \Theta_{\wp_i}(\tilde{\alpha}_{W_i } ) = \Theta_{\wp_i}(g_{\wp_i})
\Theta_{\wp_i}(g'_{\wp_i})^{-1}  \in G(k).\]

After performing the above construction (separately) for each $U \in \UU$, we have that 
  \[ \Theta_\wp (\alpha_{W_U}g'_{\wp}) = \Theta_\wp(\alpha_{W_U}) \Theta_\wp(g'_\wp)
= \Theta_\wp(g_\wp) \in G(\kappa(\wp)) = G(k)\]
for every branch $\wp \in \BB$ at a point $P \in \PP$ lying on an element $U \in \UU$.

Finally, let $\PP^* = {\XX}_k^{\red} \smallsetminus(  \bigcup_{U \in \UU_0}V_U)$.  Then the set  $\UU^*$ of irreducible components of ${\XX}_k^{\red} \smallsetminus \PP^*$ is just the collection of the sets $V_U$, for $U \in \UU$.
For each $V=V_U \in \UU^*$, write $\alpha_V = \alpha_{W_U}$.
Let $P \in \PP$ be in the closure of $V \in \UU^*$ and $\wp$ is a branch on $V$ at $P$; thus $P \in \PP \cap \bar{U}$.
By the above display,
$\Theta_\wp(\alpha_V g'_\wp ) = \Theta_\wp(g_\wp) \in G(\kappa(\wp))$, as asserted.
\end{proof}

Recall (from the beginning of Section~\ref{lgp mon tree sec}) that given a reductive group $G$, there is an associated finite group scheme $\mu(G)$.

\begin{thm} \label{computation-of-sha}
Under Hypothesis~\ref{rational comps ints}, assume that the closed fiber $\XX_k$ is reduced.
Let $G$ be a reductive group over $R$ such that $\mu(G)$ is \'etale (which is automatic if $\cha(k)=0$).
Then the natural map
\[\theta:\Sha(F, G) \to \prod_{U \in \UU}  (G(k)/\R)  \backslash \prod_{\wp \in \BB}  (G(k)/\R)  \slash \prod_{P \in \PP} (G(k)/\R)\]
is a bijection.
\end{thm}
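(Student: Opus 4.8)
The map $\theta$ is already surjective by Proposition~\ref{dbl coset surj}(\ref{kappa-surj}) (note that $\kappa(P)=\kappa(U)=k$ for all $P\in\PP$ and $U\in\UU$ under Hypothesis~\ref{rational comps ints}), so the whole content is injectivity. The plan is to take two classes $\xi,\xi'\in\Sha(F,G)$ with $\theta(\xi)=\theta(\xi')$, represent them by cocycles $g=(g_\wp)_{\wp\in\BB}$ and $g'=(g'_\wp)_{\wp\in\BB}$ in $\prod_\wp G(F_\wp)$ via the double coset description $\Sha(F,G)\simeq\prod_U G(F_U)\backslash\prod_\wp G(F_\wp)/\prod_P G(F_P)$ of Theorem~\ref{equalshas}, and then to modify these cocycles within their double cosets (which leaves $\xi,\xi'$ unchanged) until the associated torsors over $\XX$ acquire isomorphic restrictions to the closed fiber; at that point Proposition~\ref{GPS prop} will force $\xi=\xi'$.

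First I would make the representatives ``as specializable as possible''. Using Proposition~\ref{branch factorization} (together with the $\R$-invariance of $\theta_\wp$ from Lemma~\ref{thetas}(\ref{thetas exist})) I may assume each $g_\wp$ lies in $G_{\rm s}(\wh R_\wp)$, and then Corollary~\ref{equal-in-gkr} lets me replace $g'$ by a representative of $\xi'$ with each $g'_\wp\in G_{\rm s}(\wh R_\wp)$ and $\theta_\wp(g_\wp)=\theta_\wp(g'_\wp)\in G(\kappa(\wp))/\R$ for every branch $\wp$. The key step is then to upgrade this equality of $\R$-equivalence classes to an honest equality of reductions: by Lemma~\ref{equal-in-gk}, after passing to a refinement $\PP^*\supseteq\PP$ (whose extra points are all regular, hence unibranched) and replacing $g'$ by $(\alpha_V)_V\cdot g'$ for suitable $\alpha_V\in G(\wh R_V)$ --- an operation that does not change the class $\xi'$, being left multiplication by an element of $\prod_V G(\wh R_V)\subseteq\prod_V G(F_V)$ --- one gets $\Theta_\wp(g_\wp)=\Theta_\wp(g'_\wp)\in G(\kappa(\wp))$ for every branch $\wp$ at a point of $\PP$. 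The finitely many branches at the new, unibranched points of $\PP^*$ must be handled by an additional elementary adjustment (using formal smoothness of $G$ and that such points impose no gluing constraint), so that in the end I may assume $g_\wp,g'_\wp\in G_{\rm s}(\wh R_\wp)$ with $\Theta_\wp(g_\wp)=\Theta_\wp(g'_\wp)$ for all $\wp\in\BB^*$.

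With that in hand I would pass to the closed fiber. Viewing $g,g'$ as elements of $G(\wh R_{\BB^*})$, Corollary~\ref{model double coset} attaches to them torsors $Z,Z'$ over $\XX$ whose images in $H^1(F,G)$ are $\xi,\xi'$ (by the compatibilities of Corollary~\ref{commu-diag}). To see that $Z$ and $Z'$ restrict to isomorphic torsors over $\XX_k^{\red}$, I would use the double coset description of Corollary~\ref{closed fiber double coset} and exhibit an element $v=(v_{P^*})_{P^*\in\PP^*}$ with $v_{P^*}\in G(\wh R_{P^*}/\sqrt{(t)})$ such that $\bar g_\wp\,v_{P^*}=\bar g'_\wp$ for every branch $\wp$ at $P^*$, taking the left factor to be trivial. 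At a nodal point $P$ with branches $\wp_1,\wp_2$ lying on components $V_1,V_2$, the pullback square of rings~(\ref{pullback rings}) identifies $G(\wh R_P/\sqrt{(t)})$ with $G(\wh R_P/\wp_1)\times_{G(\kappa(P))}G(\wh R_P/\wp_2)$ (since $G$ is affine), and the pair of local factors $(\bar g_{\wp_1}^{-1}\bar g'_{\wp_1},\,\bar g_{\wp_2}^{-1}\bar g'_{\wp_2})$ defines an element of this fiber product precisely because $\Theta_{\wp_i}(g_{\wp_i})=\Theta_{\wp_i}(g'_{\wp_i})$ forces both to reduce to $1$ in $G(\kappa(P))$; at unibranched points of $\PP^*$ there is nothing to check. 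Finally, since the closed fiber is reduced we have $\XX_k^{\red}=\XX_k$, and since $\mu(G)$ is \'etale Proposition~\ref{GPS prop} yields $\xi=\xi'$ in $H^1(F,G)$; combined with surjectivity, $\theta$ is a bijection.

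The hard part is the middle step: going from agreement up to $\R$-equivalence on the closed fiber (which is all $\theta$ records) to agreement of honest reductions, so that the nodal gluing works. This is exactly the role of Lemma~\ref{equal-in-gk}, and the delicate point is the bookkeeping forced by the refinement $\PP^*$ it produces --- the adjusting elements $\alpha_V$ have poles at the new points of $\PP^*$, so one must check that the branches there do not reintroduce a discrepancy. The other essential but black-boxed ingredient is Proposition~\ref{GPS prop}, which is where the hypothesis that $\mu(G)$ is \'etale and the theorem of Gille--Parimala--Suresh enter.
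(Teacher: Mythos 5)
Your proposal is correct and follows essentially the same route as the paper: surjectivity from Proposition~\ref{dbl coset surj}(\ref{kappa-surj}), then the chain Proposition~\ref{branch factorization} $\to$ Corollary~\ref{equal-in-gkr} $\to$ Lemma~\ref{equal-in-gk} to align the specializations, and Proposition~\ref{GPS prop} (with $\XX_k=\XX_k^{\red}$) to finish. The only difference is cosmetic and occurs at the very end: the paper invokes Lemma~\ref{equal-in-rpmt} to make the images in $G(k_\wp)$ literally equal and then cites Corollary~\ref{torsors closed fiber}(\ref{torsors same cl fib}), whereas you carry out that same pullback-square gluing by hand in the closed-fiber double coset space of Corollary~\ref{closed fiber double coset}; your explicit adjustment at the new unibranched points of $\PP^*$ (where the left-multiplication by $\alpha_V$ need not preserve specializability) is a correct and welcome piece of bookkeeping.
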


\begin{proof}
By Proposition~\ref{dbl coset surj}(\ref{kappa-surj}), $\theta$ is surjective.  So it suffices to prove injectivity.  Given elements $\zeta,\zeta'$ of $\Sha(F,G)$ with the same image under $\theta$, we will show that $\zeta=\zeta'$.

We may identify $\Sha(F,G)$ with $\prod_{U \in \UU} G(F_{U}) \,\backslash \prod_{{\wp \in \BB}} G(F_\wp) \,/ \prod_{P \in \PP} G(F_{P})$ for a choice of $\PP$ and $\UU$.
Pick respective representatives
$(g_\wp)_\wp, (g'_\wp)_\wp \in \prod_{{\wp \in \BB}} G(F_\wp)$ for our elements $\zeta,\zeta' \in \Sha(F,G)$.  By Proposition~\ref{branch factorization}, after adjusting the representative of $\zeta$ on the right by an element of $\prod_{P \in \PP} G(F_P)$, we may assume that each $g_\wp \in G_{\rm s}(\wh R_\wp)$.
Next, by Corollary~\ref{equal-in-gkr}, after adjusting the representative of $\zeta'$ on the left by an element of $\prod_{U \in \UU} G(F_U)$ and on the right by an element of $\prod_{P \in \PP} G(F_P)$, we may assume that $g'_\wp \in G_{\rm s}(\wh R_\wp)$ and
$\theta_\wp(g_\wp) = \theta_\wp(g'_\wp) \in G(k)/\R$ for all $\wp \in \BB$.

Now invoking Hypothesis~\ref{rational comps ints}, we may apply Lemma~\ref{equal-in-gk} to obtain a finite set $\PP^*$ containing $\PP$, together with the associated set $\UU^*$ in bijection with $\UU$, and elements $\alpha_V \in G(\wh R_V)$ for $V \in \UU^*$, satisfying the condition stated there.  Let $\BB^*$ be the associated set of branches; this consists of the elements of $\BB$ together with an additional element at each point of $\PP^*$ that is not in $\PP$.  (Note that the closed fiber of our model is unibranched at these latter points.)
Under the isomorphism
\[\Sha(F,G) \simeq \prod_{U \in \UU^*} G(F_{U}) \,\backslash \prod_{{\wp \in \BB^*}} G(F_\wp) \,/ \prod_{P \in \PP^*} G(F_{P}),\]
the elements $\zeta,\zeta' \in \Sha(F,G)$ respectively have representatives
$(g_\wp)_{\wp \in \BB^*}, (g'_\wp)_{\wp \in \BB^*}$, where $g_\wp, g'_\wp$ are as before for $\wp \in \BB$, and are equal to $1$ if $\wp \in \BB^*$ is not in $\BB$.
(See the end of Section~\ref{lgp sgp}.)
By the conclusion of Lemma~\ref{equal-in-gk}, after replacing $\PP, \UU, \BB$ by $\PP^*, \UU^*, \BB^*$, and after adjusting the representative of $\zeta'$ on the left by an element of (the new) $\prod_{U \in \UU} G(F_U)$, we may assume that $\Theta(g_\wp)=\Theta(g'_\wp) \in G(\kappa(\wp)) = G(k)$ for all branches $\wp \in \BB$ at points of $\PP$ at which the closed fiber is not unibranched.

By Lemma~\ref{equal-in-rpmt}, after adjusting the representative of $\zeta$ on the right by some element of $\prod_{P \in \PP} G(F_P)$, we may assume that $g_\wp, g'_\wp \in G_{\rm s}(\wh R_\wp) \subseteq G(\wh R_\wp)$ have equal images in $G(k_\wp)$.  So by Corollary~\ref{torsors closed fiber}(\ref{torsors same cl fib}), the tuples $(g_\wp)$, $(g'_\wp)$ define $G$-torsors over $\XX$ whose restrictions to $\XX_k$ are isomorphic.  By the hypotheses on the closed fiber and on $\mu(G)$, Proposition~\ref{GPS prop} applies.  Hence the $G$-torsors over $F$ defined by these tuples are isomorphic; i.e., $\zeta=\zeta'$.
\end{proof}

\subsection{Explicit description of the obstruction set}

Below we make the target space of $\theta$ in Theorem~\ref{computation-of-sha} more concrete, in terms of the structure of the reduction graph.  The resulting description of $\Sha(F,G)$, given in
Theorem~\ref{explicit Sha},
provides a necessary and sufficient condition for a local-global principle to hold under the above hypothesis.  This theorem will rely on Lemma~\ref{decorated} below, which draws on ideas from \cite{CHHKPS}, where we studied $\Sha(F,G)$ in the case that $G$ is a torus, by means of the cohomology of decorated graphs.  Given a (connected) graph $\Gamma$ and a {\em coefficient system} $A_\bullet$ consisting of abelian groups $A_v$ and $A_e$ associated to the vertices $v$ and edges $e$ of the graph, we had defined $H^i(\Gamma,A_\bullet)$ for $i=0,1$.  We then applied that to the case that $\Gamma$ is the reduction graph associated to a model of a semi-global field, where the coefficient system depended on the torus.  In our current more general situation, our algebraic groups need not be commutative, and so we instead need non-abelian coefficients.  Since the reduction graph of a model is a bipartite graph, and since the description of $\Sha(F,G)$ requires just $H^1$, we restrict ourselves here to defining $H^1$ with non-abelian coefficients just in the case of bipartite graphs, where the description is a bit simpler.

So let $\Gamma$ be a bipartite graph with edge set $E$ and vertex set $V$, with $V$ the disjoint union of subsets $V_1, V_2$, such that each edge has one vertex in $V_1$ and the other in $V_2$.  A {\em coefficient system} $G_\bullet$ will be a system of groups $G_e,G_v$ associated to the edges and vertices of $\Gamma$, together with a homomorphism $G_{(v,e)}:G_v \to G_e$ for every pair $(v,e)$ such that $v$ is a vertex of an edge $e$.  The homomorphisms $G_{(v,e)}$ for $v \in V_1$ together define a homomorphism $G_1:\prod_{v \in V_1} G_v \to \prod_{e \in E} G_e$, whose image acts on $\prod_{e \in E} G_e$ on the left.  Similarly, the homomorphisms $G_{(v,e)}$ for $v \in V_2$ together define a homomorphism $G_2:\prod_{v \in V_2} G_v \to \prod_{e \in E} G_e$, whose image acts on $\prod_{e \in E} G_e$ on the right.  With respect to these actions, we may form the double coset space
\[H^1(\Gamma,G_\bullet) := \prod_{v \in V_1} G_v \backslash \prod_{e \in E} G_e / \prod_{v \in V_2} G_v.\]
When the groups in $G_\bullet$ are abelian, this agrees with the definition in \cite[Section~5]{CHHKPS}.
(Also, whether or not the groups are abelian, $H^1(\Gamma,G_\bullet)$ is equal to $H^1(\Gamma,\ms G)$, where $\Gamma$ is regarded as a topological space together with a sheaf of groups $\ms G$ that is induced by the groups $G_e,G_v$.)

Viewing $\Gamma$ as a one-dimensional simplicial complex, we may also consider its cohomology group $H^1(\Gamma,\mbb Z)$.  This is a free $\mbb Z$-module of finite rank.

Given a group $G$ and a positive integer $m$, two elements $(g_1,\dots,g_m), (g_1',\dots,g_m') \in G^m$ are {\em uniformly conjugate} if there exists $h \in G$ such that $g_j' = hg_jh^{-1}$ for all $j$.  This equivalence relation will be denoted by $\sim$.

\begin{lem} \label{decorated}
Let $G_\bullet$ be a coefficient system on a bipartite graph $\Gamma$, as above.
Suppose that the groups $G_e$ and $G_v$ are all equal to a given group $G$, and the maps $G_{(v,e)}$ are each the identity.  Let $m$ be the number of cycles in the graph $\Gamma$; i.e., the rank of $H^1(\Gamma,\mathbb Z)$.  Then
$H^1(\Gamma,G_\bullet) \simeq G^m/{\sim}$
as pointed sets.
\end{lem}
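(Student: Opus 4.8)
The plan is to compute $H^1(\Gamma, G_\bullet)$ directly from its definition as the double coset space $\prod_{v \in V_1} G \backslash \prod_{e \in E} G / \prod_{v \in V_2} G$, and to identify this with $G^m/{\sim}$ by choosing a spanning tree of $\Gamma$. First I would fix a spanning tree $T \subseteq \Gamma$; since $\Gamma$ is connected, $T$ contains all the vertices of $\Gamma$, and the number of edges of $\Gamma$ not in $T$ equals $m = \operatorname{rank} H^1(\Gamma, \mathbb Z)$. Enumerate these non-tree edges as $e_1, \dots, e_m$. The key step is to use the tree edges to normalize a coset representative: given an arbitrary tuple $(g_e)_{e \in E} \in \prod_{e \in E} G$, I would show that by acting with elements of $\prod_{v \in V_1} G$ on the left and $\prod_{v \in V_2} G$ on the right, one can arrange that $g_e = 1$ for every edge $e \in T$. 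This is done by induction outward from a chosen base vertex along the tree: at each step, a tree edge $e$ joining an already-normalized part of $T$ to a new vertex $w$ can be made trivial by left or right multiplication by the factor of $\prod G_v$ indexed by $w$ (the sign depending on whether $w \in V_1$ or $V_2$), and since $w$ is a leaf of the portion of $T$ processed so far, this adjustment affects no previously normalized edge.

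Having reduced to representatives supported on the non-tree edges, i.e. tuples $(g_{e_1}, \dots, g_{e_m}) \in G^m$, the next step is to determine the residual equivalence: which tuples with all tree edges trivial are related by the double coset action. An element $(a_v)_{v \in V_1} \times (b_v)_{v \in V_2}$ of the acting group preserves triviality of all tree edges iff for every tree edge $e$ joining $v_1 \in V_1$ to $v_2 \in V_2$ we have $a_{v_1} b_{v_2}^{-1} = 1$, i.e. $a_{v_1} = b_{v_2}$; since $T$ spans and is connected, this forces all the $a_v$ and $b_v$ to equal a single common element $h \in G$. Such an element acts on a non-tree edge $e_j$ joining $v_1 \in V_1$ to $v_2 \in V_2$ by $g_{e_j} \mapsto a_{v_1} g_{e_j} b_{v_2}^{-1} = h g_{e_j} h^{-1}$, simultaneously for all $j$. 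Hence the residual relation is exactly uniform conjugacy, giving the bijection $H^1(\Gamma, G_\bullet) \simeq G^m/{\sim}$; it sends the distinguished (trivial) class to the class of $(1,\dots,1)$, so it is an isomorphism of pointed sets.

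The main thing to be careful about — and what I'd regard as the only real subtlety rather than a true obstacle — is the bookkeeping in the normalization step: one must check that the inductive choice of adjustments along the spanning tree is consistent, i.e. that processing vertices in an order compatible with the tree structure (say by a depth-first or breadth-first traversal from a root) ensures each adjustment touches only the single new tree edge and leaves the previously trivialized edges trivial. This works precisely because $T$ is a tree, so adding vertices one at a time keeps the processed subgraph a subtree with the new vertex as a leaf. A minor point to note is that the group indexed by a non-tree edge's endpoint in $V_1$ (resp. $V_2$) is used both for normalizing tree edges and could a priori also be used to further modify non-tree edges, but since both endpoints of a non-tree edge have already had their group-factor "spent" fixing the adjacent tree edges, no extra freedom remains beyond the global conjugation described above — this is exactly the content of the residual-equivalence computation and should be stated carefully. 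One should also remark that the construction does not depend on the choice of spanning tree up to the resulting bijection, though strictly this is not needed for the statement as phrased.
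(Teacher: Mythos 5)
Your proposal is correct and follows essentially the same route as the paper: reduce modulo a spanning tree to tuples supported on the $m$ non-tree edges, then observe that the stabilizing data must be constant along the connected spanning tree, leaving exactly uniform conjugation as the residual equivalence. The only cosmetic differences are that the paper handles the tree case by a separate induction peeling off leaves (rather than normalizing outward from a root) and parametrizes the right action without the inverse, neither of which affects the argument.
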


\begin{proof}
First consider the case where $m=0$; i.e., where $\Gamma$ is a tree.  We proceed by induction on the number of edges of $\Gamma$.  If there is just one edge (and two vertices), then the assertion is trivial.  Now assume the assertion holds for trees with $n$ vertices.  If $\Gamma$ is a tree with $n+1$ vertices, let $v_0$ be a terminal vertex, and let $e_0$ be the unique edge having $v_0$ as a vertex.  Let $\Gamma'$ be the (bipartite) tree obtained by deleting $v_0$ and $e_0$, with edge set $E' = E \smallsetminus \{e_0\}$ and vertex set $V' = V \smallsetminus \{v_0\}$.  Let $G_\bullet'$ be the constant coefficient system $G$ on $\Gamma'$.
The projection map $\prod_{e \in E} G \to \prod_{e \in E'} G$ induces a bijection on the respective double coset spaces, since each tuple in $\prod_{e \in E} G$ is in same double coset as one whose $e_0$ entry is trivial.  Since $H^1(\Gamma',G_\bullet')$ is trivial by inductive hypothesis, so is $H^1(\Gamma,G_\bullet)$.

Now consider the case where $m>0$.  There exists a set $E_0 = \{e_1,\dots,e_m\} \subseteq E$ of $m$ edges  such that the graph obtained by deleting these edges (and retaining all the vertices) is a maximal subtree $\Gamma'$ of $\Gamma$.  For each $e \in E$ let the vertices of $e$ be $v_{e,1}, v_{e,2}$, with $v_{e,i} \in V_i$.  For $e=e_j$ write $v_{j,i} = v_{e_j,i}$.

By the previous case, $H^1(\Gamma',G_\bullet')$ is trivial, where $\Gamma'$ is given the induced bipartite structure and $G_\bullet'$ is the constant coefficient system on $\Gamma'$ as above.  Thus every element of $H^1(\Gamma,G_\bullet)$ is a double coset represented by an element of $\prod_{e \in E} G$ whose non-trivial entries all have indices in $E_0$.
Hence the composition
$G^m = \prod_{e \in E_0} G \to \prod_{e \in E} G \to H^1(\Gamma,G_\bullet)$ is surjective, where the first map assigns trivial entries for edges not in $E_0$.
To complete the proof, we will show that two elements $(g_1,\dots,g_m), (g_1',\dots,g_m') \in G^m$ have the same image in $H^1(\Gamma,G_\bullet)$ if and only if they are uniformly conjugate by some element $h \in G$.

Let $g_1,\dots,g_m,h \in G$, and for $1 \le i \le m$ let $g_j' = hg_jh^{-1}$.  Then the double cosets in $H^1(\Gamma,G_\bullet)$ associated to $(g_1,\dots,g_m)$ and to $(g_1',\dots,g_m')$ (with $1$'s in the other entries)
are the same, by taking the constant tuple $(h)$ on $V_1$ and taking the constant tuple $(h^{-1})$ on $V_2$.  Conversely, suppose that $(g_1,\dots,g_m), (g_1',\dots,g_m') \in G^m$ define the same double coset, where again we set $g_e = g_e' = 1$ for all $e \not\in E_0$.  That is, there are tuples $(h_v)_{v \in V_1}$ and $(h_v)_{v \in V_2}$ of elements of $G$ taking $(g_1,\dots,g_m)$ to $(g_1',\dots,g_m')$, and taking $1$ to $1$ in the other entries.  More precisely,  $g_j'=h_{v_{j,1}}g_jh_{v_{j,2}}$ for each $j=1,\dots,m$, and $1 = h_{v_{e,1}}\cdot 1 \cdot h_{v_{e,2}}$ for each $e \not\in E_0$.  Thus $h_{v_2}=h_{v_1}^{-1}$ if $v_1 \in V_1$ and $v_2 \in V_2$ are the vertices of some edge $e \not\in E_0$; i.e., some edge of $\Gamma'$.  Since $\Gamma'$ is a (connected) tree, and since the vertices of $\Gamma$ and $\Gamma'$ are the same, it follows that all the elements $h_{v_1}$ (for $v_1 \in V_1$) are equal to a common element $h \in G$, and that all the elements $h_{v_2}$  (for $v_2 \in V_2$) are equal to $h^{-1}$.  Hence $g_j' = hg_jh^{-1}$ for all $j$.
\end{proof}

By Theorem~\ref{computation-of-sha} and Lemma~\ref{decorated}, we obtain the following explicit description of $\Sha(F,G)$ under Hypothesis~\ref{rational comps ints}.  As above, $\sim$ denotes uniform conjugacy, as defined just before Lemma~\ref{decorated}. 

\begin{thm} \label{explicit Sha}
Under Hypothesis~\ref{rational comps ints}, assume that the closed fiber of the model $\XX$ is reduced and let $G$ be a reductive group over $R$ such that $\mu(G)$ is \'etale. Then
$\Sha(F,G)$ is in bijection with $(G(k)/\R)^m/\!\sim$ as pointed sets, where $m$ is the number of cycles in the reduction graph of the model.  In particular, $\Sha(F,G)$ is trivial if and only if the reduction graph is a tree or $G(k)/\R$ is trivial.
\end{thm}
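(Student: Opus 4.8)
The strategy is to deduce Theorem~\ref{explicit Sha} formally from the double coset computation of Theorem~\ref{computation-of-sha} together with the purely graph-theoretic Lemma~\ref{decorated}. First I would apply Theorem~\ref{computation-of-sha}: since $\mu(G)$ is \'etale and the closed fiber of $\XX$ is reduced, the specialization map
\[\theta : \Sha(F,G) \longrightarrow \prod_{U \in \UU}(G(k)/\R) \,\backslash\, \prod_{\wp \in \BB}(G(k)/\R) \,/\, \prod_{P \in \PP}(G(k)/\R)\]
is a bijection of pointed sets, where Hypothesis~\ref{rational comps ints} has already been used to identify $\kappa(P)=\kappa(U)=\kappa(\wp)=k$ for all $P \in \PP$, $U \in \UU$, $\wp \in \BB$, so that every factor appearing is $G(k)/\R$.

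Next I would identify this target with the non-abelian $H^1$ of a bipartite graph in the sense of Lemma~\ref{decorated}. Take $\Gamma$ to be the reduction graph of $\XX$: it is bipartite with vertex set $\PP \sqcup \UU$ and edge set $\BB$, an edge $\wp$ joining $P \in \PP$ to $U \in \UU$ precisely when $\wp$ is a branch at $P$ on $U$; and it is connected because the closed fiber of the integral model $\XX$ is connected. Put $V_1 = \UU$, $V_2 = \PP$, $E = \BB$, and let $G_\bullet$ be the constant coefficient system with $G_e = G_v = G(k)/\R$ and all structure homomorphisms $G_{(v,e)}$ equal to the identity. That the structure maps really are the identity is exactly the content of the compatibilities in Lemma~\ref{thetas} under Hypothesis~\ref{rational comps ints}: the maps $G(\kappa(U))/\R \to G(\kappa(\wp))/\R$ and $G(\kappa(P))/\R \to G(\kappa(\wp))/\R$ implicit in the target of $\theta$ become the identity on $G(k)/\R$ once all the fields in sight equal $k$ (using $G(\kappa(U))/\R \iso G(k(U))/\R$ from Lemma~\ref{thetas}(\ref{U lines})). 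With this setup, the left action of $\prod_{U}G(k)/\R$ and the right action of $\prod_{P}G(k)/\R$ on $\prod_{\wp}G(k)/\R$ are precisely those defining the target of $\theta$, so that target equals $H^1(\Gamma,G_\bullet)$. Lemma~\ref{decorated} then gives $H^1(\Gamma,G_\bullet) \simeq (G(k)/\R)^m/\!\sim$ as pointed sets, where $m$ is the rank of $H^1(\Gamma,\mbb Z)$, i.e.\ the number of cycles of $\Gamma$. Composing with $\theta$ yields the asserted bijection $\Sha(F,G) \simeq (G(k)/\R)^m/\!\sim$.

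For the final ``in particular'', it remains to observe that $(G(k)/\R)^m/\!\sim$ is a one-element set exactly when $m = 0$ or $G(k)/\R$ is trivial. If $m = 0$ the product is empty and the set is $\{1\}$; if $G(k)/\R = 1$ the set is $\{(1,\dots,1)\}$; in both cases $\Sha(F,G)$ is trivial, and for a connected $\Gamma$ the condition $m = 0$ is the same as $\Gamma$ being a tree. Conversely, if $m \ge 1$ and $g \in G(k)/\R$ with $g \neq 1$, then $(g,1,\dots,1) \in (G(k)/\R)^m$ is not uniformly conjugate to $(1,\dots,1)$, since $h\cdot 1\cdot h^{-1} = 1$ and $hgh^{-1} = 1$ would force $g = 1$; hence $\Sha(F,G)$ has at least two elements.

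The substantive work in this argument is entirely upstream, in Theorem~\ref{computation-of-sha} (whose injectivity rests on the factorization results of Sections~\ref{factor sec}--\ref{sectionlb} and on Proposition~\ref{GPS prop}) and in Lemma~\ref{decorated}. At this last stage the only genuine point to be careful about is verifying that the general coefficient system attached to the reduction graph degenerates, under Hypothesis~\ref{rational comps ints}, to the \emph{constant} system with value $G(k)/\R$ and identity transition maps, so that Lemma~\ref{decorated} applies verbatim; together with the routine bookkeeping matching the bipartite halves $\UU \leftrightarrow V_1$ and $\PP \leftrightarrow V_2$ with the correct one-sided actions. I would also remark that, although the statement is phrased for a fixed model, $m$ is a homotopy invariant of $\Gamma$ and hence independent of the choices of $\PP$ and of the model, consistent with $\Sha(F,G)$ being intrinsic to $F$ and $G$.
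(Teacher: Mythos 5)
Your proposal is correct and follows essentially the same route as the paper: the proof there likewise consists of observing that Hypothesis~\ref{rational comps ints} collapses all the fields $\kappa(U),\kappa(P),\kappa(\wp)$ to $k$, so that the target of the bijection $\theta$ from Theorem~\ref{computation-of-sha} is $H^1(\Gamma,G_\bullet)$ for the constant coefficient system $G(k)/\R$ on the reduction graph, and then invoking Lemma~\ref{decorated}. Your explicit verification of the ``in particular'' clause (via the non-conjugacy of $(g,1,\dots,1)$ to the identity tuple) is a harmless elaboration of a point the paper leaves implicit.
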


\begin{proof}
Under these hypotheses, the fields $\kappa(U), \kappa(P), \kappa(\wp)$ are all equal to $k$,
and so the groups $G(\kappa(U))/\R$, $G(\kappa(P))/\R$, $G(\kappa(\wp))/\R$ are all equal to $G(k)/\R$.
Thus by Theorem~\ref{computation-of-sha},we have a bijection $\Sha(F,G) \to H^1(\Gamma,G_\bullet)$, where $\Gamma$ is the reduction graph associated to $\XX$ and $G_\bullet$ is the constant coefficient system on $\Gamma$ given by the group $G(k)/\R$.  The assertion now follows from Lemma~\ref{decorated}.
\end{proof}

It is a well-known open problem whether for any (connected) reductive group $G$ over a field $k$, the group $G(k)/\R$ is abelian.  When this holds, conjugation is trivial, and $\Sha(F,G)$ is in bijection with $(G(k)/\R)^m$.
This generalizes \cite[Theorem~6.4(c)]{CHHKPS}, which treated the case where $G$ is a torus $T$. (That result was phrased in terms of an abelian group $H^1(k,S)$ rather than $T(k)/\R$, but these are isomorphic by \cite[Th\'eor\`eme~2, p.~199]{CTS77}; see also
\cite[Theorem~3.1]{CTS87}.)

\begin{rem}  
\begin{enumerate}[(a)]
\item In Theorem~\ref{explicit Sha}, the integer $m$ is the arithmetic genus of the closed fiber, because of Hypothesis~\ref{rational comps ints}.  One may ask whether,
even without assuming Hypothesis~\ref{rational comps ints}, it remains true that $\Sha(F,G)$ is non-trivial if the arithmetic genus of the closed fiber is positive; e.g., if the closed fiber is a smooth $k$-curve of positive genus.  This is in fact not the case, since under that smoothness hypothesis the reduction graph is trivial and hence is a monotonic tree; see
Theorem~\ref{main} and Corollary~\ref{cor:char0conn}.

\smallskip

\item In combination with Theorem~\ref{equalshas}, the statement of Theorem~\ref{explicit Sha} says that under the given 
hypotheses, and for any choice of $\PP$ as in Theorem~\ref{equalshas}, there is a bijection
\[\Sha(F,G) \simeq \Sha_\PP(F, G) \simeq \Hom(\pi_1(\Gamma),G(k)/\R)/{\sim},\]
where $\Gamma$ is the reduction graph of the model, and $\sim$ is the equivalence relation induced by the action of conjugation by $G(k)/\R$ on maps to that group.  This directly parallels Corollary~6.5 of~\cite{HHK15}, which concerns linear algebraic groups $G$ over a semiglobal field~$F$, where $G$ is not assumed to be connected or reductive, but for which each connected component of the $F$-variety $G$ is assumed to be rational.  Namely, the result there says that
$\Sha_\PP(F, G) \simeq \Hom(\pi_1(\Gamma),G/G^0)/{\sim}$, where $G^0$ is the identity component of $G$.  By the rationality hypothesis on the group, $H(k)/\R$ is trivial for every component $H$ of $G$; hence $G/G^0$ is identified with the finite constant group $G(F)/\R$ and we obtain $\Sha_\PP(F, G) \simeq \Hom(\pi_1(\Gamma),G(F)/\R)/{\sim}$.  In the special case where $G$ is induced by a group over $R$ (and its components are rational), this is equivalent to saying that $\Sha_\PP(F, G) \simeq \Hom(\pi_1(\Gamma),G(k)/\R)/{\sim}$.
\end{enumerate}
\end{rem}

\section{Simply connected groups $G$ over a semi-global field $F$ with $\Sha(F,G) \neq1$} \label{sect: counterex}

In this section we give counterexamples to local-global principles for reductive groups $G$ in the context of semi-global fields $F$, by using the results of the previous two sections.  That is, we obtain examples where $\Sha(F,G)$ is non-trivial, and in some cases we compute this obstruction explicitly.

As context, we recall that if
$G$ is any $F$-rational linear algebraic group, then $\Sha_\PP(F,G)=\Sha_X(F,G)=1$ by \cite[Theorems~4.2 and 5.10]{HHK15}; and hence $\Sha(F,G)=1$ if in addition $G$ is a reductive group over $\XX$, by Proposition~\ref{sha containments}(\ref{ShaX equals Sha}).
  On the other hand, in \cite[Section~8]{CHHKPS}, we gave counterexamples to the local-global principles where the group is a torus over $R$.
In \cite{CTPS12}, it was conjectured that the local-global principle always holds for semisimple simply connected groups for a semi-global field $F$ over a $p$-adic field $K$ (in which case the residue field of $K$ has cohomological dimension 1, and $\cd(F)=3$).  This has since been proven in a number of cases involving classical groups; see \cite{preeti}, \cite{Hu}, \cite{PPS}, \cite{PS20}.

Here we show that the local-global principle does {\em not} always hold for  semisimple simply connected groups in the case of a semi-global field  over a general complete discretely valued field.  Namely, in Examples~\ref{triangle ex} and~\ref{nonmono tree}, 
we provide counterexamples to the local-global principle for  
semisimple simply connected groups $G$ over a field $k$ of cohomological dimension~4 or more, with the local-global principle being considered over 
a semiglobal field~$F$ over $K=k((t))$.  Thus $F$ has cohomological dimension at least 6 in these examples where we show that
$\Sha(F,G)$ is non-trivial.  

The fields $k$ that we consider in these examples include ones of the forms $\kappa(x,y)$ and $\kappa((x))((y))$, where $\kappa$ is either a number field or a field of the form $\kappa_0(u,v)$ and $\kappa_0((u))((v))$ for some field $\kappa_0$.
Our groups $G$ are of the form $\SL_1(D)$, where $D$ is a biquaternion algebra; here $\SL_1(D)$ is the group of elements $g$ whose reduced norm $\Nrd(g)$ (e.g., see \cite[Section~2.6]{GS}) is equal to $1$.
  
Afterwards, in Proposition~\ref{cd 4 needed}, we show that a counterexample of the type we produce cannot exist if $\cd(k)\le 3$.  Also, whereas it has been conjectured for a $p$-adic field $K$ that $\Sha(F,G)=1$ for $G$ a semi-simple simply connected group over a semi-global field $F$ over $K$, we show in Example~\ref{p-adic counterex} that the local-global principle for such $F$ and $G$ can fail if one instead considers just those discrete valuations on $F$ that are trivial on $K$, rather than all the divisorial discrete valuations on $F$.  

A major ingredient in constructing examples where $\Sha(F,G)\neq 1$ will
be Proposition \ref{dbl coset surj}  and its consequences,
Theorems~\ref{computation-of-sha} and~\ref{explicit Sha}.
Proposition~\ref{dbl coset surj}(\ref{kappa-surj}) provides a quotient set of $\Sha(F,G)$ under certain hypotheses on a model $\XX$ of $F$, and Theorems~\ref{computation-of-sha} and~\ref{explicit Sha} provide an exact computation of $\Sha(F,G)$ assuming an additional condition on $\XX$.
In those situations, to obtain counterexamples to local-global principles we will want to find examples of semisimple simply connected groups $G$ over $k$
such that $G(k)/\R \neq 1$, or such that the map $G(k)/\R \to G(k')/\R$ is
not surjective for some extension $k'$ of $k$.

\subsection{Determining $G(k)/\R$ }\label{SK1 subsec}

In order to compute $\Sha(F,G)$ by means of  
Proposition \ref{dbl coset surj}  and its consequences, we will need to determine $G(k)/\R$.  In the case that $G$ is of the form $\SL_1(D)$ for some (central) division algebra $D$ over $k$, this can be done using a result of Voskresenski\v{\i}.  Recall that if $D$ is a division algebra over a field $k$, and $G = \SL_1(D)$, then $\SK_1(D)$ is defined to be the quotient of 
$G(k) = D^{\times 1} := \{g \in D\,|\,\Nrd(g)=1\}$
by the commutator subgroup $[D^\times,D^\times]$; this is an abelian group. Voskresenski\v{\i}'s result says that the natural homomorphism $G(k) = D^{\times 1} \to \SK_1(D)$ induces an isomorphism $G(k)/\R \iso \SK_1(D)$.  (See \cite[\S 18.2]{vosk}, where this is shown using a result of Bass-Platonov; viz., that the natural map $\SK_1(D) \to \SK_1(D_{k(t)})$ is an isomorphism.)

We begin with a result that helps us compute $\SK_1(D)$ in some cases, by relating it to the group of $\R$-equivalence classes in an appropriate torus $T$.  This builds on work of Platonov, who had shown the second assertion in part~(\ref{Platonov surj iso}) below by a different argument (sketched in \cite[Section~18.3]{vosk}).  
Before stating the result, we recall some background.  

Given 
a finite Galois field extension $E/\kappa$, the {\em norm one torus} $T := \R^1_{E/\kappa}{\mathbb G}_m$ is the kernel of the norm map 
$\R_{E/\kappa}{\mathbb G}_m \to \G_m$, where $\R_{E/\kappa}\G_m$ is the Weil restriction of $\G_m$.  The group $T(\kappa)/\R$ coincides
with the Tate cohomology group $\wh H^{-1}(\Sigma,E^\times)$, where $\Sigma = \Gal(E/\kappa)$.   (See \cite[Proposition~15]{CTS77}.)  This cohomology group (and hence $T(\kappa)/\R$) is
the quotient ${}^{N}L^\times/I_{G}L^\times$, where ${}^{N}L^\times$ is the subgroup of $L^\times$ consisting of the elements of norm~1, and where $I_{G}L^\times$ is the subgroup of ${}^{N}L^\times$
generated by the elements $\sigma(x)/x$ for
$x\in L^\times$ and $\sigma \in \Sigma$.  In particular, $T(\kappa)/\R$ is abelian.
Also, if $\Sigma$ is cyclic with generator $\sigma$, and if $z \in \kappa^\times$, there is an associated cyclic algebra $(E/\kappa, \sigma, z)$, which is a central simple $\kappa$-algebra split by $E$; see \cite[Section~2.5]{GS} for details.

\begin{thm} \label{reduction to torus}
Let $\kappa$ be a field, let $K/\kappa$ and $L/\kappa$ be linearly disjoint cyclic field extensions of $\kappa$ with compositum $E=KL$, and let $T = \R^1_{E/\kappa}{\mathbb G}_m$.  
Let $\sigma$ and $\tau$ be generators of the Galois groups
$\Gal(K/\kappa)$ and $\Gal(L/\kappa)$.  Write $k=\kappa(x,y)$ and $k' = \kappa((x))((y))$, and let $D = (K(x, y)/k, \sigma, x) \otimes_k (L(x, y)/k, \tau, y)$ and $D' = D_{k'}$.  Then:
\renewcommand{\theenumi}{\alph{enumi}}
\renewcommand{\labelenumi}{(\alph{enumi})}
\begin{enumerate}
\item \label{section} 
 There are homomorphisms   $ T(\kappa)/\R \to \SK_1(D)$ and $\SK_1(D') \to T(\kappa)/\R$ such that the composition 
  $T(\kappa)/\R \to \SK_1(D) \to \SK_1(D') \to T(\kappa)/\R$ is the identity map.
\item \label{Platonov surj iso}
The maps $\SK_1(D) \to T(\kappa)/\R$ and $\SK_1(D') \to T(\kappa)/\R$ are surjective.  Moreover if $\cha(\kappa)$ does not divide $[K:\kappa][L:\kappa]$ then the latter map is an isomorphism.
\end{enumerate}  
\end{thm}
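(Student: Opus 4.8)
The plan is to realize the norm-one torus $T = \R^1_{E/\kappa}\G_m$ as a maximal torus in a suitable form of $\SL_1$ and then exploit functoriality of $\R$-equivalence together with specialization maps. First I would set up the algebraic picture: over $k=\kappa(x,y)$, the biquaternion-type algebra $D = (K(x,y)/k,\sigma,x)\otimes_k (L(x,y)/k,\tau,y)$ is split by the biquadratic (or bicyclic) extension $E(x,y)/k$, and the diagonal image of $E(x,y)^\times$ inside $D^\times$ gives a maximal commutative étale subalgebra; its norm-one elements form a subgroup isomorphic to $T(k(x,y))$ sitting inside $\SL_1(D)(k) = D^{\times 1}$. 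Composing the inclusion $T(k) \hookrightarrow \SL_1(D)(k)$ with the projection to $\SK_1(D) \cong \SL_1(D)(k)/\R$ (using Voskresenski\v{\i}'s theorem quoted just above) gives the first homomorphism $T(\kappa)/\R \to \SK_1(D)$; here one uses that $T(\kappa)/\R \to T(k)/\R$ for $k=\kappa(x,y)$ is the natural map, and $\R$-equivalence of norm-one elements in $E(x,y)$ is captured by the usual description $T(\kappa)/\R = {}^N E^\times / I_\Sigma E^\times$. For the map $\SK_1(D') \to T(\kappa)/\R$: passing to $k' = \kappa((x))((y))$, the algebra $D'$ is a "generic'' biquaternion algebra ramified along $x=0$ and $y=0$; I would use the residue/second-residue maps in Galois cohomology (or equivalently the structure of $\SK_1$ of iterated Laurent series division algebras, as in Platonov's work and the Bass--Platonov results cited) to produce a homomorphism $\SK_1(D') \to T(\kappa)/\R$ whose target is the "unramified part.''

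The key step is then to check the composition $T(\kappa)/\R \to \SK_1(D) \to \SK_1(D') \to T(\kappa)/\R$ is the identity. This should follow by tracking a norm-one element $e \in {}^N E^\times$ through: it maps to the corresponding element of $T(\kappa)/\R \subseteq T(k)/\R$, then into $D^{\times 1}$, then base-changes into $D'^{\times 1}$, and finally the residue computation recovers $e$ modulo $I_\Sigma E^\times$ because $e$ was already defined over $\kappa$ and hence is "unramified'' along both $x=0$ and $y=0$. Concretely I would compute the second residue of the reduced-norm-one element coming from $e$ and see that the ramified contributions vanish. This immediately yields part~(\ref{section}) and also shows that $\SK_1(D) \to T(\kappa)/\R$ and $\SK_1(D') \to T(\kappa)/\R$ are surjective, giving the first assertion of part~(\ref{Platonov surj iso}).

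For the last assertion of part~(\ref{Platonov surj iso})---that $\SK_1(D') \to T(\kappa)/\R$ is an isomorphism when $\cha(\kappa) \nmid [K:\kappa][L:\kappa]$---the plan is to prove injectivity. Under this characteristic hypothesis the relevant cohomology is torsion prime to the characteristic, so one can use the exact sequences for $\SK_1$ of a division algebra over a complete discretely valued field (the "two-variable Laurent series'' computation, iterating the one-variable case), together with the fact that $\SK_1$ of the residue division algebras in the filtration is trivial (these residues are cyclic or split algebras, for which $\SK_1$ vanishes by Wang's theorem). The main obstacle I expect is precisely this injectivity step: controlling $\SK_1(D')$ completely, i.e.\ showing there are no "extra'' classes beyond those detected by the torus $T$, which requires the full strength of Platonov's analysis of $\SK_1$ for these generically-ramified division algebras over iterated Laurent series fields (this is the part Platonov originally proved by a different argument, as noted in the statement, sketched in \cite[Section~18.3]{vosk}). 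Everything else---the construction of the maps and the identity-composition check---is a more-or-less formal consequence of functoriality of $\R$-equivalence, Voskresenski\v{\i}'s theorem, and the residue calculus for Galois cohomology of complete discretely valued fields.
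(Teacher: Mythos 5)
Your construction of the first map $T(\kappa)/\R \to \SK_1(D)$ is essentially the paper's, but you are treating as "formal" the two verifications that are in fact the entire content of part~(a). For the first map one must show that the subgroup $\R T(\kappa)$, generated by the elements $\sigma^i\tau^j(\alpha)/\alpha$, lands in $[D^\times,D^\times]$; this works only because $\sigma$ and $\tau$ are realized as \emph{inner} automorphisms of $D$ by the standard generators $\pi,\delta$ with $\pi^n=x$, $\delta^m=y$, so that each generator of $\R T(\kappa)$ is literally a commutator. For the second map, the paper does not use cohomological residues: it takes the maximal $\kappa((x))[[y]]$-order $\Gamma$ of $D'$, reduces modulo $\Gamma\delta$ to land in $D_0=(K((x))/\kappa((x)),\sigma,x)\otimes L((x))$, then reduces the maximal order of $D_0$ modulo $\bar\pi$ to land in $E$, and must then check by an explicit computation (writing $f=u\delta^s$, $g=v\delta^t$ and reducing twice) that the image of any commutator $fgf^{-1}g^{-1}$ lies in the subgroup generated by the $\sigma^{i}\tau^{j}(\alpha)/\alpha$. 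Your proposal asserts the existence of such a map "whose target is the unramified part" without confronting this computation, and your closing claim that everything besides injectivity is a formal consequence of functoriality is not accurate: the descent of both maps to $\R$-equivalence, respectively commutator, classes is where the work lies.

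For the injectivity in part~(b) you take a genuinely different route from the paper, namely Platonov's original one: iterate the reduced-K-theory exact sequences for division algebras over complete discretely valued fields and kill the $\SK_1$ of the residue algebras by Wang's theorem. That is a viable strategy (it is exactly the argument sketched in \cite[Section~18.3]{vosk}, which the paper cites as the "different argument"), but it requires the full congruence-subgroup machinery. The paper instead gives a short direct argument: given $\theta\in\SL_1(D')(k')$ with trivial image in $T(\kappa)/\R$, adjust $\theta$ by a commutator so that its double reduction is exactly $1\in E^\times$, use Hensel's lemma (this is where the hypothesis $\cha(\kappa)\nmid nm$ enters) to write $\theta=\theta_0^{nm}$ with $\theta_0$ of reduced norm $1$, and conclude from the fact that $\SK_1(D')$ is $nm$-torsion (Tits) that $\theta$ is trivial in $\SK_1(D')$. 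The paper's argument is considerably more elementary and self-contained; yours would work but only by importing the theorem it is meant to reprove.
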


\begin{proof} 
For part~(\ref{section}), let
$n = [K :\kappa]$ and $m = [L : \kappa]$.  
Since $K$ and $L$ are linearly disjoint over~$\kappa$, $E/\kappa$ is a Galois extension of degree $nm$
and Gal$(E/\kappa)$ is an abelian group generated by~$\sigma$ and $\tau$ (where $\sigma$ and $\tau$ are treated as automorphism of $E$ in the obvious way). 
There exist commuting elements~$\pi, \delta \in D^\times$ such that $\pi^n = x$, $\delta^m = y$, and such that the restrictions of the 
 inner automorphisms  given by $\pi$ and $\delta$  to $E$ are $\sigma$ and $\tau$, respectively. We again write $\sigma, \tau$ for those inner automorphisms of $D^\times$; these automorphisms each fix both $\pi$ and $\delta$.
    
  Since $E(x, y) \subset D$ is a maximal subfield, for $\varepsilon \in  E(x, y)$ we have $\N_{E(x,y)/k}(\varepsilon) = \Nrd_D(\varepsilon)$ (see \cite[Proposition 2.6.3(2)]{GS}).
  Since $E \subset E(x, y)$,   we have  
$T(\kappa) =  \R^1_{E/\kappa}{\mathbb G}_m(\kappa) \subseteq D^{\times 1} =\SL_1(D)(k)$.  
The group $\R T(\kappa)$ is generated by the elements of the form $\alpha^{-1} \sigma^i \tau^j(\alpha)$  with $\alpha \in E^\times$ and $i, j \in {\mathbb Z}$, by \cite[Proposition 15]{CTS77} (as recalled above).
Since $\sigma$ and $\tau$ are restrictions of inner automorphisms on $D$ (by $\pi, \delta$ above), it follows that these generators are commutators.  Thus $\R T(\kappa) \subseteq [D^\times, D^\times]$, and hence we have 
a homomorphism $T(\kappa)/\R \to \SK_1(D)$.  

We now define a homomorphism $\SL_1(D')(k') \to T(\kappa)$. 
Let  $\Gamma$ be the unique maximal $\kappa((x))[[y]]$-order of $D'$ (\cite[Theorem 12.8]{reiner}). 
Then $\delta \in \Gamma$, and $\Gamma \delta $ is a 2-sided ideal 
of $\Gamma$ with $\Gamma/\Gamma \delta$ is  isomorphic to the division algebra $ D_0 := (K((x))/\kappa((x)), \sigma,  x) \otimes_{\kappa((x)) } L((x))$. Here $\sigma$ and $\tau$ induce automorphisms 
$\overline{\sigma}$ and $\overline{\tau}$ on $\Gamma/\Gamma \delta$.
Let $\Gamma_0$ be the unique maximal $L[[x]]$-order of $D_0$, and let $\overline{\pi}$ be the image of $\pi$ in $\Gamma_0$. Then $\Gamma_0\overline{\pi}$ is a 2-sided ideal of 
$\Gamma_0$, and $\Gamma_0/\Gamma_0\overline{\pi}$ is isomorphic to $E$.   

Let $\theta \in \SL_1(D')(k')$. Then $\theta \in \Gamma$, and it is a unit (\cite[p.~139]{reiner}). Let $\overline{\theta}$ be the image of $\theta$ in $\Gamma/\Gamma\delta \simeq D_0$.
Since $\Nrd(\theta) = 1$, we have  $\N_{L((x))/\kappa((x))}($Nrd$(\overline{\theta})) = 1$, and so $\Nrd(\overline{\theta}) \in L[[x]]$ is a unit.
Thus $\overline{\theta}$ lies in $\Gamma_0$ and is a unit. Let $\overline{\theta}_E$ be the image of $\overline{\theta}$ in $\Gamma_0/\Gamma_0\overline{\pi} \simeq E$.
Since $\Nrd(\theta) =1$, it follows that $\overline{\theta}_E\in \R^1_{E/\kappa}{\mathbb G}_m(\kappa) = T(\kappa)$. Hence we have a map $\SL_1(D')(k') \to T(\kappa)$ 
which is a homomorphism. 
It follows from the definition that the composite map $T(\kappa) \to \SL_1(D)(k) \to \SL_1(D')(k') \to T(\kappa)$ is the identity.

Let $f, g \in D'^\times$. Then $f = u\delta^s$ and $g = v\delta^t$ for some $u, v $ units in $\Gamma$ and $s, t \in \Z$.  
We have 
$$fgf^{-1}g^{-1} = u\delta^s v \delta^t \delta^{-s} u^{-1}\delta^{-t} v^{-1} =  u\delta^s v \delta^{-s} \delta^t  u^{-1}\delta^{-t} v^{-1} 
= u\tau^s(v) \tau^{t}(u^{-1}) v^{-1} \in\Gamma,$$ using that $\tau$ is conjugation by $\delta$. 
Writing $\ov u, \ov v$ for the images of $u,v$ in $\Gamma_0$, 
we have $\overline{u} = u_0 \overline{\pi}^{s_0}$ and $\overline{v} =  v_0 \overline{\pi}^{t_0}$ for some units 
$u_0, v_0 \in \Gamma_0$ and $s_0, t_0 \in {\mathbb Z}$. Then 
\begin{align*}\overline{fgf^{-1}g^{-1}}  &= \overline{ u\tau^s(v) \tau^{t}(u^{-1}) v^{-1}} 
= u_0 \overline{\pi}^{s_0} \overline{\tau}^s(v_0 \overline{\pi}^{t_0}) \overline{\tau}^t(\overline{\pi}^{-s_0}u_0^{-1}  ) \overline{\pi}^{-t_0}v_0^{-1}\\
&= u_0\, \overline{\sigma}^{s_0} \overline{\tau}^{s} ( v_0)\,  \overline{\tau}^{t} \overline{\sigma}^{t_0} ( u_0^{-1})\, v_0^{-1} \ \in\Gamma_0,\end{align*}
using that $\ov \sigma$ is conjugation by $\ov \pi$ and that $\ov \tau$ fixes $\ov \pi$.
Thus the images of $\overline{fgf^{-1}g^{-1}}$ and $\bigl(\overline{u}_0\, \ov\tau^t\ov\sigma^{t_0}(\overline{u}_0^{-1})\bigr) \bigl( \overline{v}_0^{-1}\,\ov\sigma^{s_0}\ov\tau^s(\overline{v}_0)\bigr)$ under $\Gamma_0 \to \Gamma_0/\Gamma_0 \overline{\pi} \simeq E$ are equal, since
$E$ is commutative.  
So the image of $fgf^{-1}g^{-1}$ under the map $\SL_1(D')(k') \to T(\kappa)$ 
is in $\R T(\kappa)$, again by \cite[Proposition 15]{CTS77}.
 Hence we  obtain a homomorphism $\SK_1(D') \to T(\kappa)/\R$ such that the composition 
  $T(\kappa)/\R \to \SK_1(D) \to \SK_1(D')\to T(\kappa)/\R$ is the identity map.  This completes the proof of part~(\ref{section}).  
  
\smallskip

The surjectivity assertion in part~(\ref{Platonov surj iso}) is immediate from part~(\ref{section}).  For the isomorphism assertion under the assumption that $\cha(\kappa)$ does not divide $nm$, let $\theta \in \SL_1(D')(k')$ with $\overline{\theta}_E  = 1 \in T(\kappa)/\R$ (where $\overline{\theta}_E$ is as in the proof of part~(\ref{section})). 
  Since $\R T(k) \subseteq [D'^\times, D'^\times] \subseteq \SL_1(D')$, after multiplying $\theta$ by an element in $[D'^\times, D'^\times]$  we may assume that 
  $\overline{\theta}_E$ is equal to~$1\in T(\kappa)$.  Since $k'$ is a complete discretely valued field whose residue field is also a complete discretely valued field, and since $\cha(\kappa)$ does not divide $nm$, by Hensel's lemma there exists  
$\theta_0 \in D'^\times$ with $\theta = \theta_0^{nm}$ and $(\overline{\theta}_0)_E = 1$.  Since $\Nrd(\theta_0)^{nm} = \Nrd(\theta) = 1$ and $(\overline{\theta}_0)_E = 1$, it then follows that 
  $\Nrd(\theta_0) = 1$. Since $\SK_1(D')$ is $nm$-torsion  (see \cite[Lemme 2.2(i)]{Tits78}), we get  $\theta  = 1 \in \SK_1(D')$. Thus the homomorphism $\SK_1(D') \to T(\kappa)/\R$ is an isomorphism.  
\end{proof}

We will use this theorem to obtain examples in which $G(k)/\R$ is non-trivial by showing that
$T(\kappa)/\R$ is non-trivial there, where $T=\R^1_{L/\kappa}\G_{m}$ with $L/\kappa$ Galois.  To do this, 
it will again be useful to describe $T(\kappa)/\R$ as the quotient of the group of norm $1$ elements of $L^\times$ by the subgroup generated by the elements $\sigma(x)/x$ for
$x\in L^\times$ and $\sigma \in \Gal(L/\kappa)$.  We will take groups $G$ of the form $\SL_1(D)$, where $D$ is taken to be a tensor product of two cyclic algebras (as in the above theorem), with a focus on the case of biquaternion algebras.

\begin{ex} \label{Rclasses ex}
\renewcommand{\theenumi}{\alph{enumi}}
\renewcommand{\labelenumi}{(\alph{enumi})}
\begin{enumerate}
\item \label{V4 -1}
Let $L/\kappa$ be a degree four field extension of the form $L = \kappa(\sqrt a, \sqrt b)$, where $-1$ is a square in $\kappa$.  Suppose in addition that the diagonal quadratic form $\langle 1,a,-b \rangle$ is anisotropic over $\kappa$; or equivalently, the quaternion algebra $(a,b)$ over $\kappa$ is not split.
(For example, we could take $\kappa= \QQ(i)$, $a=3$, $b=5$.  Or we could take $\kappa$ to be a function field of the form $\kappa_0(u,v)$ or $\kappa_0((u))((v))$ for some field $\kappa_0$, and take $a=u$ and $b=v$.)  
It was shown in \cite[Example~8.1]{CHHKPS} that ${}^{N}L^\times/I_{G}L^\times = \wh H^{-1}(\Sigma,L^\times) = T(\kappa)/\R$ is non-trivial, where $T = \R^1_{L/\kappa}(\Gm)$.  Hence so is $G(k)/\R$, where $k=\kappa(x,y)$ or $\kappa((x))((y))$, and where $G=\SL_1(D)$, with $D$ the biquaternion $k$-algebra $(a,x) \otimes (b,y)$; this is because
$G(k)/\R = \SK_1(D) \to T(\kappa)$ is surjective by Theorem~\ref{reduction to torus}(\ref{Platonov surj iso}).

\item \label{V4 nfld}
Let $L/\kappa$ be a degree four field extension of number fields of the form $L = \kappa(\sqrt a, \sqrt b)$, where we do not require $-1$ to be a square, and let $T = \R^1_{L/\kappa}(\Gm)$.  Let $d_\kappa$ be the number of places of 
$\kappa$ over which there is a unique place of $L$.  Then, as shown in the proof of \cite[Corollaire~2]{CTS77}, $T(\kappa)/\R$ is non-trivial if and only if $d_\kappa>1$, and in that case it is isomorphic to $(\Z/2\Z)^{d_\kappa - 1}$.  For example, if $\kappa=\QQ$, $a=-1$, and~$b=2$, then $d_\kappa = 1$ and $T(\kappa)/\R$ is trivial; while if $\kappa=\QQ(\sqrt{17})$, then the same choice of $a,b$ yields $T(\kappa)/\R \simeq \Z/2\Z$.  As in part~(\ref{V4 -1}), we can then take $k=\kappa(x,y)$ or $\kappa((x))((y))$; $D$ the biquaternion $k$-algebra $(a,x) \otimes (b,y)$; and $G=\SL_1(D)$.  In the case $k=\kappa(x,y)$
we then find that $G(k)/\R$ is non-trivial if $\kappa=\QQ(\sqrt{17})$; while in the case $k=\kappa((x))((y))$ we find that $G(k)/\R$ is trivial for $\kappa=\QQ$ and that it is isomorphic to $\Z/2\Z$ if $\kappa=\QQ(\sqrt{17})$.

\item \label{bicyclic gl fld}
In the notation of Theorem~\ref{reduction to torus}, assume that $[K:\kappa]=[L:\kappa]=p$ for some prime number $p$.  Let $G = \SL_1(D')$ over $k'=\kappa((x))((y))$.  Then $G(k')/\R \simeq \SK_1(D') \simeq (\Z/p\Z)^{d_\kappa - 1}$, by \cite[Theorem~5.13]{platonov}, if $d_\kappa \ge 1$ (where $d_\kappa$ is defined as in part~(\ref{V4 nfld}) above).  In the case that $p=2$ and $\kappa$ is a number field, this agrees with the conclusion in part~(\ref{V4 nfld}) above.

\item \label{torus infinite} 
For any example of a field $\kappa$ and a torus $T$ over $\kappa$ for which $T(\kappa)/\R$ is non-trivial, there exists an overfield $\tilde \kappa/\kappa$ such that $T(\tilde \kappa)/R$ is infinite.  An explicit example of this was given in \cite[Example~8.2]{CHHKPS}, by taking the union of an increasing tower of number fields $\kappa_n$ such that $d_{\kappa_n} \to \infty$ and using part~(\ref{V4 nfld}) above.  The general case was shown in \cite[Example~8.4]{CHHKPS}, by iterating \cite[Proposition~8.3]{CHHKPS}, which asserts that $T(\kappa(T))$ strictly contains $T(\kappa)$. 
In particular, this applies to the examples given in parts~(\ref{V4 -1}) and~(\ref{V4 nfld}) above where $T(\kappa)$ is non-trivial.  (Note that the group $H^1(\kappa,S)$ in \cite{CHHKPS} is the same as $T(\kappa)/\R$, by \cite[Th\'eor\`eme~2, p.~199]{CTS77}; see also
\cite[Theorem~3.1]{CTS87}.)  With $\tilde k$ equal to $\tilde \kappa(x, y)$ or $\tilde \kappa((x))((y))$, one then has that $G(\tilde k)/\R$ is infinite for $G=\SL_1(D)$ as above, since $G(\tilde k)/\R \iso \SK_1(D_{\tilde k}) \to T(\tilde \kappa)/\R$ is surjective.
\end{enumerate}
\end{ex}

Another approach to computing $\SK_1(D)$ is given in the following result, which is based on a theorem of Rost.

\begin{prop} \label{Rost prop}
Let $k$ be a field with $\cha(k)\ne 2$, and let $D$ be a biquaternion algebra $(a,b) \otimes (c,d)$ over $k$.  
\renewcommand{\theenumi}{\alph{enumi}}
\renewcommand{\labelenumi}{(\alph{enumi})}
\begin{enumerate}
\item \label{SK1 0 part}
If $\cd_2(k) \le 3$, then $\SK_1(D)=0$.
\item \label{SK1 non0 part}
If $-1$ is a square in $k$ and $(a) \cup (b) \cup (c) \cup (d)  \neq 0 \in H^4(k,\Z/2)$, then $\SK_1(D) \ne 0$.
\end{enumerate}
\end{prop}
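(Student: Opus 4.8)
The plan is to derive both parts from a theorem of Rost computing $\SK_1$ of a biquaternion algebra in terms of degree-four Galois cohomology. Recall the setup: if $D=(a,b)\otimes(c,d)$ is a biquaternion division algebra over a field $k$ with $\cha(k)\ne 2$, let $q_D$ be its Albert form, a $6$-dimensional quadratic form of trivial discriminant and Clifford invariant $[D]$; when $-1\in k^{\times 2}$ one may take $q_D\cong\langle a,b,ab,c,d,cd\rangle$. Writing $X\subseteq\P^5$ for the associated projective quadric $q_D=0$ (a smooth $4$-dimensional quadric), Rost's theorem gives an exact sequence
\[0\longrightarrow\SK_1(D)\longrightarrow H^4(k,\Z/2)\longrightarrow H^4(k(X),\Z/2).\]
If instead $D$ is not a division algebra, then its index is $1$ or $2$ and $\SK_1(D)=0$ by Wang's theorem, so both parts hold trivially; thus we may assume $D$ is division.

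For part~(\ref{SK1 0 part}), since $\Z/2$ is $2$-primary the hypothesis $\cd_2(k)\le 3$ forces $H^4(k,\Z/2)=0$; the injection $\SK_1(D)\hookrightarrow H^4(k,\Z/2)$ coming from Rost's sequence then gives $\SK_1(D)=0$.

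For part~(\ref{SK1 non0 part}), I would show that the class $(a)\cup(b)\cup(c)\cup(d)$---which is nonzero in $H^4(k,\Z/2)$ by hypothesis---restricts to $0$ in $H^4(k(X),\Z/2)$; by exactness of Rost's sequence this produces a nonzero element of $\SK_1(D)$. Over $E:=k(X)$ the form $q_D$ is isotropic (as $\dim X\ge 1$), so $D_E$ has index at most $2$; hence either one of $(a,b)_E,(c,d)_E$ is split, in which case $(a)\cup(b)$ or $(c)\cup(d)$ already vanishes in $H^2(E,\Z/2)$, or, by Albert's theorem on biquaternion algebras of index $\le 2$, the quaternion algebras $(a,b)_E$ and $(c,d)_E$ share a common slot $u$: $(a,b)_E\cong(u,v)$ and $(c,d)_E\cong(u,w)$ for some $v,w\in E^\times$. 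In the latter case $(a)\cup(b)=(u)\cup(v)$ and $(c)\cup(d)=(u)\cup(w)$ in $H^2(E,\Z/2)$, so
\[(a)\cup(b)\cup(c)\cup(d)=(u)\cup(v)\cup(u)\cup(w)=(u)\cup(u)\cup(v)\cup(w)=(u)\cup(-1)\cup(v)\cup(w)=0,\]
using the standard relation $(u)\cup(u)=(u)\cup(-1)$ together with $-1\in k^{\times 2}\subseteq E^{\times 2}$. Either way $(a)\cup(b)\cup(c)\cup(d)$ dies over $k(X)$, as needed. This is also consistent with the reduction to the division case: were $D$ not division, $q_D$ would be isotropic over $k$ itself, $X$ would be $k$-rational, and $H^4(k,\Z/2)\to H^4(k(X),\Z/2)$ would be injective, contradicting the hypothesis together with the vanishing just established.

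Apart from invoking Rost's theorem as a black box, the substantive point is the vanishing over $k(X)$, and there the one nontrivial input is Albert's structure theorem for index-$\le 2$ biquaternion algebras (equivalently, linkage of $2$-fold Pfister forms: over $k(X)$ the forms $\langle\langle a,b\rangle\rangle$ and $\langle\langle c,d\rangle\rangle$ become linked, whence their product $\langle\langle a,b,c,d\rangle\rangle$ is hyperbolic and $(a)\cup(b)\cup(c)\cup(d)$ vanishes). I expect the things to be careful about to be the coefficient conventions in Rost's sequence---it is $H^4$ with $\Z/2$ (equivalently $\mu_2^{\otimes 3}$, harmless since $\mu_2$ carries trivial Galois action) coefficients, which is exactly why $\cd_2$ is the right hypothesis in part~(\ref{SK1 0 part})---and the legitimacy of the reduction to the case where $D$ is a division algebra.
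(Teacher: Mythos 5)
Your argument is correct. For part~(\ref{SK1 0 part}) you do exactly what the paper does: Rost's injection $\SK_1(D)\hookrightarrow H^4(k,\Z/2)$ plus the vanishing of $H^4(k,\Z/2)$ when $\cd_2(k)\le 3$. For part~(\ref{SK1 non0 part}) the paper takes a shorter route through a finer piece of the same result of Rost as presented in \cite[Theorem~4]{Merk95}: when $-1$ is a square, $\sqrt{-1}$ lies in $D^{\times 1}$ and its image under the composite $D^{\times 1}\to\SK_1(D)\to H^4(k,\Z/2)$ is precisely the symbol $(a)\cup(b)\cup(c)\cup(d)$, so nontriviality of that symbol immediately exhibits an explicit nontrivial class in $\SK_1(D)$, namely that of $\sqrt{-1}$. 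You instead use only the exactness of Rost's sequence $0\to\SK_1(D)\to H^4(k,\Z/2)\to H^4(k(X),\Z/2)$ and verify by hand that the symbol dies over the function field of the Albert quadric: over $k(X)$ the algebra has index at most $2$, Albert's common-slot theorem makes the two quaternion factors linked (or splits one of them), and the relation $(u)\cup(u)=(u)\cup(-1)=0$ (using $-1\in k^{\times 2}$) kills the product. That computation is sound, and your handling of the degenerate case is adequate: if $D$ is not division the same linkage argument applies over $k$ itself, so the hypothesis of~(\ref{SK1 non0 part}) cannot hold, while $\SK_1(D)=0$ by Wang's theorem together with Morita invariance, so~(\ref{SK1 0 part}) is unaffected. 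What the paper's version buys is brevity and the identification of a specific nontrivial element of $\SK_1(D)$; what yours buys is that you need only the exact sequence itself, at the cost of importing Albert's linkage theorem. Either suffices for the application in Section~\ref{sect: counterex}, which only requires $\SK_1(D)\ne 0$.
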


\begin{proof}
By a result of Rost (see \cite[Theorem~4]{Merk95}), there is an injection $\SK_1(D) \to H^4(k,\Z/2)$; and if $-1$ is a square in $k$ then $\sqrt{-1} \in D^{\times 1}$, and its image 
under the composition $D^{\times 1} \to \SK_1(D) \to H^4(k,\Z/2)$ is the cup product $(a) \cup (b) \cup (c) \cup (d)$.  

Part~(\ref{SK1 0 part}) is then immediate from the injectivity of $\SK_1(D) \to H^4(k,\Z/2)$, and part~(\ref{SK1 non0 part}) holds by the non-triviality of the above composition.
\end{proof}

Part~(\ref{SK1 non0 part}) of this result can be used to obtain examples where $G(k)/\R = \SK_1(D)$ is non-trivial, where $G = \SL_1(D)$.  We would like to go further, enlarging $k$ to obtain examples where $G(k)/\R$ is infinite, as in Example~\ref{Rclasses ex}(\ref{torus infinite}).  That example had relied on \cite[Proposition~8.3]{CHHKPS}, which concerned tori.  In the context of the above proposition, we can instead use the following analogous result for reductive groups. 

\begin{prop} \label{generic infinite}
Let $G$ be a connected linear algebraic $k$-group, with $k$ perfect or $G$ reductive.
 Assume $G(k)/\R$ is non-trivial.
 Then:
\renewcommand{\theenumi}{\alph{enumi}}
\renewcommand{\labelenumi}{(\alph{enumi})}
\begin{enumerate}
\item \label{R-equiv map not onto}
The natural map
$G(k)/\R \to G\bigl(k(G)\bigr)/\R$ is injective but not surjective.
\item \label{inf R-equiv}
There is a field $\tilde k$ of infinite transcendence degree over $k$ such that $G(\tilde k)/\R$ is infinite.
\end{enumerate}
\end{prop}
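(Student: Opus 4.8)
The plan is to prove part~(\ref{R-equiv map not onto}) first, mirroring the argument for tori in \cite[Proposition~8.3]{CHHKPS}, and then to derive part~(\ref{inf R-equiv}) by iterating. For injectivity in part~(\ref{R-equiv map not onto}): the generic point of $G$ over $k$ is a $k(G)$-point of $G$, and specializing it at any $k$-point of $G$ (for instance the identity) gives a retraction of the base change map $G(k)/\R \to G(k(G))/\R$. Concretely, if $g \in G(k)$ is $\R$-trivial over $k(G)$, then by continuity/specialization at a $k$-rational point of the connecting rational curve through $\A^1$, $g$ is already $\R$-trivial over $k$; here I would use the fact (cited before Proposition~\ref{branch factorization}) that $\R$-equivalent points of $G(E)$ are directly $\R$-equivalent, and that a specialization of a direct $\R$-equivalence is again a direct $\R$-equivalence. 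The hypothesis that $k$ is perfect or $G$ is reductive is what guarantees $G(k)/\R$ is a group and that the specialization maps in the Appendix (Theorems~\ref{dvr specialization} and~\ref{main appendix}) are available; this is what makes the retraction well-defined on $\R$-classes.

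For non-surjectivity, I would argue by contradiction: suppose every element of $G(k(G))/\R$ comes from $G(k)/\R$. The generic point $\eta_G \in G(k(G))$ is then $\R$-equivalent to (the image of) some $g_0 \in G(k)$. After translating $G$ so that $g_0$ becomes the identity (replacing the torsor/point by $g_0^{-1}\eta_G$), this says the identity component through $\eta_G$ and the identity are directly $\R$-equivalent, i.e.\ there is a rational map $f : \A^1_{k(G)} \dashrightarrow G_{k(G)}$ defined at $0$ and $1$ with $f(0)=1$ and $f(1)=\eta_G$. Spreading $f$ out over a dense open of $G$ and then specializing at an arbitrary $k$-point $h \in G(k)$ shows that $h$ is $\R$-trivial in $G(k)$ — for every $h$ — so $G(k)/\R$ is trivial, contradicting the hypothesis. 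The one subtlety is ensuring the rational map $f$, viewed as a family over $G$, remains defined at $0,1$ after restricting to the fiber over a generic enough $h$; this is handled exactly as in the proof of Proposition~\ref{branch factorization}, by removing a proper closed subset of $G$ and noting the remaining locus is dense, hence contains a $k$-point after a further finite extension if $k$ is infinite (and trivially if the $\R$-class argument is run over $k(t)$ first, using that $G(k)/\R \hookrightarrow G(k(t))/\R$ is an isomorphism by the Bass–Platonov-type statement recalled in Section~\ref{SK1 subsec}). I expect this density/rational-definedness bookkeeping to be the main technical obstacle, though it is routine given the machinery already in the paper.

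For part~(\ref{inf R-equiv}), I would build $\tilde k$ as the union of a tower $k = k_0 \subset k_1 \subset k_2 \subset \cdots$ where $k_{n+1} = k_n(G_{k_n})$, the function field of $G$ over $k_n$. By part~(\ref{R-equiv map not onto}) applied over each $k_n$ (which is again perfect if $k$ is, or over which $G$ is still reductive), each map $G(k_n)/\R \to G(k_{n+1})/\R$ is injective but not surjective, so the cardinality of $G(k_n)/\R$ is strictly increasing. Setting $\tilde k = \bigcup_n k_n$, which has infinite transcendence degree over $k$ since each step adds $\dim G \ge 1$ to the transcendence degree, we get that $G(\tilde k)/\R = \varinjlim_n G(k_n)/\R$ is infinite, because it surjects onto (indeed receives injections from) each of the strictly growing finite-or-infinite pieces. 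If some $G(k_n)/\R$ is already infinite we are done immediately; otherwise the strict increase at each stage forces the colimit to be infinite. This is the direct analog of \cite[Example~8.4]{CHHKPS}, with Proposition~\ref{generic infinite}(\ref{R-equiv map not onto}) replacing \cite[Proposition~8.3]{CHHKPS}.
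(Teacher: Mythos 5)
Your proposal follows essentially the same route as the paper: spread out a direct $\R$-equivalence over $k(G)$ to a dense open $U\subseteq G$ and specialize at $k$-points of $U$ (which exist because $G(k)/\R\neq 1$ forces $k$ infinite, and $G$ is unirational under the stated hypotheses, so $G(k)$ is Zariski dense), giving injectivity and, under the surjectivity assumption, the contradiction $G(k)/\R=U(k)/\R=1$; part~(b) is the same iteration of $k\mapsto k(G)$. The only cosmetic difference is that you invoke the Appendix specialization maps and a Bass--Platonov-type detour, neither of which the paper needs here---the elementary spreading-out-and-specializing argument suffices.
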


\begin{proof}
If $k$ is finite, then $G(k)/\R=1$ (\cite[Corollaire~6, p.~202]{CTS77}).
Thus $k$ is infinite. Also, if two $k$-points are
$\R$-equivalent, they are directly $\R$-equivalent
\cite[II.1.1 b)]{GilleIHES}.
Since we have $k$ perfect or $G$ reductive, 
the $k$-variety $G$ is $k$-unirational by \cite[Chap.~V. Theorem~18.2]{Borel};
and the map $U(k)/\R \to G(k)/\R$ is a bijection for any non-empty Zariski open subset $U\subseteq G$, by \cite[Proposition~11, p.~196]{CTS77}.  Since $k$ is infinite and $G$ is unirational, $G(k)$ is dense in $G$, and hence $U(k)$ is non-empty.

To prove injectivity in part (\ref{R-equiv map not onto}), suppose that $g,g' \in G(k)$ become $\R$-equivalent over $k':=k(G)$.  Then they become $\R$-equivalent over some Zariski dense open subset $U\subseteq G$.  Specializing to a $k$-point of $U$ shows that $g,g'$ are $\R$-equivalent over $k$.

To prove that $G(k)/\R \to G(k')/\R$ is not surjective, suppose otherwise
for the sake of contradiction.  Then the generic point  $\eta$ in $G(k')$, which is given by the natural map
$\Spec(k') \to G$, is directly $\R$-equivalent on $G_{k'}$
to an element $g_0 \in G(k) \subseteq G(k')$.
That is, there exists an open set $S \subseteq \A^1_{k'}$
containing two $k'$-points $A$ and $B$
and a $k'$-morphism $S \to G_{k'}$ such that
$A$ maps to $g_0\times_k k' \in G(k')$ and $B$ maps to $\eta \in G(k')$.
Applying an automorphism on $\A^1_{k'}$ we may assume that
the points $A$ and $B$ come from $k$-points of $\A^1_k$.  There then
exist a dense open set $U \subseteq G$,
an open set $W \subseteq \A^1_U$ containing
$A\times U$ and $B \times U$, and a $U$-morphism
$W \to  G\times_k U$, such that
the image of  $A\times U$ is $g_0 \times V$
and the image of $B \times U$ is the diagonal in $U \times U$.

Now for any $k$-point $g\in U(k) \subseteq G(k)$,
the fiber of $W \to U$ over $g$ gives a direct $\R$-equivalence
between  the $k$-point $g$
and the point $g_0 \in G(k)$.
Thus $G(k)/\R = U(k)/\R = 1$.
This is a contradiction, thereby proving part (\ref{R-equiv map not onto}).

Part (\ref{inf R-equiv}) then follows
by infinite iteration of the passage from $k$ to $k(G)$.
\end{proof}

Combining this with Proposition~\ref{Rost prop}, we obtain the following example.

\begin{ex} \label{Rost ex}
\renewcommand{\theenumi}{\alph{enumi}}
\renewcommand{\labelenumi}{(\alph{enumi})}
\begin{enumerate}
\item \label{Rost ex nontriv}
Let $\kappa$ be a field with $\cha(\kappa)\ne 2$, such that $-1$ is a square in $\kappa$.  Let $a,b \in \kappa^\times$ be elements such that the quaternion algebra $(a,b)$ over $\kappa$ is not split (e.g., $\kappa=\QQ(i)$, $a=3$, $b=5$).  Then over the rational function field $k=\kappa(x,y)$, the biquaternion algebra $D = (a,b) \otimes (x,y)$ satisfies $\SK_1(D) \ne 0$, by Proposition~\ref{Rost prop}(\ref{SK1 non0 part}).  Thus $G(k)/\R$ is non-trivial, where $G=\SL_1(D)$.
\item \label{Rost ex inf}
In the situation of part~(\ref{Rost ex nontriv}), there is a field extension $\tilde k/k$ such that $G(\tilde k)/\R$ is infinite, by applying 
Proposition~\ref{generic infinite}(\ref{inf R-equiv}).
\end{enumerate}
\end{ex}
  
\subsection{Examples with $\Sha(F,G)\neq 1$} \label{counterexamples}

We now use the above discussion to obtain examples of semi-global fields $F$ and groups $G$ of the form $\SL_1(D)$ for which $\Sha(F,G)$ is non-trivial.  These are the first known counterexamples to the local-global principle for semisimple simply connected groups over a semi-global field.

As before, we take a semi-global field $F$ over a complete discrete valuation ring $R$ having residue field $k$, and a normal crossings model $\XX$.  
As shown in Theorem~\ref{main}, if the closed fiber of $\XX$ is reduced and the associated reduction graph $\Gamma$ is a monotonic tree, and if $G$ is a reductive group over $R$ such that $\mu(G)$ is \'etale, then $\Sha(F,G)=1$.  We therefore consider examples where either $\Gamma$ is not a tree, or else it is a tree that is not monotonic.  We restrict our attention here to the equicharacteristic case, so that $R=k[[t]]$.  
By taking $G = \SL_1(D)$ for $D$ a biquaternion algebra over $k$ (and hence over $R$ and $\XX$), we have at our disposal Examples~\ref{Rclasses ex} and~\ref{Rost ex}.  Also, in this situation, $G(k)/\R$ is isomorphic to $\SK_1(D)$, and so it is commutative. 

\medskip

We begin with an example in which the closed fiber of $\XX$ consists of three copies of $\P^1_k$ such that each pair intersects at a single point.  Thus the reduction graph is a cycle, rather than a tree; and by a suitable choice of $k$ and $D$ we can make $\Sha(F,G)$ non-trivial, and even infinite.

\begin{ex} \label{triangle ex}
As in Example~8.7 of \cite{CHHKPS}, let $k$ be a field, let $R=k[[t]]$, let $\XX = \operatorname{Proj}(R[u,v,w]/(uvw-t(u+v+w)^3))$, and let $F$ be the function field of $\XX$.  Then $\XX$ is a normal crossings model of $F$ over $R$ whose closed fiber consists of three copies of $\P^1_k$ meeting pairwise at $k$-points to form a triangle.  Thus $m=1$ in Theorem~\ref{explicit Sha}, and for any reductive group $G$ over $R$ we have a bijection from $\Sha(F,G)$ to the set of conjugacy classes $(G(k)/\R)/{\sim}$ in the group $G(k)/\R$.  Take $G=\SL_1(D)$ for 
$D$ a biquaternion algebra over $k$.  Since $G(k)/\R$ is commutative, we have a bijection from $\Sha(F,G)$ to $G(k)/\R$.  

To obtain a counterexample to the local-global principle we can thus take any of the examples with non-trivial $G(k)/\R$ in Section~\ref{SK1 subsec}.  In particular, as in Example~\ref{Rclasses ex}(\ref{V4 nfld}), we can choose $k=\QQ(\sqrt{17})((x))((y))$ and $D = (-1,x)\otimes(2,y)$, so that $\Sha(F,G) \simeq G(k)/\R \simeq \Z/2\Z$.  Alternatively, using other cases in those examples, we get a non-trivial $\Sha(F,G)$ by taking $k=\QQ(\sqrt{17})(x,y)$; or by taking $k=\kappa((x))((y))$ or $\kappa(x,y)$, where $\kappa$ is itself a field of the form $\kappa_0((u))((v))$ or $\kappa_0(u,v)$.  Thus, in particular, we obtain counterexamples to the local-global principle in the case that $k$ is of the form $\kappa_0(u,v,x,y)$ or $\kappa_0((u))((v))((x))((y))$ for some field~$\kappa_0$.

In addition, by applying Example~\ref{Rclasses ex}(\ref{torus infinite}) (or Proposition~\ref{generic infinite}) with any of these choices of $k$, we obtain a field extension $\tilde k/k$ such that $\Sha(\tilde F,G)$ is infinite, where $\tilde F = F \otimes_{k((t))} \tilde k((t))$.
\end{ex}

Our next example involves a model $\XX$ whose reduction graph $\Gamma$ is a tree that is not monotonic.

\begin{ex} \label{nonmono tree}
As in Examples~7.7 and~8.9(a) of \cite{CHHKPS}, let $k$ be a field such that $\cha(k) \ne 2$, let $R=k[[t]]$, let $c$ be a non-square in $k$, and let $F$ be the function field of $\XX = \operatorname{Proj}(R[x,y,z]/((y-x)(xy - cz^2) + tz^3))$.  Then $\XX$ is a normal crossings model of $F$ over $R$ whose closed fiber is reduced and consists of two copies of $\P^1_k$ that meet at a single closed point $P$ whose residue field is $k':=k(\sqrt c)$. 
Thus the reduction graph is a tree, but is not a monotonic tree (and the base change of the closed fiber from $k$ to $k'$ is not a tree).
Let $U_1,U_2$ be the complements of $P$ in the two copies of $\P^1_k$, and for $i=1,2$ let $\wp_i$ be the branch at $P$ lying on $U_i$.
With notation as at the beginning of Section~\ref{sectionlb}, we have $\kappa(U_{i})=k$ and $\kappa(P)=\kappa(\wp_i) = k'$ for $i=1,2$.  
By Proposition~\ref{dbl coset surj}(\ref{kappa-surj}), for any reductive group $G$ over $R$
we have a surjection
\begin{eqnarray*}
\Sha(F,G) &\to& \prod_{U \in \UU} (G(\kappa(U)/\R)  \backslash \prod_{\wp \in \BB}  (G(\kappa(\wp))/\R)  \slash \prod_{P \in \PP} (G(\kappa(P))/\R)\\
&=& (G(k)/\R \times G(k)/\R) \backslash (G(k')/\R \times G(k')/\R)  \slash (G(k')/\R).
\end{eqnarray*}
Here the maps $G(k)/\R \times G(k)/\R \to G(k')/\R \times G(k')/\R$ on the left hand side are the natural maps
induced by the inclusion $k \subset k'$, and the map $G(k')/\R \to G(k')/\R \times G(k')/\R$ on the right hand side is the diagonal map.

To use this to obtain an explicit counterexample to the local-global principle we can let $k=\QQ((x))((y))$ and $c=17$, so that $k' = \QQ(\sqrt{17})((x))((y))$.
Let $D$ be the biquaternion algebra $(-1,x)\otimes(2,y)$ over $k$ and let $G=\SL_1(D)$.  By 
Example~\ref{Rclasses ex}(\ref{V4 nfld}), $G(k)/\R$ is trivial, while $G(k')/\R = \Z/2\Z$.  Thus $\Sha(F,G)$ maps surjectively onto the quotient of $(\Z/2\Z)^2$ by the diagonal $\Z/2\Z$, and hence is non-trivial.  As in Example~\ref{triangle ex}, we can then find a field extension $\tilde k/k$ such that $\Sha(\tilde F,G)$ is infinite, where $\tilde F = F \otimes_{k((t))} \tilde k((t))$.
\end{ex}
 
Observe that in all our examples of non-trivial $\Sha(F,G)$ and non-trivial $G(F)/\R$ for $G=\SL_1(D)$ and $F$ a semi-global field over $k((t))$, we have $\cd(k) \ge 4$.  
Namely, in our examples, $k=\kappa(x,y)$ or $\kappa((x))((y))$, where $\kappa$ is a number field or a field of the form $\kappa_0(u,v)$ or $\kappa_0((u))((v))$.
In fact, in using our approach, this condition on the cohomological dimension is necessary for examples where Theorem~\ref{computation-of-sha} applies:

\begin{prop} \label{cd 4 needed}
Let $k$ be a field, with $\cha(k)\neq 2$. Let $D$ be a biquaternion division algebra over $k$, and 
let $G=\SL_1(D)$.
Let $R=k[[t]]$, and let $F$ be a semi-global field over $R$ that 
admits a normal crossings model $\XX$ whose closed fiber is reduced and consists of
projective $k$-lines meeting at $k$-points. If $\cd_{2}(k) \leq 3$, then $\Sha(F,G)=1$.
 \end{prop}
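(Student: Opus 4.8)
The plan is to combine Theorem~\ref{explicit Sha} with Proposition~\ref{Rost prop}(\ref{SK1 0 part}). Under the stated hypotheses, the model $\XX$ satisfies Hypothesis~\ref{rational comps ints}: the components of the reduced closed fiber are projective $k$-lines and the intersection points are $k$-rational. Moreover the closed fiber is reduced by assumption. The group $G=\SL_1(D)$ is reductive over $R=k[[t]]$ (since $D$ is a biquaternion division algebra over $k$, hence $G$ is a form of $\SL_4$, which is semisimple simply connected), so $G^{\mathrm{ss}}=G$, $\mathrm{rad}(G)=1$, and $G^{\mathrm{sc}}=G$; thus $\mu(G)=\mu(G^{\mathrm{ss}})$ is the center of $\SL_1(D)$, a group scheme of multiplicative type of order dividing $\deg(D)=4$. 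Since $\cha(k)\neq 2$ (the only prime dividing $4$), $\mu(G)$ is \'etale. Hence Theorem~\ref{explicit Sha} applies, giving a bijection of pointed sets $\Sha(F,G)\simeq (G(k)/\R)^m/{\sim}$, where $m$ is the number of cycles in the reduction graph of $\XX$.

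It therefore suffices to show $G(k)/\R=1$. By Voskresenski\v{\i}'s theorem (recalled in Section~\ref{SK1 subsec}), the natural homomorphism $G(k)=D^{\times 1}\to \SK_1(D)$ induces an isomorphism $G(k)/\R\iso \SK_1(D)$. By Proposition~\ref{Rost prop}(\ref{SK1 0 part}), the hypothesis $\cd_2(k)\leq 3$ forces $\SK_1(D)=0$. Consequently $G(k)/\R$ is trivial, so $(G(k)/\R)^m/{\sim}$ is a single point regardless of $m$, and thus $\Sha(F,G)=1$.

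I do not anticipate a serious obstacle here; the argument is essentially an assembly of results already established in the excerpt. The one point requiring a line of care is the verification that $\mu(G)$ is \'etale, i.e.\ that $\cha(k)$ does not divide the order of the center of $G^{\mathrm{sc}}=\SL_1(D)$. Since $D$ has degree $4$, that center is $\mu_{4}$ (twisted by a quadratic form, as $D$ is a tensor product of two quaternion algebras), of order $4$, and $\cha(k)\neq 2$ by hypothesis; so $\mu(G)$ is indeed \'etale. (Alternatively, one may simply invoke Theorem~\ref{computation-of-sha} directly, whose hypotheses are the same, and then apply Lemma~\ref{decorated}; this is exactly what Theorem~\ref{explicit Sha} packages.) With this in hand the proof is two sentences: apply Theorem~\ref{explicit Sha}, then use Voskresenski\v{\i} plus Proposition~\ref{Rost prop}(\ref{SK1 0 part}) to conclude $G(k)/\R=\SK_1(D)=0$, whence $\Sha(F,G)=1$.
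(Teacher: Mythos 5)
Your proof is correct and follows essentially the same route as the paper: the paper's own argument invokes Proposition~\ref{Rost prop} to get $\SK_1(D)=0$, Voskresenski\u{\i}'s isomorphism $G(k)/\R\simeq\SK_1(D)$, and then the description of $\Sha(F,G)$ from Theorem~\ref{computation-of-sha} (of which Theorem~\ref{explicit Sha} is the packaged form you use). Your explicit check that $\mu(G)$ is \'etale (center $\mu_4$ of $G^{\rm sc}=\SL_1(D)$, with $\cha(k)\neq 2$) is a hypothesis the paper leaves implicit, and it is correctly verified.
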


\begin{proof}
By Proposition~\ref{Rost prop}, $\SK_1(D)$ is trivial.  But  $G(k)/\R \simeq \SK_1(D)$ by \cite[\S 18.2]{vosk}.  Hence $G(k)/\R$ is also trivial.  Thus $\Sha(F,G)$ is trivial, by the description given in 
Theorem~\ref{computation-of-sha}.
\end{proof}

\begin{rem} \label{cd 3 rk}
Proposition~\ref{cd 4 needed} can be extended as follows:
\renewcommand{\theenumi}{\alph{enumi}}
\renewcommand{\labelenumi}{(\alph{enumi})}
\begin{enumerate}
\item \label{cd3 char0}
If $\cha(k)=0$, then the conclusion of Proposition~\ref{cd 4 needed} holds even without assuming that the biquaternion algebra $D$ is a division algebra, because Voskresenski\v{\i} showed in \cite[\S 18.2]{vosk} that the isomorphism $G(k)/\R \simeq \SK_1(D)$ remains true for general central simple algebras in that situation.  Hence in that case, we additionally obtain that $G(k')/\R$ 
is trivial for every finite extension of $k$.  Thus we conclude that 
$\Sha(F,G)$ is trivial even if we assume only that the components of the closed fiber of $\XX$ are projective lines (not necessarily over $k$, or meeting at $k$-points).
\item \label{cd3 cyclic alg}
Let $k'=\kappa((x))((y))$
and let $D'$ be a product of two cyclic algebras as in Theorem~\ref{reduction to torus}.
Assume that  their degrees are  not divisible by the characteristic. If  $\cd(k') \le 3$,
then $\cd(\kappa) \le 1$, hence $T(\kappa)/\R$ is trivial for any torus $T$ over $\kappa$ (by \cite[Corollaire~5(i), p.~201]{CTS77}).  Thus $G(k)/\R \simeq \SK_1(D) \simeq T(\kappa)/\R$ is trivial, with $T$ as in Theorem~\ref{reduction to torus}(\ref{Platonov surj iso}).  Hence $\Sha(F,G)$ is trivial, and so Proposition \ref{cd 4 needed} extends to this setting.
\end{enumerate}
\end{rem}

The above proposition and remark provide examples in which the local-global principle for $F$ and $G$ holds in the situation where $\cd(k) \le 3$.  This suggests the question of whether $\Sha(F,G)$ is trivial for every semisimple simply connected group $G$ over a complete discrete valuation ring $R$ whose residue field $k$ satisfies $\cd(k) \le 3$, where $F$ is a semi-global field over~$R$.

\smallskip

On the other hand, instead of considering the local-global obstruction set $\Sha(F,G)$, which is taken with respect to all the divisorial discrete valuations on $F$ arising from regular models $\XX$ over $R$, one could consider the {\it a priori} larger obstruction set $\Sha_K(F,G)$ taken with respect to the set $\Omega_K$ of discrete valuations on $F$ that are trivial on $K$ (or equivalently, the ones that arise from closed points on the generic fiber of $\XX \to \Spec(R)$).  This obstruction was considered, for example, in \cite{CH}, \cite{HSS}, and \cite{HS16}.  As the next example shows, the associated local-global principle can fail for semisimple simply connected groups even in the case of semi-global fields over a $p$-adic field.  This answers a question posed by D.~Harari.

\begin{ex} \label{p-adic counterex}
Let $p$ be an odd prime, and let $k=\mbb F_p$, $R = \Z_p$, and $K=\QQ_p$.  Consider the projective $R$-curve $\XX = \operatorname{Proj}(R[u,v,w]/(uvw-p(u+v+w)^3))$, with function field $F$.  Thus $\XX$ is a normal crossings model of the semi-global field $F$ over $R$, whose closed fiber consists of three copies of $\P^1_k$, forming a triangle.  (This is a mixed characteristic analogue of the model in Example~\ref{triangle ex}.)  Let $C$ be the fiber of $\XX$ over the generic point of $\Spec(R)$.  Thus $C$ is a smooth projective curve over $K=\QQ_p$, with function field $F$.
By \cite[Lemma~5.10(a),(b)]{HKP}, $\Sha^3_K(F,\Z/2\Z) = \Sha_K(F,\Z/2) = \Z/2\Z$.  
(This can also be obtained using \cite[Corollary~2.9]{Kato}.  We note that \cite{HKP} indirectly relied on \cite{Kato} via its use of \cite{HS16}.)
By \cite[Theorem~3.9]{PS99}, the unique non-trivial element $\xi$ of $\Sha^3_K(F,\Z/2\Z) \subseteq H^3(F,\Z/2)$ is decomposable; i.e., is represented by a symbol $(a)\cup (b) \cup (c)$, for some $a,b,c \in F^\times$ (where we identify $H^1(F,\Z/2\Z)$ with $F^\times/F^{\times 2}$).  Since $\xi \in \Sha^3_K(F,\Z/2\Z)$, its image $\xi_v \in H^3(F_v,\Z/2\Z)$ is trivial, for all $v \in \Omega_K$.

If $G$ is a simple $F$-group of type $G_2$, then for field extensions $L/F$ there is a functorial injective map of pointed sets $\iota_L: H^1(L,G) \to H^3(L,\Z/2)$ whose image is the set of decomposable elements (see \cite[Section 8.1, Th\'eor\`eme~9]{Serre:Bourb}).  
Since the above decomposable element $\xi \in \Sha^3_K(F,\Z/2\Z)$ is non-trivial, it is the image of some non-zero element $\zeta$ of $H^1(F,G)$.  The image $\zeta_v$ of $\zeta$ in $H^1(F_v,G)$ is trivial for all $v \in \Omega_K$, since the injective map $\iota_{F_v}: H^1(F_v,G) \to H^3(F_v,\Z/2\Z)$ sends $\zeta_v$ to $\xi_v = 0$.  Thus $\zeta$ is a non-trivial element of $\Sha_K(F,G)$.

Alternatively, with $a,b,c$ as above, let $D$ be the quaternion algebra $(b,c)$ over $F$, and let $G = \SL_1(D)$.  The Brauer class of $D$ is $2$-torsion and lies in $H^2(F,\Z/2\Z)$, where it may be identified with the cup product $(b) \cup (c)$; here $(b), (c) \in F^\times/F^{\times 2} = H^1(F,\Z/2\Z)$.  For each $L/F$, we may consider the
map $H^1(L,\Z/2\Z) = L^\times/L^{\times 2} \to H^3(L,\Z/2\Z)$ given by $s \mapsto s \cup D_L$.
By a theorem of Merkurjev-Suslin (see \cite[Section~7.2, Th\'eor\`eme~8]{Serre:Bourb}), the kernel of this map consists of the square classes of elements of $\Nrd(D_L^\times) \supseteq L^{\times 2}$;
and so there is an induced injection $\alpha_L:H^1(L,G) = L^\times/\Nrd(D_L^\times) \to H^3(L,\Z/2\Z)$.
The class of $a \in F^\times$ is sent by $\alpha_F$ to the non-zero element $\xi$, and the class of its image $a_v \in F_v^\times$ is sent by the injective map $\alpha_v:H^1(F_v,G) \to H^3(F_v,\Z/2\Z)$ to $\xi_v=0$ for all $v \in \Omega_K$.  Hence the class of $a$ in $H^1(F,G)$ is a non-trivial element of $\Sha_K(F,G)$. 
\end{ex}
     
\appendix

\section{Specialization in dimensions one and two} \label{appendix}

Let $A$ be a regular local ring with fraction field $L$ and residue field $\ell$.  Let $G$ be a reductive group over $A$. In \cite[Th\'eor\`eme~0.2]{GilleTAMS}, under the hypothesis that $A$ is a discrete valuation ring whose residue characteristic is not $2$,
P.~Gille constructs a {\it specialization map} $G(L)/\R \to G(\ell)/\R$ on $\R$-equivalence classes.  This
is a group homomorphism such that the composition $G(A) \to G(L)/\R \to G(\ell)/\R$ agrees with the restriction map $G(A) \to G(\ell)$ composed with the canonical map $G(\ell) \to G(\ell)/\R$.
In this appendix, we show that such a specialization map may be defined for all regular local rings $A$ of dimension one or two, with no restriction on the residue characteristic.
We do this first in the case of dimension one (i.e., $A$ is a discrete valuation ring), in Theorem~\ref{dvr specialization}, and then we use that case to treat the case of dimension two, in
Theorem~\ref{main appendix}.  Before turning to this construction, we state some preliminary results concerning tori and anisotropic groups.

\subsection{Preliminaries on tori and anisotropy}

Below we consider a reductive group $G$ over a regular local ring $A$, and we relate the split tori in $G$ over $A$ to those over the fraction field and residue field of $A$.  We also discuss criteria for isotropy; i.e., for the existence of split tori.

\begin{prop} \label{split tori lift}
Let $G$ be a reductive group over a normal domain $A$.  Assume either that $A$ is complete with respect to an ideal $I$, or that $A$ is a Henselian local ring with maximal ideal $I$. 
\renewcommand{\theenumi}{\alph{enumi}}
\renewcommand{\labelenumi}{(\alph{enumi})}
\begin{enumerate}
\item \label{split lifting}
Every split torus in $G_{A/I}$ lifts to a split torus in $G$.
\item \label{max rank}
The maximal ranks of split tori in $G$ and $G_{A/I}$ agree.
\end{enumerate}
\end{prop}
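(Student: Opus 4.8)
The plan is to deduce both parts from the standard representability and conjugacy theory for tori in reductive group schemes over a local (or complete) base, as developed in SGA3. First I would recall the relevant input: for a reductive group scheme $G$ over a base $S$, the functor sending an $S$-scheme to the set of tori of a given rank in $G$ is representable by a smooth $S$-scheme (indeed the scheme of maximal tori $\mathrm{Tor}(G)$ is smooth affine over $S$; more generally for subtori of fixed rank one uses \cite[Exp.~XII]{SGA3.2}). Combined with the hypothesis — $A$ complete with respect to $I$, or $A$ Henselian local with maximal ideal $I$ — formal smoothness (Hensel's lemma in the Henselian case, successive approximation plus Grothendieck's existence/algebraization in the complete case) lets us lift an $A/I$-point of such a smooth scheme to an $A$-point. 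This already produces a subtorus $T \subseteq G$ over $A$ reducing to a given subtorus $\bar T \subseteq G_{A/I}$ of the same rank.

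For part~(\ref{split lifting}), the subtlety is that lifting a torus does not automatically lift the splitting. Here I would argue as follows. Starting from a split torus $\bar S \cong \mathbb{G}_{m,A/I}^r$ in $G_{A/I}$, first lift it to some subtorus $S \subseteq G$ over $A$ of rank $r$ by the representability-plus-formal-smoothness argument above. A torus over $A$ is classified (étale-locally, hence by its character sheaf) by a locally constant sheaf of free $\mathbb{Z}$-modules with a continuous $\pi_1(\Spec A)$-action; since $A$ is local (in the Henselian case) or $I$-adically complete with $A/I$ local, and since $A$ is a normal domain, the étale fundamental groups of $\Spec A$ and $\Spec(A/I)$ coincide (proper/Henselian base change, or \cite[Exp.~X]{SGA1}), so a torus over $A$ is split if and only if its reduction mod $I$ is split. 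As $\bar S = S_{A/I}$ is split by assumption, $S$ is split, giving~(\ref{split lifting}). Alternatively, and perhaps more cleanly, one may invoke the rigidity of tori: the group of characters $\Hom_{A\text{-gp}}(S,\mathbb{G}_m) \to \Hom_{A/I\text{-gp}}(\bar S,\mathbb{G}_m)$ is a bijection (rigidity of diagonalizable groups, \cite[Exp.~IX]{SGA3.2}), and an explicit $\mathbb{Z}$-basis of characters of $\bar S$ exhibiting the splitting lifts to characters of $S$ cutting out the splitting.

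For part~(\ref{max rank}), one inequality is immediate: a split torus $S \subseteq G$ over $A$ restricts to a split torus $S_{A/I} \subseteq G_{A/I}$ of the same rank, so the maximal split rank over $A$ is at most that over $A/I$. The reverse inequality is exactly part~(\ref{split lifting}): a maximal split torus in $G_{A/I}$ lifts to a split torus of the same rank over $A$, so the maximal split rank over $A$ is at least that over $A/I$. Hence equality. The main obstacle I anticipate is purely bookkeeping: making sure that the representing scheme one applies formal smoothness to is genuinely smooth over $A$ and parametrizes exactly subtori of the prescribed rank (rather than, say, closed subgroup schemes of a fixed type, which requires $G$ reductive and the fibers to be handled uniformly), and checking that the completeness hypothesis — as opposed to Henselianness — still permits the lifting via Grothendieck's algebraization theorem applied to the smooth affine parameter scheme. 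Once that infrastructure is cited correctly, both parts are short.
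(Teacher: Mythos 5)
Your proposal is correct in substance, but it takes a different route from the paper at the one point where a choice matters. The paper does not lift the torus abstractly and then worry about lifting the splitting: it fixes an isomorphism $\ov T \cong \Gm^n_{A/I}$, regards $\ov T$ as an $A/I$-point of the smooth separated $A$-scheme $Z$ representing $\underline{\Hom}_{A\text{-groups}}(\Gm^n,G)$ (\cite[Exp.~XI, Cor.~4.2]{SGA3.2}), and lifts that point by formal smoothness (complete case) or by factoring through an \'etale map to affine space (Henselian case). The lifted point is a homomorphism $\Gm^n_A \to G$, so the lifted torus comes equipped with its splitting; the ``subtlety'' you isolate simply does not arise. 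Your route instead lifts the subtorus and then invokes the equivalence of finite \'etale covers (equivalently, the agreement of character sheaves) for the Henselian pair $(A,I)$ to transfer splitness from $G_{A/I}$ back to $G$. That works, and your ``alternative'' via rigidity of characters is essentially a repackaging of the same fact, but it costs you an extra nontrivial input (the Henselian-pair invariance of the finite \'etale site) that the paper's argument avoids. Two small corrections to your write-up: first, $A/I$ is \emph{not} local in the complete case relevant here (e.g.\ $A=\wh R_W$ with $A/I$ the coordinate ring of an affine curve), so your parenthetical ``with $A/I$ local'' is both false in the intended applications and unnecessary --- the \'etale-cover equivalence holds for any Henselian pair, which covers both hypotheses; second, the scheme of \emph{maximal} tori is not the right parameter space, since a maximal split torus need not be a maximal torus, so you do need the Hom-scheme (or the functor of multiplicative-type subgroups), as you hedge. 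Your argument for part~(\ref{max rank}) --- restriction gives one inequality, part~(\ref{split lifting}) the other --- is exactly what the paper means by ``immediate consequence.''
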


\begin{proof}
For part~(\ref{split lifting}), let $\ov T$ be a split torus in $G_{A/I}$, say of rank $n$.
Since $G$ is smooth and since groups of the form $\Gm^n$ are of multiplicative type, \cite[Exp.~XI, Corollaire~4.2]{SGA3.2} applies, and asserts that the functor $\underline{\rm Hom}_
{A{\rm-groups}}(\Gm^n,G)$ is representable by a smooth separated $A$-scheme $Z$.  Thus 
$\ov T$ corresponds to an $A/I$-point $\ov\zeta$ of $Z$.

We claim that $\ov \zeta$ lifts to an $A$-point $\zeta$ of $Z$.  If $A$ is $I$-adically complete then this follows from formal smoothness.  If $A$ is a Henselian local ring with maximal ideal $I$,
then for some affine neighborhood $Y \subseteq Z$ of the closed point $\bar\zeta$, the structure morphism $Y \to \Spec(A)$ factors through an \'etale morphism $Y \to\A^N_A$ for some $N$, by smoothness; and so a lift $\zeta$ exists since $A$ is Henselian.

The above lift $\zeta$
corresponds to a lift of $\ov T$ to a split torus $T$ in $G$.  This proves part~(\ref{split lifting}), and then part~(\ref{max rank}) is an immediate consequence.
\end{proof}

Recall that a reductive group $G$ over a ring $A$ is {\it anisotropic} if it does not contain an isomorphic copy of $\G_{{\rm m},A}$.  For an affine group scheme $G$ over $A$ and an $A$-algebra $B$, we write $G_B$ for $G \times_A B$; this is an affine group scheme over $B$.

\begin{prop} \label{local anisotropy}
Let $G$ be a reductive group over a regular local ring
$A$ (of arbitrary dimension) with fraction field $L$ and residue field $\ell$.  
Consider the reductive groups $G_L$ over $L$ and $G_\ell$ over $\ell$.
\renewcommand{\theenumi}{\alph{enumi}}
\renewcommand{\labelenumi}{(\alph{enumi})}
\begin{enumerate}
\item \label{ell to L}
If $G_\ell$ is anisotropic, then $G_L$ is anisotropic.
\item \label{L to A}
If $G_L$ is anisotropic, then $G$ is anisotropic.
\item \label{Hens equiv}
If $A$ is Henselian, then these three conditions are equivalent: $G$ is anisotropic, $G_L$ is anisotropic, $G_\ell$ is anisotropic.
\end{enumerate}
\end{prop}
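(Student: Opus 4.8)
The plan is to prove the three parts in order, with parts~(\ref{ell to L}) and~(\ref{L to A}) feeding into part~(\ref{Hens equiv}). For part~(\ref{L to A}), the key observation is that a copy of $\G_{{\rm m},L}$ inside $G_L$ spreads out: a split torus $\G_{{\rm m},L} \hookrightarrow G_L$ is given by an $L$-point of the representing scheme $Z = \underline{\rm Hom}_{A\text{-groups}}(\G_{\rm m},G)$ from \cite[Exp.~XI, Corollaire~4.2]{SGA3.2} (applicable since $G$ is smooth and $\G_{\rm m}$ is of multiplicative type), which is separated over $A$. Since $A$ is a regular local ring, hence normal, and $Z$ is separated over $A$, one can take the scheme-theoretic closure of the image point and use valuative-type arguments (or rather: the composite $\Spec L \to Z$ extends, after possibly restricting, to an $A$-point because the relevant conditions — faithfulness of the homomorphism, i.e.\ that it is a closed immersion on the generic fiber — are open and the closure meets the closed fiber). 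More concretely, I would argue that if $G$ were anisotropic but $G_L$ were not, then the closure of the $L$-point in $Z$ would give an $A$-subgroup scheme that is generically $\G_{\rm m}$; by the structure theory of subgroups of multiplicative type (\cite[Exp.~IX]{SGA3.2}) and normality of $A$, this subgroup is a torus over $A$, and since its generic fiber is split of rank one, it is split over $A$ — contradicting anisotropy of $G$. The main obstacle here is making the spreading-out argument clean without extra flatness hypotheses; invoking normality of $A$ and properness/separatedness of the relevant moduli correctly is the delicate point.

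For part~(\ref{ell to L}), I would argue the contrapositive: suppose $G_L$ is isotropic, so $G_L$ contains a split torus $\G_{{\rm m},L}$. Again this corresponds to an $L$-point of $Z$; spreading out over a nonempty open $\Spec A'$ of $\Spec A$ containing the generic point, after shrinking we get a split torus over the semilocal regular ring $A'$. The issue is to descend/specialize this to the closed point. Here I would instead take the approach of first reducing to the case $\dim A = 1$, then to the Henselization or completion: by Proposition~\ref{split tori lift}(\ref{max rank}), over a complete (or Henselian) discrete valuation ring the maximal ranks of split tori in $G$ and in $G_\ell$ agree, and — by the argument of part~(\ref{L to A}) — also agree with... no; rather, the cleanest route is to invoke that for $G$ over a Henselian local ring $A$, by Proposition~\ref{split tori lift}, split tori in $G_\ell$ lift to $G$, and conversely a split torus in $G$ restricts both to $G_\ell$ and (by part~(\ref{L to A}), contrapositive) forces $G_L$ isotropic only if... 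I would organize it as: over the Henselization $A^{\rm h}$ (with the same residue field $\ell$ and fraction field $L^{\rm h}$), $G_\ell$ anisotropic $\Rightarrow$ (by Proposition~\ref{split tori lift}(\ref{max rank})) $G_{A^{\rm h}}$ anisotropic $\Rightarrow$ (by part~(\ref{L to A})) $G_{L^{\rm h}}$ anisotropic $\Rightarrow$ $G_L$ anisotropic, since anisotropy is insensitive to the separable-algebraic extension $L \hookrightarrow L^{\rm h}$ (a split torus over $L$ stays split over $L^{\rm h}$, and conversely a split $\G_{\rm m}$ over $L^{\rm h}$ descends by a standard Galois/colimit argument, as $L^{\rm h}$ is a filtered colimit of \'etale $L$-algebras and $Z$ is of finite presentation). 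The expected main obstacle is the last descent step and verifying that anisotropy descends along $L \to L^{\rm h}$; this is where I would need to be careful, but it follows because $Z$ is finitely presented over $A$ so an $L^{\rm h}$-point giving a faithful homomorphism already comes from an \'etale $L$-algebra, whose spectrum has an $L$-point after a further \'etale cover, and one then checks the torus obtained is still split (which it is, as a rank-one torus that is split after an \'etale base change and whose character lattice is already trivialized upstairs).

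Finally, part~(\ref{Hens equiv}) is then immediate by chaining implications: part~(\ref{L to A}) gives ``$G_L$ anisotropic $\Rightarrow$ $G$ anisotropic''; part~(\ref{ell to L}) gives ``$G_\ell$ anisotropic $\Rightarrow$ $G_L$ anisotropic''; and for the remaining implication ``$G$ anisotropic $\Rightarrow$ $G_\ell$ anisotropic'' (which needs the Henselian hypothesis), I would use Proposition~\ref{split tori lift}(\ref{split lifting}): if $G_\ell$ contained a split $\G_{{\rm m},\ell}$, it would lift to a split torus in $G$, contradicting anisotropy of $G$. Thus all three conditions are equivalent. Overall, the structural point I expect to spend the most effort on is the spreading-out/valuative argument underlying part~(\ref{L to A}) — everything else is formal manipulation of implications plus citations to Proposition~\ref{split tori lift} and \cite{SGA3.2}.
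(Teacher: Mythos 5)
There is a genuine gap, and it starts with a misreading of part~(b). As stated, part~(b) asserts that anisotropy of $G_L$ implies anisotropy of $G$ over $A$; its contrapositive — a closed immersion $\G_{{\rm m},A}\hookrightarrow G$ base changes to $\G_{{\rm m},L}\hookrightarrow G_L$ — is immediate, and that one line is the paper's entire proof of (b). What you set out to prove is the \emph{converse}: that anisotropy of $G$ over $A$ forces anisotropy of $G_L$. That converse is exactly the link your chain for part~(a) needs ($G_{A^{\rm h}}$ anisotropic $\Rightarrow G_{L^{\rm h}}$ anisotropic), but it is a much harder statement, and your argument for it does not close. The scheme $Z=\underline{\rm Hom}_{A\text{-groups}}(\G_{\rm m},G)$ is smooth and separated but not proper over $A$, so the scheme-theoretic closure of an $L$-point of $Z$ need not meet the closed fiber at all (already for $A$ a discrete valuation ring with uniformizer $t$ and $Z=\A^1_A$, the closure of the point $1/t$ is $\Spec A[1/t]$, disjoint from the closed fiber). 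Without the closure hitting the closed fiber there is no candidate $A$-subgroup scheme, so the ``delicate point'' you flag is precisely where the proof breaks. Worse, your remaining route is essentially circular: by Proposition~\ref{split tori lift}(b), for Henselian $A$ anisotropy of $G$ over $A$ is equivalent to anisotropy of $G_\ell$, so the implication ``$G_{A^{\rm h}}$ anisotropic $\Rightarrow G_{L^{\rm h}}$ anisotropic'' is equivalent to part~(a) for $A^{\rm h}$, which (since $\ell=\ell^{\rm h}$ and $L\subseteq L^{\rm h}$) is equivalent to the statement you are trying to prove.

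The paper proves part~(a) by induction on $\dim A$ through discrete valuation rings, which is the ingredient your plan is missing. Assume $G_L$ is isotropic; choose $x$ belonging to a regular system of parameters and let $\wh A_x$ be the completion of the localization of $A$ at the height one prime $(x)$. This is a complete discrete valuation ring whose fraction field contains $L$ and whose residue field is the fraction field of the regular local ring $A/xA$, of dimension one less. Isotropy over $L$ gives isotropy over ${\rm frac}(\wh A_x)$, hence over $\wh A_x$ itself by the cited lemma of Guo (for a reductive group over a discrete valuation ring, isotropy is detected on the fraction field), hence over the residue field of $\wh A_x$; induction then yields isotropy over $\ell$. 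With (a) in hand, (b) is the trivial base-change remark above, and your part~(c) — chaining (a) and (b) with Proposition~\ref{split tori lift}(a) — is exactly the paper's argument and is fine.
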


\begin{proof}
For~(\ref{ell to L}), assume that $G$ is isotropic over $L$.  Let
$x \in A$ be part of a regular system of parameters of $A$, and consider the completion 
$\wh A_x$ of the local ring of $A$ at the corresponding height one prime $\mathfrak p$.  Thus $\wh A_x$ is a discrete valuation ring, whose fraction field is the completion $L_x$ of $L$ with respect to the $\mathfrak p$-adic valuation. 
Since the group $G$ is isotropic over $L$, it is also isotropic over the overfield $L_x$; 
and since $G$ is reductive it is then also isotropic over the discrete valuation ring $\wh A_x$, by \cite[Section~3, Lemma~4]{Guo}.
Thus $G$ is isotropic over the residue field of $\wh A_x$, which is the fraction field of the regular local ring $A/xA$ of dimension one less than that of $A$. By induction on
the dimension of $A$, we conclude that $G$ is isotropic over $\ell$.

Part~(\ref{L to A}) is immediate, since if $G$ is isotropic over $A$, then it is trivially isotropic over $L$.

Finally, if $A$ is Henselian and $G$ is isotropic over $\ell$ then $G$ is also isotropic over $A$ by Proposition~\ref{split tori lift}(\ref{split lifting}).  So by parts~(\ref{ell to L}) and~(\ref{L to A}), part~(\ref{Hens equiv}) follows.
\end{proof}

\begin{prop}\label{BTR} Let $A$ be a discrete valuation ring with fraction field
$L$ and residue field $\ell$. Let $G$ be a reductive group over $A$. 
If $G_\ell$ is anisotropic then $G(A)=G(L)$; and if $G(A)=G(L)$ then $G$ and $G_L$ are anisotropic.
\end{prop}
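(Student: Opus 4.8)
The plan is to prove the two assertions of the lemma separately: the second (the ``isotropy'' direction) is elementary, while the first will be reduced to the case where $A$ is complete and then deduced from the theory of bounded subgroups.

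First I would handle the implication ``$G(A)=G(L)\Rightarrow G$ and $G_L$ anisotropic.'' By Proposition~\ref{local anisotropy}(\ref{L to A}) it suffices to show that if $G_L$ is isotropic then $G(A)\subsetneq G(L)$. So suppose $G_L$ contains a copy of $\Gm$; this is a closed subgroup of the affine group $G_L$, so an isomorphism from $\Gm$ onto it followed by the inclusion gives a closed immersion $\lambda\colon\Gm\to G_L$, and hence a surjection of coordinate rings $L[G]\to L[s,s^{-1}]$. Choose $f\in L[G]$ mapping to $s$. Since $L[G]=L\otimes_A A[G]$, there exist $N\ge 0$ and $g\in A[G]$ with $f=\pi^{-N}g$, where $\pi$ is a uniformizer of $A$; thus $g$ maps to $\pi^N s$ in $L[s,s^{-1}]$. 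For $u\in L^\times$ the point $\lambda(u)\in G(L)$ corresponds to the composite $A[G]\to L[s,s^{-1}]\to L$ sending $s\mapsto u$, so $g\bigl(\lambda(u)\bigr)=\pi^N u$. If $\lambda(u)$ lay in $G(A)$, then $g(\lambda(u))$ would lie in $A$, i.e.\ $v(u)\ge -N$; taking $u=\pi^{-N-1}$ then exhibits $\lambda(u)\in G(L)\smallsetminus G(A)$, as needed.

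Next I would treat the implication ``$G_\ell$ anisotropic $\Rightarrow G(A)=G(L)$.'' The first step is to reduce to the case that $A$ is complete: letting $\hat A$ be the completion of $A$, with fraction field $\hat L$ and residue field $\ell$, the group $G_{\hat A}$ is reductive with special fiber $G_\ell$, so the complete case gives $G(\hat A)=G(\hat L)$; since $G$ is affine and $L\cap\hat A=A$ inside $\hat L$, we get $G(A)=G(L)\cap G(\hat A)$ as subsets of $G(\hat L)$, and as $G(L)\subseteq G(\hat L)=G(\hat A)$ this yields $G(A)=G(L)$. In the complete case, $G_\ell$ anisotropic implies $G_L$ anisotropic by Proposition~\ref{local anisotropy}(\ref{ell to L}), hence $G(L)$ is bounded by the Bruhat--Tits--Rousseau theorem characterizing anisotropy of a reductive group over a complete discretely valued field in terms of boundedness of its group of rational points; moreover $G(A)$ is a maximal bounded (indeed hyperspecial) subgroup of $G(L)$, since $G$ extends to a reductive group scheme over $A$. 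As $G(A)\subseteq G(L)$, $G(L)$ is bounded, and $G(A)$ is maximal among bounded subgroups, it follows that $G(A)=G(L)$. (Alternatively, one may invoke the Cartan decomposition $G(L)=G(A)\,\Lambda^+\,G(A)$ relative to a maximal split torus of $G_L$: that torus being trivial, $\Lambda^+$ is trivial and the decomposition collapses to $G(L)=G(A)$.)

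The main obstacle is the complete case of the second implication: this is the only place where anything beyond elementary scheme theory and Proposition~\ref{local anisotropy} is used, and some care is needed both to cite boundedness of $G(L)$ for anisotropic $G_L$ in the generality of an arbitrary complete discretely valued field (rather than a local field) and to identify the hyperspecial parahoric subgroup attached to the chosen reductive model of $G$ with $G(A)$; everything else is formal.
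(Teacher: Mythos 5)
Your proof is correct, and for the substantive direction ($G_\ell$ anisotropic $\Rightarrow G(A)=G(L)$) it is exactly the paper's argument: reduce to the complete (Henselian) case via $G(A)=G(L)\cap G(\wh A)$ inside $G(\wh L)$, use Proposition~\ref{local anisotropy} to transfer anisotropy from $\ell$ to $L$, then combine the Bruhat--Tits--Rousseau characterization of anisotropy by boundedness of $G(L)$ with Maculan's theorem that $G(A)$ is a maximal bounded subgroup. These are precisely the references the paper invokes, and they do hold over an arbitrary complete discretely valued field, so the caveat you raise is already covered by the cited results. Where you genuinely diverge is in the converse direction: the paper first invokes a lemma of Guo to upgrade isotropy of $G_L$ to isotropy of $G$ over $A$, obtaining a closed immersion $\Gm\hookrightarrow G$ over $A$, and then uses $\Gm(A)=G(A)\cap\Gm(L)\subsetneq\Gm(L)$; you instead work only with the copy of $\Gm$ in $G_L$ and clear denominators in the coordinate ring to exhibit an explicit point of $G(L)\smallsetminus G(A)$. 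Your variant is sound (a point of $G(A)$ evaluates every element of $A[G]$ into $A$, so $\pi^N u\in A$ forces $v(u)\ge -N$) and has the merit of avoiding the extra citation, at the cost of being slightly less structural; both arguments are short and either would do.
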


\begin{proof}
For the first assertion, note that if $G_\ell$ is anisotropic then so is $G_L$, by Proposition~\ref{local anisotropy}.  In the case that $A$ is Henselian, the equality $G(A)=G(L)$ then essentially follows from a result of Bruhat-Tits-Rousseau (see \cite[pp.~197-198]{Pra}).  Namely, under the Henselian hypothesis
they showed that $G_L$ is anisotropic
if and only if $G(L)$ is bounded.  But by 
\cite[Theorem~1.1]{Maculan}, the subgroup $G(A)$ is a maximal bounded subgroup of $G(L)$; hence $G(A)=G(L)$.  (For another proof, see \cite[Section~3, Proposition~6]{Guo}.)

In the general case, where the discrete valuation ring $A$ is not assumed Henselian, let $\wh L$, $\wh A$ be the completions of $L$ and $A$ respectively.  Since $\ell$ is the residue field of $\wh A$, it follows that $G(\wh A) = G(\wh L)$ by the previous case; and so $G(A) = G(\wh A) \cap G(L) = G(\wh L) \cap G(L) = G(L)$.

For the second assertion, assume that $G$ is isotropic over $A$ or $L$. (For a discrete valuation ring, those conditions are equivalent by \cite[Section~3, Lemma~4]{Guo}.)
Then there is a closed immersion $\Gm \hookrightarrow G$ over $A$.  But $\Gm(A)$ is strictly contained in $\Gm(L)$; and $\Gm(A) = G(A) \cap \Gm(L)$ since $\Gm \subseteq G$ is a closed immersion.  So $G(A)$ is indeed strictly contained in~$G(L)$.
\end{proof}

Thus if $A$ is a Henselian discrete valuation ring 
with fraction field $L$ and $G$ is reductive over $A$, then the condition $G(A)=G(L)$ is equivalent to each of the three anisotropy conditions in Proposition~\ref{local anisotropy}(\ref{Hens equiv}).

\begin{prop} \label{H and S}
Let $G$ be a reductive group over a regular local ring $A$ with fraction field $L$.  Let $S \subseteq G$ be a maximal split torus in $G$, and let $H = C_G(S)$ be its centralizer.
\renewcommand{\theenumi}{\alph{enumi}}
\renewcommand{\labelenumi}{(\alph{enumi})}
\begin{enumerate}
\item \label{aniso quotient}
Then the group $H/S$ is anisotropic over $L$.
\item \label{almost anisotropic factorization}
If $A$ is a Henselian discrete valuation ring, then $H(L) = H(A)S(L)$.
\end{enumerate}
\end{prop}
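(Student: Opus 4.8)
For part~(\ref{aniso quotient}), the plan is to reduce to the classical structure theory over the field $L$. First I would record two facts about $H=C_G(S)$. On the one hand, $S$ is automatically a maximal split torus of $H$ as well, since every split torus of $H$ is in particular a split torus of $G$ and hence cannot strictly contain $S$. On the other hand, $S$ is central in $H$ (by definition $H$ centralizes $S$), so $H/S$ is again reductive over $A$, and it is anisotropic over $A$: if $\Gm\hookrightarrow H/S$ were a split subtorus, its preimage in $H$ under the quotient map would be an extension of $\Gm$ by the split torus $S$, which splits because $\Ext^1$ between split tori is trivial over any ring; this would produce a split torus of $H$ of rank strictly larger than $S$, contradicting the first point. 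Now the content of part~(\ref{aniso quotient}) is that $H/S$ stays anisotropic after base change to $L$; equivalently, that $S_L$ is still a maximal split torus of $G_L$. Granting this, one has $H_L=C_{G_L}(S_L)$ (formation of the centralizer of a torus commutes with base change), and the structure theory over the field $L$ (see \cite[\S~20]{Borel}) shows that the centralizer of a maximal split torus is reductive with that torus as its unique maximal split torus; hence $(H/S)_L=H_L/S_L$ is anisotropic, as desired.

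So the only point requiring work in part~(\ref{aniso quotient}) is the maximality of $S_L$ in $G_L$, or equivalently the descent of anisotropy of $H/S$ from $A$ to $L$. When $A$ is a field this is vacuous, and when $A$ is Henselian it is immediate from Proposition~\ref{split tori lift}(\ref{max rank}) (the maximal rank of split tori in $G$ and in $G_\ell$ coincide, and $S$ realizes it over $A$, so $S_\ell$ realizes it over $G_\ell$). For a general regular local $A$ I would argue by dévissage exactly as in the proof of Proposition~\ref{local anisotropy}: choosing $x$ in a regular system of parameters of $A$ and completing at the height-one prime $(x)$ reduces, through the discrete-valuation-ring case — where one invokes the lemma of Guo cited in the proofs of Propositions~\ref{local anisotropy} and~\ref{BTR} — to a regular local ring of smaller dimension, and one concludes by induction on $\dim A$. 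I expect this general-base version of the maximality statement to be the main obstacle; in the two situations where the proposition is actually applied ($A$ a field and $A$ a complete discrete valuation ring) it is elementary.

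For part~(\ref{almost anisotropic factorization}), where $A$ is a Henselian discrete valuation ring with residue field $\ell$, the plan is a short cohomological chase. From the central extension $1\to S\to H\to H/S\to 1$ of reductive $A$-groups, together with the vanishing of $H^1$ of the split torus $S$ over the local ring $A$ (trivial Picard group) and over the field $L$ (Hilbert~90), one obtains exact sequences $1\to S(A)\to H(A)\to(H/S)(A)\to 1$ and $1\to S(L)\to H(L)\to(H/S)(L)\to 1$. Since $S$ is central in $H$, the subset $S(L)H(A)$ of $H(L)$ coincides with $H(A)S(L)$, and chasing the diagram shows that the identity $H(L)=H(A)S(L)$ is equivalent to the surjectivity of $(H/S)(A)\to(H/S)(L)$, that is, to $(H/S)(A)=(H/S)(L)$. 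Finally, $H/S$ is a reductive group over the Henselian discrete valuation ring $A$ which is anisotropic over $\ell$: indeed $S_\ell$ is a maximal split torus of $G_\ell$ by Proposition~\ref{split tori lift}(\ref{max rank}), so $(H/S)_\ell=C_{G_\ell}(S_\ell)/S_\ell$ is anisotropic by the structure theory over $\ell$ used in part~(\ref{aniso quotient}). Hence Proposition~\ref{BTR} gives $(H/S)(A)=(H/S)(L)$, which completes the proof. Thus part~(\ref{almost anisotropic factorization}) is a formal consequence of part~(\ref{aniso quotient}), Proposition~\ref{BTR}, and the vanishing of the relevant $H^1$'s, the only real input being the anisotropy statement, which is the same as in part~(\ref{aniso quotient}).
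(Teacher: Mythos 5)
Your proof of part~(\ref{almost anisotropic factorization}) is the paper's proof almost verbatim: the central extension $1\to S\to H\to H/S\to 1$, the vanishing of $H^1(A,S)$ and $H^1(L,S)$, and Proposition~\ref{BTR} applied to $H/S$, which is anisotropic over the residue field. For part~(\ref{aniso quotient}) you use exactly the paper's key lemma — a central extension of a split torus by a split torus is split, because such an extension is of multiplicative type and the relevant $\Ext^1(\Z^r,\Z^s)$ of constant \'etale sheaves vanishes — but you organize the argument differently: the paper runs the extension argument over $L$ and concludes by contradiction with ``maximality,'' tacitly using that $S_L$ is still a maximal split torus of $G_L$, whereas you first deduce anisotropy of $H/S$ over $A$ and then isolate the passage from $A$ to $L$ as a separate step. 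That isolation is the right instinct: the hypothesis only gives maximality of $S$ over $A$, and your treatment of the descent is complete when $A$ is a field or a Henselian local ring (via Propositions~\ref{split tori lift} and~\ref{local anisotropy}), which covers every application of this proposition in the paper; your d\'evissage for a general regular local $A$ remains a sketch, since it would require a higher-rank analogue of the rank-one isotropy lemma of Guo cited in Proposition~\ref{local anisotropy}, and even then one needs to control the split rank of $G_\ell$ without a Henselian hypothesis. Since the paper's own one-line argument for part~(\ref{aniso quotient}) does not address this point at all, your version is, if anything, the more careful of the two.
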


\begin{proof}
Part~(\ref{aniso quotient}) will follow from showing that a central extension of a split torus by a split torus, in the category of groups, is itself a split torus.  But any such extension is in fact a group of multiplicative type, by \cite[
Exp.~XVII, Proposition~7.1.1]{SGA3.2}; and hence it is also an extension of split tori in the category of groups of multiplicative type.  
Such extensions are classified by $\Ext^1(\Z^r,\Z^s)$; 
i.e., extensions of locally
constant sheaves in the \'etale topology on $\Spec(A)$ (where $r,s$ are the ranks of the two split tori).  But this group is trivial, thereby yielding the assertion in part~(\ref{aniso quotient}).

For part~(\ref{almost anisotropic factorization}), observe first that $H/S$ is anisotropic over $L$ by part~(\ref{aniso quotient}), and hence also over the residue field, by Proposition~\ref{local anisotropy}(\ref{Hens equiv}).  Thus $(H/S)(L)=(H/S)(A)$ by Proposition~\ref{BTR}.  Since $S$ is split and $A$ is local, it follows that 
$H^1(A, S) = 0$; and so the map $H(A) \to (H/S)(A)$ is surjective.  Thus for every $h \in H(L)$ there is some $h' \in H(A)$ whose image in $(H/S)(A)=(H/S)(L)$ agrees with that of $h$.
That is, $h = h's$ for some $s \in S(L)$, as desired.
\end{proof}

We note that in the statement of Proposition~\ref{H and S}, the centralizer is taken in a functorial sense, in the context of presheaves on the category of schemes (see \cite[Exp.~XXVI, Section~6]{SGA3.3}).  As a result, the formation of centralizers commutes with base change.  In particular, in the situation of the proposition, if $B=A/\frak p$ for some prime ideal $\frak p$, then $H_B$ is the centralizer of $S_B$ in $G_B$.

\subsection{Specialization in dimension one} \label{spcl dim 1}

Before turning specifically to discrete valuation rings, we 
begin with a general lemma.  This lemma applies in particular to complete regular local rings, since such rings are excellent by \cite[Chap~IV, Scholie~7.8.3(iii)]{EGA4b}.

\begin{lem} \label{context map}
Let $X,Y$ be schemes of finite type over an excellent regular local ring $A$, with $X$ regular and connected of dimension two,
and with $Y$ proper over $A$.  Let $g:X \dashrightarrow Y$ be a rational map of $A$-schemes.  Then there is a morphism $\pi:\til X \to X$ obtained by a sequence of blow-ups at closed points, together with a morphism $\til g:\til X \to Y$, such that $g \circ \pi = \til g$ as rational maps.
\end{lem}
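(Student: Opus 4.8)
The plan is to replace $g$ by the closure of its graph, which is a modification of $X$, and then to dominate that modification by a sequence of blow-ups at closed points, using resolution of singularities for excellent surfaces. First I would fix the largest open $U \subseteq X$ on which $g$ is defined, and write $g|_U : U \to Y$ for the corresponding morphism. Since $X$ is connected and regular it is integral (regular implies reduced and normal, so connected implies irreducible), hence $U$ is dense. Let $\Gamma \subseteq X \times_A Y$ be the scheme-theoretic closure of the graph of $g|_U$, with projections $p : \Gamma \to X$ and $q : \Gamma \to Y$. The projection $X \times_A Y \to X$ is proper, being the base change of $Y \to \Spec A$; hence its restriction $p$ to the closed subscheme $\Gamma$ is proper. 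Moreover $p$ restricts to an isomorphism over $U$, with inverse the graph of $g|_U$, so $p : \Gamma \to X$ is a modification. Since $X$ is normal and $Y$ is separated, the valuative criterion applied to the discrete valuation rings $\mc O_{X,\xi}$ at the codimension-one points $\xi$ of $X$ shows that each such $\xi$ lies in $U$; thus $X \smallsetminus U$ is a finite set of closed points, and the indeterminacy of $g$ is concentrated at points.

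The key step is to invoke the structure theory of modifications of regular two-dimensional excellent schemes. Since $A$ is an excellent regular local ring (excellent local rings being closed under finite type) and $X$ is of finite type over $A$, the scheme $X$ is excellent, Noetherian, regular, and of dimension two. By resolution of singularities for excellent surfaces together with the factorization of proper birational morphisms between regular surfaces (Lipman \cite{Lip75}, \cite{Lip78}; see also the chapter on resolution of surfaces in \cite{stacks}), the modification $p : \Gamma \to X$ is dominated by a finite sequence of blow-ups at closed points: there is a composition $\pi : \til X = X_n \to X_{n-1} \to \cdots \to X_0 = X$, each morphism being the blow-up of a closed point, together with a morphism $h : \til X \to \Gamma$ with $p \circ h = \pi$.

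Finally I would set $\til g := q \circ h : \til X \to Y$. To verify that $\til g = g \circ \pi$ as rational maps, restrict to the dense open $V := \pi^{-1}(U) \subseteq \til X$. On $V$ the morphism $h$ has image in $\Gamma_U := p^{-1}(U)$, since $p \circ h|_V = \pi|_V$ factors through $U$; under the isomorphism $p|_{\Gamma_U} : \Gamma_U \iso U$ the morphism $q|_{\Gamma_U}$ corresponds to $g|_U$, so $\til g|_V = q \circ h|_V = g|_U \circ \pi|_V = (g \circ \pi)|_V$. Since $V$ is dense in the integral scheme $\til X$, this proves $\til g = g \circ \pi$ as rational maps, as required.

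The main obstacle is the input used in the second paragraph: one needs the version for excellent surfaces (not merely surfaces over a field) of the statement that a modification of a regular two-dimensional scheme is dominated by a sequence of blow-ups at closed points. This follows from Lipman's resolution of singularities for excellent surfaces — first resolving $\Gamma$ to a regular surface and then factoring the resulting proper birational morphism of regular surfaces — and since these inputs hold with no restriction on residue characteristic, the lemma holds in the stated generality. The only point requiring care is to confirm that the factorization statement one cites is stated in exactly this form over excellent bases; everything else in the argument is formal.
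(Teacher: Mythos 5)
Your proposal is correct and follows essentially the same route as the paper: take the closure of the graph of $g$ in $X\times_A Y$, resolve its singularities using excellence (Lipman / the Stacks project), and observe that the resulting proper birational morphism from a regular two-dimensional scheme to $X$ factors as a sequence of blow-ups at closed points, with $\til g$ the composite of the resolution with the second projection. The remarks about the indeterminacy locus being a finite set of closed points and the explicit check of $\til g = g\circ\pi$ on a dense open are harmless additions; the paper's proof is the same argument stated more compactly.
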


\begin{proof}
Since $A$ is excellent, so are $X$ and $Y$, by \cite[Chap~IV, Proposition~7.8.6(i)]{EGA4b}.  Let $\til X'$ be the closure of the graph   of $g$ in $X \times_{A} Y$, with projection maps $\alpha:\til X' \to X$ and $\gamma:\til X' \to Y$.  Thus $g \circ \alpha = \gamma$ as rational maps; and $\alpha$ is proper, since $Y \to \Spec(A)$ is.  
By excellence, 
\cite[\href{https://stacks.math.columbia.edu/tag/0ADX}{Section 0ADX}]{stacks} and  \cite[\href{https://stacks.math.columbia.edu/tag/0BGP}{Theorem 0BGP}]{stacks} apply, and assert that there exists 
a resolution $\beta:\til X \to \til X'$ of the singularities of $\til X'$ (which is in particular proper).  Thus $\pi=\alpha \circ \beta:\til X \to X$ is a proper birational morphism; and so by 
\cite[\href{https://stacks.math.columbia.edu/tag/0C5R}{Lemma 0C5R}]{stacks} it 
is a composition of blowups at closed points. 
The composition $\til g := \gamma \circ \beta$ is a morphism $\til X \to Y$, and $g \circ \pi = 
g \circ \alpha \circ \beta = \gamma \circ \beta = \til g$ as rational maps.
\end{proof}

In the above situation, we view the residue field $\ell$ of $A$ as an $A$-algebra, and so we write $X(\ell)$ for $\Hom_A(\Spec(\ell),X)$.  Note that every $\ell$-point of $X$ lies over the closed point of $\Spec(A)$; i.e., $X(\ell)$ is contained in the closed fiber $X_\ell$ of $X \to \Spec(A)$. Thus $X(\ell)=X_\ell(\ell)$, and 
we write $X(\ell)/\R$ to mean $X_\ell(\ell)/\R$.

\begin{lem} \label{R trivial fibers}
Let $A$ be an excellent regular local ring with residue field $\ell$,
and let $X$ be a regular connected two-dimensional scheme of finite type over $A$. 
If $\til X \to X$ is a
sequence of blow-ups at closed points, then  the natural map  $\til X
(\ell)/\R \to X(\ell)/\R$ is a bijection.
\end{lem}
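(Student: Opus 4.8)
The plan is to reduce to the case of a single blow-up at a closed point $Q \in X$ and argue by induction on the number of blow-ups. So suppose $\pi: \til X \to X$ is the blow-up at a single closed point $Q$, with exceptional divisor $E \cong \P^1_{\kappa(Q)}$. Since $\pi$ is an isomorphism away from $Q$, the map $\til X(\ell) \to X(\ell)$ is surjective: an $\ell$-point $P$ of $X$ with $P \ne Q$ has a unique preimage, and an $\ell$-point landing at $Q$ (so that $\ell$ contains $\kappa(Q)$) lifts to some $\ell$-point of $E \subset \til X$ over it. Hence surjectivity of $\til X(\ell)/\R \to X(\ell)/\R$ is clear, and the content is injectivity.

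For injectivity, I would show that any two $\ell$-points of $\til X$ lying over the same $\ell$-point $P \in X(\ell)$ are $\R$-equivalent in $\til X(\ell)$; combined with the functoriality of $\R$-equivalence under the morphism $\pi$, this gives that the fibers of $\til X(\ell)/\R \to X(\ell)/\R$ are singletons. If $P \ne Q$ the fiber is already a single point. If $P = Q$ (so $\kappa(Q) \subseteq \ell$), then the preimage of $P$ in $\til X(\ell)$ consists of $\ell$-points of the exceptional curve $E_\ell \cong \P^1_\ell$. Any two $\ell$-points of $\P^1_\ell$ are directly $\R$-equivalent inside $\P^1_\ell$ itself via the identity parametrization $\A^1_\ell \hookrightarrow \P^1_\ell$ together with a linear change of coordinates; and since $E_\ell \subseteq \til X_\ell \subseteq \til X$, these points remain $\R$-equivalent in $\til X(\ell)$. (Here I use that $\R$-equivalence on $\til X(\ell)$ is defined using rational maps $\A^1_\ell \dashrightarrow \til X$, and such a rational map with source $\P^1$-chart factoring through $E_\ell$ certainly qualifies.) This settles the single blow-up case.

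For the general case, write $\til X \to X$ as a composition $\til X = X_n \to X_{n-1} \to \cdots \to X_0 = X$ of blow-ups at closed points. Each $X_{i+1} \to X_i$ satisfies the hypotheses (the $X_i$ are regular, connected, two-dimensional, and of finite type over the excellent regular local ring $A$, the latter by standard properties of blow-ups), so by the single-blow-up case the map $X_{i+1}(\ell)/\R \to X_i(\ell)/\R$ is a bijection; composing, $\til X(\ell)/\R \to X(\ell)/\R$ is a bijection as well.

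\textbf{Main obstacle.} The delicate point is the injectivity step, specifically making precise that two $\ell$-points of $\til X$ lying over a common point $P$ of $X$ must be $\R$-equivalent in $\til X(\ell)$ — and in particular checking this is an honest equivalence of $\ell$-points of the $A$-scheme $\til X$, not merely of its closed fiber. The key facts needed are that $\pi$ is an isomorphism over $X \smallsetminus \{Q\}$, that $E_\ell \cong \P^1_\ell$ with its points pairwise $\R$-equivalent, and that $\R$-equivalence only uses rational maps from the affine line, so a connecting path supported on $E_\ell$ suffices. I do not expect any subtlety beyond carefully tracking which scheme the rational maps are valued in; the geometry is entirely elementary once the reduction to a single blow-up is in place.
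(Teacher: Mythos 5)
Your reduction to a single blow-up and your surjectivity argument are fine, but the injectivity step has a genuine gap: you only treat the case where the two points $Q,Q'\in \til X(\ell)$ have \emph{equal} images in $X(\ell)$. Injectivity of $\til X(\ell)/\R \to X(\ell)/\R$ requires more: if $\pi(Q)$ and $\pi(Q')$ are merely $\R$-\emph{equivalent}, and possibly distinct, in $X(\ell)$, you must produce an $\R$-equivalence between $Q$ and $Q'$ upstairs. Knowing that each fiber of $\til X(\ell)\to X(\ell)$ collapses to a single $\R$-class, together with functoriality (which only says the map on classes is well defined), does not yield this; you need to \emph{lift} $\R$-equivalences from $X$ to $\til X$, and that is where the real content of the lemma lies. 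You have in fact flagged the wrong step as the delicate one.

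The missing argument runs as follows (and is how the paper proceeds). After reducing, via a chain of directly $\R$-equivalent points and the surjectivity of $\til X(\ell)\to X(\ell)$, to a single direct $\R$-equivalence, one has a morphism $U\to X$ from an open subset $U\subseteq \P^1_\ell$ whose image contains $\pi(Q)$ and $\pi(Q')$. Since $\pi\colon\til X\to X$ is proper and $U$ is a regular one-dimensional scheme, the valuative criterion for properness extends the rational map $U\dashrightarrow\til X$ to a morphism $U\to\til X$ lifting the given one. If neither endpoint maps to the blown-up point, the unique preimages $Q,Q'$ lie on this lifted curve and are directly $\R$-equivalent. If one endpoint maps to the blown-up point, the lifted curve passes through some point $Q''$ of the exceptional divisor that may differ from $Q'$; one then connects $Q''$ to $Q'$ inside $E(\ell)\cong\P^1(\ell)$ — this is the only place your ``points over a common image are $\R$-equivalent'' observation is actually needed. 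Without the lifting step via properness, the proof does not establish injectivity.
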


\begin{proof}
Let $X_\ell$ be the fiber of $X$ over the closed point of $\Spec(A)$.  If $X_\ell$ is empty then the result is trivial; so we assume that $X_\ell$ is non-empty.

Note that any two $\R$-equivalent points of $\til X(\ell)$ map to $\R$-equivalent points of $X(\ell)$, and so there is a well defined map on $\R$-equivalence classes.  To prove that this map is bijective, by induction we are reduced 
to the case that $\til X \to X$ is a blow-up at a single closed point $P$ of $X$. If $P$ has residue field larger than $\ell$, then $\til X(\ell) = X(\ell)$ and there is nothing to prove.  In the
case that $P$ is an $\ell$-point, the exceptional divisor $E$ is isomorphic to $\P^1_{\ell}$; thus $\til X(\ell) \to X(\ell)$ is surjective, and hence so is $\til X(\ell)/\R \to X(\ell)/\R$.  

For injectivity, suppose that distinct points $Q,Q' \in \til X(\ell)$ have $\R$-equivalent images in $X(\ell)$ under $\pi:\til X \to X$.  If $\pi(Q)=\pi(Q') \in X(\ell)$, then that common image is $P$, and the points $Q,Q'$ both lie in $E(\ell) \cong \P^1(\ell)$ and hence are $\R$-equivalent.  So we may assume that $\pi(Q),\pi(Q') \in X(\ell)$ are distinct $\R$-equivalent points.  Thus there is
a chain of $\ell$-points $P_0=\pi(Q)$, $\dots$, $P_n=\pi(Q')$ of $X$,
with $\ell$-morphisms $f_i: U_i \to X_\ell$ directly linking $P_i$ to $P_{i+1}$, where each $U_i \subset \P^1_{\ell}$ is open.
Here each $P_i$ is of the form $\pi(Q_i)$ for some point $Q_i \in \til X(\ell)$, since $\til X(\ell) \to X(\ell)$ is surjective.
We are thus reduced to the case of two points $Q,Q' \in \til X(\ell)$ whose images in $X(\ell)$ are distinct and directly $\R$-equivalent.  In this situation there is a  single $\ell$-morphism $U \to X_\ell$
from an open subset $U \subseteq \P^1_{\ell}$ to $X_\ell$, having $\pi(Q)$ and $\pi(Q')$ in its image.  

By the valuative criterion for properness, there is a lift of the morphism $U \to X_\ell$ to a morphism $U \to \til X_\ell$.  If $\pi(Q),\pi(Q') \in X$ are each unequal to the blown-up point $P$, then over each of these two points there is a unique point of $\til X$; viz., $Q$ and $Q'$ respectively.  Thus $Q$ and $Q'$ lie in the image of $U \to \til X_\ell$ and so 
are $\R$-equivalent.  On the other hand, if one of these two points of $X$, say $\pi(Q')$, is the blown-up point $P$, then the image of $U \to \til X_\ell$ must contain $Q$ and an $\ell$-point $Q''$ of the exceptional divisor $E \cong \P^1_\ell$ (namely, the point of the image that lies over $\pi(Q')$).  Thus $Q,Q'' \in \til X_\ell(\ell)$ are $\R$-equivalent.  Also,
$Q',Q'' \in E(\ell) \cong \P^1(\ell) \subset \til X_\ell(\ell)$ are directly $\R$-equivalent.  So $Q,Q' \in \til X_\ell(\ell)$  are $\R$-equivalent.
\end{proof}

Given a two-dimensional regular local ring $\mc O$, by a {\em regular} height one prime $\wp$ in $\mc O$ we mean a prime of the form $(\pi)$, where $\pi$ is part of a regular system of parameters $\{\pi,\delta\}$ for $\mc O$ at its maximal ideal $\mathfrak m_{\mc O}$.  Equivalently, $\wp$ has the property that the one-dimensional local ring $\mc O/\wp$ is regular; i.e., a discrete valuation ring.  

If $A$ is a local ring with residue field $\ell$ and $G$ is a group scheme over $A$,
then for any $g \in G(A)$ we write $\ov g \in G(\ell)$ for the image of $g$ under the reduction map $G(A) \to G(\ell)$.

\begin{prop} \label{R special}
Let  $A$ be a 
discrete valuation  ring with fraction field $L$ and residue field $\ell$.
Let $G$ be a reductive group
over $A$ such that $G_\ell$ is anisotropic.
If   $g_0, g_1 \in G(A)$ are $\R$-equivalent as elements of $G(L)$, then their images $\ov g_0, \ov g_1 \in G(\ell)$ are also $\R$-equivalent.
\end{prop}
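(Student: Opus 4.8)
The plan is to reduce to the case of a direct $\R$-equivalence and then spread it out over a model, transferring it from the generic fibre to the closed fibre by means of the surface-theoretic inputs in Lemmas~\ref{context map} and~\ref{R trivial fibers}.

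First I would reduce to the case that $A$ is complete: passing to the completion $\wh A$ does not change the residue field $\ell$, so $G_\ell$ remains anisotropic, and since $G(A)=G(L)=G(\wh L)=G(\wh A)$ by Proposition~\ref{BTR}, the elements $g_0,g_1$ stay $\R$-equivalent over $\wh L$; moreover a complete discrete valuation ring is excellent, so Lemma~\ref{context map} is available. Next, again by Proposition~\ref{BTR} the anisotropy of $G_\ell$ gives $G(A)=G(L)$, so any chain of direct $\R$-equivalences in $G(L)$ joining $g_0$ to $g_1$ runs entirely through $G(A)$; replacing $(g_0,g_1)$ by two consecutive terms of such a chain, I may assume $g_0$ and $g_1$ are \emph{directly} $\R$-equivalent, say $f\colon\A^1_L\dashrightarrow G_L$ is a rational map defined at the $L$-points $0$ and $1$ with $f(0)=g_0$, $f(1)=g_1$. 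If $f$ is constant then $g_0=g_1$ and there is nothing to prove, so I assume $f$ is non-constant.

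Choose a closed immersion $G\hookrightarrow\A^N_A$ and let $Y$ be the closure of $G$ in $\mathbb{P}^N_A$; then $Y$ is proper over $A$ and $G$ is a dense open subscheme of $Y$ (namely the complement of the intersection of $Y$ with the hyperplane at infinity). Viewing $\mathbb{P}^1_L$ as the generic fibre of $X:=\mathbb{P}^1_A$, the map $f$ extends to a rational map $g\colon X\dashrightarrow Y$. Since $X$ is regular (hence normal), connected and two-dimensional, $A$ is excellent, and $Y$ is proper over $A$, the valuative criterion of properness shows that $g$ is defined at every codimension-one point of $X$; hence its indeterminacy locus is a finite set of closed points, all lying in the closed fibre of $X$ (a closed point of $\mathbb{P}^1_L$ is not closed in $\mathbb{P}^1_A$). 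By Lemma~\ref{context map} there is a morphism $\pi\colon\til X\to X$, a composition of blow-ups at closed points, and a morphism $\til g\colon\til X\to Y$ with $\til g=g\circ\pi$. All the blown-up points lie in the closed fibre, so $\pi$ is an isomorphism over the generic fibre $X_L$; hence the $A$-sections $\sigma_0,\sigma_1\colon\Spec A\to X$ corresponding to $0,1\in\mathbb{P}^1(A)$ lift over $\Spec L$ to $\til X$ and then, by the valuative criterion of properness for $\pi$, to $A$-points $\til\sigma_0,\til\sigma_1\colon\Spec A\to\til X$. Now $\til g\circ\til\sigma_i\colon\Spec A\to Y$ agrees on the generic point with the $L$-point $g_i\in G(L)\subseteq Y(L)$; since $Y$ is separated and $\Spec A$ is reduced with dense generic point, $\til g\circ\til\sigma_i$ must equal the $A$-point of $Y$ determined by $g_i\in G(A)$ — this is where $G(A)=G(L)$ is used. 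In particular $\til\sigma_i$ factors through the open subscheme $\til g^{-1}(G)\subseteq\til X$, and the reduction $\til P_i\in\til X(\ell)$ of $\til\sigma_i$ satisfies $\til g(\til P_i)=\bar g_i\in G(\ell)$.

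Since $X(\ell)=\mathbb{P}^1(\ell)$ is a single $\R$-equivalence class, Lemma~\ref{R trivial fibers} gives that $\til X(\ell)$ is also a single class, so $\til P_0$ and $\til P_1$ are joined by a chain of direct $\R$-equivalences in $\til X(\ell)$, each carried by an irreducible component of the closed fibre of $\til X$. Applying $\til g$ to such a chain and restricting each connecting rational map along the open inclusion $G\hookrightarrow Y$ turns any link whose two endpoints lie in $G(\ell)$ into a direct $\R$-equivalence inside $G(\ell)$: indeed, the preimage under a non-constant map to $Y$ of the dense open subscheme $G$ is a dense open of $\A^1_\ell$, and it contains the two distinguished $\ell$-points because they map to points of $G(\ell)$. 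This would give $\bar g_0\sim_{\R}\bar g_1$ in $G(\ell)$ at once if the chain joining $\til P_0$ to $\til P_1$ could be chosen so that every intermediate point still maps under $\til g$ into $G(\ell)$. I expect this last point to be the main obstacle: one must arrange — by choosing the resolution $\pi$ carefully and exploiting the connectedness of the closed fibre of $\til X$ together with the fact that the components through $\til P_0$ and $\til P_1$ are not mapped by $\til g$ into the boundary $Y\smallsetminus G$ — that the chain avoids the components of the closed fibre that $\til g$ carries into $Y\smallsetminus G$. Once that is done, everything else is formal given Lemmas~\ref{context map} and~\ref{R trivial fibers} and Proposition~\ref{BTR}.
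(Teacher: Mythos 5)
Your setup matches the paper's proof almost exactly: reduce to $A$ complete, replace the $\R$-equivalence by a direct one (your route via $G(A)=G(L)$ and chains is a fine alternative to citing that $\R$-equivalence on a linear algebraic group over a field is always direct), spread the rational map out over a two-dimensional $A$-scheme, resolve its indeterminacy with Lemma~\ref{context map}, and use Lemma~\ref{R trivial fibers} to connect the reductions $P_0,P_1$ of the two sections inside $\til X(\ell)$. However, the step you flag as ``the main obstacle'' is precisely the one genuinely nontrivial point of the proposition, and it is left open: your suggested remedy (choosing the resolution carefully, or a connectedness argument routing the chain around components that $\til g$ sends into the boundary) is not filled in and is not how the argument closes. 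In fact there is nothing to route around.

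The paper's resolution --- and the place where the anisotropy of $G_\ell$ is actually used beyond giving $G(A)=G(L)$ --- is to prove that $\til g(P)$ lies in $G(\ell)$ for \emph{every} $P \in \til X(\ell)$. Concretely: given $P$, choose a regular height one prime $\wp$ of $\mc O := \mc O_{\til X,P}$ whose corresponding point of $\Spec(\mc O)$ lies in the dense open set $\til g^{-1}(G)$ (for a regular system of parameters $x,y$, the primes $(x+y^n)$ work for all but finitely many $n$). Let $B$ be the completion of the discrete valuation ring $\mc O/\wp$, with fraction field $E$; its residue field is $\ell$. Then $\til g(\wp)\in G(E)$, and since $G_\ell$ is anisotropic, Proposition~\ref{BTR} gives $G(E)=G(B)$, so the rational map $\Spec(B)\dashrightarrow G$ extends to a morphism $\Spec(B)\to G$ agreeing with $\til g\circ(\Spec(B)\to\til X)$; its closed point maps to $P$ on one side and into $G(\ell)$ on the other, whence $\til g(P)\in G(\ell)$. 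With that established, your final paragraph goes through unchanged: every link of the chain in $\til X(\ell)$ has all its distinguished $\ell$-points mapping into the open set $G$, so the generic point of each connecting rational curve lies in $\til g^{-1}(G)$ and each link pushes forward to a direct $\R$-equivalence in $G(\ell)$, giving $\ov g_0\sim_\R\ov g_1$.
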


\begin{proof}
Let $\wh A$ be the completion of $A$, with fraction field $\wh L$.  Thus $A, \wh A$ both have residue field~$\ell$, and the reduction map $G(A) \to G(\ell)$ factors through $\wh A$.  The 
inclusion $G(A) \hookrightarrow G(\wh A)$ induces an inclusion $G(L) \hookrightarrow G(\wh L)$
that takes $\R$-equivalent points to $\R$-equivalent points.  
Hence it suffices to prove the result with $A$ replaced by $\wh A$.  We are therefore reduced to the case that $A$ is complete.  

As $G_L$ is a linear algebraic group over the field $L$, $\R$-equivalent points $g_0,g_1 \in G(A) \subseteq G(L)$ are directly 
$\R$-equivalent over $L$, via some rational map $g_L: \A^1_L \dashrightarrow G_L$ taking $0_L \in \A^1(L)$ to $g_0$ and $1_L \in \A^1(L)$ to $g_1$ (see \cite[Lemme~II.1.1(b)]{GilleIHES}).
We can also view $g_L$ as a rational map $\A^1_A \dashrightarrow G$.  Let $\til G$ be the closure of $G \subseteq \GL_{n,A}$ in $\P^{n^2}_A$, and let $\pi:\til X \to X := \A^1_A$ and $\til g: \til X \to \til G$ be as in Lemma ~\ref{context map}, taking $Y = \til G$ there (that lemma applies because $A$ is complete). 
Thus $\til X \to X$ is a succession of blow-ups at closed points, and the morphism $\til g:\til X \to \til G$ lifts the rational map $g_L:X \dashrightarrow G$.
Hence $\til g$ and $g_L \circ \pi$ agree as rational maps.  In particular, for $i=0,1$, we have $\til g(i_L) = g_i \in G(L)$.

Since $\til X \to X$ is an isomorphism away from the closed fiber, we may identify $\til X_L$ with $X_L = \A^1_L$.
Let $\til 0$, $\til 1 \in \til X (A)$ be the unique lifts of the points
$0_L, 1_L \in \til X_L(L) = \A^1(L)$ (corresponding to their closures in $\til X$), and let $P_0, P_1 \in \til X(\ell)$ be their images under $\til X(A) \to \til X(\ell)$.  
Since any two $\ell$-points of $X_\ell = \A^1_\ell \subset X= \A^1_A$ are $\R$-equivalent in $X_\ell$, it follows from Lemma~\ref{R trivial fibers} that the same holds for two $\ell$-points of $\til X_\ell$; 
and in particular, $P_0,P_1$ are $\R$-equivalent in $\til X(\ell)$.  

Fix $i\in\{0,1\}$.  Then the morphism $g_i:\Spec(A) \to G \subset \til G$ and the composition $\Spec(A)\, {\buildrel \til{i} \over \to}\, \til X \,
{\buildrel \til g \over \to}\,  \til G$ agree at the generic point $\Spec(L)$ of $\Spec(A)$, because $\til g(i_L) = g_i \in G(L)$.  Therefore these two morphisms agree, since $A$ is a discrete valuation ring and $G$ is separated.  Hence the closed point of $\Spec(A)$ has the same image under these two morphisms.  That is, 
\begin{equation} \label{reduction display}
\til g(P_i) = \ov g_i \in G(\ell)
\end{equation}
for $i=0,1$. 

We claim, moreover, that for {\em every} $P \in \til X(\ell)$, the point $\til g(P) \in \til G(\ell)$ lies in $G(\ell)$.  To see this, first choose a regular height one prime $\wp$ in the local ring $\ms O := \ms O_{\til X,P}$  such that $\til g(\wp) \in G$, where we view $\wp \in \Spec(\mc O)$.  
(For example, for any regular system of parameters $x,y$ and any $n \ge 0$, the principal ideal $(x+y^n)$ is a regular height one prime $\wp$ of $\mc O$, and for all but finitely many $n$ the point $\wp \in \Spec(\mc O)$ lies in the dense open subset $\til g^{-1}(G)$.)
Let $B$ be the completion of the discrete valuation ring $\ms O/\wp$, and let $E$ be the fraction field of $B$.  Thus $\til g(\wp) \in G(E)$.  Since $G_\ell$ is anisotropic, 
$G(E)=G(B)$ by Proposition~\ref{BTR}.  That is, the rational map 
$\Spec(B) \dashrightarrow G$
 extends to a morphism $\Spec(B) \to G$
giving a commutative diagram:
\begin{equation} \label{diamond diagram}
\xymatrix @R=.2cm @C=.3cm{
& \Spec(B) \ar[ld] \ar[rd] \ar[dd] \\
\til X \ar[dr]_{\til g} & &  G \ar@{_(->}[dl] \\
& \til G
}
\end{equation}
As the closed point of $\Spec(B)$ maps to $P \in \til X(\ell)$ (in the left hand side of (\ref{diamond diagram})) and maps to a point of $G(\ell) \subseteq \til G(\ell)$ (in the right hand side),
it follows from commutativity of the diagram that $\til g(P) \in G(\ell)$ as claimed.

Since  $P_0,P_1$ are $\R$-equivalent in $\til X(\ell)$, and since $\til g(P_i) = \ov g_i \in G(\ell)$ for $i=0,1$ (by display~(\ref{reduction display})), in order to conclude the proof it suffices to show that $\R$-equivalent points in  $\til X(\ell)$ have $\R$-equivalent images in $G(\ell)$ under $\til g$.  For this, it suffices to consider points $P,P' \in \til X(\ell)$ that are directly $\R$-equivalent; i.e., such that there is a rational map $\phi: \A^1_\ell \dashrightarrow X$ connecting $P$ and $P'$.  By the previous paragraph, 
$P,P' \in \til g^{-1}(G)(\ell)$; and so the generic point of $\mbb P^1_\ell$ is also mapped by $\phi$ into $\til g^{-1}(G)$.  We thus obtain a rational map $g \circ \phi : \A^1_\ell \dashrightarrow G$, showing that $\til g(x), \til g(x') \in G(\ell)$ are (directly) $\R$-equivalent.
\end{proof}
 
\begin{prop}[Decomposition of elements in reductive groups] \label{decomposition}
Let $A$ be a regular local ring of dimension at most $2$ and let $G$ be a reductive group over $A$.  Let $S \subseteq G$ be a maximal split torus, and let $H = C_G(S)$.  Then $H$ is reductive, and there are parabolic subgroups of $G$ with unipotent radicals $\mc U, \mc U'$, respectively, such that the map
\[H \times_A \mc U \times_A \mc U' \to G\]
induced by multiplication is an open immersion that gives an isomorphism of $H \times_A \mc U \times_A \mc U'$
with the dense open subset $\mc C = H\mc U\mc U'$ of $G$.  
\end{prop}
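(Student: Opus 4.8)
The strategy is to produce $P$, $P^-$, $\mc U$, $\mc U'$ by the dynamic (attractor/repeller) construction attached to a suitably chosen cocharacter of $S$, to identify $H$ with the common Levi factor, and then to invoke the relative ``big cell'' theorem of \cite[Exp.~XXVI]{SGA3.3}.

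First I would record that $H = C_G(S)$ is a reductive $A$-group, since the centralizer of a split torus in a reductive group scheme is reductive (see \cite[Exp.~XIX]{SGA3.3}; this will also be recovered below from $H$ being a Levi factor). Next I would choose the cocharacter. As $A$ is local, $\Spec A$ is connected, so the adjoint action of the split torus $S$ on the finite locally free module $\mathrm{Lie}(G)$ admits a weight decomposition $\mathrm{Lie}(G) = \bigoplus_{\chi \in X^*(S)} \mathrm{Lie}(G)_\chi$ with only finitely many nonzero summands; let $\Phi \subseteq X^*(S) \smallsetminus \{0\}$ be the finite set of nonzero weights that occur. Pick $\lambda \in X_*(S)$ with $\langle \chi, \lambda\rangle \neq 0$ for all $\chi \in \Phi$ (possible since $\Phi$ is finite). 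By the dynamic method \cite[Exp.~XXVI]{SGA3.3}, the functors $P_G(\lambda)$, $Z_G(\lambda)$, $U_G(\lambda)$ are representable by smooth closed subgroup schemes of $G$, with $P_G(\lambda)$ parabolic, $Z_G(\lambda)$ a Levi factor, $U_G(\lambda) = R_u(P_G(\lambda))$, and $P_G(\lambda) = Z_G(\lambda) \ltimes U_G(\lambda)$; similarly for $-\lambda$, with $Z_G(-\lambda) = Z_G(\lambda)$. Set $P := P_G(\lambda)$, $P^- := P_G(-\lambda)$, $\mc U := R_u(P) = U_G(\lambda)$, $\mc U' := R_u(P^-) = U_G(-\lambda)$. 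Because $\lambda$ factors through $S$ one has $H = C_G(S) \subseteq Z_G(\lambda)$, and by the choice of $\lambda$ these two smooth closed subgroups have the same Lie algebra $\mathrm{Lie}(G)_0$ on every fiber and coincide on every geometric fiber of $\Spec A$ (formation of $C_G(S)$ commuting with base change, and $C_{G_{\bar s}}(S_{\bar s}) = C_{G_{\bar s}}(\lambda_{\bar s})$ over an algebraically closed field for generic $\lambda$); hence $H = Z_G(\lambda) = Z_G(-\lambda) = P \cap P^-$, so $H$ is the common Levi of the opposite parabolics $P, P^-$, and $\mc U, \mc U'$ are their unipotent radicals.

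It remains to identify $\mc C = H\mc U\mc U'$ with the big cell and check it is open and dense. By \cite[Exp.~XXVI, 4.3.2]{SGA3.3}, the multiplication morphism $m : \mc U \times_A H \times_A \mc U' \to G$ is an open immersion onto an open subscheme $\Omega = \mc U H \mc U'$ (the big cell of the parabolic $P$ and its opposite $P^-$). Since $H$ normalizes $\mc U = R_u(P)$, the morphism $H \times_A \mc U \times_A \mc U' \to G$, $(h,u,u') \mapsto huu'$, is the composite of the isomorphism $H \times_A \mc U \times_A \mc U' \iso \mc U \times_A H \times_A \mc U'$, $(h,u,u') \mapsto (huh^{-1}, h, u')$, with $m$, hence is itself an open immersion, with image $\Omega = \mc U H \mc U' = H \mc U \mc U' = \mc C$. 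Finally, $\mc C$ is dense in $G$: $G$ is faithfully flat over the connected scheme $\Spec A$ with connected fibers, hence connected, and being smooth over the regular ring $A$ it is regular, hence integral and in particular irreducible; so the nonempty open subscheme $\mc C$ (which contains the identity section) is dense.

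\textbf{Main obstacle.} The only substantive point is that everything must be carried out integrally over $A$, not merely fiber by fiber: that $H = C_G(S)$, the attractor and repeller subgroups, and the semidirect decompositions $P = H \ltimes \mc U$, $P^- = H \ltimes \mc U'$ are genuine smooth closed $A$-subgroup schemes, and that the big-cell decomposition is an open immersion over $A$. All of this is exactly the content of the relative theory of parabolic subgroups of reductive group schemes in \cite[Exp.~XXVI]{SGA3.3}, so the work is largely a matter of assembling those results together with the elementary bookkeeping above (the choice of $\lambda$, the Lie-algebra comparison, the re-ordering of the factors, and the irreducibility of $G$). The hypothesis $\dim A \le 2$ is inherited from the surrounding context and is not needed for this proposition.
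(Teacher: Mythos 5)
Your proof is correct, and its second half coincides with the paper's: both arguments ultimately rest on the relative big-cell statement of \cite[Exp.~XXVI, Theorem~4.3.2]{SGA3.3}, followed by the same elementary reshuffling of the factors $H,\mc U,\mc U'$ using that $H$ normalizes $\mc U$. Where you diverge is in how the parabolic with Levi $H$ is produced. The paper simply chooses a \emph{minimal} parabolic $P \supseteq S$ and quotes \cite[Exp.~XXVI, Prop.~1.6, Prop.~6.8, Rem.~6.18]{SGA3.3} for the facts that $H=C_G(S)$ is a reductive Levi of $P$ and that $P \cong H \times_A \mc U$; the opposite parabolic $P'$ with $P\cap P'=H$ then comes out of Theorem~4.3.2 itself. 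You instead build $P=P_G(\lambda)$ and $P^-=P_G(-\lambda)$ by the dynamic method from a cocharacter $\lambda\in X_*(S)$ chosen off the walls of the (finitely many) nonzero $S$-weights on $\mathrm{Lie}(G)$, and then verify $H=Z_G(\lambda)$ by a fiberwise Lie-algebra/smoothness comparison. Your route is more constructive and self-contained on that point (it avoids invoking the existence theory for minimal parabolics over a semi-local base), at the cost of the genericity argument and the fibral identification $C_G(S)=Z_G(\lambda)$; the paper's route is shorter but leans more heavily on the structure theory of relative parabolics in Exp.~XXVI. Your closing observation that density follows from irreducibility of $G$ (smooth with connected fibers and a section over a connected regular base) makes explicit a point the paper leaves implicit, and your remark that $\dim A\le 2$ is not needed here is accurate.
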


\begin{proof}
Choose a minimal parabolic subgroup $P$ of $G$ containing $S$, 
and let $\mc U$ be the unipotent radical of $P$.
The group $H$ is
a reductive Levi subgroup of $P$; the group $P$ is a semi-direct product of $H$ with the normal subgroup $\mc U$; 
and $H\mc U \cong H \times_A \mc U$ as schemes via the multiplication map (see \cite[Exp.~XXVI,  Proposition~1.6, Proposition~6.8, Remark~6.18]{SGA3.3}).
By \cite[Exp.~XXVI, Theorem~4.3.2]{SGA3.3}, there is a unique parabolic subgroup $P'$ of $G$ such that $P \cap P' = H$, and its unipotent radical $\mc U'$ has the property that the multiplication map $P \times_A \mc U' \to G$ is an open immersion.  (Here we apply the inversion map on $G$ to reverse the order of the factors from the one given in part (vi) of that theorem.)  We conclude that the composition 
$H \times_A \mc U \times_A \mc U' \to P \times_A \mc U' \to G$ is an open immersion with image $\mc C = H\mc U\mc U'$.  That is, $H \times_A \mc U \times_A \mc U' \to P \times_A \mc U' \to \mc C$ is
an isomorphism and $\mc C \subseteq G$ is a dense open subset.
\end{proof}
  
\begin{cor} \label{R equiv in H} 
With notation as in Proposition~\ref{decomposition}, let $L$ be the fraction field of $A$.
\renewcommand{\theenumi}{\alph{enumi}}
\renewcommand{\labelenumi}{(\alph{enumi})}
\begin{enumerate}
\item \label{R equiv bij}
The isomorphism $H \times \mc U \times \mc U' \iso \mc C \subseteq G$
and the map $H 
\times \mc U \times \mc U' \to H \to H/S$ defined by projection induce bijections $G(L)/\R \leftarrow \mc C(L)/\R \to H(L)/\R \to (H/S)(L)/\R$ that together define a group isomorphism
$G(L)/\R \iso (H/S)(L)/\R$.
\item \label{dvr R rep in H}
If, in addition, $A$ is a Henselian discrete valuation ring, then every element of $G(L)$ is $\R$-equivalent to an element of $H(A)$.
\end{enumerate}
\end{cor}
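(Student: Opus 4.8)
The plan is to prove part~(\ref{R equiv bij}) by establishing the three displayed bijections one at a time and then deducing the group isomorphism formally, and to prove part~(\ref{dvr R rep in H}) by combining part~(\ref{R equiv bij}) with the factorization $H(L)=H(A)S(L)$ of Proposition~\ref{H and S}(\ref{almost anisotropic factorization}). First I would dispose of the two easier bijections. Since $G$ is reductive, $G_L$ is unirational over $L$ by \cite[Chap.~V, Theorem~18.2]{Borel}; as $\mc C$ is a dense open subscheme of $G$ containing the identity section (Proposition~\ref{decomposition}), the inclusion $\mc C\hookrightarrow G$ induces a bijection $\mc C(L)/\R\iso G(L)/\R$ by \cite[Proposition~11, p.~196]{CTS77}. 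For the projection $\mc C(L)/\R\to H(L)/\R$, recall from Proposition~\ref{decomposition} that multiplication is an isomorphism of $A$-schemes $H\times_A\mc U\times_A\mc U'\iso\mc C$. The groups $\mc U_L,\mc U'_L$ are split unipotent, hence isomorphic as $L$-schemes to affine spaces (see \cite[Exp.~XXVI, \S2]{SGA3.3}), and affine space is $\R$-trivial; so the section $h\mapsto(h,1,1)$ is inverse to the projection on $\R$-equivalence classes, since any $(h,u,u')$ is joined to $(h,1,1)$ inside $\mc C$ by moving the last two coordinates along lines, and $\R$-equivalences in $H$ induce $\R$-equivalences in $\mc C\subseteq G$.

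The substance of part~(\ref{R equiv bij}) is the bijectivity of the quotient map $q_L\colon H(L)/\R\to(H/S)(L)/\R$. Surjectivity of $H(L)\to(H/S)(L)$, hence of $q_L$, follows from $H^1(L,S)=0$ (Hilbert~90 for the split torus $S$ over the field $L$). For injectivity, using \cite[II.1.1(b)]{GilleIHES} it suffices to treat $\bar h,\bar h'\in(H/S)(L)$ that are directly $\R$-equivalent via a rational map $\bar\phi\colon\A^1_L\dashrightarrow H/S$ defined at $0$ and $1$ with $\bar\phi(0)=\bar h$ and $\bar\phi(1)=\bar h'$; choose lifts $h,h'\in H(L)$. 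Pulling back $H\to H/S$ along $\bar\phi$ gives an $S$-torsor over the dense open $U\subseteq\A^1_L$ on which $\bar\phi$ is defined; since $S$ is a split torus and the Picard group of any nonempty open subscheme of $\A^1_L$ vanishes, this torsor is trivial on all of $U$, so $\bar\phi$ lifts to a rational map $\phi\colon\A^1_L\dashrightarrow H$ defined at $0$ and $1$. Then $\phi(0)$ and $h$ both lie over $\bar h$, so $\phi(0)=hs_0$ with $s_0\in S(L)$; because $S$ is central in $H=C_G(S)$ and a split torus is $\R$-trivial, left translation by $h$ of a rational curve from $1$ to $s_0$ in $S$ shows $\phi(0)$ is $\R$-equivalent to $h$ in $H(L)$, and similarly $\phi(1)$ is $\R$-equivalent to $h'$; hence $h$ is $\R$-equivalent to $h'$. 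Taking the transitive closure (lifting intermediate points via the surjectivity of $H(L)\to(H/S)(L)$) gives injectivity of $q_L$.

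For the group isomorphism, note that the composite $H(L)/\R\to\mc C(L)/\R\to G(L)/\R$, where the first arrow is the section inverse to the projection, equals the homomorphism induced by the inclusion $H\hookrightarrow G$ (as $h\mapsto(h,1,1)\mapsto h$); being a composite of bijections it is a group isomorphism $H(L)/\R\iso G(L)/\R$, and precomposing with the group isomorphism $q_L^{-1}$ yields the asserted group isomorphism $(H/S)(L)/\R\iso G(L)/\R$. For part~(\ref{dvr R rep in H}), given $g\in G(L)$, part~(\ref{R equiv bij}) (or directly \cite[Proposition~11, p.~196]{CTS77}) shows $g$ is $\R$-equivalent to some $c\in\mc C(L)$; writing $c=huu'$ with $h\in H(L)$, $u\in\mc U(L)$, $u'\in\mc U'(L)$, the line argument above gives that $c$ is $\R$-equivalent to $h$ in $G(L)$, so $g$ is $\R$-equivalent to $h$. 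Since $A$ is now a Henselian discrete valuation ring, Proposition~\ref{H and S}(\ref{almost anisotropic factorization}) gives $h=h_0s$ with $h_0\in H(A)$ and $s\in S(L)$; centrality of $S$ in $H$ together with $\R$-triviality of the split torus $S$ then shows, by left translation by $h_0$, that $h$ is $\R$-equivalent to $h_0\in H(A)$. Hence $g$ is $\R$-equivalent to $h_0$, as desired. (This part uses only the surjectivity of the bijections in part~(\ref{R equiv bij}), not the injectivity of $q_L$.)

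I expect the main obstacle to be exactly the injectivity of $q_L$, i.e.\ lifting a rational curve from $H/S$ back to $H$. What makes this tractable is that $H\to H/S$ is a torsor under a \emph{split} torus, so the obstruction to lifting over a neighborhood of the two endpoints lives in the (vanishing) Picard group of an open subscheme of the affine line, while the centrality of $S$ in $H=C_G(S)$ is what lets one repair the endpoints by $\R$-trivial elements without disturbing the rest of the curve. Everything else is a bookkeeping of $\R$-triviality for affine spaces and split tori together with the structural input of Proposition~\ref{decomposition} and Proposition~\ref{H and S}.
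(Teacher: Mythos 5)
Your proposal is correct and follows essentially the same route as the paper's proof: the same reduction via \cite[Proposition~11, p.~196]{CTS77} and the affine-space structure of $\mc U,\mc U'$, the same lifting of a rational curve through the split-torus torsor $H\to H/S$ using the vanishing of the Picard group of an open subset of $\A^1_L$, and the same use of $H(L)=H(A)S(L)$ plus $\R$-triviality of $S_L$ for part~(b). The only point you omit is the case where $L$ is a finite field (possible since $A$ may have dimension $0$), where the density argument behind \cite[Proposition~11, p.~196]{CTS77} does not apply; the paper disposes of this case first by noting that all the relevant sets of $\R$-equivalence classes are singletons by \cite[Corollaire~6, p.~202]{CTS77}.
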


\begin{proof}
If $L$ is a finite field (and so is equal to $A$), then the set of $\R$-equivalence classes of $L$-points in any linear algebraic group over $L$ consists of just one element, by \cite[Corollaire~6, p.~202]{CTS77}.  In particular, this applies to the groups $G,H,S,\mc U,\mc U'$; and also to $\mc C(L)/\R$ since $\mc C = H\mc U\mc U'$.  So the corollary is trivial in that case.

Thus we may assume that $L$ is infinite.
Since $G_L$ is reductive over $L$, and since $\mc C \subseteq G$ is a dense open subset, \cite[Proposition~11, p.~196]{CTS77} then says that the natural map $\mc C(L)/\R \to G(L)/\R$ is a bijection.  For part~(\ref{R equiv bij}) we next  show that we have bijections $\mc C(L)/\R \to H(L)/\R \to (H/S)(L)/\R$.

By~\cite[Exp.~XXVI, \S 2, Corollaire~2.5]{SGA3.3}, the unipotent radicals $\mc U$ and $\mc U'$ are isomorphic to affine spaces $\A^n_A$ for some values of $n$.  Thus the morphism $\mc C \iso H \times \mc U \times \mc U' \to H$ induces a bijection $\mc C(L)/\R \to H(L)/\R$ on $\R$-equivalence classes.  
Since $S$ is a split torus, it is isomorphic to some power of $\G_{{\rm m},A}$; so each fiber of the $S$-torsor $H \to H/S$ over an $L$-point of $H/S$ contains $L$-points, all of which are $\R$-equivalent.  Thus $H(L)/\R \to (H/S)(L)/\R$ is surjective.  
For injectivity, if two $L$-points $P,Q$ of $H/S$ are $\R$-equivalent, then they are
connected by a smooth rational affine $L$-curve $C \subseteq H/S$; and the restriction of the $S$-torsor $H \to H/S$ to $C$ is trivial, since $C$ has trivial Picard group.  
Thus $C$ lifts to a smooth rational affine curve $\til C \subseteq H$, and $\til C$ meets the fibers of $H \to H/S$ over $P$ and $Q$ at $L$-points of $H$.  So all the $L$-points of $H$ lying over $P$ are $\R$-equivalent to those over $Q$, proving the bijectivity of $H(L)/\R \to (H/S)(L)/\R$.

Next, observe that the above bijection $H(L)/\R \to G(L)/\R$ carries the $\R$-equivalence class of $h \in H(L)$ to the $\R$-equivalence class of $h$ in $G(L)$; and so it agrees with the homomorphism induced by $H\to G$ (which is then an isomorphism).  Since the bijection $H(L)/\R \to (H/S)(L)/\R$ is the map induced by $H\to H/S$, it is also an isomorphism, and hence so is $G(L)/\R \iso (H/S)(L)/\R$, completing part~(\ref{R equiv bij}).

Since $H(L)/\R \to G(L)/\R$ is an isomorphism, every element of $G(L)$ is $\R$-equivalent to an element of $H(L)$.  In the case that $A$ is a Henselian discrete valuation ring, Proposition~\ref{H and S}(\ref{almost anisotropic factorization}) says that $H(L) = H(A)S(L)$.  Since $S_L$ is a power of $\G_{{\rm m},L}$, any two $L$-points of $S$ are $\R$-equivalent, and 
it follows that every element of $H(L)$ is $\R$-equivalent to an element of $H(A)$.  This proves part~(\ref{dvr R rep in H}).
\end{proof}

Below we introduce our specialization map for a reductive group $G$ over a discrete valuation ring $A$ with fraction field $L$ and residue field $\ell$.  Given another discrete valuation ring $A'$, we will consider 
injective local homomorphisms between them; i.e.,
injective homomorphisms $\phi:A \to A'$ of discrete valuation rings such that $\phi({\mathfrak m_A}) \subseteq {\mathfrak m_{A'}}$.  Such maps $\phi$ induce inclusions $L \to L'$ and $\ell \to \ell'$, where $L',\ell'$ are the fraction field and residue field of $A'$, respectively.

Here and below, we write $[\mbox{ \ }]_\R$ to denote the $\R$-equivalence class of an element.  
For a reductive group $G$ over a domain $A$ with fraction field $L$, by $G(A)/\R$ we will mean the subset of $G(L)/\R$ consisting of $\R$-equivalence classes that contain an element of $G(A)$.

\begin{thm} \label{dvr specialization}
Let $A$ be a discrete valuation ring with fraction field $L$ and residue field $\ell$, and let $G$ be a reductive group over $A$.  
\renewcommand{\theenumi}{\alph{enumi}}
\renewcommand{\labelenumi}{(\alph{enumi})}
\begin{enumerate}
\item \label{dvr spcl unique}
There is a unique homomorphism 
\[\spcl = \spcl_A = \spcl_{A,G}:G(L)/\R \to G(\ell)/\R\] 
such that $\spcl([g]_\R) = [\ov g]_\R$ for all $g \in G(A) \subseteq  G(L)$, 
and which is functorial in $G$ and in $A$ (in the latter case with respect to injective local homomorphisms of discrete valuation rings).  
\item \label{dvr spcl surj hens}
If $A$ is Henselian, then $\spcl:G(L)/\R \to G(\ell)/\R$ is surjective.
\end{enumerate}
\end{thm}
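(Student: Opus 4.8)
The plan is to reduce to the case that $A$ is Henselian, and there to build $\spcl_A$ from the anisotropic case (Proposition~\ref{R special}) by means of the decomposition $\mc C=H\mc U\mc U'$ of Proposition~\ref{decomposition}. If $\ell$ is finite, then $G(\ell)/\R$ is trivial by \cite[Corollaire~6, p.~202]{CTS77}, so the (unique) constant map works and is functorial; thus we may assume $\ell$ infinite. For the reduction to the Henselian case, let $\hat A$ be the completion of $A$, with fraction field $\hat L$ and the same residue field $\ell$. Since a rational map $\A^1_L\dashrightarrow G_L$ witnessing a direct $\R$-equivalence base changes to $\hat L$, the inclusion $G(L)\hookrightarrow G(\hat L)$ induces a homomorphism $\iota_A:G(L)/\R\to G(\hat L)/\R$. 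If $\spcl_{\hat A}$ is known with the asserted compatibility, we set $\spcl_A:=\spcl_{\hat A}\circ\iota_A$; this is compatible with reduction on $G(A)\subseteq G(\hat A)$, and is functorial in $A$ and $G$ because the maps $\iota$ are. Moreover, by Corollary~\ref{R equiv in H}(\ref{dvr R rep in H}) every $\R$-equivalence class in $G(\hat L)$ contains an element of $G(\hat A)$, so any homomorphism out of $G(\hat L)/\R$ restricting to reduction on $G(\hat A)$ is unique; combined with functoriality along $A\to\hat A$, this forces $\spcl_A=\spcl_{\hat A}\circ\iota_A$ and gives the uniqueness in~(\ref{dvr spcl unique}). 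So it suffices to treat the case $A$ Henselian.

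Assume now $A$ is a Henselian discrete valuation ring with infinite residue field $\ell$. By Proposition~\ref{split tori lift} lift a maximal split torus $\bar S$ of $G_\ell$ to a split torus $S\subseteq G$; then $S$ is maximal split in $G$, and $S_\ell=\bar S$ is maximal split in $G_\ell$. Put $H=C_G(S)$ (reductive, with $S$ central, by Proposition~\ref{decomposition}); base change gives $H_\ell=C_{G_\ell}(\bar S)$ and $(H/S)_\ell=H_\ell/\bar S$. By Proposition~\ref{H and S}(\ref{aniso quotient}), $H/S$ is anisotropic over $L$, hence over $\ell$ by Proposition~\ref{local anisotropy}(\ref{Hens equiv}); so Proposition~\ref{BTR} gives $(H/S)(L)=(H/S)(A)$, and Proposition~\ref{R special} shows that reduction induces a well-defined homomorphism $(H/S)(L)/\R\to(H/S)(\ell)/\R$, $[x]_\R\mapsto[\bar x]_\R$. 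Applying Corollary~\ref{R equiv in H}(\ref{R equiv bij}) over $A$ and over the field $\ell$ yields group isomorphisms $G(L)/\R\iso(H/S)(L)/\R$ and $G(\ell)/\R\iso(H/S)(\ell)/\R$. Define $\spcl_A$ as the composite
\[ G(L)/\R \iso (H/S)(L)/\R \longrightarrow (H/S)(\ell)/\R \iso G(\ell)/\R, \]
with the middle arrow the reduction map above; it is a homomorphism, and it is surjective because $(H/S)(A)\to(H/S)(\ell)$ is surjective by formal smoothness, proving~(\ref{dvr spcl surj hens}). Chasing the definitions, $\spcl_A([h]_\R)=[\bar h]_\R$ for every $h\in H(A)$.

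The remaining --- and hardest --- point is to upgrade this last identity from $h\in H(A)$ to an arbitrary $g\in G(A)$; i.e.\ to prove $\spcl_A([g]_\R)=[\bar g]_\R$. The idea is to move $g$ into the open set $\mc C=H\mc U\mc U'$ by a translation that is simultaneously good over $L$ and over $\ell$. The open subscheme $\mc C\cap\lambda_g^{-1}(\mc C)$ of $G$ (with $\lambda_g$ left translation by $g$) is dense, with non-empty smooth special fiber; since $\ell$ is infinite and $G_\ell$ is unirational (\cite[Chap.~V, Theorem~18.2]{Borel}), that special fiber has an $\ell$-point, which lifts through this smooth $A$-scheme to an $A$-point $\gamma$ by the Henselian property. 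Then $\gamma$ and $g\gamma$ lie in $\mc C(A)=H(A)\times\mc U(A)\times\mc U'(A)$ (Proposition~\ref{decomposition}), so $g\gamma=h_1u_1u_1'$ and $\gamma=h_2u_2u_2'$ with $h_i\in H(A)$, $u_i\in\mc U(A)$, $u_i'\in\mc U'(A)$. As $\mc U\cong\mc U'\cong\A^n_A$ (\cite[Exp.~XXVI, \S 2, Corollaire~2.5]{SGA3.3}), the $u_i,u_i'$ are $\R$-trivial in $G(L)$ and their reductions are $\R$-trivial in $G(\ell)$; hence $[g]_\R=[h_1]_\R[h_2]_\R^{-1}$ in $G(L)/\R$ and $[\bar g]_\R=[\bar h_1]_\R[\bar h_2]_\R^{-1}$ in $G(\ell)/\R$. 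Using that $\spcl_A$ is a homomorphism and the case of $H(A)$ already handled,
\[ \spcl_A([g]_\R)=\spcl_A([h_1]_\R)\spcl_A([h_2]_\R)^{-1}=[\bar h_1]_\R[\bar h_2]_\R^{-1}=[\bar g]_\R. \]
Finally, functoriality of $\spcl_A$ in $G$ and in $A$ in the Henselian case follows since both sides of each functoriality square agree on $\R$-classes of $G(A)$-points and these exhaust $G(L)/\R$ (Corollary~\ref{R equiv in H}(\ref{dvr R rep in H})); the general case then follows via the maps $\iota$ as in the first paragraph. I expect the translation-and-lifting step of this last paragraph --- producing a single $A$-point $\gamma$ that lands in $\mc C$ on both the generic and special fibers at once --- to be the main obstacle.
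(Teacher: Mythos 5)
Your proposal is correct and follows essentially the same route as the paper's proof: reduce to the complete/Henselian case via $\spcl_A=\spcl_{\wh A}\circ\iota_A$, define the map through the anisotropic quotient $H/S$ using Propositions~\ref{H and S}, \ref{local anisotropy}, \ref{BTR} and~\ref{R special} together with Corollary~\ref{R equiv in H}, and handle an arbitrary $g\in G(A)$ by intersecting $\mc C$ with a translate, finding an $\ell$-point there by unirationality of $G_\ell$, and lifting it. The step you flagged as the likely obstacle is exactly the paper's argument (the paper uses $\mc C\cap g\mc C$ and writes $g=c(c')^{-1}$ with $c,c'\in\mc C(A)$), and your derivation of uniqueness and functoriality from the fact that $G(A)$-classes exhaust $G(L)/\R$ in the Henselian case matches the paper as well.
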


\begin{proof}
Since $G(\ell)/\R$ is trivial if $\ell$ is finite (by \cite[Corollaire~6, p.~202]{CTS77}), we may assume that $\ell$ is infinite.

For part~(\ref{dvr spcl unique}), we first consider the case of complete discrete valuation rings $A$.  Given such an $A$, choose a maximal split torus $S \subset G$, and let $H = C_G(S)$.  
We have a natural bijection $G(L)/\R \iso (H/S)(L)/\R$, by Corollary~\ref{R equiv in H}(\ref{R equiv bij}).  Here
$H/S$ is anisotropic over $L$ by Proposition~\ref{H and S}(\ref{aniso quotient}), and
over $\ell$ by Proposition~\ref{local anisotropy}(\ref{Hens equiv}) since $A$ is complete.  Thus $(H/S)(L)=(H/S)(A)$ by Proposition~\ref{BTR}, and the reduction map $(H/S)(A) \to (H/S)(\ell)$ induces a homomorphism $
(H/S)(A)/\R \to (H/S)(\ell)/\R$ by Proposition~\ref{R special}.  
Moreover $S_\ell$ is a maximal split torus in $G_\ell$ by Proposition~\ref{split tori lift}, and $H_\ell = C_{G_\ell}(S_\ell)$ (as noted after Proposition~\ref{H and S}); and so we have an isomorphism $(H/S)(\ell)/\R \iso G(\ell)/\R$ by 
Corollary~\ref{R equiv in H}(\ref{R equiv bij}).
We define the map $\spcl_A$ as the composition
\[G(L)/\R \iso (H/S)(L)/\R = (H/S)(A)/\R \to (H/S)(\ell)/\R \iso G(\ell)/\R.\]
This composition is a homomorphism since each of the factors is. 

To show that $\spcl([g]_\R) = [\ov g]_\R$ for all $g \in G(A)$ (still under the assumption that $A$ is complete), we first suppose that $g \in \mc C(A)$, with $\mc C$ as in Proposition~\ref{decomposition}.  Then, with $\mc U, \mc U'$ as in that proposition, $\spcl([g]_\R)$ is the image of $g$ under
the composition $\mc C(A) = \mc U'(A) \times \mc U(A) \times H(A) \to H(A) \to (H/S)(A) \to (H/S)(A)/\R \to (H/S)(\ell)/\R \to G(\ell)/\R$.  It is thus equal to $[\ov g]_\R$, for $g \in \mc C(A)$.  

To prove that $\spcl([g]_\R) = [\ov g]_\R$ for an arbitrary element $g \in G(A)$ (i.e., not necessarily in $\mc C(A)$), note that the dense open subset $\mc C \subseteq G$ meets the closed fiber $G_\ell$ non-trivially (e.g., at the identity), as does its translate $g\mc C$.  So the intersection 
$\mc C \cap g\mc C$ is open and dense, and also meets $G_\ell$ non-trivially.
But $G_\ell$ is unirational, by \cite[Chap.~V, Theorem~18.2]{Borel}.  Since $\ell$ is infinite, it follows that $\mc C \cap g\mc C \cap G_\ell$ contains a $\ell$-point.
That $\ell$-point lifts to a point $c \in G(A)$ by the smoothness of $G$ and the completeness of $A$; and $c$ lies in $\mc C(A) \cap g\mc C(A)$ since its reduction lies in the open set $\mc C \cap g\mc C$.  Thus $c = gc'$ for some $c' \in \mc C(A)$.  So $g = c(c')^{-1} \in G(A)$.
Since $c,c' \in \mc C(A)$ and since $\spcl$ is a homomorphism, 
the above special case yields $\spcl([g]_\R) = \spcl([c]_\R)\spcl([(c')^{-1}]_\R) = [\ov c]_\R[(\ov c')^{-1}]_\R = [\ov g]_\R$, as needed.

The functoriality condition on $\spcl$ with respect to $G$ and $A$ asserts the commutativity of the diagrams
\begin{equation*}
\xymatrix{
G(L)/\R \ar[d] \ar[r]^{\spcl_A} & G(\ell)/\R \ar[d]& &  
G(L)/\R \ar[d] \ar[r]^{\spcl_A} & G(\ell)/\R \ar[d]  
\\
G'(L)/\R  \ar[r]^{\spcl_A} & G'(\ell)/\R & & 
G(L')/\R  \ar[r]^{\spcl_{A'}} & G(\ell')/\R, 
\\
}
\end{equation*}
where $G \to G'$ is a morphism of reductive groups over $A$, and where $A \to A'$ is an injective local homomorphism of discrete valuation rings with $L'$ the fraction field of $A'$ and $\ell \to \ell'$ the induced map on residue fields (which exists by the local homomorphism hypothesis).  
In the current situation in which the discrete valuation rings that we consider are assumed complete, the commutativity of these diagrams follows from the facts that the vertical maps are well defined (on $\R$-equivalence classes); that every $\R$-equivalence class in $G(L)/\R$ contains an element of $H(A) \subseteq G(A)$ (by Corollary~\ref{R equiv in H}(\ref{dvr R rep in H})); and that $\spcl$ is given by reduction modulo the maximal ideal for elements of $G(A)$.

For uniqueness, suppose that $\spcl'_A$ is another such functorial map defined for complete discrete valuation rings $A$.  As above, by 
Corollary~\ref{R equiv in H}(\ref{dvr R rep in H}), each $g \in G(L)$ is $\R$-equivalent to some $h \in H(A) \subseteq G(A) \subseteq G(L)$.
Thus $\spcl'([g]_\R) = \spcl'([h]_\R) = [\ov h]_\R =  \spcl([h]_\R) = \spcl([g]_\R)$.  This shows that $\spcl'=\spcl$, proving uniqueness and completing the proof of~(\ref{dvr spcl unique}) in the case of complete rings $A$.

For a more general discrete valuation ring $A$, say with completion $\wh A$, define $\spcl_A = \spcl_{\wh A} \circ \iota_A$, where $\iota_A:A \to \wh A$ is the natural injection.  This is still a well defined homomorphism.  It follows immediately from this definition and from the complete case that $\spcl([g]_\R) = [\ov g]_\R$ for all $g \in G(A)$.  Functoriality also follows from the complete case, because a local homomorphism of discrete valuation rings induces such a homomorphism between their completions, and because $\spcl_A = \spcl_{\wh A} \circ \iota_A$.  Uniqueness similarly follows from the complete case, since if $\spcl'_A$ is another such functorial map, then $\spcl'_A$ factors through $\spcl'_{\wh A}$ by functoriality and thus $\spcl_A'=\spcl_A$.  This completes the proof of part~(\ref{dvr spcl unique}) in the general case. 

Finally, if $A$ is Henselian, then $G(A) \to G(\ell)$ is surjective by formal smoothness of $G$.  
Thus $G(A)/\R \to G(\ell)/\R$ is surjective, and hence so is $\spcl:G(L)/\R \to G(\ell)/\R$, proving part~(\ref{dvr spcl surj hens}).
\end{proof}

We call the map $\spcl$ in Theorem~\ref{dvr specialization} the {\it specialization map} for $G$ over the discrete valuation ring $A$.

\begin{rem} \label{dvr spcl rk}
\renewcommand{\theenumi}{\alph{enumi}}
\renewcommand{\labelenumi}{(\alph{enumi})}
\begin{enumerate}
\item \label{extend Gille}
As a consequence of the uniqueness assertion in Theorem~\ref{dvr specialization}, the above map $\spcl$ agrees with the map in \cite[Th\'eor\`eme~0.2]{GilleTAMS}, under the hypothesis that $\cha(\ell) \ne 2$.
\item \label{spcl via completion}
Given a discrete valuation ring $A$ with fraction field $L$ and residue field $\ell$, let 
$\wh A$ be the completion of $A$, say with fraction field $\wh L$ (and again residue field $\ell$).  
By the construction in the above proof (or by functoriality), the
map $\spcl=\spcl_A: G(L)/\R \to G(\ell)/\R$ then factors through $\spcl_{\wh A}: G(\wh L)/\R \to G(\ell)/\R$.  As a result, the study of the specialization map on discrete valuation rings can be reduced to the complete case.
\end{enumerate}
\end{rem}

\subsection{Specialization in dimension two} \label{spcl dim 2}

We now turn to the case of a two-dimensional regular local ring $A$, with fraction field $L$ and residue field $\ell$. To define the specialization map in this situation, we can use Theorem~\ref{dvr specialization} twice in succession to obtain a specialization map for reductive groups over $A$.  
Namely, we pick a regular height one prime $\wp$ in $A$ (see the discussion just before Proposition~\ref{R special}).
The localization $A_\wp$
is a discrete valuation ring with fraction field $L$, and its residue field $k_\wp := {\rm frac}(A/\wp)$ is itself the fraction field of a discrete valuation ring that has residue field $\ell$.  
So for any reductive group $G$ over $A$ we can take the composition
\begin{equation} \label{special 2d composition}
\xymatrix @C=.4cm{
\spcl_\wp = \spcl_{A,\wp}= \spcl_{A,\wp,G}:G(L)/\R\, {\buildrel \spcl_{A_\wp} \over \longrightarrow} \, G(k_\wp)/\R \,  {\buildrel \spcl_{A/\wp} \over \longrightarrow} \, G(\ell)/\R\\
}
\end{equation}
of specialization maps associated to discrete valuation rings.

\begin{prop} \label{2d spcl wp}
Let $A$ be a two-dimensional regular local ring with fraction field $L$ and residue field $\ell$, let $\wp$ be a regular height one prime of $A$, and let $G$ be a connected reductive group over $A$.  Then the following hold:
\renewcommand{\theenumi}{\alph{enumi}}
\renewcommand{\labelenumi}{(\alph{enumi})}
\begin{enumerate}
\item \label{2d wp sp hom}
The map $\spcl_{A,\wp}:G(L)/\R \to G(\ell)/\R$ is a group homomorphism.
\item \label{2d wp sp eval}
For all $g \in G(A)$, we have $\spcl_{A,\wp}([g]_\R)=[\ov g]_\R$.  
\item \label{2d wp sp complete surj}
The map $\spcl_{A,\wp}$ is surjective if $A$ is complete.
\item \label{2d wp sp factor comp}
The map $\spcl_{A,\wp}$ factors through $\spcl_{\wh A,\wh\wp}$, where $\wh A$ is the completion of $A$ and $\wh\wp$ is the unique height one prime of $\wh A$ lying over $\wp$.
\end{enumerate}
\end{prop}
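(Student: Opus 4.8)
The statement to prove concerns $\spcl_{A,\wp}$, defined in \eqref{special 2d composition} as a composition $\spcl_{A/\wp} \circ \spcl_{A_\wp}$ of two specialization maps on discrete valuation rings. The plan is to derive each of the four assertions \eqref{2d wp sp hom}--\eqref{2d wp sp factor comp} directly from the corresponding properties of the one-dimensional specialization map established in Theorem~\ref{dvr specialization}, together with the already-noted facts about how $\wp$ behaves. First I would record the geometric setup: since $\wp$ is a regular height one prime of the two-dimensional regular local ring $A$, the localization $A_\wp$ is a discrete valuation ring with fraction field $L$, and $A/\wp$ is a one-dimensional regular local ring, i.e.\ a discrete valuation ring, whose residue field is $\ell$ and whose fraction field is $k_\wp$; this is exactly the chain of two discrete valuation rings implicit in \eqref{special 2d composition}. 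There is also the natural pullback map $G(A) \to G(A_\wp)$ and, compatibly, a map $G(A) \to G(A/\wp)$ induced by reduction modulo $\wp$.

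\textbf{Parts \eqref{2d wp sp hom} and \eqref{2d wp sp eval}.} Part~\eqref{2d wp sp hom} is immediate: $\spcl_{A_\wp}$ and $\spcl_{A/\wp}$ are group homomorphisms by Theorem~\ref{dvr specialization}(\ref{dvr spcl unique}), and a composition of homomorphisms is a homomorphism. For part~\eqref{2d wp sp eval}, given $g \in G(A)$, its image in $G(L)$ is the image of the $A_\wp$-point obtained by pullback along $A \to A_\wp$. By the defining property of $\spcl_{A_\wp}$ in Theorem~\ref{dvr specialization}(\ref{dvr spcl unique}), $\spcl_{A_\wp}([g]_\R) = [\bar g_{\wp}]_\R$, where $\bar g_\wp \in G(k_\wp)$ is the reduction of $g$ modulo $\wp$; but $\bar g_\wp$ is in fact the image of $g$ under $G(A) \to G(A/\wp) \hookrightarrow G(k_\wp)$, i.e.\ it comes from an $(A/\wp)$-point of $G$. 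Applying $\spcl_{A/\wp}$ and again using the defining property of Theorem~\ref{dvr specialization}(\ref{dvr spcl unique}) for the discrete valuation ring $A/\wp$, we get $\spcl_{A/\wp}([\bar g_\wp]_\R) = [\bar g]_\R$, where $\bar g \in G(\ell)$ is the reduction of $\bar g_\wp$ modulo the maximal ideal of $A/\wp$ --- which is the same as the reduction of $g$ modulo the maximal ideal of $A$. Hence $\spcl_{A,\wp}([g]_\R) = [\bar g]_\R$.

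\textbf{Parts \eqref{2d wp sp complete surj} and \eqref{2d wp sp factor comp}.} For part~\eqref{2d wp sp complete surj}, suppose $A$ is complete. Then $A/\wp$ is a complete discrete valuation ring, so $\spcl_{A/\wp}$ is surjective by Theorem~\ref{dvr specialization}(\ref{dvr spcl surj hens}). For $\spcl_{A_\wp}$, one notes that $A_\wp$ need not itself be complete, but by Theorem~\ref{dvr specialization}(\ref{dvr spcl unique}) (or Remark~\ref{dvr spcl rk}(\ref{spcl via completion})) the map $\spcl_{A_\wp}: G(L)/\R \to G(k_\wp)/\R$ factors through $\spcl_{\wh{A_\wp}}$; and a cleaner route is to observe directly that surjectivity of $\spcl_{A,\wp}$ as a composite only needs surjectivity of $\spcl_{A/\wp}$ once we know the image of $\spcl_{A_\wp}$ already surjects onto $G(k_\wp)/\R$ --- equivalently, one can argue via $G(A) \to G(\ell)$, which is surjective by formal smoothness of $G$ since $A$ is complete, and then use part~\eqref{2d wp sp eval} to see that $\spcl_{A,\wp}$ is surjective because $G(A)/\R \to G(\ell)/\R$ is. This last argument is the most robust and I expect to use it. For part~\eqref{2d wp sp factor comp}: the localization $\wh A_{\wh\wp}$ receives a local homomorphism from $A_\wp$, and $\wh A/\wh\wp$ is the completion of $A/\wp$ (indeed $\wh\wp$ is the unique height one prime of $\wh A$ over $\wp$, so $\wh A/\wh\wp$ is a complete regular local ring of dimension one with residue field $\ell$); then the functoriality of the one-dimensional specialization map with respect to injective local homomorphisms of discrete valuation rings (Theorem~\ref{dvr specialization}(\ref{dvr spcl unique})) gives commuting squares relating $\spcl_{A_\wp}$ to $\spcl_{\wh A_{\wh\wp}}$ and $\spcl_{A/\wp}$ to $\spcl_{\wh A/\wh\wp}$, and pasting them yields that $\spcl_{A,\wp}$ factors through $\spcl_{\wh A, \wh\wp}$.

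\textbf{Main obstacle.} None of the four parts is genuinely deep given Theorem~\ref{dvr specialization}; the only point requiring care is the bookkeeping in part~\eqref{2d wp sp factor comp}, namely checking that $\wh A/\wh\wp$ is the completion of $A/\wp$ and that the relevant homomorphisms of discrete valuation rings are local and injective so that the functoriality clause of Theorem~\ref{dvr specialization}(\ref{dvr spcl unique}) applies on the nose. A secondary subtlety is making sure, in part~\eqref{2d wp sp eval}, that ``reduce modulo $\wp$ then modulo $\mathfrak m_{A/\wp}$'' coincides with ``reduce modulo $\mathfrak m_A$'' at the level of $G$-points --- this is a routine diagram of ring maps ($A \to A/\wp \to \ell$ equals $A \to \ell$) but should be stated explicitly since it is the crux of the compatibility with the naive reduction map.
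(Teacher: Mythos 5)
Your proposal is correct and follows exactly the route the paper takes: the paper's own proof simply declares all four parts immediate from the definition of $\spcl_{A,\wp}$ as the composite in display~(\ref{special 2d composition}) together with Theorem~\ref{dvr specialization}, invoking functoriality for part~(\ref{2d wp sp factor comp}). Your write-up just supplies the routine details (the two-step reduction for part~(\ref{2d wp sp eval}), the surjectivity of $G(A)\to G(\ell)$ via formal smoothness for part~(\ref{2d wp sp complete surj}), and the pasted functoriality squares for part~(\ref{2d wp sp factor comp})) that the paper leaves implicit.
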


\begin{proof}
This is immediate from the definition of $\spcl_{A,\wp}$ in display~(\ref{special 2d composition}) together with Theorem~\ref{dvr specialization}, where in part~(\ref{2d wp sp factor comp}) we use the functoriality of $\spcl$ for discrete valuation rings.
\end{proof}

Above, if we write $L_\wp$ for the fraction field of the completion $\wh A_\wp$ of the discrete valuation ring $A_\wp$, then the first map $\spcl_{A_\wp}:G(L)/\R \to G(k_\wp)/\R$ in~(\ref{special 2d composition})
is the same as the composition 
\begin{equation} \label{special 2d complete}
\xymatrix @C=.4cm{
G(L)/\R \to G(L_\wp)/\R\, {\buildrel \spcl_{\wh A_\wp} \over \longrightarrow}\, G(k_\wp)/\R,\\
}
\end{equation}
by Remark~\ref{dvr spcl rk}(\ref{spcl via completion}).

In the case that $A$ is complete and has fraction field $L$ and residue field $\ell$, we can also describe the map $\spcl_\wp$ 
in terms of a maximal split torus $S \subseteq G$ and its centralizer $H= C_G(S)$.  Namely, by Corollary~\ref{R equiv in H}(\ref{R equiv bij}), $G(L)/\R \iso (H/S)(L)/\R$.  The group $(H/S)_L$ is anisotropic by Proposition~\ref{H and S}(\ref{aniso quotient}); and then so are $(H/S)_\ell$ and hence $(H/S)_{A/\wp}$ and $(H/S)_{\wh A_\wp}$ by successive applications of Proposition~\ref{local anisotropy}.   Together with Proposition~\ref{BTR}, we can then reformulate $\spcl_{A,\wp,G}$ as the following composition:
\begin{align} \label{2 dim spcl compos}
\begin{split}
G(L)/\R \iso (H/S)(L)/\R &\to (H/S)(L_\wp)/\R = (H/S)(\wh A_{\wp})/\R \\
&\to (H/S)(k_{\wp})/\R = (H/S)( A/\wp)/\R\\
&\to (H/S)(\ell)/\R \iso G(\ell)/\R,
\end{split}
\end{align}
or equivalently as 
\begin{equation} \label{2 dim spcl factor HS}
\xymatrix @C=.4cm{
G(L)/\R \iso (H/S)(L)/\R \ {\buildrel \spcl_{A,\wp,H/S} \over {\xrightarrow{\hspace{1.3cm}}}} \ (H/S)(\ell)/\R \iso G(\ell)/\R.\\
}
\end{equation}

In order to prove the analogue of Theorem~\ref{dvr specialization}, we will show that this map is independent of the choice of regular height one prime $\wp$, with the help of the next proposition.

\begin{prop} \label{2d special new}
Let $A$ be a two-dimensional regular local ring with fraction field $L$ and residue field $\ell$.
Let $G \subseteq \GL_{n,A}$ be a reductive group over $A$ such that $G_\ell$ is anisotropic.
Let $g \in G(L)$, viewed as a rational map $X :=\Spec(A) \dashrightarrow G$.
Then there is a morphism $\til X \to X$, obtained by a sequence of blow-ups at closed points, such that 
\renewcommand{\theenumi}{\roman{enumi}}
\renewcommand{\labelenumi}{(\roman{enumi})}
\begin{enumerate}
\item \label{ratl map lift}
the rational map $g:X \dashrightarrow G$ lifts to a unique morphism $\til g:\til X \to \til G$, where $\til G$ is the closure of $G$ in $\P^{n^2}_A$; and
\item \label{2d spcl via lift}
for each regular height one prime $\wp$ of $A$, there is a point $P \in \til X(\ell)$ with $\til g(P) \in G(\ell)$, such that $\spcl_\wp([g]_\R) \in G(\ell)/\R$ is the $\R$-equivalence class of $\til g(P)$.
\end{enumerate}
\end{prop}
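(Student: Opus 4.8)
The plan is to resolve the rational map $g$ by a sequence of blow-ups at closed points, producing the morphism $\til g$ required in~(\ref{ratl map lift}), and then to match the algebraically defined map $\spcl_\wp$ against the value of $\til g$ at an $\ell$-point of $\til X$ produced, via the valuative criterion, from a rank two valuation. We may assume $A$ is excellent; this holds in particular when $A$ is complete, and the general case reduces to that one by passing to the completion (using Proposition~\ref{2d spcl wp}(\ref{2d wp sp factor comp})).

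For~(\ref{ratl map lift}), view $g$ as a rational map $X = \Spec(A) \dashrightarrow \til G$ by composing with the open immersion $G \hookrightarrow \til G$, and apply Lemma~\ref{context map} with $Y = \til G$ (which is proper over $A$). This yields $\pi : \til X \to X$, a sequence of blow-ups at closed points, together with a morphism $\til g : \til X \to \til G$ with $g \circ \pi = \til g$ as rational maps. Uniqueness of $\til g$ is automatic: any two such lifts agree on the dense open subset of $\til X$ over which $\pi$ is an isomorphism and $g$ is defined, while $\til X$ is reduced and $\til G$ is separated.

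For~(\ref{2d spcl via lift}), fix a regular height one prime $\wp$ of $A$. Since $A/\wp$ is a discrete valuation ring with residue field $\ell$ and fraction field $k_\wp$, anisotropy of $G_\ell$ forces $G_{k_\wp}$ to be anisotropic (Proposition~\ref{local anisotropy}(\ref{ell to L})), hence $G(L) = G(A_\wp)$ by Proposition~\ref{BTR} applied to the discrete valuation ring $A_\wp$; thus $g \in G(A_\wp)$, and $g_\wp := \ov g \in G(k_\wp)$ is defined. A second application of Proposition~\ref{BTR}, to $A/\wp$ and again using anisotropy of $G_\ell$, gives $G(k_\wp) = G(A/\wp)$, so $g_\wp \in G(A/\wp)$ and $\ov{g_\wp} \in G(\ell)$. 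From $\spcl_\wp = \spcl_{A/\wp} \circ \spcl_{A_\wp}$ and the normalization property of Theorem~\ref{dvr specialization}(\ref{dvr spcl unique}) (which give $\spcl_{A_\wp}([g]_\R) = [g_\wp]_\R$ and $\spcl_{A/\wp}([g_\wp]_\R) = [\ov{g_\wp}]_\R$) we obtain
\[
\spcl_\wp([g]_\R) = [\ov{g_\wp}]_\R \in G(\ell)/\R.
\]
To realize this as $\til g(P_\wp)$, let $\mc O_v \subseteq L$ be the valuation ring of the composite of the valuations of $A_\wp$ and of $A/\wp$ --- equivalently, the preimage of $A/\wp$ under the residue map $A_\wp \to k_\wp$; then $\mc O_v$ contains and dominates $A$, has fraction field $L$ and residue field $\ell$, and its residue map factors as $\mc O_v \twoheadrightarrow A/\wp \twoheadrightarrow \ell$. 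Since $g \in G(A_\wp)$ with $\ov g = g_\wp \in G(A/\wp)$ and $G$ is closed in $\GL_{n,A}$, the entries and inverse determinant of $g$ lie in $\mc O_v$, so $g \in G(\mc O_v)$, with reduction $\ov{g_\wp}$ modulo the maximal ideal of $\mc O_v$. By the valuative criterion of properness for $\pi$, the canonical morphism $\Spec(\mc O_v) \to X$ lifts uniquely to a morphism $\Spec(\mc O_v) \to \til X$; let $P_\wp$ be the image of its closed point, which (as $\mc O_v$ dominates $A$ with residue field $\ell$) lies over the closed point of $X$ and is an $\ell$-point of $\til X$. The two morphisms $\Spec(\mc O_v) \to \til G$ obtained from $\til g \circ \bigl(\Spec(\mc O_v) \to \til X\bigr)$ and from $g \in G(\mc O_v) \hookrightarrow \til G(\mc O_v)$ share the generic fiber $g \in G(L)$; since $\til G$ is separated and $\Spec(\mc O_v)$ is integral, they coincide, so comparing closed fibers yields $\til g(P_\wp) = \ov{g_\wp} \in G(\ell)$. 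Together with the displayed equality this gives $\spcl_\wp([g]_\R) = [\til g(P_\wp)]_\R$, proving~(\ref{2d spcl via lift}).

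One can moreover show, by the argument in the proof of Proposition~\ref{R special} --- choose a regular height one prime of $\mc O_{\til X,P}$ lying in the dense open $\til g^{-1}(G)$, complete the resulting discrete valuation ring, and invoke Propositions~\ref{local anisotropy} and~\ref{BTR} --- that $\til g$ carries all of $\til X(\ell)$ into $G(\ell)$; this stronger fact will be used in Theorem~\ref{main appendix}. The main obstacle is the passage between the two descriptions of $\spcl_\wp$: by construction it is an iterated discrete valuation specialization map, built indirectly from maximal split tori and Proposition~\ref{R special}, whereas~(\ref{2d spcl via lift}) demands a concrete realization through $\til g$. The anisotropy hypothesis on $G_\ell$ is precisely what bridges the two, forcing $g$ and all of its successive residues to be integral so that the whole picture descends to a single rank two valuation ring, where separatedness of $\til G$ finishes the comparison; the only other delicate point is arranging the hypotheses of Lemma~\ref{context map}, i.e.\ the reduction to excellent $A$.
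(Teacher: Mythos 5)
Your proof is correct and follows essentially the same route as the paper: reduce to the complete (hence excellent) case via Proposition~\ref{2d spcl wp}(\ref{2d wp sp factor comp}), apply Lemma~\ref{context map} for part~(\ref{ratl map lift}), and for part~(\ref{2d spcl via lift}) use anisotropy of $G_\ell$ together with Propositions~\ref{local anisotropy} and~\ref{BTR} to make $g$ integral along $\wp$ and then invoke the valuative criterion and separatedness of $\til G$. The only real difference is one of packaging: where the paper lifts $\wp$ to the codimension one point $\til\wp$ of $\til X$, extends the resulting section over $\Spec(A/\wp)$ to obtain the closed point $P$, and then identifies $\spcl_{A,\wp}$ with $\spcl_{\til A,\til\wp}$ for $\til A = \mc O_{\til X,P}$ before evaluating on $\til g \in G(\til A)$, you collapse both steps into the single composite (rank two) valuation ring $\mc O_v$ and evaluate $\spcl_\wp$ directly on $A$ as $[\ov{g_\wp}]_\R$ --- the resulting point $P_\wp$ is in fact the same as the paper's $P$, being the center of $\mc O_v$ on $\til X$.
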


\begin{proof}
We claim that it suffices to prove this in the case of complete rings.  Namely, given $A,L,\ell$ as above, let $\wh A$ be the completion of $A$ and let $\wh L$ be the fraction field of $\wh A$.  Then a regular height one prime $\wp$ of $A$ extends to a regular height one prime $\wh\wp$ of $\wh A$, and the diagram
\begin{equation}
\xymatrix{
G(L)/\R \ar@{^{(}->}[d] \ar[r]^{\spcl_{A,\wp}} & G(\ell)/\R \ar@{=}[d] \\
G(\wh L)/\R  \ar[r]^{\spcl_{\wh A,\wh \wp}} & G(\ell)/\R \\
}
\end{equation}
commutes by Proposition~\ref{2d spcl wp}(\ref{2d wp sp factor comp}).  
Moreover every succession of blow-ups of $\Spec(\wh A)$ at closed points is induced by a succession of blow-ups $\til X$ of $\Spec(A)$ at closed points, since pulling back from $\Spec(A)$ to $\Spec(\wh A)$ preserves the closed fiber.  Now the locus of indeterminacy of a rational map $\til X \dashrightarrow \til G$ consists just of closed points $P$, and a rational map is defined at $P$ if and only if it is defined on the complete local ring at $P$.  Since the pullback from $\Spec(A)$ to $\Spec(\wh A)$ preserves complete local rings at closed points, it preserves whether a given rational map is a morphism.
Thus we may replace the given ring $A$ by $\wh A$; and so we will assume that $A$ is complete.

Applying Lemma~\ref{context map} to the rational map $g:X \dashrightarrow G \subseteq \til G$ (thus taking $Y = \til G$ in that lemma), we obtain a succession of blow-ups $\til X \to X$ at closed points satisfying assertion~(\ref{ratl map lift}) of the proposition (where the uniqueness of the lift $\til g:\til X \to \til G$ follows from the fact that regular morphisms from an integral scheme to a separated scheme are equal if they agree as rational maps).  It remains to show that assertion~(\ref{2d spcl via lift}) holds.

Let $\wp$ be a regular height one prime of $A$, viewed as a codimension one point of $X$.  Since $\til X \to X$ is an isomorphism away from the closed point of $X$, there is a unique codimension one point $\til\wp$ of $\til X$ that lies over $\wp$, such that $\wp$ and $\til\wp$ have the same local rings and residue fields.  Thus there is a section of $\til X \to X$ over the point $\wp$, taking $\wp$ to $\til\wp$.  Since $\wp$ is regular, $A/\wp$ is a discrete valuation ring; and by the valuative criterion for properness, the section over $\wp$ extends to a section $\sigma$ over $\Spec(A/\wp)$. Let $P \in \til X(\ell)$ be the image of the closed point of $X$ under $\sigma$; 
write $\til A = \mc O_{\til X,P}$;
and let $Z \subset \til X$ be the (isomorphic) image of $\Spec(A/\wp)$ in $\til X$ under $\sigma$.  (Here we identify the codimension one point $\til \wp \in \til X$ with the corresponding height one prime ideal of $\til A$.)
Thus $L = {\rm frac}(\til A)$.  Also, 
$Z = \Spec(B)$, where $B:=\til A/\til\wp$ is a discrete valuation ring isomorphic to $A/\wp$, whose fraction field $k_{\til \wp}$ is isomorphic to $k_\wp = A_\wp/\wp = {\rm frac}(A/\wp)$.  

Since $G_\ell$ is anisotropic, and since $\ell$ is the residue field of the complete discrete valuation ring $A/\wp$, whose fraction field is $k_\wp$, it follows from Proposition~\ref{local anisotropy} that $G_{k_\wp}$ is anisotropic.  But $k_\wp$ is also the residue field of the discrete valuation ring $A_\wp$, whose fraction field is $L$.
So 
it follows from Proposition~\ref{BTR} that the element $g \in G(L)$ lies in $G(A_\wp)$, 
and thus restricts to an element $\bar g \in G(k_\wp)$.  Since $\til g$ lifts $g$, it follows that $\til g:\til X \to \til G$ carries the point $\til\wp$ to a point of $G$.  Now $\til\wp$ is the generic point of $Z = \Spec(B)$, and $B$ is a complete discrete valuation ring having fraction field $k_{\til\wp}$ and residue field $\ell = \kappa(P)$.  So $G(k_{\til\wp}) = G(B)$ by Proposition~\ref{BTR}; and thus 
the restriction of $\til g$ to $\til\wp$ extends to a morphism $h:Z \to G \subset \til G$.  But the restriction $\til g|_Z$ of $\til g$ to $Z$ is a morphism $Z \to \til G$ that agrees with $h$ at the point $\til\wp$.  Since $\til G$ is separated, it follows that $h=\til g|_Z$; and hence $\til g(P) = h(P) \in G(\ell)$.

By displays~(\ref{special 2d composition}) and~(\ref{special 2d complete}), the specialization map $\spcl_{A,\wp}:G(L)/\R \to G(\ell)/\R$ factors through the natural map $G(L)/\R \to G(L_\wp)/\R$, where $L_\wp$ is the fraction field of $\wh A_\wp$.  Similarly,  $\spcl_{\til A,\til \wp}$ factors through $G( L)/\R \to G(L_{\til \wp})/\R$, where $L_{\til \wp}$ is the fraction field of the completion of $\til A_{\til \wp}$.  But the natural map $G(L_\wp) \to G(L_{\til \wp})$ is an isomorphism, since $A_\wp \to \til A_{\til \wp}$ is.  
With respect to this isomorphism, the one-dimensional specialization maps $\spcl_{A_\wp}$ and $\spcl_{A/\wp}$ in display~(\ref{special 2d composition}) are identified with the corresponding maps $\spcl_{\til A_{\til \wp}}$ and $\spcl_{\til A/\til \wp}$.
Hence $\spcl_{A,\wp}([g]_\R)=\spcl_{\til A,\til \wp}([g]_\R)=\spcl_{\til A,\til\wp}([\til g]_\R)= [\til g(P)]_\R$ by Proposition~\ref{2d spcl wp}, since $\til g \in G({\til A})$.
\end{proof}

Using the above, we now show that the map $\spcl_\wp$ is independent of $\wp$, yielding a {\em two-dimensional specialization map} $\spcl$ that depends just on $A$ and $G$.  We also show other key properties of the map $\spcl$.

\begin{thm} \label{main appendix}
Let $A$ be a regular local ring of dimension two, with fraction field $L$ and residue field $\ell$. Let $G$ be a reductive group over $A$. Then
\renewcommand{\theenumi}{\alph{enumi}}
\renewcommand{\labelenumi}{(\alph{enumi})}
\begin{enumerate}
\item \label{sp hom}
There is a unique group homomorphism $\spcl = \spcl_A = \spcl_{A,G}: G(L)/\R \to G(\ell)/\R$ such that for every regular height one prime $\wp$ of $A$, $\spcl=\spcl_\wp$.
\item \label{complete surjectivity} 
If $A$ is complete, then $\spcl$ is surjective.
\item  \label{special restriction} 
For $g \in G(A)$, $\spcl_A([g]_\R)=[\ov g]_\R$, where $\ov g$ is the image of $g$ under the homomorphism $G(A) \to G(\ell)$ induced by the reduction map $A \to \ell$.
\item \label{2d spcl functorial}
The map $\spcl$ is functorial in $G$ and in $A$ (in the latter case with respect to injective local homomorphisms of two-dimensional regular local rings).
\item \label{functorial uniqueness}
The group homomorphisms $\spcl_A:G(L)/\R \to G(\ell)/\R$, as $A$ varies over regular local rings of dimension $2$, are uniquely characterized by properties~(\ref{special restriction}) and~(\ref{2d spcl functorial}).
\end{enumerate}
\end{thm}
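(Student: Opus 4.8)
The plan is to construct $\spcl$ first for complete $A$, then to descend to arbitrary $A$, and throughout to replace a general reductive $G$ by an anisotropic quotient so that the blow‑up machinery of Proposition~\ref{2d special new} applies.

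For part~(\ref{sp hom}) I would begin by reducing to the case $A$ complete: by Proposition~\ref{2d spcl wp}(\ref{2d wp sp factor comp}) each $\spcl_\wp$ factors through $\spcl_{\wh A,\wh\wp}$ (with $\wh\wp=\wp\wh A$ again regular of height one), so independence of $\wp$ for $A$ follows once it is known for $\wh A$. Assuming $A$ complete, fix a maximal split torus $S\subseteq G$ with centralizer $H=C_G(S)$ and set $\bar G=H/S$; it is reductive and anisotropic over both $L$ and $\ell$ (Propositions~\ref{H and S}(\ref{aniso quotient}) and~\ref{local anisotropy}), and by Corollary~\ref{R equiv in H}(\ref{R equiv bij}) together with the reformulation~(\ref{2 dim spcl factor HS}) the map $\spcl_{A,\wp,G}$ is, via fixed isomorphisms, identified with $\spcl_{A,\wp,\bar G}$. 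For $\bar g\in\bar G(L)$ one then applies Proposition~\ref{2d special new} to obtain a sequence of blow‑ups $\til X\to\Spec A$ at closed points and a lift $\til{\bar g}:\til X\to Y$ of $\bar g$ (with $Y$ the projective closure of $\bar G$) such that $\spcl_\wp([\bar g]_\R)=[\til{\bar g}(P_\wp)]_\R$ for suitable $P_\wp\in\til X(\ell)$. Arguing as in the last part of the proof of Proposition~\ref{R special} (choosing a regular height one prime of $\mc O_{\til X,P}$ along which $\til{\bar g}$ is a morphism, completing, and invoking Proposition~\ref{BTR} since $\bar G_\ell$ is anisotropic), one checks that $\til{\bar g}(P)\in\bar G(\ell)$ for \emph{every} $P\in\til X(\ell)$ and that $\til{\bar g}$ carries $\R$-equivalent $\ell$-points to $\R$-equivalent points. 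Since $\Spec(A)(\ell)$ is a single point, Lemma~\ref{R trivial fibers} makes $\til X(\ell)/\R$ a single class, so $[\til{\bar g}(P)]_\R$ is independent of $P$, hence of $\wp$; independence of the blow‑up is handled by dominating two choices by a third. One then sets $\spcl_A:=\spcl_{A,\wp}$, a homomorphism by Proposition~\ref{2d spcl wp}(\ref{2d wp sp hom}), and for general $A$ puts $\spcl_A:=\spcl_{\wh A}\circ\iota_A$. Parts~(\ref{complete surjectivity}) and~(\ref{special restriction}) are then immediate from Proposition~\ref{2d spcl wp}(\ref{2d wp sp complete surj}),(\ref{2d wp sp eval}) and the equality $\spcl_A=\spcl_\wp$.

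For part~(\ref{2d spcl functorial}), functoriality in $G$ is immediate, since $\spcl_\wp=\spcl_{A/\wp}\circ\spcl_{A_\wp}$ is a composite of one‑dimensional specialization maps (via~(\ref{special 2d complete})), each functorial in the group by Theorem~\ref{dvr specialization}(\ref{dvr spcl unique}). For functoriality in $A$ along an injective local homomorphism $A\to A'$, I would reduce to $A,A'$ complete (using that $\spcl$ is built from completions), then pick maximal split tori $S\subseteq G$ over $A$ and $S'\subseteq G_{A'}$ over $A'$ with $S'\supseteq S_{A'}$, so that $S',C_{G_{A'}}(S')\subseteq H_{A'}$ and $S'/S_{A'}$ is a maximal split torus of $(H/S)_{A'}$ with anisotropic kernel $C_{G_{A'}}(S')/S'$; the base‑change compatibility of the isomorphisms of Corollary~\ref{R equiv in H}(\ref{R equiv bij}) then reduces the required square to anisotropic groups, where it is to be verified through the blow‑up description of part~(\ref{sp hom}) (a blow‑up $\til X'\to\Spec A'$ resolving the relevant lift, a point $P'\in\til X'(\ell')$, its image in $\til X(\ell')$ under $\til X'\to\til X\times_A\Spec A'\to\til X$ for a blow‑up $\til X\to\Spec A$, and Lemma~\ref{R trivial fibers} applied over $A'$). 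Finally, for the uniqueness~(\ref{functorial uniqueness}), suppose $\spcl'$ is another family obeying~(\ref{special restriction}) and~(\ref{2d spcl functorial}). After reducing $G$ to $\bar G=H/S$ (compatibly, by functoriality of $\spcl'$ in $G$ for $H\hookrightarrow G$ and $H\twoheadrightarrow H/S$), choose $\til X\to\Spec A$ and $\til g:\til X\to Y$ as in part~(\ref{sp hom}) with $\til g(\til X(\ell))\subseteq\bar G(\ell)$. For $P\in\til X(\ell)$ the ring $\mc O:=\mc O_{\til X,P}$ is a two‑dimensional regular local ring with fraction field $L$ and residue field $\ell$, the map $A\to\mc O$ is an injective local homomorphism, and $\bar g$ extends to an element of $\bar G(\mc O)$ reducing modulo $\mathfrak m_{\mc O}$ to $\til g(P)$. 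Then~(\ref{special restriction}) gives $\spcl'_{\mc O}([\bar g]_\R)=[\til g(P)]_\R=\spcl_{\mc O}([\bar g]_\R)$, and~(\ref{2d spcl functorial}) for $A\to\mc O$ gives $\spcl'_A([\bar g]_\R)=\spcl'_{\mc O}([\bar g]_\R)$ and $\spcl_A([\bar g]_\R)=\spcl_{\mc O}([\bar g]_\R)$, whence $\spcl'=\spcl$.

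The principal obstacle is functoriality in $A$. Unlike in dimension one, a regular height one prime of $A'$ need not contract to a regular height one prime of $A$: it may contract to $\mathfrak m_A$, or even to $(0)$ (for instance when $A$ is realized as $k[[a(v),b(v)]]\subseteq k[[v,w]]$ with $a,b$ algebraically independent), so $\spcl_A$ and $\spcl_{A'}$ cannot in general be read off from a single common discrete valuation. Circumventing this seems to require the blow‑up description of $\spcl$ together with a careful comparison of maximal split tori; the delicate point is that the anisotropic quotient $H/S$ attached to $G$ over $A$, while anisotropic over $\ell$, may become isotropic over $\ell'$, so the computation over $A'$ must be carried out for the genuine anisotropic kernel $C_{G_{A'}}(S')/S'$ rather than for $(H/S)_{A'}$.
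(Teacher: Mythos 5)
For parts~(\ref{sp hom}), (\ref{complete surjectivity}), (\ref{special restriction}) and~(\ref{functorial uniqueness}) your argument is essentially the paper's: reduce to $\wh A$ via Proposition~\ref{2d spcl wp}(\ref{2d wp sp factor comp}), pass to the anisotropic quotient $H/S$ via display~(\ref{2 dim spcl factor HS}), and combine Proposition~\ref{2d special new} with Lemma~\ref{R trivial fibers} (and the argument from the proof of Proposition~\ref{R special} showing that $\til g$ carries $\til X(\ell)$ into $G(\ell)$ compatibly with $\R$-equivalence) to get independence of $\wp$; the uniqueness argument via $A\to\wh{\mc O}_{\til X,P}$ is also the one in the paper. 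Your ``dominate two blow-ups by a third'' step is unnecessary, since Proposition~\ref{2d special new} produces a single $\til X$ serving all regular primes $\wp$ at once.

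Where you genuinely diverge is functoriality in $A$ in part~(\ref{2d spcl functorial}). The paper disposes of this in one sentence, by composing the one-dimensional functorialities of Theorem~\ref{dvr specialization} along display~(\ref{special 2d composition}); you are right that this cannot be applied verbatim, and your example is valid: for $A=k[[a(v),b(v)]]\subseteq k[[v,w]]=A'$ every height one prime of $A'$ contracts to $\mathfrak m_A$ or to $(0)$, so there is no pair of compatible regular height one primes along which to match the two DVR specializations. Your caution here thus exceeds the level of detail the paper itself supplies. That said, your replacement argument is only a sketch, and its two delicate steps are exactly the ones left unproven: (i) the existence of a maximal split torus $S'\supseteq S_{A'}$ in $G_{A'}$ over the complete ring $A'$, and the compatibility under this enlargement of the isomorphisms $G(-)/\R\cong(H/S)(-)/\R$ of Corollary~\ref{R equiv in H} (these depend on choices of parabolics and open cells, and one must verify that the two routes to the anisotropic kernel $C_{G_{A'}}(S')/S'$ induce the same map on $\R$-classes); and (ii) the comparison of points in the blow-ups: Lemma~\ref{R trivial fibers} as stated controls only $\ell$-points of a blow-up of $\Spec(A)$, whereas your argument needs the $\ell'$-points of $\til X$ coming from $\til X'$ to be $\R$-equivalent to the $\ell$-point $P$ that computes $\spcl_A$, which requires a version of that lemma for $\ell'$-points. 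Both issues look repairable, but as written they are gaps of roughly the same size as the one you set out to fill.
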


\begin{proof}
We begin by showing that the map $\spcl_\wp$ is independent of the choice of the choice of the regular height one prime $\wp$ in $A$.  By Proposition~\ref{2d spcl wp}(\ref{2d wp sp factor comp}), we may assume that $A$ is complete.  In that situation, first
consider the case where $G_\ell$ is anisotropic.
With notation as in Proposition~\ref{2d special new}, for each $g \in G(L)$ we have a morphism $\til X \to X$, a lifting $\til g:\til X \to \til G$ of $g: X \dashrightarrow G$, and a point  $P \in \til X(\ell)$ such that $\til g(P) \in G(\ell)$ and $\spcl_\wp([g]_\R) =[\til g(P)]_\R$.  Since every point in $\til X(\ell)$ lies over the closed point of $X$, it follows from Lemma~\ref{R trivial fibers} that any two such points are $\R$-equivalent, and hence so are their images under $\til g$.  Thus $\spcl_\wp$ is independent of the choice of $\wp$, if $G_\ell$ is anisotropic.  For a more general reductive group $G$ over the complete ring $A$, display~(\ref{2 dim spcl factor HS}) says that the specialization map $\spcl_\wp$ for $G$ factors through the corresponding map for the group $H/S$, which is anisotropic over $\ell$.  So the assertion follows from the previous case.

Using this, parts~(\ref{sp hom}), (\ref{complete surjectivity}), and~(\ref{special restriction}) follow from 
Proposition~\ref{2d spcl wp}, by taking $\spcl=\spcl_\wp$ for any choice of $\wp$.  Part~(\ref{2d spcl functorial}) follows from display~(\ref{special 2d composition}) together with the functoriality in Theorem~\ref{dvr specialization}(\ref{dvr spcl unique}).  

Finally, for part~(\ref{functorial uniqueness}), suppose that for each regular local ring $A$ of dimension $2$ and each reductive group $G$ over $A$, we have a homomorphism $s_A: G(L)/\R \to G(\ell)/\R$, with $L,\ell$ denoting the fraction field and residue field of $A$, such that $s_A([g]_\R)=[\ov g]_\R$ for all $g \in G(A)$, and such that the maps $s_A$ are functorial in $A$ and $G$.  We wish to show that $s_A = \spcl_A$ for all $A$ and $G$.  

First consider the case that $G_\ell$ is anisotropic, where  
$A,L,\ell,G$ are as above.  Let $g \in G(L)$.  As in Proposition~\ref{2d special new}, we obtain $\til X$, $\til G$, $\til g$, $P$ such that $\spcl_\wp([g]_\R) = [\til g(P)] \in G(\ell)/\R$.  Here $\til g: \til X \to \til G$ is a morphism, and $\til g(P) \in G(\ell)$; so $\til g \in G(A')$, where $A'$ is the complete local ring of $\til X$ at $P$, say with fraction field $L'$.  The inclusion $A \to A'$ is a local homomorphism of complete regular local rings of dimension two, inducing an isomorphism on residue fields.  So, by functoriality, $\spcl_A$ is the composition of the natural inclusion $G(L) \to G(L')$ with the map $\spcl_{A'}$, and $\spcl_A(g) = \spcl_{A'}(g) = \spcl_{A'}(\til g)$.  Since $s_A$ is also functorial, we similarly have $s_A(g)=s_{A'}(\til g)$.  But since $\til g \in 
G(A')$, 
it follows from property~(\ref{special restriction}) that $\spcl_{A'}(\til g) = [\til g(P)]_\R = s_{A'}(\til g)$.  Thus $\spcl_A(g)=s_A(g)$, as desired.

For a more general reductive group $G$, the same conclusion follows from the above special case by the functoriality condition and by the functorial isomorphisms $G(L)/\R \to (H/S)(L)/\R$ as in Corollary~\ref{R equiv in H}, since $(H/S)_\ell$ is anisotropic. 
\end{proof}


\providecommand{\bysame}{\leavevmode\hbox to3em{\hrulefill}\thinspace}

\noindent{\bf Author Information:}\\

\noindent Jean-Louis Colliot-Th\'el\`ene\\
Arithm\'etique et G\'eom\'etrie Alg\'ebrique, Universit\'e Paris-Saclay, CNRS, Laboratoire de\\ Math\'ematiques d'Orsay, 91405, Orsay, France\\
email: jlct@math.u-psud.fr

\medskip

\noindent David Harbater\\
Department of Mathematics, University of Pennsylvania, Philadelphia, PA 19104-6395, USA\\
email: harbater@math.upenn.edu

\medskip

\noindent Julia Hartmann\\
Department of Mathematics, University of Pennsylvania, Philadelphia, PA 19104-6395, USA\\
email: hartmann@math.upenn.edu

\medskip

\noindent Daniel Krashen\\
Department of Mathematics, University of Pennsylvania, Philadelphia, PA 19104-6395, USA\\
email: dkrashen@math.upenn.edu

\medskip

\noindent R.~Parimala\\
Department of Mathematics and Computer Science, Emory University, Atlanta, GA 30322, USA\\
email: parimala@mathcs.emory.edu

\medskip

\noindent V.~Suresh\\
Department of Mathematics and Computer Science, Emory University, Atlanta, GA 30322, USA\\
email: suresh@mathcs.emory.edu

\medskip

The authors were supported on an NSF collaborative FRG grant: DMS-1463733 (DH and JH), DMS-1463901 (DK), DMS-1463882 (RP and VS).  Additional support was provided by NSF RTG grant DMS-1344994 and NSF grant DMS-1902144 (DK); NSF DMS-1805439 and DMS-2102987 (DH and JH); NSF DMS-1401319 (RP); NSF DMS-1301785 (VS); and NSF DMS-1801951 (RP and VS).

\end{document}